\documentclass{jloganal}
\usepackage[british]{babel}
\usepackage{amssymb,stmaryrd,enumerate,hyperref,cleveref,xcolor}
\usepackage[cal=boondoxo]{mathalfa}
\usepackage[pagewise]{lineno}



\hypersetup{	
    colorlinks,
    citecolor=violet,
    filecolor=black,
    linkcolor=blue,
    urlcolor=teal
}

\title{On ordered groups of regular growth rates}

\author{Vincent Mamoutou Bagayoko}
\surname{Bagayoko}
\address{IMJ-PRG\\
Universit{\'e} Paris Cit{\'e}\\
B{\^a}timent Sophie Germain\\
France}
\email{bagayoko@imj-prg.fr}
\urladdr{https://vincentbagayoko.neocities.org}

\keywords{ordered groups, o-minimality, ordered differential fields}
\subject{primary}{msc2000}{06F15, 03C64, 12H05}
\subject{secondary}{msc2000}{26A12, 20E45}

\volumenumber{}
\issuenumber{}
\publicationyear{}
\papernumber{}
\startpage{}
\endpage{}
\doi{}
\MR{}
\Zbl{}
\received{}
\revised{}
\accepted{}
\published{}
\publishedonline{}
\proposed{}
\seconded{}
\corresponding{}
\editor{}
\version{}


\newcommand{\assign}{:=}

\newcommand{\nasymp}{\not\asymp}
\newcommand{\nin}{\not\in}
\newcommand{\nobracket}{}
\newcommand{\of}{:}

\newcommand{\suchthat}{:}
\newcommand{\tmem}[1]{{\em #1\/}}
\newcommand{\tmop}[1]{\ensuremath{\operatorname{#1}}}
\newcommand{\tmstrong}[1]{\textbf{#1}}
\newcommand{\tmtextit}[1]{\text{{\itshape{#1}}}}
\newenvironment{descriptioncompact}{\begin{description} }{\end{description}}
\newenvironment{enumeratealpha}{\begin{enumerate}[a{\textup{)}}]}{\end{enumerate}}
\newenvironment{itemizedot}{\begin{itemize} }{\end{itemize}}
\newcounter{nnacknowledgments}

{\theoremstyle{remark}\newtheorem{acknowledgments*}[nnacknowledgments]{Acknowledgments}}
{\theoremstyle{remark}\newtheorem{question}{Question}}
\newtheorem{theorem}{Theorem}[section]
\newtheorem{maintheorem}{Theorem}
\newtheorem{corollary}[theorem]{Corollary}
\newtheorem{definition}[theorem]{Definition}
\newtheorem{proposition}[theorem]{Proposition}
{\theoremstyle{remark}\newtheorem{example}[theorem]{Example}}
{\theoremstyle{remark}\newtheorem{remark}{Remark}}
\newtheorem{lemma}[theorem]{Lemma}
\newcommand{\nonconverted}[1]{\mbox{}}


\begin{document}

\begin{abstract}
  We introduce an elementary class of linearly ordered groups, called growth
  order groups, encompassing certain groups under composition of formal series
  (e.g. transseries) as well as certain groups $\mathcal{G}_{\mathcal{M}}$ of
  infinitely large germs at infinity of unary functions definable in an
  o-minimal structure $\mathcal{M}$. We study the algebraic structure of
  growth order groups and give methods for constructing examples. We show that
  if $\mathcal{M}$ expands the real ordered field and germs in
  $\mathcal{G}_{\mathcal{M}}$ are levelled in the sense of Marker \& Miller,
  then $\mathcal{G}_{\mathcal{M}}$ is a growth order group.
\end{abstract}

\maketitle

\section*{Introduction}

How do two quantities that grow regularly toward infinity behave under
composition? How to characterise the order of growth of such magnitudes?

Hardy introduced {\cite{H1910}} L-functions, which are real-valued functions
obtained as combinations of the exponential function, the logarithm and
semialgebraic functions. They naturally form a differential ring under
pointwise operations. More remarkably, Hardy showed that any two such
functions can always be compared on small enough neighborhoods of $+ \infty$. That is, germs at $+ \infty$ of L-functions are linearly ordered.
For instance, the inequalities
\[ \exp (t) > t^n > \cdots > t^2 > nt > \cdots > 2 t > t + n > \cdots > t + 1
   > t \]
hold on positive half lines $(a, + \infty) \subseteq \mathbb{R}$. Differential-algebraic equations and inequalities, and indeed the whole
first-order theory of fields of germs in the language of ordered valued
differential fields, are well understood {\cite{vdH:mt,ADH:H-closed}}.

The compositional theory of such quantities, however, is unknown. If $f, g$
are two real-valued functions and $g$ eventually exceeds all constant
functions, then the germ of $f \circ g$ only depends on that of $f$ and that
of $g$. This induces a law of composition of germs. Even short and simple
functional equations, involving germs of even regular commonplace
functions... turn out to be particularly difficult to analyse.
In particular, when is the simple inequality
\begin{equation}
  f \circ g > g \circ f \label{eq-basicbasic}
\end{equation}
satisfied for two germs $f, g$ of $L$-functions? We will define a first-order theory of ordered groups of abstract
regular growth rates, that describes in
particular the solutions of \eqref{eq-basicbasic} in said groups.

Let us see how the informal notion of regular
growth rate can be instantiated. The most concrete example is that of elements in Hardy fields {\cite{Bou07}}, i.e. ordered
differential fields of germs. If a Hardy field $\mathcal{H}$ is closed under
compositions, and if its subset $\mathcal{H}^{>\mathbb{R}}$ of germs that lie
above all constant germs is closed under functional inversion, then
$\mathcal{H}^{>\mathbb{R}}$ is an ordered group.

Given an o-minimal structure $\mathcal{M}$, the set
$\mathcal{M}_{\infty}$ of germs at $+ \infty$ of unary definable functions in
$\mathcal{M}$ is also linearly ordered by eventual comparison. Its subset
$\mathcal{G}_{\mathcal{M}}$ of germs of functions that tend to $+ \infty$ at
$+ \infty$ is an ordered group for the induced ordering and the composition of
germs, and the asymptotic growth of germs in $\mathcal{G}_{\mathcal{M}}$ is strongly related \cite{MilStar:growth} to the algebra of definable sets in $\mathcal{M}$. Whereas $\mathcal{M}_{\infty}$ is model theoretically tame provided
$\mathcal{M}$ has definable Skolem functions (see \Cref{subsection-o-minimal}), the ordered group
$\mathcal{G}_{\mathcal{M}}$ is not interpretable in $\mathcal{M}$ in general,
and its first-order theory in the language
$\mathcal{L}_{\tmop{og}}$ of ordered groups is not tame in general. Thirdly, consider an ordered field $\mathbb{K}$ of generalised power series
{\cite{Hahn1907}} over an ordered field of constants $C$, whose set
$\mathbb{K}^{> C}$ of series lying above all constants is non-empty. In
certain cases, there is a composition law $\mathord{\circ} \of \mathbb{K}
\times \mathbb{K}^{> C} \longrightarrow \mathbb{K}$ such that $(\mathbb{K}^{>
C}, \circ, <)$ is an ordered group. Examples include fields of transseries {\cite{vdH:ln,vdDMM01}, fields of hyperseries
{\cite{Bag:hyperclosed}}, and, conjecturally {\cite[Conclusion, 1]{Bag:phd}},
Conway's field of surreal numbers {\cite{Con76}}. Groups of the form
$\mathcal{H}^{>\mathbb{R}}$, $\mathcal{G}_{\mathcal{M}}$ and $\mathbb{K}^{>
C}$ share important first-order properties in $\mathcal{L}_{\tmop{og}}$. No
systematic study of this resemblance has been done yet, and this paper can be taken as a primer on that matter.

We propose a first-order theory
$T_{\tmop{gog}}$ in $\mathcal{L}_{\tmop{og}}$ whose models are ordered groups of abstract regular growth rates. We call them {\tmem{growth
order groups}}. Simple examples include Abelian ordered groups, and, for
instance, ordered groups of strictly increasing affine maps on an ordered
vector space. We will show that models of
$T_{\tmop{gog}}$ comprise both groups of o-minimal germs, groups of formal
series and more abstract examples, and that $T_{\tmop{gog}}$ is sufficiently
strong to provide insight on these groups that is not readily deducible from
their concrete presentations.

In the first section, we give our conventions and notations for ordered
groups, that are always linearly left-ordered and right-ordered. We state
well-known basic facts about such groups, taking from
{\cite{Levi:ord-grp,Neu:or-gr,Fuchs11}}.

In \Cref{section-gogs}, we introduce the three axioms \hyperref[gog1]{\textbf{GOG1}}--\hyperref[gog3]{\textbf{GOG3}}
for growth order groups, starting with \hyperref[gog1]{\textbf{GOG1}} and \hyperref[gog2]{\textbf{GOG2}}
(\Cref{subsection-growth-axioms}).
\Cref{subsection-valuation-theory} focuses
on the existence of a non-commutative valuation, in the sense of {\cite{SaSo:vgroups}}, on ordered groups
satisfying \hyperref[gog1]{\textbf{GOG1}}. We then define {\tmem{scaling elements}}
(\Cref{subsection-scaling-elements}), which form scales along which elements
in the group have asymptotic expansions as in classical valuation theory. In
\Cref{subsection-growth-order-groups}, we
introduce the final axiom \hyperref[gog3]{\textbf{GOG3}} and we show that
growth order groups are commutative transitive {\cite{FiRos:CT}}, that is:

\begin{maintheorem}
  {\tmem{[\Cref{cor-CT}]}} The centraliser of a non-trivial element in a
  growth order group is Abelian.
\end{maintheorem}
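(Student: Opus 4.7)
The plan is to fix a non-trivial $g$ in a growth order group $G$ and two elements $a,b$ of the centraliser $C(g)$, and to show that the commutator $aba^{-1}b^{-1}$ is trivial. The main tools available are the non-commutative valuation and the growth rank from \Cref{subsection-valuation-theory}, the scaling-element asymptotic expansions of \Cref{subsection-scaling-elements}, and the axiom \textbf{GOG3} just introduced in \Cref{subsection-growth-order-groups}.

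First I would analyse how the condition $ag = ga$ constrains the growth rank of $a$ in terms of that of $g$. The expected phenomenon, motivated by commuting transseries and commuting germs in Hardy fields, is that membership in $C(g)$ forces the growth rank of $a$ into a very restricted position relative to that of $g$, typically inside a single archimedean class or a convex subgroup canonically attached to $g$. I would try to extract this rigidity from \textbf{GOG1}--\textbf{GOG2}, using the fact that conjugation preserves the growth rank and that the non-commutative valuation of a commutator is strictly dominated by the valuations of its entries; this already suggests that commutators between elements of $C(g)$ have strictly smaller growth rank than $a$ or $b$.

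Second, with $a$ and $b$ pinned down to a common piece of $G$, I would write their asymptotic expansions along a scale built from $g$, or from a scaling element associated with the growth rank of $g$. The commutation equations $ga = ag$ and $gb = bg$ should translate into explicit constraints on the leading terms of these expansions, forcing those leading terms to lie in a common commutative subgroup. Propagating this information term by term along the scale, and using \textbf{GOG3} to ensure that asymptotic expansions uniquely determine elements, would then yield $[a,b] = 1$. An attractive variant is to argue directly by contradiction: if $[a,b] \neq 1$, iterated commutators with $a$ would produce an infinite strictly descending chain of growth ranks, contradicting the well-behavedness supplied by \textbf{GOG3}.

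The main obstacle is the rigidity step: showing that commuting with a single non-trivial $g$ is enough to trap $a$ inside an Abelian piece of $G$. In concrete models this corresponds to the uniqueness of functional square roots and of iterates, and it is reasonable to expect that \textbf{GOG3} is the abstract axiom designed to deliver precisely this uniqueness. Verifying that it does so purely within the language $\mathcal{L}_{\tmop{og}}$, without reinstating any analytic or field-theoretic structure, will constitute the core technical effort of the proof.
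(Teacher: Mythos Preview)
You have the two essential ingredients, but you misjudge their difficulty and miss the one-line closing step.

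First, the ``rigidity'' you flag as the main obstacle is immediate from the definition of $\preccurlyeq$: if $a \in \mathcal{C}(g)^{\neq}$ then $a \preccurlyeq g$ (since $a$ lies in $\mathcal{C}(g)$, hence in its convex hull) and $g \preccurlyeq a$ (since $g \in \mathcal{C}(a)$), so $a \asymp g$. No analytic input or ``core technical effort'' is needed here.

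Second, the commutator bound you mention is exactly \Cref{lem-small-commutator}: for $a, b \asymp \mathcal{s}$ with $\mathcal{s}$ scaling, one has $[a,b] \prec a$. Note that this is \emph{not} a general fact about the valuation; it genuinely requires a scaling element in the relevant growth rank, and this is the only place \textbf{GOG3} enters. It does not provide well-foundedness of growth ranks, nor does it say that asymptotic expansions determine elements uniquely.

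The closing step you are missing is this: $[a,b]$ also lies in $\mathcal{C}(g)$, so if $[a,b] \neq 1$ then by the rigidity above $[a,b] \asymp g$, contradicting $[a,b] \prec g$. That is the entire argument, and it is precisely the paper's proof (\Cref{prop-central-commute}). Your term-by-term propagation along a scale is unnecessary, and your ``infinite descending chain'' variant does not close: nothing in \textbf{GOG1}--\textbf{GOG3} forbids infinite descending chains in $\tmop{gr}(\mathcal{G})$, which is merely a linearly ordered set.
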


We also discuss the existence of asymptotic expansions in growth order
groups, and embeddings of growth order groups into groups of non-commutative
formal series (\Cref{subsection-structure}).

\Cref{section-constructions} gives methods for constructing growth order
groups. We give conditions under which the quotient of a
growth order group is a growth order group (\Cref{subsection-quotients}). We then define the ordered groups
$\mathcal{G}_{\mathcal{M}}$ of germs in an o-minimal structure $\mathcal{M}$
and give examples where $\mathcal{G}_{\mathcal{M}}$ is, or is not a growth
order group (\Cref{subsection-o-minimal}).

In \Cref{section-H-fields}, we give conditions on an
o-minimal expansions $\mathcal{R}$ of the real ordered field for
$\mathcal{G}_{\mathcal{R}}$ to be a growth order group. Let $\mathcal{R}$ be an o-minimal expansion of the real ordered field.
Given a real-valued germ $g$ at $+ \infty$ and $n \in \mathbb{N}$, we write
$g^{[n]}$ for the $n$-fold compositional iterate of $g$. With
{\cite{Rosli83:rank,MarMil97}}, we say that $\mathcal{R}$ is
{\tmem{levelled}}{\index{levelled o-minimal field}} if for all positive
elements $f$ of the ordered group $\mathcal{G}_{\mathcal{R}}$, there is an $l
\in \mathbb{N}$ such that for all sufficiently large $k \in \mathbb{N}$, we
have
\[ - 1 \leqslant \log^{[n]} \circ f - \log^{[n - l]} \leqslant 1. \] E.g. $l=0$ for the germ of the identity or the function $0<t\mapsto \exp(\log(t)^2)$, and $l=1$ for the germ of $\exp$ or $\exp^2$.  The main theorem is as follows:

\begin{maintheorem}
  \label{th-main}Let $\mathcal{R}$ be an o-minimal expansion of the real
  ordered field. If $\mathcal{R}$ is levelled, then
  $\mathcal{G}_{\mathcal{R}}$ is a growth order group. Moreover, centralisers
  of non-trivial elements in $\mathcal{G}_{\mathcal{R}}$ are Archimedean.
\end{maintheorem}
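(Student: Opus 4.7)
The plan is to verify the three axioms \hyperref[gog1]{\textbf{GOG1}}--\hyperref[gog3]{\textbf{GOG3}} in turn for $\mathcal{G}_{\mathcal{R}}$, exploiting the Hardy-field structure on $\mathcal{R}$ together with the Marker--Miller level function, and then to upgrade the Abelian conclusion of the earlier corollary to Archimedicity by a functional-equation argument inside each centraliser. Because $\mathcal{R}$ is an o-minimal expansion of the real field, every $f \in \mathcal{G}_{\mathcal{R}}$ is eventually smooth, strictly monotonic, and admits a compositional inverse in $\mathcal{G}_{\mathcal{R}}$; in particular $\mathcal{G}_{\mathcal{R}}$ sits inside a Hardy field closed under composition on the positive-infinite part. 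The levelled hypothesis supplies a function $\ell \of \mathcal{G}_{\mathcal{R}} \to \mathbb{Z}$, and I would take this as the combinatorial backbone onto which the abstract growth-order machinery of \Cref{section-gogs} is grafted.

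Next, for \hyperref[gog1]{\textbf{GOG1}} I would show that the valuation introduced in \Cref{subsection-valuation-theory} refines the level: two germs $f, g$ are growth-equivalent exactly when $\ell(f) = \ell(g)$ and their logarithmic tails agree up to an additive constant. The iterated exponentials $\exp^{[n]}$ serve as the canonical scaling elements of the hierarchy, strictly increasing in rank with $n$, and the levelled comparison $\log^{[k]} \circ f - \log^{[k-\ell(f)]} \in [-1,1]$ then gives, for each positive $f$ of level $n$, a real constant $c(f) > 0$ with $f \sim \exp^{[n]} \circ (c(f) \cdot \mathord{\tmop{id}}) \circ \log^{[n]}$ up to lower-order corrections. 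This is exactly the data required to exhibit scaling elements (\Cref{subsection-scaling-elements}) and asymptotic expansions, yielding \hyperref[gog2]{\textbf{GOG2}} and \hyperref[gog3]{\textbf{GOG3}}; the rank-preserving compositional identities needed for the axioms reduce to routine Hardy-field manipulations once the expansion is in hand.

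For the Archimedean statement, fix a non-trivial $f$ and let $Z(f)$ be its centraliser, which is Abelian by \Cref{cor-CT}. I would first show that the level restricted to $Z(f)$ is a group homomorphism into $\mathbb{Z}$: if $g \in Z(f)$ had a level mismatched with $f$, the commutator relation $f \circ g = g \circ f$ would, upon applying $\log^{[k]}$ and using levelledness, force an unbounded discrepancy contradicting the functional equation. The kernel consists of germs at the same level as $\tmop{id}$ relative to $f$, and on this kernel the scaling constant $c(\cdot)$ from the previous paragraph descends to an embedding into $(\mathbb{R}, +)$. Since both $\mathbb{Z}$ and $\mathbb{R}$ are Archimedean and $Z(f)$ is lexicographically controlled by the short exact sequence they form, $Z(f)$ itself is Archimedean.

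The main obstacle I foresee is the precise verification that the level is additive on the centraliser and that the residue map into $\mathbb{R}$ is well-defined and order-preserving: both demand careful control of the bounded error term $\log^{[k]} \circ f - \log^{[k-\ell(f)]}$ under composition with a commuting germ, which is a delicate Hardy-field computation rather than a formal consequence of levelledness alone. Once these two facts are in place, the remaining axioms fall out more or less mechanically from \Cref{section-gogs}, and \hyperref[gog3]{\textbf{GOG3}} reduces to exhibiting $\exp^{[n]}$ as a cofinal family of scaling elements.
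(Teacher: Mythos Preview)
Your approach rests on a misidentification of the growth rank with (a mild refinement of) the Marker--Miller level. The level is far coarser than the valuation $\preccurlyeq$ of \Cref{subsection-valuation-theory}: in any exponential $\mathcal{R}$ the germs $x+1$, $2x$ and $x^2$ all have level $0$, yet they lie in three distinct growth ranks (by \Cref{lem-cof-centralizer} the centraliser of $x+1$ is contained in $x+\mathbb{R}+\mathcal{o}$, hence bounded above by $2x$; similarly $\mathcal{C}(2x)$ is bounded above by $x^2$). So your proposed characterisation of growth-equivalence via level plus bounded logarithmic tails is false. Moreover the iterated exponentials $\exp^{[n]}$ all commute with $\exp$ and therefore share a \emph{single} growth rank---the maximal one---so they cannot serve as scaling elements in each rank, and your asymptotic form $f \sim \exp^{[n]}\circ(c(f)\cdot\tmop{id})\circ\log^{[n]}$ fails already at level $0$, where it would force $x+1 \sim c\cdot\tmop{id}$. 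The axiom \hyperref[gog2]{\textbf{GOG2}} is also not a ``routine Hardy-field manipulation'': it is the hardest of the three and requires genuine control of $f\circ g - g\circ f$, which your sketch does not supply. Finally, the polynomially bounded case (where every germ has level $0$ and your backbone collapses entirely) is not addressed.

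The paper takes a different route. It first shows (\Cref{cor-Hardy-H}, \Cref{cor-elem-Hfield}) that $\mathcal{R}_\infty$ is an H-field with composition satisfying the Taylor axiom \hyperref[hfc5]{\textbf{HFC5}}, and then proves a general result (\Cref{th-levelled}) for any such field admitting a decomposition $K^{>\mathbb{R}}=\mathcal{G}_0\cdot\mathcal{G}_1$ with $\mathcal{G}_1$ an Archimedean growth order group, $\mathcal{G}_0$ convex normal, and $\{a\circ(a^{\tmop{inv}}+1):a\in\mathcal{G}_1\}$ cofinal in $\mathcal{G}_0$. The levelled hypothesis is used only to produce this decomposition, with $\mathcal{G}_1=\exp^{[\mathbb{Z}]}$ in the exponential case and an affine group in the polynomially bounded case (handled separately via Miller's dichotomy). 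The substantive work then happens inside $\mathcal{G}_0$: every $g\in\mathcal{G}_0^{>}$ is shown to be conjugate to some $x+1+\varepsilon$ with $\varepsilon\in\mathcal{o}^{>}$ (\Cref{lem-G0-x+1}, using an approximate antiderivative and \hyperref[hfc5]{\textbf{HFC5}}), after which \Cref{lem-cof-centralizer} gives Archimedean centralisers directly, \Cref{lem-gog2-R} gives \hyperref[gog2]{\textbf{GOG2}} by a three-case Taylor computation, and scaling elements in every rank of $\mathcal{G}_0$ arise as conjugates of $x+1$. Archimedicity of centralisers is established \emph{first} and then fed into \Cref{prop-Archimedean-gog1} to obtain \hyperref[gog1]{\textbf{GOG1}}---the reverse of your ordering, and without invoking \Cref{cor-CT}.
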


Many o-minimal expansions of $\mathbb{R}$ are levelled,
including expansions of $\mathbb{R}$ by generalised analytic classes and the
exponential {\cite{RoSer:QEqaa,RoSerSpei22}} (see \Cref{cor-levelled-qaa}),
and the Pfaffian closure of the real ordered field {\cite{Spei:Pfaff}} (see
\Cref{cor-levelled-Pfaff}). In fact, no o-minimal expansion of $\mathbb{R}$ is
known not to be levelled.

Our proof heavily relies on the fact that the elementary extension
$\mathcal{R}_{\infty}$ of $\mathcal{R}$ is closed under derivation of germs,
and that as an ordered valued differential field, it is an H-field
{\cite{AvdD02}}. In \Cref{subsection-H-fields-with-composition}, we introduce a first-order
theory of H-fields $K$ over an ordered field of constants $C$ with a
composition law $\mathord{\circ} \of K \times K^{> C} \longrightarrow K$ and a
compositional identity $x \in K^{> C}$, such that $(K^{> C}, \circ, x, <)$ is
an ordered group. A crucial feature of such fields is that they satisfy the axiom
scheme of Taylor expansions (\hyperref[hfc5]{\textbf{HFC5}}). 
We prove in \Cref{subsection-Taylor} that certain Hardy fields closed under composition have Taylor expansions. Say that a real-valued function $f$ is
{\tmem{transexponential}}{\index{transexponential function}} if the germ of
$f$ lies above $\exp^{[n]}$ for each $n \in \mathbb{N}$. We show in particular
that:

\begin{maintheorem}\label{th-3}
  {\tmem{[\Cref{cor-elem-Hfield}]}} Let $\mathcal{R}$ be an o-minimal
  expansion of an ordered field. Assume that $\mathcal{R}$ has an elementary
  substructure $\mathcal{R}_0$ with underlying ordered field $\mathbb{R}$ and
  that $\mathcal{R}_0$ defines no transexponential function. Then
  $\mathcal{R}_{\infty}$ has Taylor expansions.
\end{maintheorem}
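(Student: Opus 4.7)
The approach is to reduce to the case where germs are honest real-valued definable functions, then to establish Taylor expansions by elementary real analysis, using the absence of transexponential germs to bound derivatives.

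Concretely, I would proceed in three steps. First, I would set up the reduction to $(\mathcal{R}_0)_\infty$: the scheme \hyperref[hfc5]{\textbf{HFC5}} is first-order in the language of H-fields with composition, so once one checks that the assignment $\mathcal{M} \mapsto \mathcal{M}_\infty$ yields an elementary extension of H-fields with composition when $\mathcal{R}_0 \preceq \mathcal{R}$ (granted by the treatment in \Cref{subsection-o-minimal} together with the definable Skolem functions enjoyed by any o-minimal expansion of a field), verifying \hyperref[hfc5]{\textbf{HFC5}} in $(\mathcal{R}_0)_\infty$ suffices. Second, for each $f \in (\mathcal{R}_0)_\infty$ and each germ $\varepsilon$ that is infinitesimal for the valuation on $(\mathcal{R}_0)_\infty$, classical Taylor's formula yields
\[
   f(x+\varepsilon(x)) \;=\; \sum_{k=0}^{n-1} \frac{f^{(k)}(x)}{k!}\,\varepsilon(x)^k + \frac{f^{(n)}(\xi_x)}{n!}\,\varepsilon(x)^n
\]
for some $\xi_x$ between $x$ and $x+\varepsilon(x)$ and all sufficiently large $x$; I must show that the error term is dominated by $f^{(n-1)}(x)\,\varepsilon(x)^{n-1}/(n-1)!$ in the valuation sense demanded by \hyperref[hfc5]{\textbf{HFC5}}. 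Third, this domination follows from uniform control of the ratios $f^{(n)}/f^{(n-1)}$; I would derive this from the assumption that no definable germ in $\mathcal{R}_0$ is transexponential, via the level hierarchy of Miller and Marker--Miller used in \Cref{cor-levelled-qaa}. Namely, every such $f$ is bounded by some $\exp^{[k]}$, and cell decomposition together with the definable eventual monotonicity of each $f^{(n)}$ propagates the bound to the derivatives, which the infinitesimal $\varepsilon$ then absorbs.

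The main obstacle is the derivative bookkeeping in step three. For specific classes of o-minimal expansions (Pfaffian, quasi-analytic, $\exp$-polynomials) such bounds are explicit, but the present hypothesis is abstract: there is no a priori structural representation of $f$ to differentiate. If the direct route proves too delicate, an alternative I would explore is to leverage the H-field structure of $(\mathcal{R}_0)_\infty$ from \Cref{subsection-H-fields-with-composition} together with the non-transexponential hypothesis to embed $(\mathcal{R}_0)_\infty$ into a field of hyperseries of finite level, in which Taylor expansions hold by construction, and then pull \hyperref[hfc5]{\textbf{HFC5}} back along the embedding.
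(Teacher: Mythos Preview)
Your reduction in step~1 (transferring \hyperref[hfc5]{\textbf{HFC5}} from $(\mathcal{R}_0)_\infty$ to $\mathcal{R}_\infty$ by elementarity) is exactly what the paper does in \Cref{cor-elem-Hfield}, and the Taylor-with-remainder strategy of step~2 is in the same spirit (the paper uses the integral remainder rather than the Lagrange form). But your setup hides the main difficulty: the hypothesis in \hyperref[hfc5]{\textbf{HFC5}} is only $\delta\in\mathcal{o}(b)$ together with $(f^\dag\circ b)\,\delta\in\mathcal{o}$, and after reducing to $b=x$ by precomposing with $b^{\tmop{inv}}$ the perturbation $\varepsilon$ need only satisfy $\varepsilon\in\mathcal{o}(x)$ and $f^\dag\varepsilon\in\mathcal{o}$, \emph{not} $\varepsilon\in\mathcal{o}$. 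For $f=x^2$ one may take $\varepsilon=\sqrt{x}$, so your intermediate point $\xi_x$ can lie far from $x$ in absolute terms, and the required comparison $f^{(n)}(\xi_x)\in\Theta(f^{(n)}(x))$ is not automatic---it is precisely the crux of the argument.

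The paper handles this by first reducing all instances of \hyperref[hfc5]{\textbf{HFC5}} to the single statement $f\circ(g+\delta)\in\Theta(f\circ g)$ (\Cref{prop-Hardy-conj2-equivalnce}), using the purely H-field-theoretic inequalities $(f')^\dag\in\mathcal{O}(x^{-1})$ when $f^\dag\in\mathcal{O}(x^{-1})$ and $(f')^\dag\in\mathcal{O}(f^\dag)$ otherwise (Lemmas~\ref{lem-Tay-alt1} and~\ref{lem-Tay-alt2}); these yield exactly your ``uniform control of the ratios $f^{(n)}/f^{(n-1)}$'' without any cell decomposition. That single $\Theta$-statement is then proved by induction on the least $k$ with $f\in\mathcal{O}((\exp^{[k-1]})^p)$ for some $p$ (\Cref{prop-Hardy-partial-conj2}), the inductive step passing from $f$ to $f^\dag$, which drops one exponential level. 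Only the crude bound $f<\exp^{[k]}$ is used; the Marker--Miller level of $f$ is a different, finer invariant and plays no role at this stage, so the appeal to \Cref{cor-levelled-qaa} is misplaced. Your fallback of embedding into a field of hyperseries of finite level would require at least as much control of derivatives to set up the embedding, so it does not bypass the obstacle.
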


Using Taylor expansions, we derive conjugacy relations in H-fields with
composition and inversion (Sections~\ref{subsection-conjugacy} and~\ref{subsection-proofofmain}). In the case when $C =\mathbb{R}$, this allows
us to prove a general result (\Cref{th-levelled}) giving conditions under
which $K^{>\mathbb{R}}$ is a growth order group. \Cref{th-main} follows from applications of \Cref{th-levelled}. We rely on Miller's first dichotomy result {\cite{Mil:dicho}} stating that
either each germ in $\mathcal{R}_{\infty}$ is bounded by the germ of a
polynomial function, or $\mathcal{R}$ defines the exponential function. The
polynomially bounded and exponential cases are treated in
Sections~\ref{subsection-application-bounded}
and~\ref{subsection-application-exp} respectively.

\section{Ordered groups}

\subsection{Ordered groups}

\begin{definition}
  \label{def-bi-ordered}An {\tmem{{\tmstrong{ordered group}}}}{\index{ordered
  group}} is a group $(\mathcal{G}, \cdot, 1)$ together with a
  {\underline{linear}} (i.e. total) ordering $<$ on $\mathcal{G}$ such that
  \begin{equation}
    \forall f, g, h \in \mathcal{G}, (g > h \Longrightarrow (fg > fh \wedge gf
    > hf)). \label{eq-bi-ordered}
  \end{equation}
\end{definition}

We write $\leqslant$ for the large relation corresponding to $<$, i.e. $f
\leqslant g \Longleftrightarrow (f < g \vee f = g)$. Our first-order
language of ordered groups is $\mathcal{L}_{\tmop{og}} \assign \langle \cdot, 1,
\leqslant, \tmop{Inv} \rangle$\label{autolab1} where the unary function symbol
$\tmop{Inv}$ is to be interpreted as the inverse map $g \mapsto g^{- 1}$. We
write $T_{\tmop{og}}$ for the expected $\mathcal{L}_{\tmop{og}}$-theory of
ordered groups. Homomorphisms should be understood in the model theoretic
sense: a {\tmem{homomorphism}}{\index{homomorphism of ordered groups}} of
ordered groups is a nondecreasing group morphism, whereas an
{\tmem{embedding}}{\index{embedding of ordered groups}}, of ordered groups is
a strictly increasing group morphism.

\begin{remark}
  \label{rem-groups-functions}An ordered group $\mathcal{G}$ can be seen as a
  group of automorphisms of a linearly ordered set $(X, <)$ ordered by
  universal pointwise comparison
  \[ \varphi < \phi \Longleftrightarrow (\forall x \in X, (\varphi (x) < \phi
     (x))) . \]
  Indeed, let $\mathcal{G}$ act on $(\mathcal{G}, <)$ by
  translations on the left. This intuition is particularly relevant in the case of growth order groups.
\end{remark}

Given an ordered group $\mathcal{G}$, we write\label{autolab2}
\label{autolab3}
\[ \mathcal{G}^{>} \assign \{ f \in \mathcal{G} \suchthat f > 1 \}
   \text{{\hspace{3em}}and{\hspace{3em}}$\mathcal{G}^{\neq} \assign \{f \in
   \mathcal{G} \suchthat f \neq 1\}$.} \]
An ordered group $(\mathcal{G}, \cdot, 1, <)$ is said
{\tmem{Archimedean}}{\index{Archimedean ordered group}} if for all $f, g \in
\mathcal{G}^{\neq}$, there is an $n \in \mathbb{Z}$ such that $f^n \geqslant
g$. Recall by H{\"o}lder's theorem (see {\cite[Section~IV.1,
Theorem~1]{Fuchs11}}) that $\mathcal{G}$ is Archimedean if and only if it
embeds into $(\mathbb{R}, +, 0, <)$. In particular, Archimedean
ordered groups are Abelian.

If $(H, \cdot, 1)$ is a group and $f, g \in H$, then we write\label{autolab4}
\[ [f, g] \assign f^{- 1} g^{- 1} fg. \]
We recall that the {\tmem{centraliser}}{\index{centraliser of an element}} of
an element $g \in H$ is the subgroup\label{autolab5}
\[ \mathcal{C} (g) \assign \{ h \in H \suchthat [g, h] = 1 \} = \{ h \in H
   \suchthat hg = gh \} . \]
For each $h \in H$, we have
\begin{equation}
  \mathcal{C} (hgh^{- 1}) = h\mathcal{C} (g) h^{- 1}
  \label{eq-centralizer-conj-identity} .
\end{equation}

\subsection{Powers}

Let $(\mathcal{G}, \cdot, 1, <)$ be an ordered group. Let us make a few
comments on powers of elements in $\mathcal{G}$. The axioms for
ordered groups imply that $\mathcal{G}$ is torsion-free, i.e. $f^n = 1
\Longrightarrow f = 1$ for all $f \in \mathcal{G}$ and $n \in \mathbb{Z}
\setminus \{ 0 \}$.

\begin{lemma}
  {\tmem{{\cite[Lemma~1.1]{Neu:or-gr}}}} For all $f, g \in \mathcal{G}$ and $n
  > 0$, we have $[f^n, g] = 1 \Longrightarrow [f, g] = 1$.
\end{lemma}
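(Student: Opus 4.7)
The plan is to reduce the claim to uniqueness of $n$-th roots in ordered groups. Assume $[f^n, g] = 1$, which means $g^{-1} f^n g = f^n$. Since conjugation by $g$ is a group homomorphism of $\mathcal{G}$, we have $g^{-1} f^n g = (g^{-1} f g)^n$, so setting $f' \assign g^{-1} f g$ the hypothesis rewrites as
\[
  (f')^n = f^n.
\]
It therefore suffices to establish that $n$-th powers are injective on $\mathcal{G}$, as this then gives $f' = f$, i.e.\ $fg = gf$.

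To prove injectivity of $x \mapsto x^n$ for $n \geqslant 1$, I would argue by a short induction using the bi-order property \eqref{eq-bi-ordered}. Suppose $x, y \in \mathcal{G}$ with $x < y$. Multiplying this inequality on the left by $x$ and on the right by $y$ (two applications of \eqref{eq-bi-ordered}) gives $x^2 < xy < y^2$. Assuming $x^k < y^k$ for some $k \geqslant 1$, multiplying on the left by $x$ and using $x^k < y^k$ together with $x < y$ yields $x^{k+1} < x y^k < y^{k+1}$. Hence $x < y \Longrightarrow x^n < y^n$ for every $n \geqslant 1$, and symmetrically for $x > y$. Consequently $x^n = y^n$ forces $x = y$.

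Applying this with $x = f'$ and $y = f$ concludes the proof: $f' = f$, so $g$ commutes with $f$, i.e.\ $[f, g] = 1$.

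There is no real obstacle here; the only non-trivial ingredient is the strict monotonicity of powers, which is one of the basic facts about ordered groups alluded to in \Cref{def-bi-ordered} and the preceding paragraph. The argument uses neither the Archimedean property, nor torsion-freeness directly (though it re-derives it as a special case), and works uniformly for all $n > 0$.
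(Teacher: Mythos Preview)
Your proof is correct. The paper does not give its own argument for this lemma; it simply cites Neumann {\cite[Lemma~1.1]{Neu:or-gr}}, and your proof via uniqueness of $n$-th roots (strict monotonicity of $x\mapsto x^n$ under the bi-ordering) is exactly the standard argument, essentially the one Neumann gives.
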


\begin{corollary}
  \label{cor-frac-commute}{\tmem{{\cite[Corollary~1.2]{Neu:or-gr}}}} Let $f, g
  \in \mathcal{G}$ and $m, n \in \mathbb{N}^{>}$ with \ $f^n g^m = g^m f^n$.
  Then $fg = gf$.
\end{corollary}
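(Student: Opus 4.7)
The plan is to deduce the corollary by applying the preceding lemma twice, reducing the exponent on each factor in turn. The hypothesis $f^{n}g^{m}=g^{m}f^{n}$ says exactly $[f^{n},g^{m}]=1$, so I want to strip off the exponent $n$ from $f$ first, then the exponent $m$ from $g$.

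For the first reduction, apply the lemma with the ordered pair $(f,g^{m})$ playing the role of $(f,g)$ in its statement. Since $[f^{n},g^{m}]=1$, the lemma yields $[f,g^{m}]=1$, i.e.\ $fg^{m}=g^{m}f$. Rewriting this as $[g^{m},f]=1$, I apply the lemma a second time, now with $(g,f)$ in the role of $(f,g)$ and with exponent $m$. The conclusion is $[g,f]=1$, which is the same as $fg=gf$.

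The main obstacle is essentially nonexistent: the whole content is packaged in the preceding lemma, and the corollary is just the observation that the lemma may be applied on each side independently because $[a,b]=1\Longleftrightarrow [b,a]=1$. The only thing to be slightly careful about is that the lemma is stated with the exponent only on the first argument, which is why the commutator must be flipped between the two applications.
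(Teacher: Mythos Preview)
Your proof is correct and is precisely the standard two-step reduction from the preceding lemma; the paper itself gives no proof for this corollary, merely citing Neumann, so there is nothing further to compare.
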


\begin{corollary}
  Let $g \in \mathcal{G}$. Let $m, n \in \mathbb{Z} \setminus \{ 0 \}$ and $f
  \in \mathcal{G}$ with $f^m = g^n$. Then $f$ is unique to satisfy $f^m = g^n$, and we have $[f, g] = 1$.
\end{corollary}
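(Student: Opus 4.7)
The plan is to deduce both claims from the lemma cited just before and from \Cref{cor-frac-commute}, together with the torsion-free nature of $\mathcal{G}$ that was recalled in the opening of the subsection.

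First I would reduce to the case $m > 0$: if $m < 0$, then replacing $f$ by $f^{-1}$ and $m$ by $-m$ preserves the relation $f^m = g^n$, since $(f^{-1})^{-m} = f^m = g^n$, and does not affect commutation of $f$ with $g$ or uniqueness. So assume $m \in \mathbb{N}^{>}$. For the commutation statement, observe that $f^m = g^n$ commutes with $g$ trivially, i.e.\ $f^m \cdot g = g \cdot f^m$, which can be rewritten as $f^m g^1 = g^1 f^m$. Now \Cref{cor-frac-commute} applied with exponents $m, 1 \in \mathbb{N}^{>}$ immediately gives $fg = gf$, that is $[f, g] = 1$.

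For uniqueness, suppose $f_1, f_2 \in \mathcal{G}$ both satisfy $f_1^m = f_2^m = g^n$. By what we just proved, both $f_1$ and $f_2$ commute with $g$. To show $f_1 = f_2$, I would first prove that $f_1$ and $f_2$ commute with each other: from $f_1^m = f_2^m$ we get $f_1^m \cdot f_2 = f_2^m \cdot f_2 = f_2 \cdot f_2^m = f_2 \cdot f_1^m$, so $f_1^m \cdot f_2^1 = f_2^1 \cdot f_1^m$, and another application of \Cref{cor-frac-commute} yields $f_1 f_2 = f_2 f_1$. Hence $f_1 f_2^{-1}$ lies in an Abelian subgroup of $\mathcal{G}$ containing $f_1, f_2$, and in that subgroup
\[ (f_1 f_2^{-1})^m = f_1^m f_2^{-m} = 1. \]
Since $\mathcal{G}$ is torsion-free and $m \neq 0$, this forces $f_1 f_2^{-1} = 1$, i.e.\ $f_1 = f_2$.

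There is no real obstacle here: the only thing to be careful about is the sign of $m$, handled by the initial reduction, and the passage from $f_1^m = f_2^m$ to $f_1 = f_2$, which is the standard uniqueness of $m$-th roots in a torsion-free group once commutativity of $f_1$ and $f_2$ has been established.
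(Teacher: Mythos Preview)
Your proof is correct and follows the same overall route as the paper: commutation via \Cref{cor-frac-commute}, uniqueness via torsion-freeness. The paper's proof is just the two-line version of yours.

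One small remark on uniqueness: your detour through proving $f_1 f_2 = f_2 f_1$ is sound but unnecessary in an ordered group. Since the ordering is bi-invariant, $f_1 < f_2$ forces $f_1^m < f_2^m$ for any $m > 0$, so the map $f \mapsto f^m$ is strictly increasing and hence injective. This is presumably what the paper has in mind when it says uniqueness ``follows from the fact that $\mathcal{G}$ is torsion-free'' --- the same monotonicity argument that gives torsion-freeness gives uniqueness of roots directly, without needing to pass through commutativity of $f_1$ and $f_2$.
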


\begin{proof}
  That $f$ is unique follows from the fact that $\mathcal{G}$ is torsion-free.
  We have $[f, g] = 1$ by \Cref{cor-frac-commute}.
\end{proof}

\section{Growth order groups}\label{section-gogs}

We now introduce growth order groups by defining a first-order
theory~$T_{\tmop{gog}} \supseteq T_{\tmop{og}}$ thereof.

\subsection{Growth axioms}\label{subsection-growth-axioms}

Let $(\mathcal{G}, \cdot, 1, <)$ be an ordered group. Consider the following
sentences in $\mathcal{L}_{\tmop{og}}$ (after an obvious rewriting).

\begin{descriptioncompact}
  \item[$\textbf{GOG1}$] \label{gog1}Given $f, g \in \mathcal{G}^{>}$ with $f
  \geqslant g$ and $g_0 \in \mathcal{C} (g)$, there is an $f_0 \in \mathcal{C}
  (f)$ with $f_0 \geqslant g_0$.
  
  \item[$\textbf{GOG2}$] \label{gog2}For $f, g \in \mathcal{G}^{>}$, we have
  \begin{equation}
    f >\mathcal{C} (g) \Longrightarrow fg > gf \text{.} \label{eq-growth}
  \end{equation}
\end{descriptioncompact}

Any ordered Abelian group automatically satisfies \hyperref[gog1]{\textbf{GOG1}}, and vacuously
satisfies \hyperref[gog2]{\textbf{GOG2}}. We say that $\mathcal{G}$ has {\tmem{Archimedean
centralisers}}{\index{Archimedean centralisers}} if for each $g \in
\mathcal{G}^{\neq}$, the ordered group $\mathcal{C} (g)$ is Archimedean.

\begin{proposition}
  \label{prop-Archimedean-gog1}If $\mathcal{G}$ has Archimedean
  centralisers, then \hyperref[gog1]{\textbf{GOG1}} holds.
\end{proposition}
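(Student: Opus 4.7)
The statement to prove is that Archimedean centralisers imply axiom \textbf{GOG1}: given $f \geqslant g$ in $\mathcal{G}^{>}$ and any $g_0 \in \mathcal{C}(g)$, we must produce $f_0 \in \mathcal{C}(f)$ with $f_0 \geqslant g_0$. The natural strategy is to split on the size of $g_0$ relative to $f$ and use powers of $f$ to dominate $g_0$ when necessary. Powers of $f$ are an obvious source of elements of $\mathcal{C}(f)$, and the Archimedean hypothesis on $\mathcal{C}(g)$ is exactly what lets us bound $g_0$ by a power of $g$.

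Concretely, I would proceed as follows. First, if $g_0 \leqslant f$, then $f_0 \assign f$ works, since trivially $f \in \mathcal{C}(f)$. Suppose therefore that $g_0 > f$; in particular $g_0 > f \geqslant g > 1$, so both $g$ and $g_0$ lie in $\mathcal{C}(g)^{>}$. By hypothesis $\mathcal{C}(g)$ is Archimedean, so there exists $n \in \mathbb{N}^{>}$ with $g^n \geqslant g_0$. Because $f \geqslant g > 1$, iterated application of the order axiom \eqref{eq-bi-ordered} gives $f^n \geqslant g^n$. Hence $f_0 \assign f^n$ satisfies $f_0 \in \mathcal{C}(f)$ and $f_0 \geqslant g_0$, completing the argument.

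There is essentially no obstacle: the only subtlety is being clean about the case split so that the Archimedean hypothesis is applied to elements of $\mathcal{C}(g)$ that are genuinely non-trivial and positive. One could also phrase the whole thing in one line by allowing $n = 0$ when $g_0 \leqslant 1$, but splitting cases is cleaner. No appeal beyond the definitions and the order axioms is needed.
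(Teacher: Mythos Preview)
Your proof is correct and follows essentially the same approach as the paper: bound $g_0$ above by a power $g^n$ using that $\mathcal{C}(g)$ is Archimedean, then take $f_0 = f^n \in \mathcal{C}(f)$ and use $f \geqslant g > 1$ to get $f^n \geqslant g^n \geqslant g_0$. The paper's version simply omits your case split, writing directly that $g^{-n} \leqslant g_0 \leqslant g^n$ for some $n \in \mathbb{N}$ (which already subsumes the case $g_0 \leqslant 1$); your case $g_0 \leqslant f$ is harmless but unnecessary.
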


\begin{proof}
  Let $f, g \in \mathcal{G}^{>}$ with $f \geqslant g$ and let $g_0 \in
  \mathcal{C} (g)$. We have $g^{- n} \leqslant g_0 \leqslant g^n$ for a
  certain $n \in \mathbb{N}$, so $f^n$ is an element of $\mathcal{C} (f)$ with
  $f^n \geqslant g^n \geqslant g_0$.
\end{proof}

\subsection{Some non-commutative valuation
theory}\label{subsection-valuation-theory}

In Sections~\ref{subsection-valuation-theory}
and~\ref{subsection-scaling-elements}, we fix an ordered group $(\mathcal{G},
\cdot, 1, <)$ satisfying \hyperref[gog1]{\textbf{GOG1}}. For $f,g \in \mathcal{G}$, we write $f\preccurlyeq g$ if $g\neq 1$ and there are $g_0, g_1 \in \mathcal{C} (g)$ such that $g_0 \leqslant f \leqslant g_1$, i.e.
if $f$ lies in the convex hull of
$\mathcal{C} (g)$. We also set $1\preccurlyeq g$ for all $g \in \mathcal{G}$.

\begin{proposition}
  \label{prop-quasi-ordering}The relation $\preccurlyeq$ is a linear
  quasi-ordering on $\mathcal{G}$.
\end{proposition}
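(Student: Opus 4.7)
The plan is to verify the three properties of a linear quasi-ordering: reflexivity, totality, and transitivity. The first two are essentially formal and do not rely on \hyperref[gog1]{\textbf{GOG1}}. For reflexivity, the case $f = 1$ is handled by the convention $1 \preccurlyeq 1$; for $f \neq 1$ one takes $g_0 = g_1 = f \in \mathcal{C}(f)$. For totality, trivial cases aside, set $|f| := \max(f, f^{-1}) \in \mathcal{C}(f)^{+}$ and likewise $|g|$; after possibly swapping $f$ and $g$ we may assume $|f| \leqslant |g|$, whence $|g|^{-1} \leqslant f \leqslant |g|$ with $|g|^{\pm 1} \in \mathcal{C}(g)$, giving $f \preccurlyeq g$.

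Transitivity is where \hyperref[gog1]{\textbf{GOG1}} enters, and I would prepare two structural observations first. One: the convex hull $\mathcal{C}(g)^{C}$ is in fact a convex subgroup of $\mathcal{G}$, because for any subgroup $H$ of an ordered group its convex hull is again a subgroup (the estimate $|ab| \leqslant h_1 h_2$ holds whenever $|a| \leqslant h_1$ and $|b| \leqslant h_2$ with $h_1, h_2 \in H$). Two: \hyperref[gog1]{\textbf{GOG1}} supplies the monotonicity $a \leqslant b \Longrightarrow \mathcal{C}(a)^{C} \subseteq \mathcal{C}(b)^{C}$ for positive $a, b$. Combined with the classical fact that the convex subgroups of a linearly ordered group form a chain under inclusion, these give the skeleton of the argument.

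Now suppose $f \preccurlyeq g \preccurlyeq h$ with all three elements nontrivial (WLOG positive). Pick witnesses $g_{\ast} \in \mathcal{C}(g)^{+}$ with $|f| \leqslant g_{\ast}$ and $h^{\ast} \in \mathcal{C}(h)^{+}$ with $g \leqslant h^{\ast}$, arranged so that $g_{\ast} \geqslant g$ and $h^{\ast} \geqslant h$. If $g_{\ast} \leqslant h^{\ast}$, the conclusion is immediate: $|f| \leqslant g_{\ast} \leqslant h^{\ast} \in \mathcal{C}(h)^{+}$. In the harder case $g_{\ast} > h^{\ast}$, apply \hyperref[gog1]{\textbf{GOG1}} to $h^{\ast} \geqslant g$ with $g_{\ast} \in \mathcal{C}(g)$ to produce $f_0 \in \mathcal{C}(h^{\ast})^{+}$ with $f_0 \geqslant g_{\ast}$, which yields $f \in \mathcal{C}(h^{\ast})^{C}$. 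The main technical hurdle is then to upgrade this to $f \in \mathcal{C}(h)^{C}$, equivalently to establish $\mathcal{C}(h^{\ast})^{C} \subseteq \mathcal{C}(h)^{C}$. Monotonicity already delivers the reverse inclusion, so by the chain-ordering of convex subgroups it suffices to rule out strict containment; I would do this by a contradiction argument applied to an alleged element $\eta \in \mathcal{C}(h^{\ast})^{+}$ strictly beating every element of $\mathcal{C}(h)^{+}$, exploiting both $\eta \in \mathcal{C}(h^{\ast})$ and the commutation $h \cdot h^{\ast} = h^{\ast} \cdot h$ through further applications of \hyperref[gog1]{\textbf{GOG1}} to the pairs $(\eta, h^{\ast})$ and $(\eta, h)$.
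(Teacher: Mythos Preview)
Your reflexivity and totality arguments are correct and essentially match the paper's.

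For transitivity, your approach diverges from the paper's and contains a genuine gap. The paper dispatches transitivity in one stroke: after normalising so that $g_0, h_0 < 1 < g_1, h_1$, a single application of \hyperref[gog1]{\textbf{GOG1}} yields $h_2, h_3 \in \mathcal{C}(h)$ with $g_0^{-1} \leqslant h_2$ and $g_1 \leqslant h_3$, whence $h_2^{-1} \leqslant f \leqslant h_3$. There is no case split and no convex-subgroup machinery.

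Your argument instead splits into cases, lands on the auxiliary claim $\mathcal{C}(h^{\ast})^{C} \subseteq \mathcal{C}(h)^{C}$ for $h^{\ast} \in \mathcal{C}(h)$ with $h^{\ast} \geqslant h > 1$, and then does not prove it: the promised ``contradiction argument \ldots\ through further applications of \hyperref[gog1]{\textbf{GOG1}} to the pairs $(\eta, h^{\ast})$ and $(\eta, h)$'' is never carried out, and it is not clear that it can be. Concretely, applying \hyperref[gog1]{\textbf{GOG1}} with $\eta$ in the role of the larger element only produces bounds lying in $\mathcal{C}(\eta)$, not in $\mathcal{C}(h)$; and to place $\eta$ in the role of the smaller element you would need some $F \geqslant \eta$ with $\mathcal{C}(F) = \mathcal{C}(h)$, which you do not have (indeed $\eta > \mathcal{C}(h)$ by hypothesis, so no power of $h$ will do). Neither the commutation $[h,h^{\ast}]=1$ nor $[\eta,h^{\ast}]=1$ supplies such an $F$, so the contradiction does not materialise from the data you have assembled. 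The appeal to the chain-ordering of convex subgroups is also a red herring: you already know $\mathcal{C}(h)^{C} \subseteq \mathcal{C}(h^{\ast})^{C}$, so comparability contributes nothing toward the reverse inclusion you actually need.
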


\begin{proof}

Throughout the proof, we consider generic elements $f, g,h \in \mathcal{G}$.

We first prove that the relation is total. 
  We have $f \preccurlyeq g \Longleftrightarrow f^{- 1} \preccurlyeq g
  \Longleftrightarrow f \preccurlyeq g^{- 1} \Longleftrightarrow f^{- 1}
  \preccurlyeq g^{- 1}$. Thus we may assume that $f, g >1$. We either have $f \leqslant g$, in which case $f \preccurlyeq
  g$, or $g \leqslant f$, in which case $g \preccurlyeq f$.

  Now suppose that $f \preccurlyeq g$ and $g \preccurlyeq
  h$. We may assume that $f, g, h \neq 1$. So there are $g_0, g_1 \in
  \mathcal{C} (g)$ and $h_0, h_1 \in \mathcal{C} (h)$ with
  $g_0 \leqslant f \leqslant g_1$ and $h_0 \leqslant g
     \leqslant h_1$.
  We may choose $g_0, h_0 \in \mathcal{G}^{<}$ and $g_1, h_1 \in
  \mathcal{G}^{>}$. By \hyperref[gog1]{\textbf{GOG1}}, there are $h_2, h_3 \in \mathcal{C} (h)$
  with $g_1 \leqslant h_3$ and $g_0^{- 1} \leqslant h_2$, whence $g_0
  \geqslant h_2^{- 1}$. We thus have $h_2^{- 1} \leqslant f \leqslant h_3$,
  i.e. $f \preccurlyeq h$. So $\preccurlyeq$ is transitive. It is clearly
  reflexive.
\end{proof}

We have an equivalence relation $f \asymp g \Longleftrightarrow f \preccurlyeq
g \wedge g \preccurlyeq f$\label{autolab7} on $\mathcal{G}$ or
$\mathcal{G}^{\neq}$. Given $f \in \mathcal{G}$, we write $v (f)$ for
the equivalence class of $f$ for $\asymp$, called its {\tmem{valuation}}{\index{valuation of an element}} and we write $v
(\mathcal{G})$ for the quotient set
\[ v (\mathcal{G}) =\mathcal{G}^{\neq} \mathbin{/}
   \mathord{\asymp} = \{ v (f) \suchthat f \in \mathcal{G}^{\neq} \} .
\]
We write $f \prec g$ if $f \preccurlyeq g$ and $g \nasymp f$\label{autolab8}.

\begin{lemma}
  \label{lem-strict}Let $f, g \in \mathcal{G}$ with $g \neq 1$. We have $g
  \prec f$ if and only if $\mathcal{C} (g) < \max (f, f^{- 1})$.
\end{lemma}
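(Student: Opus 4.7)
The plan is to unpack the definition of $\prec$ in terms of convex hulls of centralisers and then argue by contradiction in each direction. Writing $M \assign \max(f, f^{-1})$, the key observation is that $f \preccurlyeq g$ (with $f \neq 1$) can be rephrased cleanly: it holds if and only if some $h \in \mathcal{C}(g)$ satisfies $h \geq M$. Indeed, if $h_0 \leqslant f \leqslant h_1$ with $h_0, h_1 \in \mathcal{C}(g)$, then $k \assign \max(h_1, h_0^{-1})$ lies in $\mathcal{C}(g)$ and satisfies $k \geqslant f$ and $k \geqslant f^{-1}$, hence $k \geqslant M$; conversely, such an $h$ provides the bounds $h^{-1} \leqslant f \leqslant h$ witnessing $f \preccurlyeq g$. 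Once this equivalence is in hand, both directions become immediate.

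For the forward direction, I would assume $g \prec f$, so in particular $f \not\preccurlyeq g$ and $f \neq 1$. By the observation, no $h \in \mathcal{C}(g)$ is $\geqslant M$. Since $M > 1$ and every $h \in \mathcal{C}(g)$ with $h \leqslant 1$ satisfies $h \leqslant 1 < M$, I conclude $\mathcal{C}(g) < M$.

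For the reverse direction, suppose $\mathcal{C}(g) < M$. Then $1 < M$ (as $1 \in \mathcal{C}(g)$), so $f \neq 1$. To see $g \preccurlyeq f$, note that both $g$ and $g^{-1}$ lie in $\mathcal{C}(g)$, so each is strictly less than $M$; that is, $M^{-1} < g < M$. Since $f$ and $f^{-1}$ lie in $\mathcal{C}(f)$ and $\{f, f^{-1}\} = \{M, M^{-1}\}$, this places $g$ in the convex hull of $\mathcal{C}(f)$, giving $g \preccurlyeq f$. For the non-equivalence $f \not\preccurlyeq g$, I would argue by contradiction using the opening observation: if $f \preccurlyeq g$, then some $h \in \mathcal{C}(g)$ with $h \geqslant M$ exists, directly contradicting $\mathcal{C}(g) < M$.

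I do not expect a real obstacle; the argument is essentially a definition-chase once one replaces ``$f$ lies in the convex hull of $\mathcal{C}(g)$'' by the cleaner statement ``some element of $\mathcal{C}(g)$ dominates both $f$ and $f^{-1}$'', which is valid because $\mathcal{C}(g)$ is a subgroup and hence closed under inversion and under taking maxima of pairs. The only point requiring care is the degenerate case $f = 1$, handled by noting that $\mathcal{C}(g) < 1$ is impossible since $1 \in \mathcal{C}(g)$, so the hypothesis $\mathcal{C}(g) < M$ forces $f \neq 1$ from the outset.
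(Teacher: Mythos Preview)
Your proof is correct and is essentially the same definition-chase the paper has in mind; the paper's own proof is a two-line sketch (``immediate by definition of $\preccurlyeq$'' when $f\neq 1$, and both sides trivially fail when $f=1$), and you have simply written out what ``immediate'' means via the clean reformulation that $f\preccurlyeq g$ (for $f\neq 1$) is equivalent to some $h\in\mathcal{C}(g)$ dominating $\max(f,f^{-1})$. One cosmetic remark: your phrase ``closed under taking maxima of pairs'' is fine but slightly misleading---the point is just that the ordering is linear, so $\max(h_1,h_0^{-1})$ is literally one of $h_1,h_0^{-1}$ and hence already in $\mathcal{C}(g)$.
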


\begin{proof}
  If $f \neq 1$, then this is immediate by definition of $\preccurlyeq$. Since $g \not\prec 1$ and $\mathcal{C} (g) \not < 1$, this yields the result.
\end{proof}

\begin{proposition}
  \label{prop-growth-order}For all $g, h \in \mathcal{G}$, we have:
  \begin{enumeratealpha}
    \item \label{def-growth-order-inv}$g^{- 1} \asymp g$.
    
    \item \label{def-growth-order-prod}$gh \preccurlyeq g$ or $gh \preccurlyeq
    h$.
    
    \item \label{def-growth-order-ord}$1 \leqslant g \leqslant h
    \Longrightarrow g \preccurlyeq h$.
  \end{enumeratealpha}
\end{proposition}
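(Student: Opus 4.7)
The unifying observation is that for any $g \in \mathcal{G}^{\neq}$ the centraliser $\mathcal{C}(g)$ is a subgroup of $\mathcal{G}$ containing $g$, and that $\mathcal{C}(g) = \mathcal{C}(g^{-1})$ (since $gh = hg$ is equivalent to $g^{-1} h = h g^{-1}$). These are the only structural inputs I will need, on top of the definition of $\preccurlyeq$ itself and the linearity established in \Cref{prop-quasi-ordering}.

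Part (a) is immediate: when $g = 1$ there is nothing to show, and when $g \neq 1$ the common centraliser $\mathcal{C}(g) = \mathcal{C}(g^{-1})$ contains both $g$ and $g^{-1}$, so taking $g_0 = g_1 = g$ (respectively $g_0 = g_1 = g^{-1}$) provides witnesses for $g^{-1} \preccurlyeq g$ and for $g \preccurlyeq g^{-1}$. Part (c) is similarly short: under $1 \leqslant g \leqslant h$ with $h \neq 1$, the elements $1, h \in \mathcal{C}(h)$ bracket $g$; if instead $h = 1$ then $g = 1$ and the convention $1 \preccurlyeq 1$ applies.

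Part (b) is the main case. I would use linearity of $\preccurlyeq$ to reduce, by swapping $g$ and $h$ if necessary, to the case $g \preccurlyeq h$, and then show $gh \preccurlyeq h$. After disposing of the trivial sub-cases $h = 1$ (so $gh = g \preccurlyeq g$) and $gh = 1$ (covered by $1 \preccurlyeq h$), I would unpack $g \preccurlyeq h$ to produce $h_0, h_1 \in \mathcal{C}(h)$ with $h_0 \leqslant g \leqslant h_1$, right-multiply by $h$ using \eqref{eq-bi-ordered} to get $h_0 h \leqslant gh \leqslant h_1 h$, and note that since $\mathcal{C}(h)$ is a subgroup containing $h$, the new bounds $h_0 h$ and $h_1 h$ again lie in $\mathcal{C}(h)$. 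This is precisely $gh \preccurlyeq h$.

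I do not foresee any substantive obstacle: once centralisers are known to be subgroups, each assertion reduces to picking witnesses inside $\mathcal{C}(g)$ or $\mathcal{C}(h)$. The only real bookkeeping is the asymmetric role of the identity in the definition of $\preccurlyeq$ (every element dominates $1$, but only $1$ is dominated by $1$), which forces a short case split in each of (a), (b), (c).
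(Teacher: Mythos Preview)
Your arguments for (a) and (c) are correct and essentially coincide with the paper's (the paper uses $h^{-1}\leqslant g\leqslant h$ rather than $1\leqslant g\leqslant h$ for (c), but this is cosmetic).

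For (b) there is a small but genuine gap in your reduction. The statement ``$gh\preccurlyeq g$ or $gh\preccurlyeq h$'' is \emph{not} symmetric in $g$ and $h$: swapping them replaces $gh$ by $hg$, so you cannot simply say ``by swapping we may assume $g\preccurlyeq h$''. What your argument actually proves is: if $g\preccurlyeq h$ then $gh\preccurlyeq h$ (correct, via right-multiplication by $h$). For the remaining case $h\preccurlyeq g$ you need a separate line: take $g_0,g_1\in\mathcal{C}(g)$ with $g_0\leqslant h\leqslant g_1$, \emph{left}-multiply by $g$ to get $gg_0\leqslant gh\leqslant gg_1$, and observe $gg_0,gg_1\in\mathcal{C}(g)$, whence $gh\preccurlyeq g$. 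With this fix your proof is complete.

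Your route for (b) is then genuinely different from the paper's. The paper argues by contradiction: assuming $gh\succ g$ and $gh\succ h$, it invokes \Cref{lem-strict} to get $gh>\mathcal{C}(g)$ or $gh<\mathcal{C}(g)$, hence $h>\mathcal{C}(g)$ or $h<\mathcal{C}(g)$, and then bounds $gh$ between $\max(h,h^{-1})^{-2}$ and $\max(h,h^{-1})^2$ to contradict $gh\succ h$. Your direct approach is shorter and avoids the auxiliary lemma; the paper's approach has the mild advantage of not splitting into two parallel cases.
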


\begin{proof}
  The statement \ref{def-growth-order-inv} follows from the fact that
  $g^{- 1} \in \mathcal{C} (g)$.\quad Assume for contradiction that $gh \succ
  g$ and $gh \succ h$. We must have $g, h \neq 1$. By
  Lemma~\ref{lem-strict}, we deduce that $gh >\mathcal{C} (g)$ or that $gh
  <\mathcal{C} (g)$. So $h >\mathcal{C} (g)$ or $h <\mathcal{C} (g)$. But then
  $\max(h,h^{-1})^{-2}  < gh < \max(h,h^{-1})^2$. This contradicts $gh \succ h$. This shows \ref{def-growth-order-prod}. For
  \ref{def-growth-order-ord} we have $h^{- 1} \leqslant g \leqslant h$ where
  $h, h^{- 1} \in \mathcal{C} (h)$.
\end{proof}

 This shows that the function $v\colon \mathcal{G}^{\neq} \longrightarrow v (\mathcal{G})$ is a valuation
   in the sense of
  {\cite[Section~4.4]{Fuchs11}} and of {\cite[Definition~2.1]{SaSo:vgroups}}. We call $v$ the \tmem{standard valuation} on $\mathcal{G}$.

\begin{proposition}
  \label{prop-growth-order-basic}For $g, h \in \mathcal{G}$, we have $ g \prec h \Longrightarrow gh \asymp hg \asymp h$.
\end{proposition}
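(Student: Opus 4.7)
The plan is to reduce the statement, by the symmetry of the claim in $gh$ and $hg$, to proving the single equivalence $gh \asymp h$, and to obtain this by combining Proposition~\ref{prop-growth-order}\ref{def-growth-order-prod} (the product dichotomy) with the transitivity of $\preccurlyeq$ from Proposition~\ref{prop-quasi-ordering}. The trivial case $g=1$ (where $gh=h=hg$) is handled immediately, so we may assume $g,h\neq 1$.

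For the direction $gh\preccurlyeq h$, I would apply Proposition~\ref{prop-growth-order}\ref{def-growth-order-prod} to the product $gh$: either $gh\preccurlyeq g$ or $gh\preccurlyeq h$. In the former case, the hypothesis $g\prec h$ gives $g\preccurlyeq h$, and transitivity then yields $gh\preccurlyeq h$ as well. So $gh\preccurlyeq h$ holds in either subcase.

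For the reverse direction $h\preccurlyeq gh$, the key trick is to rewrite $h=g^{-1}(gh)$ and apply Proposition~\ref{prop-growth-order}\ref{def-growth-order-prod} to this product: either $h\preccurlyeq g^{-1}$ or $h\preccurlyeq gh$. The first alternative would give $h\preccurlyeq g^{-1}\asymp g$ by \ref{def-growth-order-inv}, contradicting $g\prec h$ (which means $h\not\preccurlyeq g$). Thus $h\preccurlyeq gh$, and together with the previous paragraph we conclude $gh\asymp h$. The proof that $hg\asymp h$ is entirely analogous, rewriting $h=(hg)g^{-1}$ in place of $h=g^{-1}(gh)$.

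The only subtlety I anticipate is bookkeeping around the convention that $1\preccurlyeq g$ for every $g$ and the requirement $g\neq 1$ in the main case of the definition of $\preccurlyeq$; this is what forces the small case distinction between $g=1$ and $g\neq 1$ at the outset, but no real obstacle arises because once $g\prec h$ we already know $h\neq 1$ and the argument proceeds uniformly.
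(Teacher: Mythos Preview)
Your proposal is correct and follows essentially the same route as the paper: both obtain $gh\preccurlyeq h$ from the product dichotomy (Proposition~\ref{prop-growth-order}\ref{def-growth-order-prod}) together with $g\prec h$, and both get the reverse inequality by rewriting $h=g^{-1}(gh)$ and ruling out the alternative $h\preccurlyeq g^{-1}\asymp g$. The paper phrases the second half as a contradiction argument (assuming $gh\prec h$), whereas you argue directly for $h\preccurlyeq gh$; this is a cosmetic difference only.
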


\begin{proof}
  We have $gh \preccurlyeq h$ by
  Proposition~\ref{prop-growth-order}(\ref{def-growth-order-prod}). Assume for
  contradiction that $gh \prec h$. By
  Proposition~\ref{prop-growth-order}(\ref{def-growth-order-inv}), we have $h
  = g^{- 1}  (gh) \preccurlyeq g^{- 1} \asymp g \prec h$, or $h = g^{- 1} 
  (gh) \preccurlyeq gh \prec h$: a contradiction. Thus $gh \asymp h$. The
  proof of $hg \asymp h$ is symmetric.
\end{proof}

\begin{proposition}
  \label{prop-convex-asymp}For $f \in \mathcal{G}^{\neq}$, the set $v
  (f) \cap \mathcal{G}^{>}$ is convex.
\end{proposition}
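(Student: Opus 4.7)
The plan is to unwind the definitions and apply the basic comparison properties of $\preccurlyeq$ already established. Let $g, h \in \tmop{gr}(f) \cap \mathcal{G}^{>}$ and let $k \in \mathcal{G}^{>}$ lie between them; without loss of generality $g \leqslant k \leqslant h$. Since also $1 < g$, we have the chain $1 \leqslant g \leqslant k \leqslant h$.

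Applying Proposition~\ref{prop-growth-order}\ref{def-growth-order-ord} to the pairs $(g,k)$ and $(k,h)$, I obtain $g \preccurlyeq k$ and $k \preccurlyeq h$. By assumption $g \asymp f \asymp h$, so by the transitivity part of Proposition~\ref{prop-quasi-ordering}, the chain
\[ f \asymp g \preccurlyeq k \preccurlyeq h \asymp f \]
collapses to $k \asymp f$, i.e.\ $k \in \tmop{gr}(f)$. This gives convexity.

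There is essentially no obstacle here: the work is already done in Propositions~\ref{prop-quasi-ordering} and~\ref{prop-growth-order}. The only thing to be slightly careful about is that the definition of $\preccurlyeq$ requires the ``larger'' side to be nontrivial, but this is automatic in our chain since $k \geqslant g > 1$ and $h > 1$; and the trivial case $f=1$ is excluded by hypothesis $f \in \mathcal{G}^{\neq}$, though in fact even $k = 1$ is ruled out by $k \in \mathcal{G}^{>}$.
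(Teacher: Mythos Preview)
Your proof is correct and follows essentially the same route as the paper: pick $g,h\in\tmop{gr}(f)\cap\mathcal{G}^{>}$ and an element between them, apply Proposition~\ref{prop-growth-order}\ref{def-growth-order-ord} twice to get $g\preccurlyeq k\preccurlyeq h$, then use the transitivity established in Proposition~\ref{prop-quasi-ordering} together with $g\asymp f\asymp h$ to conclude $k\asymp f$. The paper's argument is the same, only more tersely stated.
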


\begin{proof}
  Let $g, h \in \mathcal{G}^{>}$ with $g, h \asymp f$ and let $j \in
  \mathcal{G}$ with $g \leqslant j \leqslant h$. We have $g \preccurlyeq j$
  and $j \preccurlyeq h$ by
  \Cref{prop-growth-order}(\ref{def-growth-order-ord}). So $f \preccurlyeq j$
  and $j \preccurlyeq f$ by \Cref{prop-quasi-ordering}, whence $j \asymp f$.
\end{proof}

We can thus define a linear ordering $<$ on $v
(\mathcal{G})$, where for $g, h
\in \mathcal{G}^{\neq}$, we set $v (g) < v (h)$ if and only if
$g \prec h$, i.e. if $v (g) \cap \mathcal{G}^{>} < v (h) \cap
\mathcal{G}^{>}$.

\begin{definition}
  The {\tmem{{\tmstrong{value set}}}}{\index{value set}} of
  $\mathcal{G}$ is the (order type of the) linearly ordered set $(v (\mathcal{G}), <)$.
\end{definition}

One sees that $\mathcal{G}$ has value set $0$ if and only if it is
trivial, and that non-trivial Abelian ordered groups have value set $1$.

\begin{example}
  Let $\mathcal{R}$ denote the real ordered field. It will
  follow from \Cref{th-main} that $\mathcal{G}_{\mathcal{R}}$ satisfies
  \hyperref[gog1]{\textbf{GOG1}} and has Archimedean centralisers. Therefore, the convex hull of
  $\mathcal{C} (g)$ for $g \in \mathcal{G}_{\mathcal{R}}$ is simply the convex
  hull of the set $g^{[\mathbb{Z}]}$ of iterates of $g$ and its inverse.
  Definable functions in $\mathcal{R}$ are semialgebraic. Any non-trivial semialgeraic function $f$ satisfies $\lim \limits_{t\to+\infty} \frac{f(t)}{rt^q} = 1$ for an $(r,q) \in \mathbb{R}^{\times} \times \mathbb{Q}$ (as it has a Puiseux series expansion). Therefore the valuation of the square function is maximal in $c(\mathcal{G})$ and  $v (2 \tmop{id})$ is
  maximal in $v (\mathcal{G}_{\mathcal{R}}) \setminus \{v
  (\tmop{id}^2)\}$. Applying the same idea to $f-\tmop{id}$ for $f \in \mathcal{G}$, we see that 
  \begin{eqnarray*}
    \{q \in \mathbb{Q} \suchthat q < 1\} & \longrightarrow & v
    (\mathcal{G}) \setminus \{v (\tmop{id}^2), v (2
    \tmop{id})\}\\
    q & \longmapsto & v (\tmop{id} + \tmop{id}^q)
  \end{eqnarray*}
  is an isomorphism of ordered sets. In other words, the value set of
  $\mathcal{G}_{\mathcal{R}}$ is the rational interval $((-
  \infty, 1] \cup \{2\}, <)$.
\end{example}

\begin{lemma}
  \label{lem-small-conjugate}For $f, g \in \mathcal{G}$ with $g \prec
  f$, we have $fgf^{- 1} \prec f$.
\end{lemma}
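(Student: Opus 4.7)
The plan is to reduce the claim to the centraliser characterisation of $\prec$ given by \Cref{lem-strict}, and then observe that conjugation by $f$ is an order-preserving automorphism fixing $\max(f, f^{-1})$.

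First, I would dispose of the degenerate cases. If $g = 1$ then $fgf^{-1} = 1$; the hypothesis $g \prec f$ forces $f \neq 1$, so the conclusion $1 \prec f$ is immediate. If $g \neq 1$, the hypothesis also forces $f \neq 1$, since $1 \in \mathcal{C}(g)$ makes $\mathcal{C}(g) < \max(f, f^{-1})$ impossible when $f = 1$. Setting $M := \max(f, f^{-1})$, the hypothesis $g \prec f$ reads $\mathcal{C}(g) < M$ by \Cref{lem-strict}. Since $fgf^{-1} \neq 1$, the desired conclusion $fgf^{-1} \prec f$ also reduces, via \Cref{lem-strict} together with the conjugation identity~\eqref{eq-centralizer-conj-identity}, to the inequality $f\,\mathcal{C}(g)\,f^{-1} < M$.

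The key observation is then that the conjugation map $h \mapsto fhf^{-1}$ is an order-preserving automorphism of $\mathcal{G}$ (left and right multiplication both preserve the order by \eqref{eq-bi-ordered}), and it fixes both $f$ and $f^{-1}$, hence fixes $M$. Applying it elementwise to $\mathcal{C}(g) < M$ therefore yields $f\,\mathcal{C}(g)\,f^{-1} < fMf^{-1} = M$, which is exactly what is needed.

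I do not anticipate an obstacle: the argument rests only on \Cref{lem-strict}, the conjugation identity~\eqref{eq-centralizer-conj-identity}, and the order-preservation of conjugation. The cases $f > 1$ and $f < 1$ are handled uniformly because the whole argument is phrased in terms of $M$ rather than $f$ itself, and \hyperref[gog1]{\textbf{GOG1}} is invoked only implicitly through the validity of the centraliser-based calculus of $\prec$.
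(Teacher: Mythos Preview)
Your proof is correct and follows essentially the same idea as the paper's one-line argument, which simply notes that conjugation by $f$ is an automorphism of the ordered group $\mathcal{G}$ fixing $f$ (hence preserving $\prec$ and carrying $g \prec f$ to $fgf^{-1} \prec f$). You have unpacked this via \Cref{lem-strict} and the identity~\eqref{eq-centralizer-conj-identity}, which is a perfectly good elaboration of the same observation.
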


\begin{proof}
  The conjugation by $f$ is an automorphism of $\mathcal{G}$ and $\prec$ is $\varnothing$-definable in the language of ordered groups.
\end{proof}

Given $g, h \in \mathcal{G}^{\neq}$, we write\label{autolab10}
\[ g \sim h \text{{\hspace{3em}}if and only if{\hspace{3em}}$gh^{- 1} \prec
   g.$} \]

\begin{lemma}
  \label{lem-equivalence}For all $g, h \in \mathcal{G}^{\neq}$, the following
  are equivalent:
  \begin{enumeratealpha}
    \item \label{lem-equivalence-a}$g \sim h$
    
    \item \label{lem-equivalence-b}$gh^{- 1} \prec h$
    
    \item \label{lem-equivalence-c}$hg^{- 1} \prec g$
    
    \item \label{lem-equivalence-d}$h \sim g$.
  \end{enumeratealpha}
\end{lemma}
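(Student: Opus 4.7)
The plan is to leverage two facts already established. First, by \Cref{prop-growth-order}\ref{def-growth-order-inv} applied to $gh^{-1}$ (whose inverse is $hg^{-1}$), we have $gh^{-1} \asymp hg^{-1}$. Since $\prec$ only depends on $\asymp$-classes on each side (an immediate consequence of the transitivity of $\preccurlyeq$ proved in \Cref{prop-quasi-ordering}), this instantly gives the equivalences \ref{lem-equivalence-a}$\Leftrightarrow$\ref{lem-equivalence-c} and \ref{lem-equivalence-b}$\Leftrightarrow$\ref{lem-equivalence-d}. So it remains to show \ref{lem-equivalence-a}$\Leftrightarrow$\ref{lem-equivalence-b}.

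Second, I will argue that under either \ref{lem-equivalence-a} or \ref{lem-equivalence-b}, we actually have $g \asymp h$; once this is in hand, the relations ``$\prec g$'' and ``$\prec h$'' coincide and the equivalence is immediate. For \ref{lem-equivalence-a}$\Rightarrow$\ref{lem-equivalence-b}: assuming $gh^{-1} \prec g$, the first observation yields $hg^{-1} \prec g$; since $h = (hg^{-1})\,g$, \Cref{prop-growth-order-basic} applied to $hg^{-1}$ and $g$ gives $h \asymp g$, and hence $gh^{-1} \prec g \asymp h$. For \ref{lem-equivalence-b}$\Rightarrow$\ref{lem-equivalence-a}: assuming $gh^{-1} \prec h$, we write $g = (gh^{-1})\,h$ and again invoke \Cref{prop-growth-order-basic} to obtain $g \asymp h$, whence $gh^{-1} \prec h \asymp g$.

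I anticipate no real obstacle: the whole argument is a short manipulation resting on the identity $(gh^{-1})^{-1}=hg^{-1}$ together with the already-established $\asymp$-invariance of $\prec$ and the product rule $k \prec \ell \Rightarrow k\ell \asymp \ell$. The only point to watch is to invoke \Cref{prop-growth-order-basic} on the correct side (using $h = (hg^{-1})g$ rather than $h = g(g^{-1}h)$) so that the factor known to be $\prec g$ appears on the left.
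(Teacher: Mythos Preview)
Your proof is correct and takes a genuinely different route from the paper's. The paper pairs \ref{lem-equivalence-a}$\Leftrightarrow$\ref{lem-equivalence-b} and \ref{lem-equivalence-c}$\Leftrightarrow$\ref{lem-equivalence-d} (via the already noted implication $g\sim h\Rightarrow g\asymp h$ and transitivity of $\preccurlyeq$), and then bridges \ref{lem-equivalence-a}$\Leftrightarrow$\ref{lem-equivalence-d} by writing $hg^{-1}=h(gh^{-1})h^{-1}$ and invoking \Cref{lem-small-conjugate} to conclude that this conjugate is $\prec h$. You instead pair \ref{lem-equivalence-a}$\Leftrightarrow$\ref{lem-equivalence-c} and \ref{lem-equivalence-b}$\Leftrightarrow$\ref{lem-equivalence-d} using only the identity $(gh^{-1})^{-1}=hg^{-1}$ together with \Cref{prop-growth-order}\ref{def-growth-order-inv}, and then close the loop \ref{lem-equivalence-a}$\Leftrightarrow$\ref{lem-equivalence-b} by a direct application of \Cref{prop-growth-order-basic} to deduce $g\asymp h$. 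Your argument is slightly more economical: it bypasses \Cref{lem-small-conjugate} entirely. Your caution about applying \Cref{prop-growth-order-basic} to $h=(hg^{-1})g$ rather than $h=g(g^{-1}h)$ is well placed, since $g^{-1}h$ is not a priori $\asymp gh^{-1}$ in a non-Abelian setting (though in fact the proposition covers both sides once the small factor is identified).
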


\begin{proof} Suppose that $g \sim h$, i.e. $g h^{-1} \prec g$. We cannot have $h^{-1} \prec g$ by \Cref{prop-growth-order-basic}, so we also have $g h^{-1} \prec h^{-1} \asymp h$ by \Cref{prop-quasi-ordering}. We deduce that \ref{lem-equivalence-a} and \ref{lem-equivalence-b}
  are equivalent. Likewise \ref{lem-equivalence-c} and \ref{lem-equivalence-d} are equivalent. Since $g h^{-1} \asymp h g^{-1}$ by Proposition~\ref{prop-growth-order}(\ref{def-growth-order-inv}), the statements
  \ref{lem-equivalence-a} and \ref{lem-equivalence-c} are equivalent. This concludes the proof.
\end{proof}

\begin{corollary}
  \label{cor-inverses}For $g, h \in \mathcal{G}^{\neq}$, we have $g \sim h
  \Longleftrightarrow g^{- 1} \sim h^{- 1}$. 
\end{corollary}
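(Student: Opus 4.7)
My plan is to unfold both sides into comparable $\prec g$ statements and bridge them by a conjugation.

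By definition, $g\sim h$ reads $gh^{-1}\prec g$, whereas $g^{-1}\sim h^{-1}$ reads $g^{-1}h\prec g^{-1}$. Applying \Cref{prop-growth-order}(\ref{def-growth-order-inv}) twice (once as $g^{-1}\asymp g$, once via $(g^{-1}h)^{-1}=h^{-1}g\asymp g^{-1}h$), the latter is equivalent to $h^{-1}g\prec g$. Meanwhile, the equivalence of clauses \ref{lem-equivalence-a} and \ref{lem-equivalence-c} in \Cref{lem-equivalence} rewrites the former as $hg^{-1}\prec g$. So the corollary reduces to showing
\[ hg^{-1}\prec g \iff h^{-1}g\prec g. \]

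The key observation is the identity $h^{-1}g = g^{-1}(hg^{-1})g$, which exhibits $h^{-1}g$ as the conjugate of $hg^{-1}$ by $g^{-1}$. Assuming $hg^{-1}\prec g$, hence $hg^{-1}\prec g^{-1}$ by \Cref{prop-growth-order}(\ref{def-growth-order-inv}), \Cref{lem-small-conjugate} applied with $f=g^{-1}$ yields $g^{-1}(hg^{-1})g\prec g^{-1}$, that is, $h^{-1}g\prec g$. Conversely, from $h^{-1}g\prec g$, \Cref{lem-small-conjugate} with $f=g$ gives $g(h^{-1}g)g^{-1}=gh^{-1}\prec g$, and \Cref{lem-equivalence} returns this to $hg^{-1}\prec g$.

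The one thing to keep straight is the conjugation identity $h^{-1}g=g^{-1}(hg^{-1})g$; everything else is routine bookkeeping between \Cref{prop-growth-order}(\ref{def-growth-order-inv}), \Cref{lem-equivalence}, and the conjugation invariance recorded in \Cref{lem-small-conjugate}. I do not anticipate any genuine obstacle.
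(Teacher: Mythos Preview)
Your approach via conjugation and \Cref{lem-small-conjugate} is sound and is essentially what the paper leaves implicit. However, your ``key identity'' is miscomputed: one has
\[
g^{-1}(hg^{-1})g \;=\; g^{-1}h,
\]
not $h^{-1}g$ (just cancel the inner $g^{-1}g$ on the right). With this correction the forward direction actually shortens: from $hg^{-1}\prec g\asymp g^{-1}$, \Cref{lem-small-conjugate} with $f=g^{-1}$ yields $g^{-1}h\prec g^{-1}$, which is literally the definition of $g^{-1}\sim h^{-1}$ --- no detour through $h^{-1}g$ or back through \Cref{lem-equivalence} is needed. (If you prefer to land on $h^{-1}g$, the identity you were reaching for is $h^{-1}g=g^{-1}(gh^{-1})g$, conjugating $gh^{-1}$ rather than $hg^{-1}$.) Your converse computation $g(h^{-1}g)g^{-1}=gh^{-1}$ is correct as written.

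So the argument survives --- indeed, since $g^{-1}h$ and $h^{-1}g$ are inverses and hence $\asymp$-equivalent by \Cref{prop-growth-order}(\ref{def-growth-order-inv}), even the mislabelled version reaches a true conclusion --- but the algebra should be fixed.
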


Note that for $g,h \in \mathcal{G}$ with $g \sim h$, we have $g
\asymp h$. We have $g h^{-1}\prec h$ by \Cref{lem-equivalence}, so $g = (g h^{-1}) h \asymp h$ by \Cref{prop-growth-order-basic}.

\begin{lemma}
  The relation $\sim$ is an equivalence relation on $\mathcal{G}^{\neq}$.
\end{lemma}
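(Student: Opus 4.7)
The plan is to verify the three properties of an equivalence relation. Reflexivity and symmetry follow directly from the definitions and from \Cref{lem-equivalence}; transitivity is the only step carrying real content.

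For reflexivity, let $g \in \mathcal{G}^{\neq}$. Then $gg^{-1} = 1$, and the convention attached to the definition of $\preccurlyeq$ gives $1 \preccurlyeq g$, while $g \not\preccurlyeq 1$ because $g \neq 1$. Hence $1 \prec g$, that is, $g \sim g$. Symmetry is precisely the equivalence of \ref{lem-equivalence-a} and \ref{lem-equivalence-d} in \Cref{lem-equivalence}.

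For transitivity, I assume $g \sim h$ and $h \sim k$ and aim at $gk^{-1} \prec g$. The key identity is the decomposition
\[ gk^{-1} = (gh^{-1})(hk^{-1}). \]
By \Cref{lem-equivalence}\ref{lem-equivalence-b} applied to $g \sim h$, the first factor satisfies $gh^{-1} \prec h$; by the definition of $h \sim k$, the second satisfies $hk^{-1} \prec h$. \Cref{prop-growth-order}\ref{def-growth-order-prod} then gives $gk^{-1} \preccurlyeq gh^{-1}$ or $gk^{-1} \preccurlyeq hk^{-1}$; in either case, combining with the corresponding strict relation $\prec h$ and with transitivity of $\preccurlyeq$ (\Cref{prop-quasi-ordering}) yields $gk^{-1} \prec h$. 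Since $g \sim h$ entails $g \asymp h$ (as noted just before \Cref{lem-equivalence}), this means $gk^{-1} \prec g$, as desired.

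I do not anticipate a genuine obstacle: everything is assembled from \Cref{prop-quasi-ordering}, \Cref{prop-growth-order}, and \Cref{lem-equivalence}. The one small point that should be spelled out is the absorption rule $x \preccurlyeq y \prec z \Rightarrow x \prec z$ used in the last step, which holds because $x \asymp z$ would combine with $x \preccurlyeq y$ to give $z \preccurlyeq y$, contradicting $y \prec z$.
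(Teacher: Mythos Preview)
Your proof is correct and follows essentially the same route as the paper: reflexivity from $1 \prec g$, symmetry from \Cref{lem-equivalence}, and transitivity via the factorisation $gk^{-1} = (gh^{-1})(hk^{-1})$ together with \Cref{prop-growth-order}\ref{def-growth-order-prod}. The only cosmetic difference is that the paper bounds both factors directly by the first element (using $g \asymp h$ up front) rather than by the middle one, saving the final step where you pass from $\prec h$ to $\prec g$; your explicit justification of the absorption $x \preccurlyeq y \prec z \Rightarrow x \prec z$ is a welcome addition.
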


\begin{proof}
  For all $g \in \mathcal{G}^{\neq}$, we have have $1 \prec g$ whence $g \sim
  g$. \Cref{lem-equivalence} implies that $\sim$ is symmetric. Let $f, g, h
  \in \mathcal{G}^{\neq}$ with $f \sim g$ and $g \sim h$. So $f \asymp g
  \asymp h$. We have $fh^{- 1} = (fg^{- 1})  (gh^{- 1})$ where $(fg^{- 1}),
  (gh^{- 1}) \prec f$ so $fh^{- 1} \prec f$ by
  \Cref{prop-growth-order}(\ref{def-growth-order-prod}). So $f \sim h$, i.e. $\sim$ is transitive.
\end{proof}

Given a $g \in \mathcal{G}^{\neq}$, we write $\tmop{res} (g)$\label{autolab11}
for the equivalence class of $g$ for $\sim$ in $\mathcal{G}^{\neq}$. We call
$\tmop{res} (g)$ the {\tmem{residue}}{\index{residue of an element}}
of $g$.

\begin{proposition}
  \label{prop-equiv-equiv}For $g \in \mathcal{G}^{\neq}$, the set $\tmop{res}
  (g)$ is convex.
\end{proposition}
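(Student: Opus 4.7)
The plan is to reduce convexity of $\tmop{go}(g)$ to \Cref{prop-convex-asymp} combined with the transitivity of $\sim$. The only subtlety is that, by definition, $\tmop{go}(g) \subseteq \mathcal{G}^{\neq}$, so I must first check that $\tmop{go}(g)$ does not straddle the identity; otherwise $1$ could separate two elements of $\tmop{go}(g)$ without lying in it.

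First I would show one-sidedness: if $g > 1$, then $\tmop{go}(g) \subseteq \mathcal{G}^{>}$ (and symmetrically for $g < 1$). Suppose for contradiction that $h \sim g$ and $h < 1$. Then $h^{-1} > 1$, so $gh^{-1} > g > 1$, and \Cref{prop-growth-order}(\ref{def-growth-order-ord}) yields $g \preccurlyeq gh^{-1}$. But $h \sim g$ gives $gh^{-1} \prec g$, which contradicts $g \preccurlyeq gh^{-1}$.

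For the main step, assume $g > 1$ and let $h_1, h_2 \in \tmop{go}(g)$ with $h_1 \leqslant h_2$, and let $k \in \mathcal{G}$ with $h_1 \leqslant k \leqslant h_2$. By the previous step $h_1, h_2 > 1$, so $k > 1$ and in particular $k \in \mathcal{G}^{\neq}$; since $h_1 \asymp g \asymp h_2$, \Cref{prop-convex-asymp} gives $k \asymp g$. Right-multiplying the inequalities by $h_1^{-1}$ yields $1 \leqslant kh_1^{-1} \leqslant h_2 h_1^{-1}$. Transitivity of $\sim$ gives $h_1 \sim h_2$, hence $h_2 h_1^{-1} \prec h_2 \asymp g$. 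Combining with \Cref{prop-growth-order}(\ref{def-growth-order-ord}) and transitivity of $\preccurlyeq$, we get $kh_1^{-1} \preccurlyeq h_2 h_1^{-1} \prec g \asymp k$, so $kh_1^{-1} \prec k$, that is $k \sim h_1 \sim g$, and therefore $k \in \tmop{go}(g)$.

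The main (and only) obstacle is the one-sidedness step: once it is in place the convexity reduces to a purely formal combination of \Cref{prop-convex-asymp} and the equivalence-relation properties of $\sim$ that have already been established.
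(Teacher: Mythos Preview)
Your proof is correct, and it takes a genuinely different route from the paper's. The paper does not isolate the one-sidedness step; instead it fixes $j\in\mathcal{G}^{\neq}$ between $f$ and $h$ (with $f\sim g\sim h$), reduces via \Cref{cor-inverses} to $g>1$, and then argues directly with \hyperref[gog1]{\textbf{GOG1}} and centralisers: if $jg^{-1}\geqslant 1$ it compares an arbitrary $s_0\in\mathcal{C}(jg^{-1})$ to some $h_0\in\mathcal{C}(hg^{-1})$ obtained from \hyperref[gog1]{\textbf{GOG1}}, and uses $hg^{-1}\prec g$ to force $s_0<g$, whence $jg^{-1}\prec g$; the case $jg^{-1}\leqslant 1$ is handled symmetrically via $gf^{-1}$.

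Your argument is more economical: once one-sidedness is in hand, you never go back to \hyperref[gog1]{\textbf{GOG1}} but instead combine right-translation of the inequalities with \Cref{prop-growth-order}(\ref{def-growth-order-ord}) and the already-proved transitivity of $\sim$, which packages the centraliser manipulations into a single line. The paper's proof, by contrast, re-derives the needed $\preccurlyeq$-comparison from scratch. Your explicit one-sidedness step also clarifies why the intermediate element is automatically in $\mathcal{G}^{\neq}$, a point the paper leaves to the reader.
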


\begin{proof}
  Let $f, h \in \mathcal{G}^{\neq}$ with $f \sim g \sim h$ and $j \in
  \mathcal{G}^{\neq}$ with~$f \leqslant j \leqslant h$. In view of
  \Cref{cor-inverses}, we may assume that $g > 1$. Consider an $s_0 \in
  \mathcal{C} (jg^{- 1})$. Suppose that $jg^{- 1} \geqslant 1$. Since $hg^{-
  1} \geqslant jg^{- 1}$, we find by \hyperref[gog1]{\textbf{GOG1}} an $h_0 \in \mathcal{C}
  (hg^{- 1})$ with $h_0 \geqslant s_0$. Now $hg^{- 1} \prec g$ so $h_0 < g$,
  so $s_0 < g$. This shows that $jg^{- 1} \prec g$, whence $j \sim g$ in that
  case. Suppose now that $jg^{- 1} \leqslant 1$. So $1 \leqslant gj^{- 1}
  \leqslant gf^{- 1}$. But $gf^{- 1} \prec g$ so the same arguments for $gj^{-
  1}$ show that $s_0 < g$, whence $j \sim g$. So $\tmop{res} (g)$ is convex.
\end{proof}

We can thus define a linear ordering $\lessdot$ on $\tmop{res}
(\mathcal{G}) \assign \mathcal{G}^{\neq} / \sim$ given by
\[ \tmop{res} (f) \lessdot \tmop{res} (g) \Longleftrightarrow f < g \wedge f
   \nsim g. \]
We set $\tmop{res} (1) = \{1\}$ and $\{1\} \lessdot \tmop{res} (f)$ for all $f
\in \mathcal{G}^{\neq}$. We also write $f \lessdot g$ whenever $\tmop{res} (f)
\lessdot \tmop{res} (g)$. Although we will not rely on this fact, this is also strict ordering on $\mathcal{G}$

\begin{lemma}
  \label{lem-small-commutator-cases}Let $g, h \in \mathcal{G}^{\neq}$ with $g \sim
  h^{- 1}$ or $g \prec h$. Then $[g, h] \prec h$.
\end{lemma}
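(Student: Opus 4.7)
The plan is to split on the disjunctive hypothesis and, in each case, to write the commutator $[g,h] = g^{-1}h^{-1}gh$ as a product of two factors that are separately shown to be $\prec h$. The conclusion then follows from \Cref{prop-growth-order}(\ref{def-growth-order-prod}), which forces any product to be $\preccurlyeq$ one of its factors, combined with the transitivity of $\preccurlyeq$ (\Cref{prop-quasi-ordering}).

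In the case $g \prec h$, I would use the factorisation $[g,h] = g^{-1}\cdot(h^{-1}gh)$. The first factor satisfies $g^{-1}\asymp g\prec h$ by \Cref{prop-growth-order}(\ref{def-growth-order-inv}). For the second, since $h \asymp h^{-1}$ we have $g \prec h^{-1}$, and \Cref{lem-small-conjugate} applied with $f \assign h^{-1}$ yields $h^{-1}gh = h^{-1}g(h^{-1})^{-1} \prec h^{-1} \asymp h$. Both factors being $\prec h$, so is the product.

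In the case $g\sim h^{-1}$ I would use instead the factorisation $[g,h] = (hg)^{-1}(gh)$, and verify $gh\prec h$ and $hg\prec h$ individually. The hypothesis unpacks, by definition of $\sim$, to $gh = g(h^{-1})^{-1} \prec g$; and $g\sim h^{-1}$ also gives $g\asymp h^{-1}\asymp h$, whence $gh\prec h$. For $hg$, \Cref{cor-inverses} converts $g\sim h^{-1}$ into $g^{-1}\sim h$, which unpacks as $(hg)^{-1} = g^{-1}h^{-1} \prec g^{-1}\asymp h$, so $hg\prec h$ as required.

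The only genuinely delicate point is choosing the right parenthesisation of $[g,h]$ in each case: a naive splitting such as $(g^{-1}h^{-1})(gh)$ does not let one bound both factors at once under either hypothesis. Once the factorisation $g^{-1}(h^{-1}gh)$ (resp.\ $(hg)^{-1}(gh)$) is chosen to match what \Cref{lem-small-conjugate} (resp.\ \Cref{lem-equivalence} and \Cref{cor-inverses}) directly produces, the rest is a mechanical application of the preceding valuation-theoretic lemmas.
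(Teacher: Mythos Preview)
Your proof is correct. The case $g\prec h$ matches the paper's argument essentially verbatim: both factor $[g,h]=g^{-1}\cdot(h^{-1}gh)$, invoke \Cref{lem-small-conjugate} for the conjugate factor, and finish with \Cref{prop-growth-order}(\ref{def-growth-order-prod}).

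For the case $g\sim h^{-1}$ you take a different and somewhat cleaner route. The paper sets $\varepsilon\assign h^{-1}g\prec h$, then argues that $g^{-1}\varepsilon\sim g^{-1}$ (using $g^{-1}\asymp h$), so the conjugate $\delta\assign g^{-1}\varepsilon g$ satisfies $\delta\prec h$; it then rewrites $[g,h]=\delta\cdot g^{-1}h$ and bounds each factor. Your factorisation $[g,h]=(hg)^{-1}(gh)$ is more symmetric: you bound $gh\prec h$ directly from the definition of $\sim$, and bound $(hg)^{-1}\prec h$ by first passing through \Cref{cor-inverses} to get $g^{-1}\sim h$. This avoids the intermediate conjugation step and the appeal to $g^{-1}\varepsilon\sim g^{-1}$; the trade-off is that you need \Cref{cor-inverses}, whereas the paper only needs \Cref{lem-equivalence}. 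Both arguments are of comparable length and rely on the same underlying valuation-theoretic toolkit.
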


\begin{proof}
  First suppose that $g \sim h^{- 1}$. \Cref{lem-equivalence} gives
  $g^{-1}h^{-1},g h \prec h$. So $[g,h] \prec h$ by Proposition~\ref{prop-growth-order}(\ref{def-growth-order-prod}). Suppose now that $g \prec h$. So $\delta \assign h^{- 1} gh
  \prec h$. We obtain
  \[ [g, h] = g^{- 1} h^{- 1} gh = g^{- 1} \delta \prec h \]
  by Proposition~\ref{prop-growth-order}(\ref{def-growth-order-prod}).
\end{proof}

\subsection{Scaling elements}\label{subsection-scaling-elements}

Recall that $\mathcal{G}$ is an ordered group satisfying
\hyperref[gog1]{\textbf{GOG1}}.

\begin{definition}
  We say that an element $\mathcal{s} \in \mathcal{G}^>$ is
  {\tmstrong{{\tmem{scaling}}}}{\index{scaling element}} if $\mathcal{C}
  (\mathcal{s})$ is Abelian, and for all $f \in \mathcal{G}$ with $f \asymp
  \mathcal{s}$, there is a $g \in \mathcal{C} (\mathcal{s})^{\neq}$ with $g
  \sim f$.
\end{definition}

Given a scaling $\mathcal{s}$ and $f\asymp g$, the  element $g$ is unique in $\mathcal{C} (\mathcal{s})$. Indeed, for $h
\in \mathcal{C} (\mathcal{s}) \setminus \{ g \}$, writing $j \assign  hg^{-1}$ we have $f (jg)^{- 1} \asymp
h \asymp f$ by \Cref{prop-growth-order}(\ref{def-growth-order-inv},
\ref{def-growth-order-prod}), so we do not have $h=jg \sim f$. Note that each positive element in an Abelian ordered group is scaling.

\begin{definition}
  We say that $\mathcal{G}$ {\tmem{{\tmstrong{has scaling
  elements}}}}{\index{having scaling elements}} if for all $\rho \in v
  (\mathcal{G})$, there is an $\mathcal{s} \in \rho$ which is scaling.
\end{definition}

\begin{proposition}
  \label{prop-reals}Let $\mathcal{s} \in \mathcal{G}^>$ such that
  $(\mathcal{C} (\mathcal{s}), \cdot, 1, <)$ is isomorphic to $(\mathbb{R}, +,
  0, <)$. Then $\mathcal{s}$ is scaling.
\end{proposition}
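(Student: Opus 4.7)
The plan is to produce $g$ as a Dedekind cut of $f$ within $\mathcal{C}(\mathcal{s})\cong\mathbb{R}$, exploiting the Dedekind-completeness of $\mathcal{C}(\mathcal{s})$. The first scaling condition---that $\mathcal{C}(\mathcal{s})$ is Abelian---is immediate from $(\mathbb{R},+,0,<)$ being Abelian. For the second condition, fix $f \asymp \mathcal{s}$; using Proposition~\ref{prop-growth-order}(\ref{def-growth-order-inv}) and Corollary~\ref{cor-inverses}, I may assume $f > 1$. If $f \in \mathcal{C}(\mathcal{s})$ then $g \assign f$ satisfies $g \in \mathcal{C}(\mathcal{s})^{\neq}$ and $fg^{-1} = 1 \prec f$, so $g \sim f$ and we are done.

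Assume now $f \notin \mathcal{C}(\mathcal{s})$. The set $A \assign \{h \in \mathcal{C}(\mathcal{s}) \suchthat h \leq f\}$ contains $1$, and is bounded above in $\mathcal{C}(\mathcal{s})$ since $f \preccurlyeq \mathcal{s}$ furnishes an upper bound there. Dedekind-completeness then allows me to set $g \assign \sup A \in \mathcal{C}(\mathcal{s})$. The defining property of $g$ combined with $f \notin \mathcal{C}(\mathcal{s})$ yields $g\varepsilon^{-1} < f < g\varepsilon$ for every $\varepsilon > 1$ in $\mathcal{C}(\mathcal{s})$; commuting $g$ and $\varepsilon$ within the Abelian centraliser rewrites this as $\varepsilon^{-1} < fg^{-1} < \varepsilon$. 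Thus $fg^{-1}$ is dominated by every positive element of $\mathcal{C}(\mathcal{s})$ and bounded below by its inverse: this is the ``infinitesimality'' that ought to force $fg^{-1} \prec \mathcal{s}$.

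It remains to verify that $g \neq 1$ and that $g \sim f$. By Lemma~\ref{lem-equivalence} and Lemma~\ref{lem-strict}, and using $f \asymp \mathcal{s}$, the relation $g \sim f$ reduces to the containment $\mathcal{C}(fg^{-1}) < \mathcal{s}$, i.e.\ to ruling out any $h \in \mathcal{C}(fg^{-1})^>$ with $h \geq \mathcal{s}$. I expect this to be the main obstacle: the plan is to argue by contradiction, assuming such an $h$ and iterating \hyperref[gog1]{\textbf{GOG1}} on the pairs $(h,\mathcal{s})$ and $(\mathcal{s},fg^{-1})$ (both legitimate, since $fg^{-1} < \mathcal{s} \leq h$) to transfer size information between the centralisers $\mathcal{C}(h)$, $\mathcal{C}(\mathcal{s})$ and $\mathcal{C}(fg^{-1})$. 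Exploiting that $h$ commutes with the infinitesimal $fg^{-1}$, together with the Archimedean and densely-ordered structure of $\mathcal{C}(\mathcal{s})\cong\mathbb{R}$ (in particular, the divisibility that supplies roots $\varepsilon^{1/n}$), one should produce an $\varepsilon > 1$ in $\mathcal{C}(\mathcal{s})$ violating $fg^{-1} < \varepsilon$---contradicting the cut. The verification that $g \neq 1$ is parallel: if $g = 1$ then $f$ itself would be infinitesimal with respect to $\mathcal{C}(\mathcal{s})$, while $f \asymp \mathcal{s}$ supplies some $h \in \mathcal{C}(f)^>$ with $h \geq \mathcal{s}$, and the same \hyperref[gog1]{\textbf{GOG1}} manipulation applied to the pairs $(h,\mathcal{s})$ and $(\mathcal{s},f)$ would yield an element of $\mathcal{C}(\mathcal{s})^>$ below $f$, contradicting infinitesimality.
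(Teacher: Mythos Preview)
Your overall strategy---take $g$ as the supremum in $\mathcal{C}(\mathcal{s})\cong\mathbb{R}$ of elements below $f$, derive the sandwich $\varepsilon^{-1}<fg^{-1}<\varepsilon$ for all $\varepsilon\in\mathcal{C}(\mathcal{s})^>$, then argue this forces $fg^{-1}\prec\mathcal{s}$---is exactly the paper's approach (its $h$ is your $g$, its inequality~(2.1) is your sandwich). The setup and the derivation of the cut inequality are correct.

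The genuine gap is that you do not actually prove the implication from the sandwich to $fg^{-1}\prec\mathcal{s}$; you only describe a plan (``iterating \textbf{GOG1}\ldots one should produce an $\varepsilon>1$ violating $fg^{-1}<\varepsilon$''). This is precisely the heart of the argument, and your sketch is not clearly executable: \textbf{GOG1} says that if $a\geqslant b>1$ then $\mathcal{C}(a)$ is cofinal \emph{above} $\mathcal{C}(b)$, which is the wrong direction for manufacturing a small positive element of $\mathcal{C}(\mathcal{s})$ beneath $fg^{-1}$. Invoking divisibility and roots $\varepsilon^{1/n}$ in $\mathcal{C}(\mathcal{s})$ shows that all powers $(fg^{-1})^n$ still satisfy the sandwich, but this only controls the cyclic subgroup generated by $fg^{-1}$, not all of $\mathcal{C}(fg^{-1})$---so the existence of your large commuting $h$ is not contradicted by that alone. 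The same objection applies to your parallel argument for $g\neq 1$.

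The paper's treatment of this step is organised differently: rather than going through Lemma~\ref{lem-strict} and attacking $\mathcal{C}(fg^{-1})$, it assumes $h^{-1}f\succcurlyeq f$ directly, deduces $h^{-1}f\asymp f\asymp\mathcal{s}$, and then argues that this places $h^{-1}f$ between two elements of $\mathcal{C}(\mathcal{s})$ of the \emph{same sign}, immediately contradicting the sandwich. You should compare your plan with that line; in any case, the step you flag as ``the main obstacle'' needs to be carried out rather than sketched.
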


\begin{proof}
  Let $f \in \mathcal{G}^{\neq}$ with $f \asymp \mathcal{s}$. If $f \in
  \mathcal{C} (\mathcal{s})$, then we are done. Assume that $f \nin
  \mathcal{C} (\mathcal{s})$ and set $h \assign \sup \{ g \in \mathcal{C} (\mathcal{s}) \suchthat g \leqslant
     f \}$.
  For $\varepsilon \in \mathcal{C} (\mathcal{s}) \cap \mathcal{G}^{>}$, we
  have
  \[ \varepsilon^{- 1} h, h \varepsilon^{- 1} < h < \varepsilon h, h
     \varepsilon, \]
because $(\mathcal{G},\cdot,1,<)$ is an ordered group.
  We deduce that
  \begin{equation}
    \mathcal{C} (\mathcal{s}) \cap \mathcal{G}^{<} < h^{- 1} f <\mathcal{C}
    (\mathcal{s}) \cap \mathcal{G}^{>} . \label{eq-ineq-hf}
  \end{equation}
  Assume for contradiction that $h^{- 1} f \succcurlyeq f$. We have $h \in
  \mathcal{C} (\mathcal{s})$ so $h \asymp \mathcal{s} \asymp f$. By \Cref{prop-growth-order}(\ref{def-growth-order-inv},
  \ref{def-growth-order-prod}) we have $h^{- 1} f \asymp f$. Since $h \in
  \mathcal{C} (\mathcal{s})$ and $f \nin \mathcal{C} (\mathcal{s})$, there are
  $f_0, f_1$ which have the same sign, with $f_0 < h^{- 1} f < f_1$. By
  \hyperref[gog1]{\textbf{GOG1}}, there are $g_0, g_1 \in \mathcal{C} (\mathcal{s})$ which have
  the same sign as well, with $g_0 < h^{- 1} f < g_1$. This contradicts
  \eqref{eq-ineq-hf}. We deduce that $h^{- 1} f \prec f$, i.e. $h \sim f$.
\end{proof}

\begin{lemma}
  \label{lem-small-commutator}Suppose that $\mathcal{s} \in \mathcal{G}^>$
  is scaling. Then for all $f, g \in \mathcal{G}^{\neq}$ with $f \asymp g
  \asymp \mathcal{s}$, we have
  $[f, g] \prec f$.
\end{lemma}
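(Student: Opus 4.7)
The plan is to exploit the scaling property of $\mathcal{s}$ to decompose $f$ and $g$ into a ``main term'' lying in the Abelian group $\mathcal{C}(\mathcal{s})$ and a ``small correction'' of strictly smaller growth, then expand $[f,g]$ and use commutativity of the main terms to cancel them.

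First, since $\mathcal{s}$ is scaling and $f \asymp g \asymp \mathcal{s}$, I obtain unique elements $f_{0}, g_{0} \in \mathcal{C}(\mathcal{s})^{\neq}$ with $f_{0} \sim f$ and $g_{0} \sim g$. Writing $f = f_{0}\alpha$ and $g = g_{0}\beta$ with $\alpha := f_{0}^{-1}f$ and $\beta := g_{0}^{-1}g$, I verify $\alpha, \beta \prec f$: from $f \sim f_{0}$, Lemma~\ref{lem-equivalence} yields $f_{0}^{-1}f \prec f_{0} \asymp f$ after passing through inverses via Corollary~\ref{cor-inverses} and Proposition~\ref{prop-growth-order}(\ref{def-growth-order-inv}), and similarly for $\beta$.

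Next, I expand
\[
[f,g] = \alpha^{-1} f_{0}^{-1} \beta^{-1} g_{0}^{-1} f_{0} \alpha g_{0} \beta.
\]
Because $\mathcal{C}(\mathcal{s})$ is Abelian, $g_{0}^{-1}f_{0} = f_{0}g_{0}^{-1}$, so the $f_{0}$'s and $g_{0}$'s collapse to conjugations and I obtain
\[
[f,g] = \alpha^{-1}\,\tilde{\beta}^{-1}\,\tilde{\alpha}\,\beta,
\qquad \tilde{\alpha} := g_{0}^{-1}\alpha g_{0},\quad \tilde{\beta} := f_{0}^{-1}\beta f_{0}.
\]
Since $\alpha \prec \mathcal{s} \asymp g_{0}$ and $\beta \prec \mathcal{s} \asymp f_{0}$, Lemma~\ref{lem-small-conjugate} (applied to the conjugating elements $g_{0}^{-1}$ and $f_{0}^{-1}$, noting that $\mathcal{C}(g_{0}) = \mathcal{C}(g_{0}^{-1})$) gives $\tilde{\alpha}, \tilde{\beta} \prec \mathcal{s} \asymp f$.

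Thus each of the four factors in the expression for $[f,g]$ is $\preccurlyeq 1$ in the sense that it is $\prec f$ (or equals $1$). By Proposition~\ref{prop-growth-order}(\ref{def-growth-order-prod}), applied inductively, any finite product of elements each strictly $\prec f$ is itself $\prec f$: at each step $ab$ satisfies $ab \preccurlyeq a$ or $ab \preccurlyeq b$, and either bound remains $\prec f$ by transitivity of $\preccurlyeq$ combined with $a, b \not\asymp f$. Applying this three times yields $[f,g] \prec f$, as required.

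The only subtlety is the cancellation step: without the assumption that $\mathcal{C}(\mathcal{s})$ is Abelian, the factors $f_{0}$ and $g_{0}$ (of size $\asymp f$) would not commute past each other, and the commutator would carry a ``large'' piece rather than collapsing into a product of corrections. This is exactly the role of Abelianness in the definition of a scaling element.
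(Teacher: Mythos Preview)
Your proof is correct and follows the same overall strategy as the paper: pick approximants $f_0,g_0\in\mathcal{C}(\mathcal{s})$ with $f_0\sim f$, $g_0\sim g$, expand the commutator, and exploit that $f_0$ and $g_0$ commute so that only factors $\prec f$ remain. Your execution, however, is cleaner than the paper's. The paper first splits off the case $f\sim g^{-1}$ (handled by Lemma~\ref{lem-small-commutator-cases}) because its main step rewrites $\delta^{-1}\varepsilon\,\mathcal{t}\mathcal{u}$ as $\mathcal{t}\mathcal{u}\,\iota$ via the relation $\delta^{-1}\varepsilon\,\mathcal{t}\mathcal{u}\sim\mathcal{t}\mathcal{u}$, and this requires $\mathcal{t}\mathcal{u}\asymp\mathcal{s}$, which fails precisely when $f\sim g^{-1}$. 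You instead swap $g_0^{-1}f_0\to f_0g_0^{-1}$ directly and absorb the leftover conjugations using Lemma~\ref{lem-small-conjugate}, which works uniformly and needs no hypothesis on $f_0g_0$. So your argument is slightly more elementary and avoids the case distinction entirely; the paper's version trades that for not invoking Lemma~\ref{lem-small-conjugate} explicitly.
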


\begin{proof}
  If $f \sim g^{- 1}$, then this follows from
  \Cref{lem-small-commutator-cases}. Assume that $f \nsim g^{- 1}$. Let
  $\mathcal{t}, \mathcal{u} \in \mathcal{C} (\mathcal{s})^{\neq}$ with
  $\mathcal{t} \sim f$ and $\mathcal{u} \sim g$. We have $\mathcal{t} \nsim
  \mathcal{u}^{- 1}$ by \Cref{prop-equiv-equiv}, so
  Proposition~\ref{prop-growth-order}(\ref{def-growth-order-prod}) implies
  that~$\mathcal{t}\mathcal{u} \asymp \mathcal{t} \asymp \mathcal{s}$. Set
  \begin{eqnarray*}
    \varepsilon & \assign & \mathcal{t}^{- 1} f \prec \mathcal{s}\\
    \delta & \assign & g\mathcal{u}^{- 1} \prec \mathcal{s}.
  \end{eqnarray*}
  Recall that $\mathcal{C}(\mathcal{s})$ is Abelian, so $[\mathcal{t},\mathcal{u}]=1$. We have
  \begin{eqnarray*}
    [f, g] & =&  f^{- 1} g^{- 1} fg \\ 
    & = & \varepsilon^{- 1} \mathcal{t}^{- 1}
    \mathcal{u}^{- 1} \delta^{- 1} \mathcal{t} \varepsilon \delta \mathcal{u}  \\ 
    & = & \varepsilon^{- 1} [\mathcal{t},
    \mathcal{u}] (\mathcal{u}^{-1} (\mathcal{t}^{-1}\delta^{- 1} \mathcal{t}) \varepsilon \delta\mathcal{u}) \\
    & = & \varepsilon^{- 1} (\mathcal{u}^{-1} (\mathcal{t}^{-1}\delta^{- 1} \mathcal{t} \varepsilon \delta)  \mathcal{u}).
  \end{eqnarray*}
  Now $\delta \prec \mathcal{t}$ so $\mathcal{t}^{-1}\delta^{- 1} \mathcal{t} \prec \mathcal{t}$ by Lemma~\ref{lem-small-conjugate}, so $\mathcal{t}^{-1}\delta^{- 1} \mathcal{t} \varepsilon \delta \prec \mathcal{t}$  by
  Proposition~\ref{prop-growth-order}(\ref{def-growth-order-prod}), so $\mathcal{u}^{-1} (\mathcal{t}^{-1}\delta^{- 1} \mathcal{t} \varepsilon \delta)  \mathcal{u} \prec \mathcal{t}$ by Lemma~\ref{lem-small-conjugate}, whence finally $[f,g] \prec \mathcal{t} \asymp f$ by
  Proposition~\ref{prop-growth-order}(\ref{def-growth-order-prod}).
\end{proof}

\begin{proposition}
  \label{prop-central-commute}If $\mathcal{s} \in \mathcal{G}^>$
  is scaling, then the centraliser of each $f \asymp \mathcal{s}$ is
  commutative.
\end{proposition}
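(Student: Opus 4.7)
The plan is to reduce the proposition to a direct application of \Cref{lem-small-commutator}, via a preliminary dichotomy observation that is actually independent of the scaling hypothesis. Namely, I claim that for any $f \in \mathcal{G}^{\neq}$, every non-trivial element $g \in \mathcal{C}(f)$ satisfies $g \asymp f$. Trivially $g \preccurlyeq f$; if $g \prec f$ held, then \Cref{lem-strict} would give $\mathcal{C}(g) < \max(f,f^{-1})$. But $g \in \mathcal{C}(f)$ is equivalent to $f \in \mathcal{C}(g)$, so $f$ and $f^{-1}$, and in particular their maximum, all lie in the subgroup $\mathcal{C}(g)$; this yields $\max(f,f^{-1}) < \max(f,f^{-1})$, a contradiction.

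With this dichotomy in place, the main argument is essentially a one-liner. Given non-trivial $g, h \in \mathcal{C}(f)$, the observation forces $g \asymp h \asymp f \asymp \mathcal{s}$, so \Cref{lem-small-commutator} produces $[g, h] \prec f$. The commutator $[g, h]$ belongs to the subgroup $\mathcal{C}(f)$, so by the same observation applied to $[g, h]$, either $[g, h] = 1$ or $[g, h] \asymp f$. Only the first option is compatible with $[g, h] \prec f$, and so $[g, h] = 1$, whence $\mathcal{C}(f)$ is commutative.

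I do not expect any substantive obstacle here. The only non-routine step is the growth-rank dichotomy for centralisers, but this is a direct unfolding of definitions combined with \Cref{lem-strict}; everything else is an immediate application of \Cref{lem-small-commutator} and the fact that centralisers are subgroups.
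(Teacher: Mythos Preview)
Your proof is correct and follows essentially the same approach as the paper's: both reduce to \Cref{lem-small-commutator} via the observation that any non-trivial element of $\mathcal{C}(f)$ is $\asymp f$. The paper leaves this dichotomy implicit, while you make it explicit and prove it via \Cref{lem-strict}; note that it also follows even more directly from the definition of $\preccurlyeq$, since $g \in \mathcal{C}(f)$ and $f \in \mathcal{C}(g)$ give both $g \preccurlyeq f$ and $f \preccurlyeq g$ by taking the witnessing elements to be $g$ and $f$ themselves.
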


\begin{proof}
  Let $f \asymp \mathcal{s}$ and let $g, h \in \mathcal{C} (f)$. Assume for
  contradiction that $[g, h] \neq 1$. Then, since $[g, h] \in \mathcal{C}
  (f)$, we have $[g, h] \asymp f$. This contradicts
  \Cref{lem-small-commutator}.
\end{proof}

\subsection{Growth order groups}\label{subsection-growth-order-groups}

Given an ordered group $(\mathcal{G}, \cdot, 1, <)$, we consider the following
axiomatic property:

\begin{descriptioncompact}
  \item[$\textbf{GOG3}$] \label{gog3}$\mathcal{G}$ has scaling elements.
\end{descriptioncompact}

Using the $\mathcal{L}_{\text{og}}$-definable abbreviations $\sim$ and $\asymp$, a natural first-order formulation of \hyperref[gog3]{\textbf{GOG3}} is $\forall a \exists b \forall c\exists d  (a \neq 1 \rightarrow ((c \asymp a) \rightarrow ([d,b] = 1 \wedge c \sim d)))$. 

\begin{definition}
  \label{def-growth-order-group}We say that an ordered group $(\mathcal{G},
  \cdot, 1, <)$ is a {\tmem{{\tmstrong{growth order group}}}}{\index{growth
  order group}} if it satisfies \hyperref[gog1]{\textbf{GOG1}}, \hyperref[gog2]{\textbf{GOG2}} and \hyperref[gog3]{\textbf{GOG3}}.
\end{definition}

 All Abelian ordered groups are growth order groups. We write $T_{\tmop{gog}}$\label{autolab13} for the
$\mathcal{L}_{\tmop{og}}$-theory $T_{\tmop{og}} \cup \{ \hyperref[gog1]{\textbf{GOG1}},
\hyperref[gog2]{\textbf{GOG2}}, \hyperref[gog3]{\textbf{GOG3}} \}$. A CT-group is a group in which
  centralisers of non-trivial elements are Abelian. As corollaries of \Cref{prop-central-commute},
we have:

\begin{corollary}
  \label{cor-CT}Growth order groups are CT-groups.
\end{corollary}

\begin{corollary}
  Any non-Abelian growth order group has trivial center.
\end{corollary}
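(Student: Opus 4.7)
The plan is to argue by contradiction and reduce immediately to Corollary \ref{cor-CT}. Suppose $\mathcal{G}$ is a non-Abelian growth order group whose center $Z(\mathcal{G})$ is non-trivial, and pick some $z \in Z(\mathcal{G})$ with $z \neq 1$. By definition of the center, every element of $\mathcal{G}$ commutes with $z$, so $\mathcal{C}(z) = \mathcal{G}$.

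Now apply Corollary \ref{cor-CT}: since $z$ is a non-trivial element of the growth order group $\mathcal{G}$, its centraliser $\mathcal{C}(z)$ is Abelian. Combined with the previous step, this says $\mathcal{G}$ itself is Abelian, contradicting the assumption that $\mathcal{G}$ is non-Abelian. Hence no such $z$ exists and $Z(\mathcal{G}) = \{1\}$.

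There is essentially no obstacle here; the corollary is a formal consequence of \Cref{cor-CT} together with the fact that a central element has centraliser equal to the whole group. The proof will fit in two or three lines.
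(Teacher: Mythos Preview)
Your proof is correct and is exactly the intended argument: the paper states this corollary immediately after \Cref{cor-CT} without proof, since it follows formally from the fact that a non-trivial central element would have centraliser equal to the whole group, forcing $\mathcal{G}$ to be Abelian.
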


\subsection{Skeletons}\label{subsection-centraliser-relations} Let $\mathcal{G}$ be a growth order group. Fix a $\rho \in v (\mathcal{G})$ and consider the set\label{autolab14}
\[ \mathcal{C}_{\rho} \assign \{\tmop{res} (f) \suchthat v (f) = \rho
   \vee f = 1\} . \]
Recall that $(\mathcal{C}_{\rho}, \lessdot)$ is linearly ordered. For all
$\tmop{res} (f), \tmop{res} (g) \in \mathcal{C}_{\rho}$, we set $\tmop{res} (f) +
\tmop{res} (g) \assign \tmop{res} (fg)$ if $fg \asymp f$ and $\tmop{res} (f) +
\tmop{res} (g) \assign \tmop{res} (1)$ if $fg \prec f$.

\begin{lemma}
  The structure $(\mathcal{C}_{\rho}, +, \tmop{res} (1), \lessdot)$ is an
  ordered Abelian group. Moreover, given a scaling element $\mathcal{s}$ in
  $\mathcal{G}$, the function $\varphi_{\mathcal{s}} \of \mathcal{C}
  (\mathcal{s}) \longrightarrow \mathcal{C}_{v (\mathcal{s})} \; ; f
  \mapsto \tmop{res} (f)$ is an isomorphism.
\end{lemma}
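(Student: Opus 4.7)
My plan is to establish the bijection first and then transport the ordered Abelian group structure of $\mathcal{C}(\mathcal{s})$ --- which is Abelian by definition of scaling and ordered as a subgroup of $\mathcal{G}$ --- over to $\mathcal{C}_{\rho}$, where $\rho = \tmop{gr}(\mathcal{s})$.

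For $f \in \mathcal{C}(\mathcal{s}) \setminus \{1\}$, the inclusion $f \in \mathcal{C}(\mathcal{s})$ witnesses $f \preccurlyeq \mathcal{s}$, while $\mathcal{s} \in \mathcal{C}(f)$ witnesses $\mathcal{s} \preccurlyeq f$, so $\tmop{gr}(f) = \rho$ and $\varphi_{\mathcal{s}}$ does land in $\mathcal{C}_{\rho}$. Surjectivity is the scaling axiom itself, and injectivity is the uniqueness remark following the definition of scaling: if $f, g \in \mathcal{C}(\mathcal{s})$ are distinct, then $fg^{-1} \in \mathcal{C}(\mathcal{s}) \setminus \{1\}$ satisfies $fg^{-1} \asymp \mathcal{s} \asymp f$, ruling out $f \sim g$. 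This same observation shows $\varphi_{\mathcal{s}}$ strictly preserves the order: distinct elements of $\mathcal{C}(\mathcal{s})$ give distinct $\sim$-classes, so $f < g$ in $\mathcal{C}(\mathcal{s})$ yields $\tmop{go}(f) \lessdot \tmop{go}(g)$ by definition of $\lessdot$.

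The principal obstacle is the well-definedness of $+$ on $\mathcal{C}_{\rho}$. Suppose $f \sim f'$ and $g \sim g'$ with $\tmop{gr}(f) = \tmop{gr}(g) = \rho$; setting $\varepsilon \assign f (f')^{-1} \prec f$ and $\delta \assign g (g')^{-1} \prec g$, a direct computation gives
\[ (fg)(f'g')^{-1} = \varepsilon \, f' \delta (f')^{-1}. \]
Since $\delta \prec g \asymp f'$, \Cref{lem-small-conjugate} yields $f' \delta (f')^{-1} \prec f'$; together with $\varepsilon \prec f \asymp f'$, \Cref{prop-growth-order}(\ref{def-growth-order-prod}) then gives $(fg)(f'g')^{-1} \prec f'$. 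Consequently $fg$ and $f'g'$ cannot have different growth ranks: either both are $\asymp \mathcal{s}$, in which case $fg \sim f'g'$ and the first clause of the definition assigns them the same class, or both lie strictly below rank $\rho$, and the second clause assigns $\tmop{go}(1)$ to each.

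Once well-definedness is secured, applying the definition inside $\mathcal{C}(\mathcal{s})$ is immediate: for $f, g \in \mathcal{C}(\mathcal{s})$, if $fg \neq 1$ then $fg \in \mathcal{C}(\mathcal{s}) \setminus \{1\}$ has $fg \asymp f$, so $\tmop{go}(f) + \tmop{go}(g) = \tmop{go}(fg) = \varphi_{\mathcal{s}}(fg)$; if $fg = 1$ then $fg \prec f$ gives $\tmop{go}(1) = \varphi_{\mathcal{s}}(1)$. Thus $\varphi_{\mathcal{s}}$ intertwines multiplication on $\mathcal{C}(\mathcal{s})$ with $+$; combined with the order-preservation already noted, the ordered Abelian group structure of $\mathcal{C}(\mathcal{s})$ transfers through $\varphi_{\mathcal{s}}$, so $(\mathcal{C}_{\rho}, +, \tmop{go}(1), \lessdot)$ is an ordered Abelian group and $\varphi_{\mathcal{s}}$ is an isomorphism.
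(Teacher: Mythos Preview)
Your proof is correct and follows the same overall strategy as the paper: establish that $\varphi_{\mathcal{s}}$ is a bijection preserving the operation and order, then transport the ordered Abelian group structure of $\mathcal{C}(\mathcal{s})$ to $\mathcal{C}_{\rho}$. The one substantive difference is that you actually prove well-definedness of $+$ via the commutator computation $(fg)(f'g')^{-1}=\varepsilon\, f'\delta (f')^{-1}\prec f'$, whereas the paper simply asserts well-definedness in one sentence; your argument here is a genuine addition and is carried out correctly using \Cref{lem-small-conjugate} and \Cref{prop-growth-order}(\ref{def-growth-order-prod}). A very minor quibble: your injectivity and order-preservation arguments tacitly assume $f,g\neq 1$ when invoking $fg^{-1}\asymp f$, but the residual cases involving $\tmop{go}(1)$ are immediate from the convention $\tmop{go}(1)=\{1\}$ and $\{1\}\lessdot\tmop{go}(h)$ for $h\neq 1$.
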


\begin{proof}
  The operation $\mathord{+} \of \mathcal{C}_{\rho} \times \mathcal{C}_{\rho}
  \longrightarrow \mathcal{C}_{\rho}$ is well-defined. For $\tmop{res} (h) \in
  \mathcal{C}_{\rho}$ where $v (h) = \rho$, since $\mathcal{s}$ is
  scaling, there is a unique $f \in \mathcal{C} (\mathcal{s})$ with $h \sim
  f$, whence $\tmop{res} (f) = \tmop{res} (h)$. So $\varphi_{\mathcal{s}}$ is
  surjective. Let $f, g \in \mathcal{C} (\mathcal{s})$. Note that $fg \in \mathcal{C}(\mathcal{s})$, so $fg = 1$ or $fg \asymp \mathcal{s}$. We thus have $fg = 1
  \Longleftrightarrow fg \prec f \Longleftrightarrow v (fg) < \rho$.
  So $\tmop{res} (fg) = \tmop{res} (f) + \tmop{res} (g)$. If $1 < f$, then $1
  \lessdot f$, so $\tmop{res} (1) \lessdot \tmop{res} (f)$. Altogether this
  shows that $\varphi_{\mathcal{s}}$ is an isomorphism between the
  $\mathcal{L}_{\tmop{og}}$-structures $(\mathcal{C}(\mathcal{s}), \cdot, 1,
  <)$ and $(\mathcal{C}_{\rho}, +, \tmop{res} (1), \lessdot)$. In particular,
  the latter is a an ordered Abelian group.
\end{proof}

We call $(\mathcal{C}_{\rho})_{\rho \in v(\mathcal{G})}$ the \textit{skeleton} of $\mathcal{G}$. If $\mathcal{H}$ is a
growth order group, then each ordered group homomorphism $\Phi \of \mathcal{G} \longrightarrow \mathcal{H}$ induces a homomorphism of skeletons, i.e. a nondecreasing map
\[ \Phi_{v} \of v (\mathcal{G}) \longrightarrow v
   (\mathcal{H}) \: ; \: v (g) \longmapsto v (\Phi (g)) \]
and, for each $\rho \in v (\mathcal{G})$, an ordered group homomorphism
\[ \Phi_{\rho} \of \mathcal{C}_{\rho} \longrightarrow
   \mathcal{C}_{\Phi_{v} (\rho)} \hspace{0.8em} ; \: \tmop{res} (f)
   \mapsto \tmop{res} (\Phi (f)) . \]

\subsection{On the structure of growth order
groups}\label{subsection-structure}

Any ordered group is {\cite[Theorem~1]{Iwasawa:free}} a quotient by a convex
normal subgroup of an ordered free group. However that description is far from
being as precise and concrete as the Hahn embedding theorem {\cite{Hahn1907}}
(see {\cite[Theorem~4.C]{Glass}}) for Abelian ordered groups, which construes
them as lexicographically ordered groups of formal commutative series with
real coefficients. We expect that a similar description exists for growth order groups, as we next explain.

Let $\mathcal{G}$ be a growth order group, and let $\mathcal{S}$ be a set of
unique scaling representatives for each valuation. Given
$\mathcal{s} \in \mathcal{S}$ and $c = \tmop{res} (g) \in
\mathcal{C}_{v (\mathcal{s})}^{\neq}$, we let $\mathcal{s}^{[c]}$ denote the unique element of $\mathcal{C} (\mathcal{s})$ with $\mathcal{s}^{[c]} \sim g$. We also write
$\mathcal{s}^{[0]} \assign 1$. Given $f_0 \in \mathcal{G}^{\neq}$, there
are a unique $\mathcal{s}_0 \in \mathcal{S}$ with $\mathcal{s}_0 \asymp f_0$ and
a unique $c_0 \in \mathcal{C}_{v (\mathcal{s}_0)}$ with $f_0 \sim
\mathcal{s}_0^{[c_0]}$. Define
\begin{equation}
  f_1 \assign \mathcal{s}^{[- c_0]} f_0 . \label{eq-choice}
\end{equation}
Reiterating the process for $f_1$ if $f_1 \neq 1$ and continuing further, we
obtain an $\ell \leqslant \omega$, a strictly $\prec$-decreasing sequence
$(\mathcal{s}_n)_{n < \ell}$ in $\mathcal{S}$ and a sequence $(c_n)_{n < \ell}
\in \prod_{n < \ell} \mathcal{C}_{v (\mathcal{s}_n)}$ with
\[ f_0 \approx \mathcal{s}_0^{[c_0]} \mathcal{s}_1^{[c_1]} \mathcal{s}_2^{[c_2]}
   \cdots \mathcal{s}_n^{[c_n]} \cdots, \]
in the sense that $ (\mathcal{s}_0^{[c_0]} \mathcal{s}_1^{[c_1]}
\mathcal{s}_2^{[c_2]} \cdots \mathcal{s}_n^{[c_n]})^{- 1} f_0\prec \mathcal{s}_n$
whenever $n < \ell$. If $\ell = \omega$, then there may exist several elements
of $\mathcal{G}^{\neq}$ with the same expansion as $f_0$ (consider for instance
an ultrapower of $\mathcal{G}$), so describing $f_0$ in full entails
extending this process inductively. This points
to the existence of an embedding of $\mathcal{G}$ into an ordered group of
formal non-commutative series
\begin{equation}
  \mathfrak{g}_0^{[c_0]} \mathfrak{g}_1^{[c_1]} \cdots
  \mathfrak{g}_{\gamma}^{[c_{\gamma}]} \cdots, \gamma < \lambda
  \label{eq-right-trailing}
\end{equation}
where $(c_{\gamma})_{\gamma < \lambda} \in \prod_{\gamma < \lambda}
\mathcal{C}_{\mathfrak{g}_{\gamma}}$, $(\mathfrak{g}_{\gamma})_{\gamma <
\lambda} \in v (\mathcal{G})^{\lambda}$ is strictly decreasing and $\lambda$ is an ordinal. In other words, it is conceivable that
there is a non-commutative version of the Hahn embedding theorem for growth
order groups.

The construction of such an ordered group is
difficult, and it requires additional information besides the skeleton. Moreover, several issues that are
absent in the Abelian case appear here.

First, the choice in \eqref{eq-choice} of expanding $f_0$ systematically on the
right is arbitrary. One could expand $f_0$ on the left, or even alternate
choices. Indeed, given an infinite limit ordinal $\kappa$ and a function $N
\of \kappa \longrightarrow \{\tmop{left}, \tmop{right}\}$, one may expand $f_0$
on the side prescribed by $N (\gamma)$ at each stage $\gamma < \kappa$. This
induces a linear ordering on $\kappa$ which we call tree-like. How can one
describe series with tree-like support?

Secondly, studying examples of groups of transseries shows that in certain
cases, extending $\mathcal{G}$ with transfinite expansions as in
\eqref{eq-right-trailing} forces the existence of valuations that are not
comparable to elements in $\mathcal{G}$. More precisely,
there can be series $s \assign \mathfrak{g}_0^{[c_0]} \mathfrak{g}_1^{[c_1]}
\cdots \mathfrak{g}_{\gamma}^{[c_{\gamma}]} \cdots$ and elements $\mathfrak{g}
\in v (\mathcal{G})$ such that the valuation of $s\mathfrak{g}^{[c]}s^{-
1}$ should lie in an unfilled cut in $(v (\mathcal{G}), \prec)$. So
an embedding theorem must involve constraints on the skeleton of
$\mathcal{G}$.

\begin{question}
  {\tmstrong{Embedding problem.}}\label{question-infinite} For a linearly
  ordered set $(I, <)$ and a family $(\mathcal{C}_i)_{i \in I}$ of Abelian
  ordered groups, under what conditions can one define a group law $\ast$ on
  the set $\mathbf{H}_{i \in I} \mathcal{C}_i$ of functions $f \in \Pi_{i \in
  I} \mathcal{C}_i$ with anti-well-ordered support $\tmop{supp} f = \{i \in I
  \suchthat f (i) \in \mathcal{C}_i \setminus \{0\} \}$, ordered lexicographically, such
  that
  \begin{itemizedot}
    \item $(\mathbf{H}_{i \in I} \mathcal{C}_i, \ast, 1, <)$
    is a growth order group with skeleton
    $\simeq (\mathcal{C}_i)_{i \in I}$,
    
    \item for all growth order groups $\mathcal{G}$ with skeleton
    $\simeq  (\mathcal{C}_i)_{i \in I}$, there is an embedding of ordered
    groups $\mathcal{G} \longrightarrow \mathbf{H}_{i \in I} \mathcal{C}_i$\,?
  \end{itemizedot}
\end{question}

As a first step toward answering this question, we showed \cite{Bag:mg} that certain groups of transseries can be represented as groups $(\mathbf{H}_{i \in I} \mathcal{C}_i, \ast, 1, <)$.

\section{Constructions of growth ordered groups}\label{section-constructions}

We now give methods for constructing growth order groups.

\begin{example}
  \label{ex-hyperseries}We constructed {\cite{Bag:hyperclosed}} an ordered
  field of formal series $\tilde{\mathbb{L}}$ equipped with a composition law
  $\mathord{\circ} \of \tilde{\mathbb{L}} \times
  \tilde{\mathbb{L}}^{>\mathbb{R}} \longrightarrow \tilde{\mathbb{L}}$ and showed {\cite[Propositions~9.23 and~10.25]{Bag:hyperclosed}}
  that $(\tilde{\mathbb{L}}^{>\mathbb{R}}, \circ, <)$ is a growth order group with Archimedean
  centralisers {\cite[Proposition~10.24]{Bag:hyperclosed}}.
\end{example}

\subsection{Semidirect products}\label{subsection-semidirect-products}

Let $(\mathcal{G}, \cdot, 1, <)$, $(G, +, 0, <)$ be ordered groups.
For clarity, we will use additive denotation for $G$, but we do not assume
that $(G, +, 0)$ is Abelian. Let a morphism $\rho \of (\mathcal{G}, \cdot, 1)
\longrightarrow \tmop{Aut} (G, +, 0)$ be given with the following properties:

\begin{descriptioncompact}
  \item[$\textbf{MGA1}$] \label{ga1}Each $\rho (g), g \in \mathcal{G}$ is
  strictly increasing.
  
  \item[$\textbf{MGA2}$] \label{ga2}For $f, g \in \mathcal{G}$ with $f < g$
  and $a \in G^{>}$ we have $\rho (f) (a) < \rho (g) (a)$.
\end{descriptioncompact}

For
$(g, a) \in \mathcal{G} \times G$, we write
\[ g \ast a \assign \rho (g) (a) . \]
We consider the lexicographically ordered semidirect product $\mathcal{G}
\rtimes_{\rho} G$, i.e. the Cartesian product $\mathcal{G} \times G$ equipped
with the operation
\[ \forall (f, a), (g, b) \in \mathcal{G} \times G, (f, a) \cdot (g, b)
   \assign (fg, (f \ast a) + b), \]
and the lexicographic ordering
\[ \forall (g, a), (h, b) \in \mathcal{G} \times G, (g, a) < (h, b)
   \Longleftrightarrow \text{($g < h$ or ($g = h$ and $a < b$)).} \]

   Note that the inverse of an $(f,a) \in \mathcal{G}
\rtimes_{\rho} G$ is given by
\[(f,a)^{-1} = (f^{-1},f^{-1} \ast (-a)).\] 
\begin{proposition}
  \label{prop-gog-product}The structure $(\mathcal{G} \rtimes_{\rho} G, \cdot,
  (1, 0), <)$ is an ordered group, and the functions
  \[ G \longrightarrow \mathcal{G} \rtimes_{\rho} G ; a \mapsto (1, a)
     \text{{\hspace{3em}}and{\hspace{3em}}$\mathcal{G} \longrightarrow
     \mathcal{G} \rtimes_{\rho} G ; f \mapsto (f, 0)$} \]
  are embeddings.
\end{proposition}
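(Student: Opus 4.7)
The plan is to verify the group axioms for $\mathcal{G}\rtimes_\rho G$, then check compatibility of the lexicographic order with multiplication, and finally confirm that the two coordinate inclusions are strictly monotone group homomorphisms. The work is a direct verification; nothing deeper than the fact that $\rho$ is a group homomorphism into $\operatorname{Aut}(G,+,0)$, together with the monotonicity axioms \textbf{MGA1} and \textbf{MGA2}, is needed.

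For the group structure, associativity is obtained by expanding both $((f,a)(g,b))(h,c)$ and $(f,a)((g,b)(h,c))$ and matching second coordinates using that $\rho$ is a homomorphism (so products in $\mathcal{G}$ translate into compositions of automorphisms) and that each $\rho(g)$ respects $+$. The pair $(1,0)$ is a two-sided identity because $\rho(1)=\operatorname{id}_G$ and each $\rho(g)$ fixes $0$. A two-sided inverse of $(f,a)$ is built from $f^{-1}$ together with an appropriate $\rho(f^{\pm 1})$-image of $-a$, and one checks directly that both ordered products yield $(1,0)$.

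For order-compatibility, assume $(g,b)>(h,c)$ and compare $(f,a)(g,b)$ with $(f,a)(h,c)$ on one side, and $(g,b)(f,a)$ with $(h,c)(f,a)$ on the other. If $g>h$, then $fg>fh$ and $gf>hf$ because $\mathcal{G}$ is itself an ordered group, so the lex inequality on first coordinates forces the desired comparison regardless of what happens in the second coordinate. If instead $g=h$ and $b>c$, the first coordinates of the two products coincide and one compares second coordinates; in each case this reduces either to $b>c$ itself (using that $(G,+,<)$ is ordered) or to $\rho(g)(b)>\rho(g)(c)$, which is exactly axiom \textbf{MGA1} applied to the automorphism $\rho(g)$. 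Note that \textbf{MGA2} plays no role in this proposition; it is presumably used later when showing that the semidirect product inherits finer growth-order structure from its factors.

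The maps $a\mapsto(1,a)$ and $f\mapsto(f,0)$ are group homomorphisms by direct computation: for the first, $\rho(1)=\operatorname{id}_G$ makes the second coordinate of the product of two such pairs add in $G$; for the second, the second coordinates of products of pairs with zero second entry collapse to $0$. Strict monotonicity of both maps is immediate from the definition of the lex order. I do not foresee a substantive obstacle: the argument is essentially bookkeeping, and the only genuine care required is the non-commutativity of $G$, which forbids freely rearranging additive terms when unpacking products in the semidirect product.
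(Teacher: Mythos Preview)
Your proposal is correct and follows the same approach as the paper: case-split on whether the first coordinates differ, use that $\mathcal{G}$ is ordered in the first case, and reduce to \textbf{MGA1} together with the ordered-group axioms of $G$ in the second. You are also right that \textbf{MGA2} is not used here; the paper's proof likewise makes no appeal to it, and you spell out the group-axiom verification more explicitly than the paper does.
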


\begin{proof}
  The lexicographic ordering is linear, so we need only show that $\mathcal{G}
  \rtimes_{\rho} G$ is an ordered group. Assume that $(g, b) > (h, c)$. If $g
  > h$, then $fg > fh$ and $gf > hf$, so $(f, a) \cdot (g, b) > (f, a) \cdot
  (h, c)$ and$(g, b) \cdot (f, a) > (h, c) \cdot (f, a)$. Otherwise $g = h$
  and $b > c$, so $f \ast b > f \ast c$, whence
  \[ (f, a) \cdot (g, b) = (fh, f \ast b + a) > (fh, f \ast c + a) = (f, a)
     \cdot (h, c) . \]
  Likewise $f \ast a + b > f \ast a + c$ so
  \[ (g, b) \cdot (f, a) = (hf, f \ast a + b) > (hf, f \ast a + c) = (h, c)
     \cdot (f, a) . \]
  This shows that $\mathcal{G} \rtimes_{\rho} G$ is an ordered group. It is easily checked that the two functions above are embeddings.
\end{proof}

We consider two further conditions on $(\mathcal{G}, G)$:

\begin{descriptioncompact}
  \item[{\tmstrong{MGA3}}] \label{ga3}For all $g \in \mathcal{G}$ and $b \in
  G$, for sufficiently large $g' \in \mathcal{C} (g)$, there is a $b' \in G$
  with
  \[ g \ast b' + b = g' \ast b + b' . \]
  \item[$\textbf{MGA4}$] \label{ga4}For all $a \in G$, $f \in \mathcal{G}^{>}$
  and $b \in G^{>}$, we have
  \[ f \ast b > a + b - a. \]
\end{descriptioncompact}

\begin{remark}
  \label{rem-sufficiently-large}Let $(g, b) \in (\mathcal{G} \rtimes_{\rho}
  G)^{\neq}$. For $(f, a) \in \mathcal{G} \rtimes_{\rho} G$, we have $(f, a)
  \cdot (g, b) = (g, b) \cdot (f, a)$ if and only if
  \[ fg = gf \text{{\hspace{3em}}and{\hspace{3em}}$f \ast b + a = g \ast a +
     b.$} \]
  The first condition means that $f \in \mathcal{C} (g)$. Now given $h \in
  \mathcal{C} (g)^{>}$ sufficiently large, by \hyperref[ga3]{\textbf{MGA3}}, there is an $a \in
  G$ with~$g \ast a - a = h \ast b - b$, hence $(h, a) \in \mathcal{C} (g,
  b)$.
\end{remark}

\begin{remark}
  \label{rem-if-Abelian-factor}If $G$ is Abelian, then \hyperref[ga4]{\textbf{MGA4}} follows from
  \hyperref[ga2]{\textbf{MGA2}}.
\end{remark}

\begin{proposition}
  \label{prop-affine-gog}Let $\rho$ satisfy \hyperref[ga1]{$\mathbf{MGA1}$}--\hyperref[ga4]{$\mathbf{MGA4}$}. If $\mathcal{G}$ and $G$ are growth order groups,
  then so is $(\mathcal{G} \rtimes_{\rho} G, \cdot, (1, 0), <)$.
\end{proposition}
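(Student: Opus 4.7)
The plan is to verify the three axioms \hyperref[gog1]{\textbf{GOG1}}, \hyperref[gog2]{\textbf{GOG2}} and \hyperref[gog3]{\textbf{GOG3}} for the product in turn, with case analysis on whether the $\mathcal{G}$-components of the elements involved are trivial. Two preliminary centraliser computations underpin the whole argument. First, \hyperref[ga4]{\textbf{MGA4}} (applied to $f$ when $f > 1$, and to $f^{-1}$ when $f < 1$, using that the automorphisms $\rho(f^{\pm 1})$ are order-preserving) rules out any $f \in \mathcal{G}^{\neq}$ and $a \in G$ satisfying $f \ast b = a + b - a$, so that $\mathcal{C}((1, b)) = \{1\} \times \mathcal{C}(b)$ for all $b \in G^{\neq}$. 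Second, \hyperref[ga2]{\textbf{MGA2}} forces $\rho(s)$ to have no nonzero fixed point in $G$ whenever $s \in \mathcal{G}^{>}$, since $s \ast a > 1 \ast a = a$ for $a > 0$ and symmetrically for $a < 0$; consequently $\mathcal{C}((s, 0)) = \mathcal{C}(s) \times \{0\}$.

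For \hyperref[gog1]{\textbf{GOG1}}, take $(f, a) \geqslant (g, b) > (1, 0)$ and $(g_0, b_0) \in \mathcal{C}((g, b))$. If $g = 1$, then $b > 0$, $g_0 = 1$ and $b_0 \in \mathcal{C}(b)$ by the first preliminary; either $f = 1$, in which case the statement reduces to \hyperref[gog1]{\textbf{GOG1}} for $G$, or $f > 1$, in which case \hyperref[ga3]{\textbf{MGA3}} produces some $(f_0, a_0) \in \mathcal{C}((f, a))$ with $f_0 > 1$, dominating $(1, b_0)$ automatically. If $g > 1$, \hyperref[gog1]{\textbf{GOG1}} for $\mathcal{G}$ yields $f' \in \mathcal{C}(f)$ with $f' \geqslant g_0$, and \hyperref[ga3]{\textbf{MGA3}} then produces $(f_0, a_0) \in \mathcal{C}((f, a))$ for any $f_0 \in \mathcal{C}(f)$ chosen both above $f'$ and large enough for \hyperref[ga3]{\textbf{MGA3}} to apply; such $f_0$ exists because $f^n \in \mathcal{C}(f)$ is unbounded. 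For \hyperref[gog2]{\textbf{GOG2}}, suppose $(f, a) > \mathcal{C}((g, b)) > (1, 0)$: if $g > 1$, \hyperref[ga3]{\textbf{MGA3}} shows that arbitrarily large elements of $\mathcal{C}(g)$ extend to elements of $\mathcal{C}((g, b))$, forcing $f > \mathcal{C}(g)$, and then \hyperref[gog2]{\textbf{GOG2}} for $\mathcal{G}$ gives $fg > gf$, so lexicographic order settles the comparison; if $g = 1$, then $b > 0$, and I split once more, using \hyperref[ga4]{\textbf{MGA4}} to obtain $f \ast b + a > a + b$ when $f > 1$, and \hyperref[gog2]{\textbf{GOG2}} for $G$ when $f = 1$.

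For \hyperref[gog3]{\textbf{GOG3}}, the preliminary computations yield the classification $(f, a) \asymp (g, b)$ iff either $f = g = 1$ with $a \asymp b$ in $G$, or $f, g \in \mathcal{G}^{\neq}$ with $f \asymp g$ in $\mathcal{G}$, so the growth ranks of the product are the disjoint union of those of $G$ (at the bottom) and those of $\mathcal{G}$ (on top). For a $G$-rank, if $s \in G^>$ is scaling, then $(1, s)$ is scaling in the product directly from $\mathcal{C}((1, s)) = \{1\} \times \mathcal{C}(s)$. For a $\mathcal{G}$-rank, if $s \in \mathcal{G}^{>}$ is scaling, then $(s, 0)$ is scaling in the product: its centraliser $\mathcal{C}(s) \times \{0\}$ is Abelian because $\mathcal{C}(s)$ is; and for any $(f, a) \asymp (s, 0)$ the scaling of $s$ in $\mathcal{G}$ supplies $g \in \mathcal{C}(s)^{\neq}$ with $g \sim f$, whence $(f, a)(g, 0)^{-1} = (fg^{-1}, a)$ has either trivial $\mathcal{G}$-component (placing it in a $G$-rank, strictly below $\tmop{gr}(s)$) or $\mathcal{G}$-component $fg^{-1} \prec f$, so in either case sits at a strictly smaller growth rank than $(f, a)$, giving $(g, 0) \sim (f, a)$ as required.

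The main obstacle is this last verification that $(s, 0)$ is scaling: a priori $\mathcal{C}((s, 0))$ could strictly contain $\mathcal{C}(s) \times \{0\}$ via nonzero fixed points of $\rho(s)$ in $G$, and might then fail to be Abelian. The strict monotonicity in the $\mathcal{G}$-variable required by \hyperref[ga2]{\textbf{MGA2}} is precisely what rules this out and makes the semidirect construction work.
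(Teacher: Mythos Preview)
Your proof is correct and follows essentially the same case analysis as the paper's proof: split on whether the $\mathcal{G}$-component is trivial, reduce to the corresponding axiom in $G$ or $\mathcal{G}$, and use \hyperref[ga3]{\textbf{MGA3}} (via \Cref{rem-sufficiently-large}) to lift centraliser elements from $\mathcal{G}$ to the product. Your two preliminary centraliser computations make explicit what the paper leaves implicit; in particular, the paper never checks that $\mathcal{C}((\mathcal{t},0))$ is Abelian in its \hyperref[gog3]{\textbf{GOG3}} argument, whereas your observation that \hyperref[ga2]{\textbf{MGA2}} kills nonzero fixed points of $\rho(\mathcal{t})$ supplies exactly this.

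Two small points. In \hyperref[gog1]{\textbf{GOG1}} with $g=1$ and $f>1$, you invoke \hyperref[ga3]{\textbf{MGA3}} to produce $(f_0,a_0)\in\mathcal{C}((f,a))$ with $f_0>1$; the paper simply uses $(f,a)$ itself, which already lies in its own centraliser and dominates any $(1,b_0)$. And your justification ``such $f_0$ exists because $f^n\in\mathcal{C}(f)$ is unbounded'' is slightly off: $\{f^n\}$ need not be cofinal in $\mathcal{C}(f)$ when the latter is non-Archimedean. What you actually need is that $\mathcal{C}(f)$, being linearly ordered, is directed and has no maximum (since $f>1$), so one can always pass above both $f'$ and the \hyperref[ga3]{\textbf{MGA3}} threshold by taking their maximum in $\mathcal{C}(f)$.
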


\begin{proof}
  We first prove \hyperref[gog1]{\textbf{GOG1}}. Let $(f, a), (g, b) \in \mathcal{G}^{>}$ with
  $(f, a) > (g, b)$, and let $(g', b') \in \mathcal{C} (g, b)$. Assume first
  that $g = 1$, so $b > 1$. Assume for contradiction that $g' > 1$. then we
  have $(g', g' \ast b + b') = (g', b') \cdot (1, b) = (1, b) \cdot (g', b') =
  (g', b' + b)$, so $g' \ast b = b' + b - b'$. But this contradicts
  \hyperref[ga4]{\textbf{MGA4}}. If $g' < 1$, then $(f, a) > (g', b')$. So we may assume that $g'
  = 1$. If $f > 1$, then $(f, a) > (g', b')$. Otherwise, we must have $f = 1$
  and thus $a > b$. Now \hyperref[gog1]{\textbf{GOG1}} in $G$ gives an $a' \in \mathcal{C} (a)$
  with $a' > b'$, whence $(1, a') \in \mathcal{C} (1, a)$ and $(1, a') > (1,
  b')$. We now treat the case when $g > 1$. We have $g' \in \mathcal{C} (g)$ where
  $f \geqslant g$, so by \hyperref[gog1]{\textbf{GOG1}} in $\mathcal{G}$ there is an $f' \in
  \mathcal{C} (f)$ with $f' \geqslant g'$. In view of
  \Cref{rem-sufficiently-large}, we may choose $f'$ sufficiently large so that
  $f' > g'$ and that there be an $a' \in G$ with $(f', a') \in \mathcal{C} (f,
  a)$. We have $(f', a') > (g', b')$, hence \hyperref[gog1]{\textbf{GOG1}} holds in $\mathcal{G}
  \rtimes_{\rho} G$.
  
  Let $(g, b), (f, a) \in \mathcal{G} \rtimes_{\rho} G$ with $(f, a)
  >\mathcal{C} (g, b)$ and $(g, b) > (1, 0)$. Assume that $g = 1$, so $b > 0$
  and $f \geqslant 1$. We have
  \begin{eqnarray*}
    (f, a) \cdot (1, b) \cdot (f, a)^{- 1} & = & (f, a) \cdot (1, b) \cdot
    (f^{- 1}, f^{- 1} \ast (- a))\\
    & = & (1, f \ast (f^{- 1} \ast (- a) + b) + a)\\
    & = & (1, (- a) + f \ast b + a) .
  \end{eqnarray*}
  If $f = 1$, then the condition $(1, a) >\mathcal{C} (1, b)$ amounts to $a
  >\mathcal{C} (b)$, \ so \hyperref[gog2]{\textbf{GOG2}} in $G$ gives $(- a) + b + a > b$. That
  is,
  \[ (f, a) \cdot (1, b) \cdot (f, a)^{- 1} > (1, b) . \]
  If $f > 1$, then \hyperref[ga4]{\textbf{MGA4}} gives $(- a) + f \ast b + a > b$, whence again
  $(f, a) \cdot (1, b) \cdot (f, a)^{- 1} > (1, b)$.
  
  Assume now that $g > 1$. So we must have $f >\mathcal{C} (g)$, whence
  $fgf^{- 1} > g$ by \hyperref[gog2]{\textbf{GOG2}} in $\mathcal{G}$. This implies that $(f, a)
  \cdot (g, b) \cdot (f, a)^{- 1} > (g, b)$. Therefore \hyperref[gog2]{\textbf{GOG2}} is
  satisfied.
  
  We now prove \hyperref[gog3]{\textbf{GOG3}}. Let $(g, b) \in \mathcal{G} \rtimes_{\rho} G$ with
  $(g, b) \neq (1, 0)$ and let $(f, a) \asymp (g, b)$. If $g = 1$, then we
  must have $f = 1$ and $a \asymp b$ in $G$. Given a scaling element $\mathcal{s}$ in $G$ with $\mathcal{s} \asymp b$, we see that $(1,
  \mathcal{s})$ is scaling in $\mathcal{G} \rtimes_{\rho} G$ with $(1,
  \mathcal{s}) \asymp (f, a)$. If $g \neq 1$, then we must have $f \asymp g$.
  Let $\mathcal{t} \in \mathcal{G}$ be scaling with $\mathcal{t} \asymp g$
  and let $\mathcal{u} \in \mathcal{C} (\mathcal{t})$ with $\mathcal{u} \sim
  f$. Then $(\mathcal{u}, 1) \sim (f, a)$ in $\mathcal{G} \rtimes_{\rho} G$,
  which implies that $(\mathcal{t}, 1)$ is scaling. So \hyperref[gog3]{\textbf{GOG3}} holds in
  $\mathcal{G} \rtimes_{\rho} G$.
  
  This shows that $\mathcal{G} \rtimes_{\rho} G$ is a growth order group.
\end{proof}

\begin{example}
  \label{ex-affine}{\tmstrong{Positive affine maps.}} Consider an ordered
  field $K$ and an ordered vector space $\left( G, +, 0,
  <, \mathord{.} \right)$ over $K$. The ordered groups $(K^{>},
  \cdot,1, <)$ and $(G, +, 0, <)$ are growth order groups, as they are Abelian. We have an action $\rho$ of $K^{>}$ on $G$ by scalar multiplication. That is $\rho (\lambda) (a) \assign \lambda \mathbin{.} a$ for all $\lambda \in K^{>}$ and $a \in G$. Then $K^{>} \rtimes_{\rho} G$ is
  naturally isomorphic to the group of strictly increasing affine functions $K
  \longrightarrow G \: ; x \mapsto \lambda \mathbin{.} x + a$ for $(\lambda,
  a) \in K^{>} \times G$, under composition, and where the ordering is given
  by
  \[ \left( x \mapsto \lambda \mathbin{.} x + a \right) > x \text{\ \ iff
     \ \ $\lambda \mathbin{.} b + a > b$ for sufficiently
     large $b \in G$.} \]
  The axioms \hyperref[ga1]{\textbf{MGA1}} and \hyperref[ga2]{\textbf{MGA2}} follow from the fact that $(G, +, 0, <,
  .)$ is an ordered vector space over $K$. We write $\tmop{Aff}_K^+ (G)$ for
  the ordered group $K^{>} \rtimes_{\rho} G$ given by \Cref{prop-gog-product}. Since $(G, +, 0)$ is Abelian, the axiom \hyperref[ga4]{\textbf{MGA4}} is satisfied. Lastly,
  given $\lambda, \lambda' \in K^{>}$ with $\lambda > 1$ and $a \in G$, we
  have $\rho (\lambda) (a) - a = \rho (\lambda - 1) (a)$ so $\rho (\lambda')
  (b) = \rho (\lambda) (a) - a$ for $b \assign \frac{\lambda - 1}{\lambda'}
  \mathbin{.} a \in G$. In particular \hyperref[ga3]{\textbf{MGA3}} holds. Therefore
  $\tmop{Aff}_K^+ (G)$ is a growth order group.
\end{example}

\subsection{Quotients}\label{subsection-quotients}

Given an ordered group $(\mathcal{G},
\cdot, 1, <)$ and a normal and convex subgroup $N \trianglelefteqslant
\mathcal{G}$, the quotient $\mathcal{G}/ N$ is an ordered group  (see {\cite[Section~1.4]{Fuchs11}} or {\cite[{\textsection}4
(1--2) p 260]{Levi:ord-grp}}) for the relation
\begin{equation}
  gN < hN \Longleftrightarrow g < h. \label{eq-quotient-ordering}
\end{equation}

\begin{lemma}
  {\tmem{\label{lem-quotient-map-isotonic}{\cite[{\textsection}4 (1) p
  260]{Levi:ord-grp}}}} The quotient map $\mathcal{G} \longrightarrow
  \mathcal{G}/ N$ is an ordered group homomorphism.
\end{lemma}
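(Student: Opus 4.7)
The plan is that this lemma is essentially immediate from the definitions, so I would write it as a short verification whose only substantive content is well-definedness of the ordering on $\mathcal{G}/N$, which the cited references already take care of. First I would note that $\pi \colon g \mapsto gN$ is a group homomorphism by the standard theory of normal subgroups: $\pi(gh) = (gN)(hN) = \pi(g)\pi(h)$, and $\pi(1) = N$ is the identity of $\mathcal{G}/N$.

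For order-preservation, the definition in \eqref{eq-quotient-ordering} makes the conclusion nearly tautological. Given $g \leqslant h$ in $\mathcal{G}$, either $g = h$, in which case $\pi(g) = \pi(h)$, or $g < h$, in which case either the cosets coincide (again giving equality of images) or they differ and the definition yields $\pi(g) < \pi(h)$ directly. Either way $\pi(g) \leqslant \pi(h)$, which is what it means for $\pi$ to be a homomorphism of ordered groups in the sense of this paper.

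The one thing I would flag as the actual (minor) obstacle is why \eqref{eq-quotient-ordering} defines an ordering on $\mathcal{G}/N$ at all, that is, why the truth of $g < h$ depends only on the cosets $gN$ and $hN$. This is where the convexity hypothesis on $N$ enters: since left multiplication by any $g \in \mathcal{G}$ is an order-preserving bijection of $\mathcal{G}$, it carries the convex set $N$ to the convex coset $gN$, and two distinct convex subsets of a linearly ordered set lie unambiguously on one side of the other. Hence if $gN \neq hN$ and $g < h$, then every element of $gN$ is strictly below every element of $hN$, giving representative-independence. The cited references record this fact, so I would not reprove it and simply appeal to them.
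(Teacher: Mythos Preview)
Your proposal is correct. The paper does not supply its own proof of this lemma at all: it simply records the statement with a citation to Levi \cite[{\textsection}4 (1) p 260]{Levi:ord-grp}, treating it as a known fact about quotients of ordered groups by convex normal subgroups. Your write-up is therefore more detailed than what the paper offers, and the content you identify as the only substantive point---well-definedness of the quotient ordering via convexity of cosets---is exactly what the cited reference establishes.
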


The ordering on $\mathcal{G}$ is lexicographic with respect to the
orderings on $\mathcal{G}/ N$ and $N$. That is, we have
\begin{equation}
  \mathcal{G}^{>} = \{ g \in \mathcal{G} \suchthat \nobracket (gN > N) \vee (g
  \in N^{>} \nobracket) \} . \label{eq-lexico-quotient}
\end{equation}

When the short exact sequence $0 \rightarrow N \rightarrow \mathcal{G}
\rightarrow \mathcal{G}/ N \rightarrow 0$ splits, and given a complement $H$
of $N$ in $\mathcal{G}$, we have an ordered group isomorphism $\mathcal{G}
\simeq \mathcal{G}/ N \rtimes_{\rho} N$ for the morphism $\rho \of
\mathcal{G}/ N \longrightarrow \tmop{Aut} (N)$ given by
\[ \forall g \in \mathcal{G}, \forall f \in N, \rho (gN) (f) \assign hfh^{- 1}
\]
for the unique $h \in H \cap gN$, and where $\mathcal{G}/ N \rtimes_{\rho} N$
is lexicographically ordered.

We shall now adapt these ideas to the case of growth order groups. If we want both
$N$ and $\mathcal{G}/ N$ to be growth ordered groups, we have to impose
further conditions on $(\mathcal{G}, N)$. This leads to the following
definition:

\begin{definition}
  Let $\mathcal{G}$ be a growth order group. A $\preccurlyeq$-initial subgroup
  of $\mathcal{G}$ is a non-empty subset $N \subseteq \mathcal{G}$ such that
  for all $f \in N$ and $g \in \mathcal{G}$, we have $g \preccurlyeq f
  \Longrightarrow g \in N$.
\end{definition}

That an $\preccurlyeq$-initial subgroup is indeed a subgroup follows from
Proposition~\ref{prop-growth-order}(\ref{def-growth-order-inv},
\ref{def-growth-order-prod}). For the sequel of
\Cref{subsection-quotients}, we fix a growth order group $(\mathcal{G}, \cdot,
1, <)$ and a normal and $\preccurlyeq$-initial subgroup~$N \subseteq
\mathcal{G}$.

\begin{proposition}
  \label{prop-growth-initial-quotient}Let $H\subseteq \mathcal{G}$ be a $\preccurlyeq$-initial subgroup. Then $H$ is a growth
  order group which is convex in $\mathcal{G}$. 
\end{proposition}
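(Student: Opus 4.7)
The plan is to show that the $\preccurlyeq$-initiality of $H$ makes every relevant invariant (centralisers, the relations $\preccurlyeq$, $\asymp$ and $\sim$, and the set of elements of a given growth rank) agree when computed in $H$ or in $\mathcal{G}$; once this is established, the three axioms for $H$ are immediate transfers of the corresponding axioms for $\mathcal{G}$.

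First I would treat convexity. Given $a,b\in H$ and $c\in\mathcal{G}$ with $a\leqslant c\leqslant b$, I split on whether $c\geqslant 1$ or $c<1$. In the first case, $1\leqslant c\leqslant b$ gives $c\preccurlyeq b$ by \Cref{prop-growth-order}\ref{def-growth-order-ord}, so $c\in H$ by initiality. In the second case, $1\leqslant c^{-1}\leqslant a^{-1}$ gives $c^{-1}\preccurlyeq a^{-1}\asymp a\in H$ (using \Cref{prop-growth-order}\ref{def-growth-order-inv}), and the same argument yields $c\in H$.

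Next I would record the key coincidence: for every $g\in H$, any element of $\mathcal{C}_{\mathcal{G}}(g)$ lies in the convex hull of $\mathcal{C}_{\mathcal{G}}(g)$, hence is $\preccurlyeq g$ in $\mathcal{G}$, hence belongs to $H$. Therefore $\mathcal{C}_{\mathcal{G}}(g)=\mathcal{C}_{H}(g)$ for every $g\in H$. From this it follows that for all $f,g\in H^{\neq}$, the relations $f\preccurlyeq g$, $f\asymp g$, $f\prec g$ and $f\sim g$ hold in $H$ if and only if they hold in $\mathcal{G}$, and that the growth rank and growth order of an element of $H$ are the same whether computed in $H$ or in $\mathcal{G}$.

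With this in hand, the three axioms transfer. For \hyperref[gog1]{\textbf{GOG1}}, apply \hyperref[gog1]{\textbf{GOG1}} in $\mathcal{G}$ to $f,g\in H^{>}$ and $g_0\in \mathcal{C}_H(g)=\mathcal{C}_{\mathcal{G}}(g)$ to obtain $f_0\in \mathcal{C}_{\mathcal{G}}(f)$ with $f_0\geqslant g_0$; since $f_0\in\mathcal{C}_{\mathcal{G}}(f)$ we have $f_0\preccurlyeq f\in H$, so $f_0\in H$ and hence $f_0\in\mathcal{C}_H(f)$. For \hyperref[gog2]{\textbf{GOG2}}, the hypothesis $f>\mathcal{C}_H(g)$ is literally $f>\mathcal{C}_{\mathcal{G}}(g)$ by the coincidence, so \hyperref[gog2]{\textbf{GOG2}} in $\mathcal{G}$ yields $fg>gf$. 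For \hyperref[gog3]{\textbf{GOG3}}, fix $\rho\in\tmop{gr}(H)$ and pick $h\in \rho$; choose a scaling element $\mathcal{s}\in\mathcal{G}$ with $\mathcal{s}\asymp h$. Then $\mathcal{s}\preccurlyeq h\in H$, so $\mathcal{s}\in H$; the centraliser $\mathcal{C}_H(\mathcal{s})=\mathcal{C}_{\mathcal{G}}(\mathcal{s})$ is Abelian, and for any $f\in H$ with $f\asymp\mathcal{s}$ the scaling property in $\mathcal{G}$ produces some $g\in\mathcal{C}_{\mathcal{G}}(\mathcal{s})^{\neq}=\mathcal{C}_H(\mathcal{s})^{\neq}$ with $g\sim f$ in $\mathcal{G}$, hence in $H$.

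The only place requiring care is the observation that $\mathcal{C}_{\mathcal{G}}(g)\subseteq H$ for $g\in H$; everything else is bookkeeping built on that fact. Once it is in place, the proof is simply an axiom-by-axiom restriction argument.
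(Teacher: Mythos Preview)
Your proposal is correct and follows essentially the same approach as the paper: the paper's proof is extremely terse, noting only that convexity follows from \Cref{prop-convex-asymp}, that $\preccurlyeq$-initiality forces $\mathcal{C}_H(h)=\mathcal{C}_{\mathcal{G}}(h)$ for $h\in H$, and that this ``is easily seen to imply'' the axioms transfer. You have simply unpacked these remarks, citing \Cref{prop-growth-order}\ref{def-growth-order-ord} directly for convexity rather than the derived \Cref{prop-convex-asymp}, and spelling out the axiom-by-axiom restriction; one small imprecision is that your centraliser coincidence $\mathcal{C}_{\mathcal{G}}(g)\subseteq H$ should be stated for $g\in H^{\neq}$ only (it fails for $g=1$), though this does not affect any of the applications since all three axioms concern non-trivial elements.
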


\begin{proof}
  That $H$ is convex follows from \Cref{prop-convex-asymp}. We note by $\preccurlyeq$-initiality that the centraliser in $H$ of an $h \in H$ is simply its centraliser in $\mathcal{G}$. This is easily seen to imply that $H$ is a growth order group.
\end{proof}

\begin{proposition}
  \label{prop-quotient-gog}Assume that the following holds
  \begin{equation}
    \forall f, g \in \mathcal{G} \setminus N, [f, g] \in N \Longrightarrow f
    \asymp g. \label{eq-quotient-centralizer-condition}
  \end{equation}
  Then $\mathcal{G}/ N$, with the ordering given by
  {\tmem{\eqref{eq-quotient-ordering}}}, is a growth order group.
\end{proposition}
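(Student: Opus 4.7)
The plan is to verify \hyperref[gog1]{\textbf{GOG1}}, \hyperref[gog2]{\textbf{GOG2}} and \hyperref[gog3]{\textbf{GOG3}} for $\mathcal{G}/N$ by exploiting the characterization that hypothesis \eqref{eq-quotient-centralizer-condition} forces on centralisers in the quotient. The starting observation is that for $g \in \mathcal{G} \setminus N$ we have $fN \in \mathcal{C}(gN)$ if and only if $[f,g] \in N$, and by \eqref{eq-quotient-centralizer-condition} any such $f$ is either in $N$ (yielding the identity of $\mathcal{G}/N$) or satisfies $f \asymp g$ in $\mathcal{G}$. Since $N$ is $\preccurlyeq$-initial, every element of $\mathcal{G} \setminus N$ $\preccurlyeq$-dominates all of $N$, and $N$ is convex, so the quotient ordering \eqref{eq-quotient-ordering} is an ordered group ordering by \Cref{lem-quotient-map-isotonic}.

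For \hyperref[gog1]{\textbf{GOG1}}, I would take $fN \geq gN > N$ and $g_0N \in \mathcal{C}(gN)$; the case $g_0 \in N$ is handled by choosing $f_0 = 1$, while if $g_0 \notin N$ the characterization gives $g_0 \asymp g$, so there is $c \in \mathcal{C}(g)$ with $g_0 \leq c$ by definition of $\preccurlyeq$, and \hyperref[gog1]{\textbf{GOG1}} in $\mathcal{G}$ furnishes $f_0 \in \mathcal{C}(f)$ with $f_0 \geq c \geq g_0$. For \hyperref[gog2]{\textbf{GOG2}}, the hypothesis $fN > \mathcal{C}(gN)$ combined with $\mathcal{C}(g) \subseteq C_{\mathcal{G}}(gN)$ and the fact that $f > N$ yields $f > \mathcal{C}(g)$ in $\mathcal{G}$; applying \hyperref[gog2]{\textbf{GOG2}} in $\mathcal{G}$ gives $fg > gf$. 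Moreover $[g,f] \notin N$, since otherwise $fN$ would itself lie in $\mathcal{C}(gN)$, contradicting the strict inequality; hence $fgN > gfN$.

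The substantive step is \hyperref[gog3]{\textbf{GOG3}}. Given a growth rank $\rho \in \tmop{gr}(\mathcal{G}/N)$ represented by $gN$ with $g \notin N$, pick a scaling $\mathcal{s} \in \mathcal{G}$ with $\mathcal{s} \asymp g$ (using \hyperref[gog3]{\textbf{GOG3}} in $\mathcal{G}$), noting $\mathcal{s} \notin N$ by $\preccurlyeq$-initiality. I claim $\mathcal{s}N$ is scaling. The main computation is: for any $f \notin N$ with $[f,\mathcal{s}] \in N$, the hypothesis gives $f \asymp \mathcal{s}$, so by scaling of $\mathcal{s}$ there is $h \in \mathcal{C}(\mathcal{s})$ with $h \sim f$ in $\mathcal{G}$. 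Since $h$ commutes with $\mathcal{s}$, a direct manipulation using $f^{-1}\mathcal{s}^{-1}f = [f,\mathcal{s}]\mathcal{s}^{-1}$ yields
\[
[fh^{-1}, \mathcal{s}] = h[f,\mathcal{s}]h^{-1} \in N,
\]
because $N$ is normal. If $fh^{-1} \notin N$, hypothesis \eqref{eq-quotient-centralizer-condition} would force $fh^{-1} \asymp \mathcal{s}$, contradicting $fh^{-1} \prec f \asymp \mathcal{s}$; hence $fh^{-1} \in N$ and $fN = hN$. This proves $\mathcal{C}(\mathcal{s}N) = \mathcal{C}(\mathcal{s})N/N$, which is Abelian as a quotient of the Abelian $\mathcal{C}(\mathcal{s})$ (by \Cref{cor-CT} applied in $\mathcal{G}$).

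For the second half of the scaling property, given $fN \asymp \mathcal{s}N$ with $f \notin N$, I would bound $fN$ between images of elements of $\mathcal{C}(\mathcal{s})$ and use the $\preccurlyeq$-initiality of $N$ to deduce $f \asymp \mathcal{s}$ in $\mathcal{G}$; then taking $h \in \mathcal{C}(\mathcal{s})$ with $h \sim f$ gives $hN \in \mathcal{C}(\mathcal{s}N)^{\neq}$ (since $h \notin N$, else $f \in N$), and $fh^{-1} \prec \mathcal{s}$ in $\mathcal{G}$ descends to $fN(hN)^{-1} \prec \mathcal{s}N$ in $\mathcal{G}/N$. The main obstacle is the Abelian centraliser argument above: without the identity $[fh^{-1},\mathcal{s}] = h[f,\mathcal{s}]h^{-1}$ and the normality of $N$, one cannot see that $f$ actually lies in $\mathcal{C}(\mathcal{s})\cdot N$, and everything downstream (the scaling clause, the comparison of growth ranks between $\mathcal{G}$ and $\mathcal{G}/N$) rests on this identification.
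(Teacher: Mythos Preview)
Your proposal is correct and follows essentially the same route as the paper. The only notable difference is in the \hyperref[gog3]{\textbf{GOG3}} step: you compute the centraliser $\mathcal{C}(\mathcal{s}N)=\mathcal{C}(\mathcal{s})N/N$ first (via the identity $[fh^{-1},\mathcal{s}]=h[f,\mathcal{s}]h^{-1}$ together with normality of $N$) and then verify the scaling approximation, whereas the paper does these in the opposite order and uses the direct identity $[\,\mathcal{t}\delta,\mathcal{s}\,]=[\delta,\mathcal{s}]$ for $\mathcal{t}\in\mathcal{C}(\mathcal{s})$, which avoids invoking normality at that particular step; both arguments are equivalent.
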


\begin{proof}
  Let $f, g \in \mathcal{G}$ with $fN > gN > N$. In particular $f, g \in
  \mathcal{G}^{>}$ and $f > g$. Let $g_0 N \in \mathcal{C} (gN)$, so
  $[g_0, g] \in N$. We have $g_0 \asymp g$ by
  \eqref{eq-quotient-centralizer-condition}. \hyperref[gog1]{\textbf{GOG1}} in $\mathcal{G}$,
  gives an $f_0 \in \mathcal{C} (f)$ with $f_0 \geqslant g_0$, hence $f_0 N \geqslant fN$. We have $[f_0, f] = 1 \in N$ so $f_0 N
  \in \mathcal{C} (fN)$. This shows that \hyperref[gog1]{\textbf{GOG1}} holds in $\mathcal{G}/
  N$.
  
  We next derive \hyperref[gog2]{\textbf{GOG2}}. Let $f, g > N$ with $(fN) \succ (gN)$. We have
  $\mathcal{C} (g) N \subseteq \mathcal{C} (gN)$, so $fN >\mathcal{C} (g) N$,
  which is equivalent to $f >\mathcal{C} (g) N$. In particular, we have $f
  >\mathcal{C} (g)$, so $f^{- 1} \succ g^{- 1}$. By
  \eqref{eq-quotient-centralizer-condition}, we obtain $[f^{- 1}, g^{- 1}] \nin
  N$. But $[f^{- 1}, g^{- 1}] > 1$ by \hyperref[gog2]{\textbf{GOG2}} in $\mathcal{G}$, so $[f^{-
  1}, g^{- 1}] > N$. That is, we have $fgf^{- 1} N > gN$, whence \hyperref[gog2]{\textbf{GOG2}}
  holds in $\mathcal{G}/ N$.
  
  Finally, let $g \in \mathcal{G} \setminus N$. Let $\mathcal{s}$ be scaling in $\mathcal{G}$ with $\mathcal{s} \asymp g$, and let $f \in
  \mathcal{G}$ with $(fN) \asymp (gN)$ in $\mathcal{G}/ N$. From
  \eqref{eq-quotient-centralizer-condition}, we deduce that there are $g', g''
  \in \mathcal{C} (g)$ with $g' \leqslant f \leqslant g''$. This implies that
  $f \asymp g$, so there is a $\mathcal{t} \in \mathcal{C} (\mathcal{s})$ with
  $\mathcal{t} \sim f$. We have $\mathcal{t}N \in \mathcal{C} (\mathcal{s}N)$
  and $(fN)  (\mathcal{t}N)^{- 1} = (f\mathcal{t}^{- 1}) N \prec fN$. We claim
  that $\mathcal{C} (\mathcal{s}N) = \{ \mathcal{u}N \suchthat \mathcal{u} \in
  \mathcal{C} (\mathcal{s}) \}$. Indeed, let $g \in \mathcal{G}$ with $gN \in
  \mathcal{C} (\mathcal{s}N)$, so $[g, \mathcal{s}] \in N$. We have $g \asymp
  \mathcal{s}$ so there is a $\mathcal{t} \in \mathcal{C} (\mathcal{s})$ with
  $g \sim \mathcal{t}$. Writing $\delta \assign \mathcal{t}^{- 1} g$, we have
  $[g, \mathcal{s}] = \delta^{- 1} \mathcal{t}^{- 1} \mathcal{s}^{- 1}
  \mathcal{t} \delta \mathcal{s}= [\mathcal{t}\delta, \mathcal{s}]$. Since $\mathcal{t}$ and $\mathcal{s}$ commute, we obtain $[g,\mathcal{s}]= [\delta, \mathcal{s}] \in N$. As $\delta
  \prec \mathcal{s}$, we deduce with \eqref{eq-quotient-centralizer-condition}
  that $\delta \in N$, so $gN =\mathcal{t}N$ as claimed. Recall by Corollary~\ref{cor-CT} that $\mathcal{C}(\mathcal{s})$ is Abelian. Thus
  $\{ \mathcal{u}N \suchthat \mathcal{u} \in \mathcal{C} (\mathcal{s}) \}$ is
  Abelian, and $\mathcal{s}N$ is scaling in $\mathcal{G}/ N$. So \hyperref[gog3]{\textbf{GOG3}} holds.
\end{proof}

Given two linearly ordered sets $(A, <)$ and $(B, <)$, we write $A \amalg B$
for the disjoint union $A \times \{0\} \sqcup B \times \{1\}$ ordered so that
$A \times \{0\} < B \times \{1\}$ and that $a \mapsto (a, 0)$ and $b \mapsto
(b, 1)$ are ordered embeddings $A \longrightarrow A \amalg B$ and $B
\longrightarrow A \amalg B$ respectively. In the next proof, we use the notation $v_{\mathcal{G}}$ for the standard valuation on a growth order group $\mathcal{G}$, in order to distinguish between various growth order groups.

\begin{proposition}
  \label{prop-quotient-ranks}Assume that
  {\tmem{\eqref{eq-quotient-centralizer-condition}}} holds. We have an
  isomorphism of ordered sets
  \[ \Phi : v_{\mathcal{G}} (\mathcal{G}) \longrightarrow
     v_N (N) \amalg v_{\mathcal{G}/ N} (\mathcal{G}/ N)
  \]
  defined by $\Phi (v (g)) \assign
  v (gN)$ if $g \nin N$ and $\Phi (v(g)) = v_N (g)$ if $g \in N^{\neq}$.
\end{proposition}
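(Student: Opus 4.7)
The plan is to verify well-definedness, injectivity, surjectivity and order-preservation of $\Phi$ by splitting into cases according to whether elements lie in $N$, and to reduce every case to one of two structural facts about how centralisers in $\mathcal{G}$ relate to those in $N$ and in $\mathcal{G}/ N$.

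First I will show that for every $g \in N^{\neq}$ one has $\mathcal{C}_{\mathcal{G}} (g) \subseteq N$, hence $\mathcal{C}_{\mathcal{G}} (g) = \mathcal{C}_N (g)$. Indeed, any $h \in \mathcal{C}_{\mathcal{G}} (g)^{\neq}$ lies in the convex hull of $\mathcal{C} (g)$ (it belongs to $\mathcal{C}(g)$ itself) and thus satisfies $h \preccurlyeq g$, so $\preccurlyeq$-initiality of $N$ forces $h \in N$. A direct consequence is that $\preccurlyeq_{\mathcal{G}}$ and $\asymp_{\mathcal{G}}$ restrict on $N^{\neq}$ to $\preccurlyeq_N$ and $\asymp_N$, so the restriction of $\Phi$ to ranks of elements of $N^{\neq}$ is a well-defined order-embedding whose image is exactly $\tmop{gr}_N (N)$.

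Next I will prove the key fact needed in $\mathcal{G}/ N$: for $g \in \mathcal{G} \setminus N$ and $h \in \mathcal{G}$ with $hN \in \mathcal{C} (gN)$, either $h \in N$ (so $hN = N$) or $h \asymp g$. This is immediate from $[h, g] \in N$ together with \eqref{eq-quotient-centralizer-condition}. Combined with the convexity of $N$, I will deduce that for $g, g' \in \mathcal{G} \setminus N$,
\begin{equation*}
 g \preccurlyeq_{\mathcal{G}} g' \; \Longleftrightarrow \; gN \preccurlyeq_{\mathcal{G}/ N} g'N.
\end{equation*}
The direction $\Longrightarrow$ is routine: bounds $g_0 \leqslant g \leqslant g_1$ with $g_0, g_1 \in \mathcal{C} (g')$ descend to bounds $g_0 N \leqslant gN \leqslant g_1 N$ in $\mathcal{C} (g'N)$. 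The direction $\Longleftarrow$ takes witnesses $h_0 N \leqslant gN \leqslant h_1 N$ with $h_i N \in \mathcal{C} (g'N)$, uses the dichotomy above to conclude $h_0, h_1 \preccurlyeq g'$ in $\mathcal{G}$, and then enlarges the $h_i$ to actual elements of $\mathcal{C} (g')$ bracketing $g$.

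With these two ingredients, the bijectivity and order-preservation of $\Phi$ follow from a routine case split, the only configuration not directly covered being $g \in N^{\neq}$ versus $g' \in \mathcal{G} \setminus N$: there the linearity of $\preccurlyeq$ combined with $\preccurlyeq$-initiality of $N$ forces $g \prec g'$, matching the fact that the $N$-part of the disjoint union $\tmop{gr}_N (N) \amalg \tmop{gr}_{\mathcal{G}/ N} (\mathcal{G}/ N)$ precedes the $\mathcal{G}/ N$-part. The main obstacle is the direction $\Longleftarrow$ of the displayed equivalence, where the use of \eqref{eq-quotient-centralizer-condition} is essential to lift centraliser witnesses from $\mathcal{G}/ N$ back to $\mathcal{G}$.
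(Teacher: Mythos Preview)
Your plan is correct and follows essentially the same case split as the paper's proof. In fact your treatment of the non-$N$ case is more careful: the paper only invokes that the quotient map is an ordered-group homomorphism to pass from $f \prec_{\mathcal G} g$ to $fN \prec_{\mathcal G/N} gN$, whereas you make explicit (via the dichotomy coming from \eqref{eq-quotient-centralizer-condition}) why $gN \preccurlyeq g'N$ lifts back to $g \preccurlyeq g'$, which is exactly what is needed to exclude $fN \asymp gN$ and make $\Phi$ strictly increasing rather than merely nondecreasing.
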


\begin{proof}
  Let $g, h \in \mathcal{G}^{\neq}$
  with $g \asymp h$. If $g \nin N$, then we have $g \succ N$, and $hN \asymp
  gN$ by \Cref{lem-quotient-map-isotonic}, whence $\Phi
  (v_{\mathcal{G}} (g))$ is well defined. If $g \in N$, then $h \in
  N$. Since $N$ is $\preccurlyeq$-initial, we have $v_N (g) = v_{\mathcal{G}} (g) =
  v_{\mathcal{G}} (h) = v_N (h)$, so $\Phi$ is
  well-defined. It is clear that $\Phi$ is surjective.
  
  Now let $f, g \in \mathcal{G}$ with $f \prec g$. We want to prove that $\Phi
  (v_{\mathcal{G}} (f)) < \Phi (v_{\mathcal{G}} (g))$. If $f
  \nin N$ and $g \nin N$, then $v (fN) < v (gN)$ by
  \Cref{lem-quotient-map-isotonic}. So $\Phi (v_{\mathcal{G}} (f)) =
  v_{\mathcal{G}/ N} (f) < v_{\mathcal{G}/ N} (g) = \Phi
  (v_{\mathcal{G}} (g))$. If $f \in N$ and $g \nin N$, then $f \prec
  g$ and $\Phi (v_{\mathcal{G}} (f)) < \Phi (v_{\mathcal{G}}
  (g))$ by definition. If $f, g \in N$, then $\Phi (v_{\mathcal{G}}
  (f)) = v_N (f) < v_N (g) = \Phi (v_{\mathcal{G}}
  (g))$. This concludes the proof.
\end{proof}

\subsection{Growth order groups of finite value set}\label{subsection-finite-rank}

We fix a non-trivial growth order group $\mathcal{G}$ such that $v
(\mathcal{G})$ has a maximal element $v (f_0)$. Let $\mathcal{s}$ be scaling with $\mathcal{s} \asymp f_0$. Write
$\mathcal{G}^{\prec \mathcal{s}}\assign \{ g \in \mathcal{G} \suchthat g \prec \mathcal{s}\}$. Note that $\mathcal{G}^{\prec \mathcal{s}}\neq \varnothing$.

\begin{proposition}
  The set $\mathcal{G}^{\prec \mathcal{s}}$ is a normal and
  $\preccurlyeq$-initial subgroup of $\mathcal{G}$.
\end{proposition}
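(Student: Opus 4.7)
My plan is to verify $\preccurlyeq$-initiality first, since that automatically entails the subgroup property (as noted just after the definition of $\preccurlyeq$-initial subgroup), and then deduce normality from a short case split that exploits the maximality of $\tmop{gr}(\mathcal{s})$. For initiality, I would take $f \in \mathcal{G}^{\prec \mathcal{s}}$ and any $g \in \mathcal{G}$ with $g \preccurlyeq f$, apply transitivity of $\preccurlyeq$ from \Cref{prop-quasi-ordering} to obtain $g \preccurlyeq \mathcal{s}$, and then rule out $g \asymp \mathcal{s}$ (assuming $g \neq 1$) by observing that this would yield $\mathcal{s} \preccurlyeq g \preccurlyeq f$, contradicting $f \prec \mathcal{s}$. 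Hence $g \prec \mathcal{s}$, so $g \in \mathcal{G}^{\prec \mathcal{s}}$.

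For normality, I would fix $g \in \mathcal{G}^{\prec \mathcal{s}}$ with $g \neq 1$ together with an arbitrary $h \in \mathcal{G}$. Since $\tmop{gr}(\mathcal{s})$ is maximal in $\tmop{gr}(\mathcal{G})$, we have $h \preccurlyeq \mathcal{s}$, which splits the analysis into two cases. If $h \prec \mathcal{s}$, then both $h$ and $g$ lie in the subgroup $\mathcal{G}^{\prec \mathcal{s}}$ (already established via initiality), so $hgh^{-1}$ belongs to $\mathcal{G}^{\prec \mathcal{s}}$ by closure under the group operations. If instead $h \asymp \mathcal{s}$, then $g \prec \mathcal{s} \asymp h$ gives $g \prec h$, and \Cref{lem-small-conjugate} yields $hgh^{-1} \prec h \asymp \mathcal{s}$, so $hgh^{-1} \in \mathcal{G}^{\prec \mathcal{s}}$ again.

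I do not expect a serious obstacle here: the essential input is the maximality of $\tmop{gr}(\mathcal{s})$, which forces any $h \notin \mathcal{G}^{\prec \mathcal{s}}$ to lie in the growth rank of $\mathcal{s}$ itself, and thereby collapses the potentially delicate question of how inner automorphisms act on growth ranks into a single direct application of \Cref{lem-small-conjugate}. Without maximality one would have to confront conjugation by elements $h$ of strictly greater growth rank than $\mathcal{s}$, where no such clean reduction is available; that scenario is precisely what maximality rules out.
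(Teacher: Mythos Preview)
Your proof is correct and follows essentially the same route as the paper, which tersely records that $\preccurlyeq$-initiality holds ``by definition'' and that normality follows from \Cref{lem-small-conjugate}. Your case split on whether $h \prec \mathcal{s}$ or $h \asymp \mathcal{s}$ is a perfectly good way to unpack that citation; an equivalent one-line variant is to note that conjugation by $h$ is an order-automorphism of $\mathcal{G}$, hence permutes growth ranks and fixes the maximal one, so $g \prec \mathcal{s}$ gives $hgh^{-1} \prec h\mathcal{s}h^{-1} \asymp \mathcal{s}$.
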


\begin{proof}
  This set is
  $\preccurlyeq$-initial by definition. It is normal by \Cref{lem-small-conjugate}. 
\end{proof}

By \Cref{prop-growth-initial-quotient}, the subgroup $\mathcal{G}^{\prec \mathcal{s}}$ is a growth order group.

\begin{proposition}
  The subgroup $\mathcal{C} (\mathcal{s})$ is a complement of
  $\mathcal{G}^{\prec \mathcal{s}}$.
\end{proposition}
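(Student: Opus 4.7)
The plan is to establish the two defining properties of a complement: first that $\mathcal{C}(\mathcal{s}) \cap \mathcal{G}^{\prec \mathcal{s}} = \{1\}$, and second that every element of $\mathcal{G}$ factors as a product $g = c h$ with $c \in \mathcal{C}(\mathcal{s})$ and $h \in \mathcal{G}^{\prec \mathcal{s}}$. Both will follow readily from the scaling property of $\mathcal{s}$ together with the crucial hypothesis that $\tmop{gr}(\mathcal{s}) = \tmop{gr}(f_0)$ is maximal in $\tmop{gr}(\mathcal{G})$.

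For the trivial intersection, take $g \in \mathcal{C}(\mathcal{s}) \cap \mathcal{G}^{\prec \mathcal{s}}$. Every non-trivial element $g$ of $\mathcal{C}(\mathcal{s})$ satisfies $g \preccurlyeq \mathcal{s}$ by definition of $\preccurlyeq$, and conversely $\mathcal{s} \in \mathcal{C}(g)$ gives $\mathcal{s} \preccurlyeq g$, so $g \asymp \mathcal{s}$. Since elements of $\mathcal{G}^{\prec \mathcal{s}}$ are, by definition, strictly $\prec \mathcal{s}$, the only possibility is $g = 1$.

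For the product decomposition, fix an arbitrary $g \in \mathcal{G}$. Since $\tmop{gr}(\mathcal{s})$ is maximal, one has $g \preccurlyeq \mathcal{s}$, so either $g \prec \mathcal{s}$ or $g \asymp \mathcal{s}$. In the first case $g \in \mathcal{G}^{\prec \mathcal{s}}$ and we write $g = 1 \cdot g$. In the second case, the scaling property of $\mathcal{s}$ gives a (unique) $c = \mathcal{C}(\mathcal{s})_g \in \mathcal{C}(\mathcal{s})^{\neq}$ with $c \sim g$; setting $h \assign c^{-1} g$ we have $h \prec g \asymp \mathcal{s}$, so $h \in \mathcal{G}^{\prec \mathcal{s}}$ and $g = c h$ as required.

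There is no serious obstacle here: the argument is essentially bookkeeping once the scaling property and maximality of $\tmop{gr}(\mathcal{s})$ are in hand. The only point requiring slight care is verifying that a non-identity element of $\mathcal{C}(\mathcal{s})$ necessarily satisfies $g \asymp \mathcal{s}$ rather than only $g \preccurlyeq \mathcal{s}$; this uses that $\mathcal{s}$ commutes with itself, so $\mathcal{s} \in \mathcal{C}(g)$, giving the reverse comparison $\mathcal{s} \preccurlyeq g$.
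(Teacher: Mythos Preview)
Your proof is correct and follows essentially the same approach as the paper's: trivial intersection via $g \asymp \mathcal{s}$ for non-trivial $g \in \mathcal{C}(\mathcal{s})$, and the product decomposition via the scaling property together with maximality of $\tmop{gr}(\mathcal{s})$. The only cosmetic difference is that you factor $g = c h$ with $h = c^{-1}g$, while the paper factors $g = (g\mathcal{t}^{-1})\mathcal{t}$; since $\mathcal{G}^{\prec \mathcal{s}}$ is normal these amount to the same thing, and your implicit use of $c \sim g \Rightarrow c^{-1}g \prec g$ is justified by \Cref{cor-inverses}.
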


\begin{proof}
  We have $\mathcal{G}^{\prec \mathcal{s}} \cap \mathcal{C} (\mathcal{s}) = \{ \mathcal{t} \in
  \mathcal{C} (\mathcal{s}) \suchthat \mathcal{t} \prec \mathcal{s} \} = \{ 1 \}$. For
  $g \in \mathcal{G}$, we either have $g \prec \mathcal{s}$, and then $g \in
  \mathcal{G}^{\prec \mathcal{s}}$, or $g \asymp \mathcal{s}$, and then given
  $\mathcal{t} \in \mathcal{C} (\mathcal{s})$ with $\mathcal{t} \sim g$, we
  have $g\mathcal{t}^{- 1} \prec \mathcal{s}$, whence $g = (g\mathcal{t}^{-
  1}) \mathcal{t} \in \mathcal{G}^{\prec \mathcal{s}} \mathcal{C}
  (\mathcal{s})$.
\end{proof}

Thus the sequence $0 \rightarrow \mathcal{G}^{\prec \mathcal{s}} \rightarrow
\mathcal{G} \rightarrow \mathcal{G}/\mathcal{G}^{\prec \mathcal{s}}
\rightarrow 0$ splits, and we have a natural isomorphism $\mathcal{G}^{\prec
\mathcal{s}} \rtimes \mathcal{C}
(\mathcal{s}) \longrightarrow \mathcal{G}$. If follows by induction that if
$v (\mathcal{G}) = \{\rho_1, \ldots, \rho_n \}$ is
finite with $\rho_1 > \cdots >\rho_n$, then $\mathcal{G}$ is
an iterated semidirect product
\begin{equation}
  \mathcal{G} \simeq (\cdots \: (\mathcal{C}_{\rho_n} \rtimes
  \mathcal{C}_{\rho_{n - 1}}) \rtimes \cdots)
  \rtimes \mathcal{C}_{\rho_1} .
  \label{eq-finite-growth-rank}
\end{equation}
This can be taken as a conclusion to our discussion in
\Cref{subsection-structure} in the case of finite value set, i.e. a positive
answer to Question~\ref{question-infinite} in that case.

\begin{proposition}
  Suppose that $\mathcal{G}$ has value set $n > 0$ and
  let $\mathcal{t}$ be scaling with $v
  (\mathcal{t}) = \min v (\mathcal{G}^{\neq})$. If $\mathcal{C}
  (\mathcal{t})$ is Archimedean, then $n \leqslant 2$.
\end{proposition}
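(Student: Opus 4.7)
The plan is to derive a contradiction from $n \geqslant 3$ by exhibiting a normal Archimedean subgroup $N \trianglelefteqslant \mathcal{G}$ such that $\mathcal{G}/N$ embeds order-preservingly into $(\mathbb{R}^{>0}, \cdot)$, and then producing witnesses of non-Archimedean behaviour in $\mathcal{G}/N$.

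First I identify $N \assign \mathcal{C}(\mathcal{t})$ with $\{g \in \mathcal{G}^{\neq} \suchthat g \asymp \mathcal{t}\} \cup \{1\}$. For any $f \asymp \mathcal{t}$, scalingness of $\mathcal{t}$ gives $h \in \mathcal{C}(\mathcal{t})^{\neq}$ with $h \sim f$, so $fh^{-1} \prec f \asymp \mathcal{t}$; the minimality of $\tmop{gr}(\mathcal{t})$ then forces $fh^{-1} = 1$, whence $f = h \in \mathcal{C}(\mathcal{t})$. The converse inclusion is immediate from the definition of $\preccurlyeq$. Since growth rank is preserved under conjugation, $N$ is normal, and it is convex (\Cref{prop-convex-asymp}) and $\preccurlyeq$-initial by the above characterisation. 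Hence $\mathcal{G}/N$ is an ordered group via \eqref{eq-quotient-ordering}.

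Conjugation by $\mathcal{G}$ induces strictly increasing group automorphisms of $N$ that factor through $\mathcal{G}/N$ because $N$ is Abelian. Since $N$ is Archimedean, H{\"o}lder's theorem embeds $N$ into $(\mathbb{R}, +)$, and every strictly increasing group automorphism of an ordered subgroup of $\mathbb{R}$ is multiplication by a positive real, giving a group homomorphism $\phi \of \mathcal{G}/N \longrightarrow (\mathbb{R}^{>0}, \cdot)$. Its kernel is trivial, as any $g$ acting trivially on $N$ in particular commutes with $\mathcal{t}$ and hence lies in $\mathcal{C}(\mathcal{t}) = N$. For order-preservation: if $g > 1$ and $g \nin N$ then $g \succ \mathcal{t}$, so \Cref{lem-strict} yields $g > \mathcal{C}(\mathcal{t}) = N$, and \hyperref[gog2]{\textbf{GOG2}} gives $g \mathcal{t} g^{-1} > \mathcal{t}$, i.e. $\phi(gN) > 1$. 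Consequently $\mathcal{G}/N$ order-embeds into $(\mathbb{R}^{>0}, \cdot)$ and is Archimedean.

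Assume $n \geqslant 3$ and pick a scaling element $\mathcal{s}$ with $\tmop{gr}(\mathcal{s})$ maximal, together with a positive $f \in \mathcal{G}$ satisfying $\tmop{gr}(\mathcal{t}) < \tmop{gr}(f) < \tmop{gr}(\mathcal{s})$; both lie outside $N$. From $f \prec \mathcal{s}$ I obtain $f^k \asymp f \prec \mathcal{s}$ for each $k \in \mathbb{N}^{>}$, and \Cref{lem-strict} then yields $f^k < \mathcal{s}$. Moreover \Cref{prop-growth-order-basic} gives $f^k \mathcal{s}^{-1} \asymp \mathcal{s}$, so $f^k \mathcal{s}^{-1} \nin N$. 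Therefore $f^k N < \mathcal{s} N$ in $\mathcal{G}/N$ for every $k$, contradicting Archimedeanness. The main difficulty is the identification $N = \mathcal{C}(\mathcal{t})$ and then extracting the order-preserving embedding $\phi$; the decisive input for order-preservation is \hyperref[gog2]{\textbf{GOG2}}, after which everything downstream is immediate.
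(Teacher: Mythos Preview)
Your proof is correct and rests on the same two ingredients as the paper's --- \hyperref[gog2]{\textbf{GOG2}} and Hion's theorem (which the paper cites as {\cite[Theorem~1.5.1]{MuRhem}}) --- but you package them differently. The paper first invokes the semidirect product decomposition \eqref{eq-finite-growth-rank} to reduce to $n=3$, then exhibits the contradiction by hand: writing $\sigma,\chi$ for conjugation by $\mathcal{s}_2,\mathcal{s}_1$ on $\mathcal{C}(\mathcal{t})$, it shows via \hyperref[gog2]{\textbf{GOG2}} that $\sigma(\mathcal{t})>\chi^{[n]}(\mathcal{t})$ for all $n$, which under Hion's theorem becomes an impossible inequality $s>r^n$ between the two real multipliers. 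Your route instead internalises Hion's theorem into a single structural statement --- that $\mathcal{G}/\mathcal{C}(\mathcal{t})$ order-embeds into $(\mathbb{R}^{>0},\cdot)$ --- and then reads off the contradiction from the existence of two incomparable growth ranks above $\tmop{gr}(\mathcal{t})$. This buys you a cleaner argument that avoids the reduction to $n=3$ and the explicit semidirect product machinery; the paper's version is more concrete and makes the offending inequality visible. Either way, the decisive step is the same: \hyperref[gog2]{\textbf{GOG2}} forces the conjugation representation on the minimal centraliser to be strictly order-preserving, and Hion's theorem then forces the quotient to be Archimedean.
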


\begin{proof}
  Assume for contradiction that $n > 2$. Using the above  decomposition $n - 3$ times, we may assume that $n = 3$. Fix two scaling elements
  $\mathcal{s}_1, \mathcal{s}_2$ with
  $\mathcal{t} \prec \mathcal{s}_1 \prec \mathcal{s}_2$. So $\mathcal{G} \simeq \mathcal{G}_1
  \rtimes \mathcal{C} (\mathcal{s}_2)$
  where $\mathcal{G}_1 =\mathcal{C} (\mathcal{t}) \rtimes \mathcal{C} (\mathcal{s}_1).$
  Let $\sigma \in
  \tmop{Aut} (\mathcal{G}_1)$ be the conjugation by $\mathcal{s}_2$ and let $\chi \in \tmop{Aut}(\mathcal{C}(\mathcal{t}))$ be the conjugation by
  $\mathcal{s}_1$. Since $\mathcal{t} \prec
  \mathcal{s}_1$ in $\mathcal{G}_1$, we have $\sigma (\mathcal{t}) \prec
  \sigma ( \mathcal{s}_1)$, whence $\sigma (\mathcal{t}) \asymp
  \mathcal{t}$. But then $\sigma (\mathcal{t}) \in \mathcal{C}
  (\mathcal{t})$. For $n \in \mathbb{N}$ we have $\mathcal{s}_1^n \prec \mathcal{s}_2$, so $\sigma
  (\mathcal{t}) > \chi^{[n]} (\mathcal{t})$ by \hyperref[gog2]{\textbf{GOG2}}. Since
  $\mathcal{C} (\mathcal{t})$ is Archimedean, this contradicts
  {\cite[Theorem~1.5.1]{MuRhem}}.
\end{proof}

\subsection{O-minimal germs}\label{subsection-o-minimal}

Let $\mathcal{M}= (M, \ldots)$ be a first-order structure in a language
$\mathcal{L}$. Assume that $\mathcal{M}$ has definable Skolem functions
(allowing parameters). This is the case for instance if $\mathcal{M}$ is an
o-minimal expansion of an ordered group in a language expanding
$\mathcal{L}_{\tmop{og}}$.

Let $n > 0$ and let $p$ be an $n$-type in $\mathcal{M}$ over $M$ whose finite subsets are realised in $\mathcal{M}$. Let $p
(\mathcal{M}) \assign \{\varphi (M^n) \suchthat \varphi \in p\}$ be the
corresponding ultrafilter on the Boolean algebra of definable subsets of
$M^n$. Consider the set $\mathcal{F}_n$ of functions $M^n \longrightarrow M$
that are definable in $\mathcal{M}$ with parameters, and the set
$\mathcal{M}_p$ of germs at $p$
\[ [f]_p \assign \{g \in \mathcal{F}_n \suchthat \exists X \in p (\mathcal{M}), f \text{ and $g$ coincide on $X$} \} \]
of such functions. If $R$ is a relation symbol of arity $k \in \mathbb{N}$ in
the corresponding language (including function symbols and
constant symbols), then $R$ is interpreted on $\mathcal{M}_p$ as the well-defined
subset of tuples $([f_1], \ldots, [f_k])$ for which there is an $X \in
   p (\mathcal{M})$ with $\mathcal{M} \vDash R \left[
   f_1 \left( \overline{m} \right), \ldots, f_n \left( \overline{m} \right)
   \right]$ for all $\overline{m} \in X$.

It is a folklore result that $\mathcal{M}_p$ is an elementary extension of
$\mathcal{M}$ for the natural inclusion
$\Psi \of \mathcal{M} \longrightarrow \mathcal{M}_p$ sending $\overline{m_0}
\in M$ to the germ of the constant function $\overline{m} \mapsto
\overline{m_0}$. This follows from the following lemma:

\begin{lemma}
  \label{lem-los}For all $\mathcal{L}$-formulas $\varphi (v_1, \ldots, v_k)$ with parameters in $M$
  and $f_1, \ldots, f_k \in \mathcal{F}_n$, we have $\{ \overline{m} \in M^n \suchthat \mathcal{M} \vDash \varphi
     (f_1 (\overline{m}), \ldots, f_p (\overline{m}))\} \in p (\mathcal{M})$ if and only if $\mathcal{M}_p \vDash \varphi ([f_1], \ldots, [f_k])$.
     
\end{lemma}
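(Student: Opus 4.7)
The plan is to proceed by induction on the complexity of $\varphi$, following the pattern of the classical proof of \L{}o\'s's theorem for ultrapowers, adapted to the fact that $p(\mathcal{M})$ is an ultrafilter only on the Boolean algebra of $\mathcal{M}$-definable subsets of $M^n$. The base case is atomic formulas $R(t_1(\overline{v}), \ldots, t_j(\overline{v}))$: it reduces immediately to the definition given in the text of how relation symbols are interpreted on $\mathcal{M}_p$, once one remarks that $\mathcal{F}_n$ is closed under substitution into an $\mathcal{L}_M$-definable term (so each $t_i(f_1, \ldots, f_k)$ again lies in $\mathcal{F}_n$) and that $p(\mathcal{M})$ is closed under finite intersection and is upward closed under inclusion of definable sets.

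For the inductive step on Boolean connectives, conjunction is handled using that $p(\mathcal{M})$ is closed under finite intersection, applied to the two sets provided by the inductive hypothesis. Negation is the step where the ultrafilter property is essential: for any $\mathcal{M}$-definable $X \subseteq M^n$, exactly one of $X$ and $M^n \setminus X$ lies in $p(\mathcal{M})$, so the set $\{\overline{m} : \mathcal{M} \vDash \neg\psi(f_1(\overline{m}), \ldots, f_k(\overline{m}))\}$ is in $p(\mathcal{M})$ iff its complement is not, which by the inductive hypothesis is equivalent to $\mathcal{M}_p \not\vDash \psi([f_1], \ldots, [f_k])$.

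The only genuinely non-formal step, and the one I expect to be the main obstacle, is the existential quantifier $\varphi(\overline{v}) = \exists w\, \psi(w, \overline{v})$, which is exactly where the standing hypothesis that $\mathcal{M}$ has definable Skolem functions will enter. The forward direction is routine: given $[g] \in \mathcal{M}_p$ with $\mathcal{M}_p \vDash \psi([g], [f_1], \ldots, [f_k])$, the inductive hypothesis produces a set in $p(\mathcal{M})$ on which $\psi(g(\overline{m}), f_1(\overline{m}), \ldots, f_k(\overline{m}))$ holds, and this set is contained in the set where $\exists w\, \psi(w, f_1(\overline{m}), \ldots, f_k(\overline{m}))$ holds. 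For the converse, assuming $A := \{\overline{m} : \mathcal{M} \vDash \exists w\, \psi(w, f_1(\overline{m}), \ldots, f_k(\overline{m}))\} \in p(\mathcal{M})$, I would pick an $\mathcal{L}_M$-definable Skolem function $h$ for the formula $\exists w\, \psi(w, \overline{v})$, set $g := h \circ (f_1, \ldots, f_k) \in \mathcal{F}_n$, and observe that $\mathcal{M} \vDash \psi(g(\overline{m}), f_1(\overline{m}), \ldots, f_k(\overline{m}))$ on all of $A$; the inductive hypothesis then gives $\mathcal{M}_p \vDash \psi([g], [f_1], \ldots, [f_k])$, hence $\mathcal{M}_p \vDash \varphi([f_1], \ldots, [f_k])$. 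Without definable Skolem functions one would at best have a pointwise witness on each $\overline{m} \in A$, with no way to package these into a single germ in $\mathcal{M}_p$, which is why this hypothesis is indispensable here.
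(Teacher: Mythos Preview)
Your argument is correct and is the standard \L{}o\'s-style induction, with the existential step handled exactly as it must be via definable Skolem functions. Note, however, that the paper does not actually prove this lemma: it is stated without proof, the surrounding text calling the corresponding elementary-extension statement a ``folklore result''. So there is no paper proof to compare against; your write-up supplies precisely the expected argument.
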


Suppose that $\mathcal{L}$ contains a binary relation symbol $<$ and that
$\mathcal{M}= (M, <, \ldots)$ is o-minimal. The set of formulas
$m < v_0$, in one free variable $v_0$, where $m$ ranges in $M$ induces a unique type
$p_{\infty}$ over $M$ called the type at infinity. The germ $[f]$ at $p_{\infty}$ of an $f
\in \mathcal{F}_n$ is simply its germ at $+ \infty$. We write
$\mathcal{M}_{\infty} \assign \mathcal{M}_{p_{\infty}}$\label{autolab15}. The
ordering on $\mathcal{M}_{\infty}$ is given by $[f_0] < [f_1]
\Longleftrightarrow f_0 (m) < f_1 (m)$ for all sufficiently large $m \in M$. By the monotonicity theorem {\cite[Chapter~3, (1.2)]{vdD98}} a definable function $f \of M \longrightarrow
M$ is strictly monotonic on some neighborhood of $+\infty$. A germ $[f]$ lies above each $m \in M$ under the embedding
$\mathcal{M} \longrightarrow \mathcal{M}_{\infty}$ if and only if $f$ tends to
$+ \infty$ at $+ \infty$. We define
$\mathcal{G}_{\mathcal{M}}$\label{autolab16} as the subset of
$\mathcal{M}_{\infty}$ of germs $[f]$ with~$[f] > M$. A germ in
$\mathcal{G}_{\mathcal{M}}$ cannot be constant or strictly decreasing, so it
is strictly increasing. We write $\tmop{id}$ for the identity function on $M$,
so $[\tmop{id}] \in \mathcal{G}_{\mathcal{M}}$.

Since $\mathcal{M}$ is o-minimal, for any $[f], [g] \in \mathcal{F}_{\infty}$,
there is an $m \in M$ such that $f ((m, + \infty))$ is a neighbourhood of~$+
\infty$. We may choose $m$ so that $f ((m, + \infty)) = (f (m), + \infty)$. So $f$ induces a strictly increasing bijection between two neighbourhoods of
$+ \infty$. The germ of $f \circ g$ lies in $\mathcal{G}_{\mathcal{M}}$.
Since this germ does not depend on $f, g$ we may define $[f] \circ [g] \assign
[f \circ g]$. Note that $[f] \circ [\tmop{id}] = [\tmop{id}] \circ [f] = [f]$.
Writing $f^{\tmop{inv}}$ for the inverse of $f \of (m, + \infty)
\longrightarrow (f (m), + \infty)$, we see that $[f^{\tmop{inv}}]$ only
depends on $[f]$, and we have $[f] \circ [f^{\tmop{inv}}] = [f^{\tmop{inv}}]
\circ [f] = [\tmop{id}]$. Thus $(\mathcal{G}_{\mathcal{M}}, \circ,
[\tmop{id}])$ is a group. The ordering on $\mathcal{G}_{\mathcal{M}}$
induced by that on~$\mathcal{M}_{\infty}$ is a left-ordering because the
germs are strictly increasing. It is a right-ordering by definition. So
$(\mathcal{G}_{\mathcal{M}}, \circ, [\tmop{id}], <)$ is an ordered group.

This raises the naive question: is $\mathcal{G}_{\mathcal{M}}$ always a growth
order group? The answer is negative. Indeed, it is known
{\cite[Theorem~8]{BaiBalVer}} that given any ordered group
$(\mathcal{G}, \cdot, 1, <)$, the structure $\mathcal{M} \assign (\mathcal{G},
<, (t_g)_{g \in \mathcal{G}})$ where each $t_g$ for $g \in \mathcal{G}$ is the
unary function $\mathcal{G} \longrightarrow \mathcal{G} \: ; h \mapsto gh$
eliminates quantifiers and has a universal axiomatisation. In particular, it
is o-minimal, and $g \mapsto [t_g]$ is an isomorphism between $(\mathcal{G},
\cdot, 1, <)$ and $(\mathcal{G}_{\mathcal{M}}, \circ, [\tmop{id}], <)$. If
$\mathcal{G}$ is not a growth order group, then neither is
$\mathcal{G}_{\mathcal{M}}$. We may still ask whether $\mathcal{G}_{\mathcal{M}}$ is a growth order group when $\mathcal{M}$ expands the real ordered field. We will answer this question in the positive in a particular case in the next
section. We finish with a positive answer to the naive question
for pure ordered groups:

\begin{example}
  Let $\mathcal{M} \assign (G, +, 0, <)$ be a non-trivial o-minimal ordered
  group. This is a divisible, Abelian ordered group {\cite{PilStein:omin}}, so it has Skolem functions. Recall {\cite{Rob:complete}} that the
  $\mathcal{L}_{\tmop{og}}$-theory $T_{\tmop{daog}}$ of non-trivial divisible
  Abelian ordered group is complete and has quantifier elimination in
  $\mathcal{L}_{\tmop{og}}$. It has a universal
  axiomatisation in the language $\mathcal{L}_{\tmop{doag}} \assign
  \langle \cdot, 1, <, \tmop{Inv}, (\mu_q)_{q \in \mathbb{Q}} \rangle$ where
  each $\mu_q, q \in \mathbb{Q}$ is interpreted as the
  scalar multiplication $x \mapsto q \mathbin{.} x$. This implies that the
  germ at $+ \infty$ of each definable function $G \longrightarrow G$ is that
  of a term in $\mathcal{L}_{\tmop{doag}}$. So each element of $\mathcal{M}_{\infty}$ is the germ of
  \[ G \longrightarrow G \: ; x \mapsto q \mathbin{.} x + y \]
  for fixed $q \in \mathbb{Q}$ and $y \in G$. In other words $\mathcal{G}_{\mathcal{M}}$ is isomorphic to the growth order
  group $\tmop{Aff}_{\mathbb{Q}}^+ (G)$ of \Cref{ex-affine}.
\end{example}

\section{H-fields with composition and inversion}\label{section-H-fields}

An {\tmem{H-field}}{\index{H-field}} {\cite{AvdD02,AvdD03}} is an ordered valued field $(K, +, \times,
0, 1, <, \mathcal{O})$ with convex valuation ring $\mathcal{O}$ and maximal ideal
thereof $\mathcal{o}$, equipped with a derivation $\partial \of K
\longrightarrow K$ such that the following conditions are satisfied:

\begin{descriptioncompact}
  \item[$\textbf{HF1}$] \label{hf1}$\forall a \in \mathcal{O}, \exists c \in
  \tmop{Ker} (\partial), a - c \in \mathcal{o}.$
  
  \item[$\textbf{HF2}$] \label{hf2}$\forall a \in K, a > \tmop{Ker} (\partial)
  \Longrightarrow \partial (a) > 0$.
\end{descriptioncompact}

We usually denote $\tmop{Ker} (\partial)$ by $C$. This is a
subfield of $K$ called the {\tmem{field of constants}}{\index{field of
constants}}. We write $K^{> C} \assign \{ a \in K \suchthat \forall c \in C,a>c\}$. For $a\in K$, we often write $a'=\partial(a)$, and we use the Landau notations 
$\mathcal{O} (a) \assign \mathcal{O}a = \{\delta a \suchthat \delta \in \mathcal{O}\}$ and
$\mathcal{o} (a) \assign \mathcal{o}a = \{\varepsilon a \suchthat \varepsilon \in \mathcal{o}\}$. So
$\mathcal{O} (1) =\mathcal{O}$ and $\mathcal{o} (1) =\mathcal{o}$. For $a \in K^{\times}$, we write\label{autolab17}
\[ a^{\dag} \assign \frac{a'}{a} \in K. \]
Note that $(ab)^{\dag} = a^{\dag} + b^{\dag}$ and $(ca)^{\dag} = a^{\dag}$ for
all $b \in K^{\times}$ and $c \in C^{\times}$. We have the following important
valuative inequality {\cite[Lemma~1.1]{AvdD02}}:
\begin{equation}
  \forall a, b \in \mathcal{o}, b' \in \mathcal{o} (a^{\dag}) . \label{eq-gap}
\end{equation}
Furthermore, we have {\cite[Corollary 1]{Rosli80}} l'Hospital's rule 
\begin{equation}
  \forall f, g \in \mathcal{H}, ((f \in \mathcal{o}(g) \wedge g \nin \Theta
  (1)) \Longrightarrow f' \in \mathcal{o}(g')). \label{eq-d-valued}
\end{equation}
\subsection{H-fields with
composition}\label{subsection-H-fields-with-composition}

We now expand H-fields with a composition law.

\begin{definition}
  \label{def-H-field-with-composition}An {\tmstrong{{\tmem{H-field with
  composition}}}}{\index{H-field with composition}} (over $C = \tmop{Ker}
  (\partial)$) is an H-field $(K, +, \cdot, 0, 1, <, \mathcal{O}, \partial)$
  with a fixed $x \in K^{> C}$ such that $x' = 1$, and a binary operation
  $\mathord{\circ} \of K \times K^{> C} \longrightarrow K$ satisfying the
  following conditions:
  
  \begin{descriptioncompact}
    \item[$\textbf{HFC1}$] \label{hfc1}For all $b \in K^{> C}$, the function
    $K \longrightarrow K \: ; \: a \mapsto a \circ b$ is a $C$-linear morphism
    of ordered rings.
    
    \item[$\textbf{HFC2}$] \label{hfc2}For all $a \in K$ and $b, d \in
    K^{> C}$, we have $a \circ (b \circ d) = (a \circ b) \circ d$.
    
    \item[$\textbf{HFC3}$] \label{hfc3}For all $a \in K^{> C}$, the function
    $K^{> C} \longrightarrow K^{> C} \: ; \: b \mapsto a \circ b$ is strictly
    increasing.
    
    \item[$\textbf{HFC4}$] \label{hfc4}For all $a \in K$ and $b \in K^{> C}$,
    we have
    \[ a \circ x = a \text{{\hspace{3em}}and{\hspace{3em}}$x \circ b = b.$} \]
    \item[$\textbf{HFC5}$] \label{hfc5}Let $a, \delta \in K$ and $b \in K^{>
    C}$ with $\delta \in \mathcal{o} (b)$ and $(a^{\dag} \circ b) \delta \in
    \mathcal{o}$. For all $n \in \mathbb{N}$, we have
    \[ a \circ (b + \delta) - \sum_{k \leqslant n} \frac{a^{(k)} \circ b}{k!}
       \delta^k \in \mathcal{o} ((a^{(n)} \circ b) \delta^n), \]
    where $a^{(k)}$ denotes the $k$-th derivative of $a$.
  \end{descriptioncompact}
\end{definition}

Consider the language $\mathcal{L}_{\tmop{hfc}}$ expanding the language of
ordered valued differential fields with a constant symbol $\underline{x}$ and a binary function
symbol $\circ$. We interpret $\underline{x}$ on $K$ as expected and extend
$\circ$ to $K \times K$ by setting $a \circ b \assign 0$ if $b \nin K^{> C}$.
Thus $K$ is an $\mathcal{L}_{\tmop{hfc}}$-structure, and the class of H-fields
with composition is elementary in $\mathcal{L}_{\tmop{hfc}}$.

The axioms \hyperref[hfc1]{\textbf{HFC1}}--\hyperref[hfc4]{\textbf{HFC4}} imply that $(K^{> C}, \circ, x, <)$ is an
ordered monoid that acts by automorphisms on
$(K, +, \cdot, 0, 1, <, \mathcal{O})$, by post-composition. In order to avoid confusion between
compositions and products in $K$, given an $a \in K^{> C}$ and an $n \in
\mathbb{N}$, we write $a^{[n]}$\label{autolab18} for the $n$-fold iterate of
$a$ (i.e. its $n$-th power in the monoid $K^{> C}$). If $a$ has an inverse in
$K^{> C}$, then we denote it by $a^{\tmop{inv}}$\label{autolab19} and we set
$a^{[- n]} \assign (a^{\tmop{inv}})^{[n]} = (a^{[n]})^{\tmop{inv}}$.

\begin{example}
  \label{ex-fractions}Let $C$ be an ordered field. Let $C (x)$ be a purely
  transcendental simple extension, ordered so that $x > C$. Write $\mathcal{O}$ for the convex hull of $C$ in $C (x)$, which
  is the set of fractions with degree $\leq 0$.
  
  We have a derivation $\partial \of C (x) \longrightarrow C (x)$
  with respect to $x$, which is determined by $C = \tmop{Ker} (\partial)$ and
  $\partial (x) = 1$. And $(C (x), +, \cdot, 0, 1, \partial,
  \mathcal{O}, <)$ is an H-field. For $P \in C (x)$ and $Q \in C (x)^{> C}$,
  since $Q$ lies above each pole of $P$, the compositum $P \circ Q$ is
  well-defined. It is easy to see that \hyperref[hfc1]{\textbf{HFC1}}--\hyperref[hfc4]{\textbf{HFC4}} are satisfied.
  Les us now justify that \hyperref[hfc5]{\textbf{HFC5}} holds. Let $F \in C(x)$ and $b,\delta,n$ as in \hyperref[hfc5]{\textbf{HFC5}}. We have $F' \in \mathcal{O} x^{-1} F$, so $(F^{(k+1)} \circ b) \delta^{k+1} \in \mathcal{O} (F^{(k)}  \circ b) \delta^k \frac{\delta}{b} \subseteq \mathcal{o} (F^{(k)}  \circ b) \delta^k$ for each $k \in \mathbb{N}$. We have formal identity $F\circ(b+y) = \sum \limits_{k \in \mathbb{N}} \frac{F^{(k)} \circ b}{k!} y^k$ in $C[[x,y]]$, and the previous argument entails that plugging $\delta$ for $y$ gives a convergent sum for the valuation topology on $C[[x]]$. It also entails that $F\circ(b+\delta)-\sum \limits_{k \leqslant n} \frac{F^{(k)} \circ b}{k!} \delta^k = \sum \limits_{k>n} \frac{F^{(k)} \circ b}{k!} \delta^k \in \mathcal{O}((F^{(n+1)} \circ b) \delta^{n+1}) \subseteq\mathcal{o}((F^{(n)} \circ b) \delta^n)$.
  
  Note that each H-field
  with composition over $C$ contains $C (x)$ as an
  $\mathcal{L}_{\tmop{hfc}}$-substructure.
\end{example}

\begin{example}
  \label{ex-grid-based-transseries}Consider the field $\mathbb{T}_g$ of
  grid-based transseries {\cite{Ec92,vdH:phd}}. We have a derivation and
  composition law {\cite{vdH:ln}} on $\mathbb{T}_g$ such that it is an H-field
  with field of constants $\mathbb{R}$ and that \hyperref[hfc1]{\textbf{HFC1}}, \hyperref[hfc2]{\textbf{HFC2}},
  \hyperref[hfc4]{\textbf{HFC4}} and \hyperref[hfc5]{\textbf{HFC5}} are satisfied. As for \hyperref[hfc3]{\textbf{HFC3}}, it follows
  from the inclusion of $\mathbb{T}_g$ in the field of finitely nested
  hyperseries of {\cite{Bag:hyperclosed}}, where it holds. By
  {\cite[Section~5.4]{vdH:ln}}, this field has inversion.
\end{example}

We will see other, more analytic examples in the next section (see
\Cref{cor-Hardy-H}). We now state a few simple consequences of the axioms.

\begin{remark}
  If $\varepsilon \in \mathcal{o}$, then $\varepsilon' \in \mathcal{o} ((x^{-
  1})^{\dag}) =\mathcal{o} (x^{- 1})$ by \eqref{eq-gap}. In particular
  $\varepsilon' \in \mathcal{o}$, so the derivation on $K$ is small as per
  {\cite[p 7]{vdH:mt}}.
\end{remark}

As an ordered field, any H-field has a field topology, called the order topology, for which the family of $(-
\varepsilon, \varepsilon), \varepsilon \in K^{>}$ is a fundamental system of
neighbourhoods of $0$. We understand limits in that sense.

\begin{lemma}
  Let $K$ be an H-field with composition. For $a \in K$ and $b \in K^{> C}$,
  we have
  \[ a' \circ b = \lim_{\underset{\delta \neq 0}{\delta \rightarrow 0}} 
     \frac{a \circ (b + \delta) - a \circ b}{\delta}. \]
\end{lemma}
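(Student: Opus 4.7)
The natural approach is to reduce to the case $a \notin C$ and then apply the Taylor-expansion axiom \textbf{HFC5} at $n = 2$.

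If $a \in C$, then $a' = 0$; moreover \textbf{HFC1} makes $\mathord{\circ}\, b$ a $C$-linear ring morphism, so $a \circ (b+\delta) = a = a \circ b$. Both sides of the claimed identity then vanish, and there is nothing to prove. I may therefore assume $a \notin C$; in particular $a \neq 0$, so $a^{\dag}$ is defined.

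The next step is to verify that the hypotheses of \textbf{HFC5} hold on a suitable punctured neighbourhood of $0$ in the field topology. Since the interval topology makes $K$ a topological field, multiplication by any fixed $u \in K^{\times}$ is continuous at $0$; consequently, for each $c \in C^{>}$, the condition $|u\delta| < c$ cuts out a neighbourhood of $0$, and $u\delta \in \mathcal{o}$ holds on the intersection of all such neighbourhoods, which is itself a neighbourhood of~$0$ in the interval topology. Applied with $u = a^{-1}$ and $u = a^{\dag} \circ b$, this shows that $\delta \in \mathcal{o}(a)$ and $(a^{\dag} \circ b)\delta \in \mathcal{o}$ hold for $\delta$ sufficiently close to $0$.

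With these hypotheses in force, \textbf{HFC5} at $n = 2$ yields
\[
  a \circ (b+\delta) - (a \circ b) - (a' \circ b)\,\delta - \tfrac{1}{2}(a'' \circ b)\,\delta^{2} \in \mathcal{o}\bigl((a''\circ b)\,\delta^{2}\bigr).
\]
Dividing by $\delta$ and rearranging,
\[
  \frac{a \circ (b+\delta) - a \circ b}{\delta} - (a' \circ b) = \tfrac{1}{2}(a'' \circ b)\,\delta + \eta,
\]
where $\eta \in \mathcal{o}((a''\circ b)\,\delta)$, so $|\eta| \leq |(a''\circ b)\,\delta|$. Hence the right-hand side is bounded in absolute value by $\tfrac{3}{2}\,|a''\circ b|\cdot|\delta|$, and the continuity of multiplication by the fixed element $a'' \circ b$ gives that this quantity tends to $0$ in the field topology as $\delta \to 0$; in the degenerate subcase $a'' \circ b = 0$ the error term is identically $0$, so the limit holds trivially.

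The only real obstacle is bookkeeping the interaction between the two distinct notions of smallness: valuation-theoretic infinitesimality (the language of \textbf{HFC5}) versus convergence in the interval topology (the language of the conclusion). Once the \textbf{HFC5} hypotheses are shown to hold on a neighbourhood, the $\varepsilon$–$\eta$ argument is a routine estimate.
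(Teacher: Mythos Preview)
Your proof is correct and takes the same route as the paper's: invoke \textbf{HFC5} to obtain a Taylor remainder which, after dividing by $\delta$, is $\mathcal{O}(\delta)$ and hence tends to $0$ in the field topology. One small point of exposition: the claim that the intersection $\bigcap_{c\in C^{>}}\{\,|u\delta|<c\,\}$ is a neighbourhood of $0$ is not a formal consequence of its being an intersection of open sets (arbitrary intersections of neighbourhoods need not be neighbourhoods); the correct reason is that $\mathcal{o}$ already contains a positive element (for instance $x^{-1}$), so $u^{-1}\mathcal{o}$ contains an open interval about $0$.
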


\begin{proof}
  Let $\delta \in K$ be sufficiently small in absolute value, so that $\delta
  \in \mathcal{o} (b)$ and $(a^{\dag} \circ b) \delta \in \mathcal{o}$. By
  \hyperref[hfc5]{\textbf{HFC5}} for $n = 1$, we have $a \circ (b + \delta) - a \circ b - (a'
  \circ b) \delta \in \mathcal{o} ((a'' \circ b) \delta^2)$, so
  \[ \left| \frac{a \circ (b + \delta) - a \circ b}{\delta} - (a' \circ b)
     \delta \right| < | (a'' \circ b) \delta |. \]
  Letting $\delta$ tend to $0$, we obtain the desired result.
\end{proof}

\begin{lemma}
  \label{lem-chain-rule}Let $K$ be an H-field with composition. Let $a \in K$
  and $b \in K^{> C}$. We have
  \[ (a \circ b)' = (a' \circ b) b' . \]
\end{lemma}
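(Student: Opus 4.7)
The plan is to derive the chain rule by applying the preceding lemma on difference quotients three times and composing the resulting limits, using the associativity axiom \hyperref[hfc2]{\textbf{HFC2}} as the algebraic bridge. The starting point is the preceding lemma applied to $a \circ b$ at the point $x$: combined with \hyperref[hfc4]{\textbf{HFC4}} (which gives $(a \circ b) \circ x = a \circ b$ and $(a \circ b)' \circ x = (a \circ b)'$), this identifies
\[
(a \circ b)' = \lim_{\substack{\varepsilon \to 0 \\ \varepsilon \neq 0}} \frac{(a \circ b) \circ (x + \varepsilon) - (a \circ b)}{\varepsilon}.
\]

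Next I would rewrite the numerator via \hyperref[hfc2]{\textbf{HFC2}} as $a \circ (b \circ (x + \varepsilon)) - a \circ b$ and set $\delta := b \circ (x + \varepsilon) - b$, producing the factorisation
\[
\frac{(a \circ b) \circ (x + \varepsilon) - (a \circ b)}{\varepsilon} = \frac{a \circ (b + \delta) - a \circ b}{\delta} \cdot \frac{\delta}{\varepsilon}.
\]
Two further applications of the preceding lemma handle the two factors: applied to $b$ at $x$, it yields $\lim_{\varepsilon \to 0} \delta/\varepsilon = b'$; and since $\delta \to 0$ as $\varepsilon \to 0$ (by continuity of the first factor, or by the same lemma giving a bound on $\delta$), applied to $a$ at $b$ with the variable $\delta$ it yields $\lim_{\varepsilon \to 0} \frac{a \circ (b + \delta) - a \circ b}{\delta} = a' \circ b$. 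Multiplying the limits and passing them through the product (which is legitimate in the field topology, since we are dealing with finite products of bounded convergent quantities) delivers $(a \circ b)' = (a' \circ b) b'$.

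The main obstacle, and the only genuine subtlety, is to justify that this factorisation is well-defined, i.e.\ that $\delta \neq 0$ whenever $\varepsilon \neq 0$ is small enough that $x + \varepsilon \in K^{>C}$. For $\varepsilon \in \mathcal{o}$ we indeed have $x + \varepsilon \in K^{>C}$ since $x > C$ and $|\varepsilon| < c$ for every $c \in C^{>0}$; and then \hyperref[hfc3]{\textbf{HFC3}} applied to $b \in K^{>C}$ tells us that the map $c \mapsto b \circ c$ is strictly increasing on $K^{>C}$, hence injective, so $b \circ (x+\varepsilon) \neq b \circ x = b$ for $\varepsilon \neq 0$. The remaining verification, that the smallness hypotheses of the preceding lemma ($\delta \in \mathcal{o}(b)$, $\varepsilon \in \mathcal{o}$, and the corresponding logarithmic-derivative conditions) are met along the limit, is routine valuation bookkeeping from the inequality \eqref{eq-gap}: for all sufficiently small $\varepsilon \in \mathcal{o}^{\neq}$ the hypotheses hold, which is exactly what one needs to take the limit.
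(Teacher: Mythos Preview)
Your argument is correct and follows essentially the same route as the paper: both start from the preceding difference-quotient lemma applied to $a\circ b$ at $x$, rewrite $(a\circ b)\circ(x+\varepsilon)=a\circ(b\circ(x+\varepsilon))$ via \hyperref[hfc2]{\textbf{HFC2}}, and then analyse the increment $\delta=b\circ(x+\varepsilon)-b$. The only presentational difference is that the paper, instead of factorising and invoking the preceding lemma twice more, appeals directly to \hyperref[hfc5]{\textbf{HFC5}} to obtain the explicit bounds $b\circ(x+\varepsilon)-b\in b'\varepsilon+\mathcal{O}(b''\varepsilon^2)$ and $\varepsilon\,\tau(\varepsilon)-(a'\circ b)b'\varepsilon\in\mathcal{O}((a'\circ b)b''\varepsilon^2)$, from which the limit drops out. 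Your factorisation is the cleaner packaging of the same computation, and your use of \hyperref[hfc3]{\textbf{HFC3}} to ensure $\delta\neq 0$ is a nice touch that the paper leaves implicit.
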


\begin{proof}
  Write $\tau(\delta) \assign \delta^{-1} ((a \circ b) \circ (x + \delta) - a \circ b)$ for all $\delta \neq 0$, so
    \[(a \circ b)' = \lim_{\underset{\delta \neq 0}{\delta \rightarrow 0}} 
    \tau(\delta).\]
  By \hyperref[hfc5]{\textbf{HFC5}} for $(b, x, \delta)$, we can have $b \circ (x + \delta) - b \in b' \delta +\mathcal{O} (b'' \delta^2)$
 arbitrarily small by choosing $\delta$ small enough. In turn, applying \hyperref[hfc5]{\textbf{HFC5}} for $(a, b, b
  \circ (x + \delta) - b)$ we obtain $\delta \tau(\delta) - (a' \circ b) b'
     \delta \in \mathcal{O} ((a' \circ b) b'' \delta^2)$
  provided $\delta$ is sufficiently small. We thus have $\tau(\delta) - (a' \circ b) b' \in
     \mathcal{O} ((a' \circ b) b'' \delta)$, hence the result.\end{proof}

We say that $K$ is an {\tmem{H-field with composition and
inversion}}{\index{H-field with composition and inversion}} if furthermore
$(K^{> C}, \circ, x)$ is a group. Then in view of \hyperref[hfc1]{\textbf{HFC1}}--\hyperref[hfc4]{\textbf{HFC4}},
the structure $(K^{> C}, \circ, x, <)$ is an ordered group. We will give
conditions for it to be a growth order group. More precisely, consider the
following conditions on an ordered pair $(\mathcal{G}_0, \mathcal{G}_1)$ of
subgroups of $K^{> C}$:

\begin{descriptioncompact}
  \item[$\mathbf{(\star)}$] \label{dec}The subset $\mathcal{G}_0 \subseteq K^{> C}$ is a normal
  convex subgroup of $K^{> C}$ containing $x + C$, the subset $\mathcal{G}_1 \subseteq K^{> C}$ is
  a complement of $\mathcal{G}_0$ in $K^{> C}$ which is a
  growth order group with Archimedean centralisers, and $\{a \circ
  (a^{\tmop{inv}} + 1) \suchthat a \in \mathcal{G}_1 \}$ is cofinal in
  $\mathcal{G}_0$.
\end{descriptioncompact}

We will obtain \Cref{th-main} as a consequence of the following theorem.

\begin{theorem}
  \label{th-levelled}Let $(K, +, \cdot, 0, 1, \partial, \mathcal{O}, <, \circ,
  x)$ be an H-field over $\mathbb{R}$ with composition and inversion and let $(\mathcal{G}_0, \mathcal{G}_1)$ be as in \hyperref[dec]{$(\mathbf{\star})$}. Then
  $K^{>\mathbb{R}}$ is a growth order group with Archimedean
  centralisers, and $\mathcal{G}_0$ is a growth order
  group which is $\preccurlyeq$-initial in $K^{>\mathbb{R}}$.
\end{theorem}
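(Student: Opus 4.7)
The plan is to combine the assumed structure of $\mathcal{G}_1$ with the H-field axioms, in particular the Taylor expansion axiom \textbf{HFC5} and the valuative inequality~\eqref{eq-gap}, to control commutation inside the small part $\mathcal{G}_0$, and thereby assemble the growth order axioms for all of $K^{>\mathbb{R}}$. The crux is a local analysis in $\mathcal{G}_0$: since $\mathcal{G}_0$ is a normal convex subgroup containing $x + \mathbb{R}$, its elements are naturally viewed as perturbations $x + \varepsilon$ of the compositional identity. I would use \textbf{HFC5} to expand both sides of the commutation equation $fg = gf$ for $f,g \in \mathcal{G}_0^{\neq}$ to first order, and combine with~\eqref{eq-gap} to conclude that the leading perturbations of $f$ and $g$ from $x$ are real-proportional modulo strictly smaller terms. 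This forces the centraliser of each non-trivial element of $\mathcal{G}_0$ to be an Archimedean ordered group. By Propositions~\ref{prop-Archimedean-gog1} and~\ref{prop-reals}, these centralisers then supply both \textbf{GOG1} and enough scaling elements for \textbf{GOG3} at every growth rank sitting inside $\mathcal{G}_0$.

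Next I would prove the $\preccurlyeq$-initiality of $\mathcal{G}_0$ in $K^{>\mathbb{R}}$. Since $\mathcal{G}_0$ is already convex, by \Cref{prop-convex-asymp} it suffices to show that for every $f \in \mathcal{G}_0^{\neq}$ one has $\mathcal{C}(f) \subseteq \mathcal{G}_0$. Suppose toward contradiction that some $h \in \mathcal{G}_1^{\neq}$ commutes with $f \in \mathcal{G}_0^{\neq}$. Applying \textbf{HFC5} to compute $h \circ (x + \varepsilon) \circ h^{\tmop{inv}}$ for $\varepsilon = f - x$ shows that conjugation by $h$ converts $f$ into $x + (\varepsilon \circ h^{\tmop{inv}})\,(h' \circ h^{\tmop{inv}}) + \cdots$, so fixing $f$ would require $\varepsilon \circ h^{\tmop{inv}} \cdot (h' \circ h^{\tmop{inv}}) = \varepsilon$; iterating and using the cofinality of $\{a \circ (a^{\tmop{inv}} + 1) : a \in \mathcal{G}_1\}$ in $\mathcal{G}_0$ furnished by \hyperref[dec]{$(\star)$} one forces $\varepsilon$ to be arbitrarily small, a contradiction. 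Thus $\mathcal{G}_0$ is $\preccurlyeq$-initial, and \Cref{prop-growth-initial-quotient} then automatically yields that $\mathcal{G}_0$ is itself a growth order group.

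It remains to upgrade this to $K^{>\mathbb{R}}$. Given $f \in K^{>\mathbb{R}} \setminus \mathcal{G}_0$, write $f = f_1 \cdot f_0$ using the complement $\mathcal{G}_1$. The projection $K^{>\mathbb{R}} \to \mathcal{G}_1$ maps $\mathcal{C}(f)$ into the centraliser of the image of $f$, which is Archimedean by hypothesis on $\mathcal{G}_1$. The kernel of this restriction is $\mathcal{C}(f) \cap \mathcal{G}_0$; any such element would be a $\mathcal{G}_0$-element commuting with $f$, and by symmetry would force $f$ to commute with a non-trivial element of $\mathcal{G}_0$ — ruled out by the previous paragraph applied to that element. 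Hence $\mathcal{C}(f)$ embeds in an Archimedean group, and all centralisers in $K^{>\mathbb{R}}$ are Archimedean, giving \textbf{GOG1} and \textbf{GOG3} on the whole group. The verification of \textbf{GOG2} splits into cases according to where $f$ and $g$ sit with respect to $\mathcal{G}_0$: when both are outside $\mathcal{G}_0$, use \textbf{GOG2} for $\mathcal{G}_1$ together with convexity of $\mathcal{G}_0$; when $g \in \mathcal{G}_0^{>}$ and $f \in \mathcal{G}_1^{>}$, a Taylor-expansion estimate shows that the conjugate $f g f^{\tmop{inv}}$ is strictly larger than $g$ because $f' > 1$ is unbounded on the relevant domain; the remaining case follows from \textbf{GOG2} inside $\mathcal{G}_0$.

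The main obstacle is the first step — the Taylor-expansion computation giving Archimedean centralisers in $\mathcal{G}_0$ — since every subsequent reduction hinges on it; this is the content that presumably occupies Sections~\ref{subsection-conjugacy} and~\ref{subsection-proofofmain}. The hypothesis that the base is the \emph{real} field $C = \mathbb{R}$ (rather than an arbitrary constant field) enters precisely to force proportionality constants to lie in $\mathbb{R}$, and the cofinality clause of \hyperref[dec]{$(\star)$} is the device that transports the local $\mathcal{G}_0$-analysis to the whole of $K^{>\mathbb{R}}$.
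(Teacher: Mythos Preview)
Your overall architecture --- analyse centralisers in $\mathcal{G}_0$ first, then lift to $K^{>\mathbb{R}}$ via the projection onto $\mathcal{G}_1$ --- matches the paper's, and your projection argument for $f \notin \mathcal{G}_0$ is essentially the paper's \Cref{prop-Archimedean-centralisers-here}. But the crucial first step has a genuine gap.

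You write that elements of $\mathcal{G}_0$ ``are naturally viewed as perturbations $x + \varepsilon$'' and propose to expand the commutation equation via \textbf{HFC5} to conclude that $\varepsilon_f$ and $\varepsilon_g$ are real-proportional at leading order. The problem is that the hypothesis \hyperref[dec]{$(\star)$} does \emph{not} force $\varepsilon = g - x$ to be small in any sense relevant to \textbf{HFC5}. In the exponential application (\Cref{subsection-application-exp}), $\mathcal{G}_0$ is the convex subgroup of germs whose iterates stay below $\exp$, so e.g.\ $g = x^2$ lies in $\mathcal{G}_0$ with $g - x \notin \mathcal{o}(x)$; the Taylor hypothesis $\delta \in \mathcal{o}(b)$ of \textbf{HFC5} fails outright, and your first-order expansion $\alpha \circ g - \alpha \approx \alpha' \beta$ is not justified. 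The same obstruction blocks your conjugation computation for $\preccurlyeq$-initiality.

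The paper gets around this not by a direct analysis but by a \emph{conjugation to normal form}: given $g \in \mathcal{G}_0^{>}$, the cofinality clause of \hyperref[dec]{$(\star)$} supplies a $\varphi \in \mathcal{G}_1$ with $\varphi^{\tmop{inv}} \circ g \circ \varphi \leq x + 1$, and then Lemmas~\ref{lem-Rosenlicht} and~\ref{lem-x+1} (finding an approximate primitive of $\delta^{-1}$ and composing) produce a further conjugate of the exact form $x + 1 + \varepsilon$ with $\varepsilon \in \mathcal{o}^{>}$ (\Cref{lem-G0-x+1}). Only \emph{after} this reduction do the Taylor arguments of \Cref{lem-cof-centralizer} and \Cref{lem-gog2-R} apply, because now the perturbation is genuinely infinitesimal. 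So the cofinality clause is not, as you say, a device to ``transport the local $\mathcal{G}_0$-analysis to the whole of $K^{>\mathbb{R}}$''; it is what makes the local analysis possible at all, by shrinking arbitrary $\mathcal{G}_0$-elements down to the regime where \textbf{HFC5} bites. A secondary issue: your appeal to \Cref{prop-reals} for \textbf{GOG3} requires $\mathcal{C}(\mathcal{s}) \simeq (\mathbb{R},+,0,<)$, not merely Archimedean; the paper instead exhibits the scaling element explicitly as a conjugate of $x+1$.
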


This will be proved in \Cref{subsection-proofofmain} below.

\subsection{Taylor approximations in Hardy fields}\label{subsection-Taylor}

Let $\mathcal{C}^{< \infty}$ denote the set of all germs $[f]$ at $+ \infty$
of real-valued functions $f$ defined on positive half-lines $(a, + \infty), a
\in \mathbb{R}$ such that for each $k \in \mathbb{N}$, there is
a positive half-line on which $f$ is $k$-times differentiable. We identify
constants with the germs of the corresponding constant functions. Then
$\mathcal{C}^{< \infty}$ is an $\mathbb{R}$-algebra under pointwise sum and
product. Moreover, it is equipped with a partial $\mathbb{R}$-algebra ordering
given by $[f] < [g]$ if and only $f (t) < g (t)$ for all sufficiently
large $t \in \mathbb{R}$. It is a differential ring under derivation of germs
$[f]' \assign [f']$ whenever $f \of (a, + \infty) \longrightarrow \mathbb{R}$
is differentiable. Finally, if $[g] >\mathbb{R}$ in $\mathcal{C}^{< \infty}$,
i.e. if $g$ tends to $+ \infty$ at $+ \infty$, then for all $[f] \in
\mathcal{C}^{< \infty}$ where $f \circ g$ is defined on a positive half-line,
the germ $[f] \circ [g]
\assign [f \circ g]$ only depends on $[f]$ and $[g]$.

We will identify germs with given representatives, trying not to confuse the
reader in the process. Given a germ $g \in \mathcal{C}^{< \infty}$, we write
\begin{eqnarray*}
  \mathcal{o} (g) & \assign & \{f \in \mathcal{C}^{< \infty} \suchthat \forall
  r \in \mathbb{R}^{>}, |f| < r|g|\}\\
  \mathcal{O} (g) & \assign & \{f \in \mathcal{C}^{< \infty} \suchthat \exists
  r \in \mathbb{R}, |f| < r|g|\} \text{, \quad and}\\
  \Theta (g) & \assign & \{f \in \mathcal{C}^{< \infty} \suchthat f \in
  \mathcal{O}(g) \wedge g \in \mathcal{O}(f)\} .
\end{eqnarray*}
We simply write $\mathcal{o}, \mathcal{O}$ and $\Theta$ for $\mathcal{o} (1),
\mathcal{O} (1)$ and $\Theta (1)$ respectively.

Recall that a {\tmem{Hardy field}}{\index{Hardy field}} is a differential subfield of
$\mathcal{C}^{< \infty}$ containing all
constant germs. The induced ordering on such fields is linear {\cite[2 p
107]{Bou07}}.

\begin{definition}
  A {\tmem{{\tmstrong{Hardy field with composition}}}}{\index{Hardy field with
  composition}} is a Hardy field $\mathcal{H}$ which is closed under
  composition of germs. We say that it has inversion if
  $\mathcal{H}^{>\mathbb{R}}$ is closed under inversion.
\end{definition}

\begin{example}
  If $\mathcal{R}$ is an o-minimal expansion of the real ordered field, then
  $\mathcal{R}_{\infty}$ is a Hardy field {\cite[Section~7.1]{vdD98}} with
  composition and inversion.
\end{example}

\begin{example}
  The intersection of all
  $\subseteq$-maximal Hardy fields is a Hardy field with composition
  {\cite{Bosh82}}. It is unknown whether it has inversion.
\end{example}

We will show that certain Hardy fields with composition and inversion are
H-fields with composition and inversion. This mainly entails deriving the
Taylor axiom \hyperref[hfc5]{\textbf{HFC5}} in those fields. If $\mathcal{H}$ is a Hardy field,
then $\mathcal{O} (1) \cap \mathcal{H}$ is a valuation ring on $\mathcal{H}$
for which it is an H-field. The notations above are consistent with that
introduced for H-fields. The derivation on $\mathcal{H}$ is \tmem{small}, i.e.
\begin{equation}
  \mathcal{o}' \subseteq \mathcal{o} \label{eq-small-der} .
\end{equation}
See \cite[Section~2]{Rosli83}} or {\cite[Proposition~9.1.9]{vdH:mt}} for proofs. Toward proving \hyperref[hfc5]{\textbf{HFC5}},
we need a mean value theorem for germs.

\begin{lemma}
  \label{lem-mean-value-Hardy}Let $\mathcal{H}$ be a Hardy field with
  composition and inversion and let $f \in \mathcal{H}$ and $g, h \in
  \mathcal{H}^{>}$ with $g < h$. There is a $c \in \mathcal{H}$ with $g < c <
  h$ and $f \circ h - f \circ g = (h - g) f' \circ c$.
\end{lemma}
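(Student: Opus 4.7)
The plan is to upgrade the classical real pointwise mean value theorem to the germ level via the composition and inversion structure of $\mathcal{H}$. Setting $\phi \assign (f \circ h - f \circ g)/(h - g) \in \mathcal{H}$, the problem reduces to finding $c \in \mathcal{H}$ with $g < c < h$ and $f' \circ c = \phi$, since for each large real $t$ the classical mean value theorem supplies a real $c(t) \in (g(t), h(t))$ solving $f'(c(t)) = \phi(t)$, and what remains is to recognise the germ of such $c(t)$ inside $\mathcal{H}$. If $f' \in \mathbb{R}$, any germ strictly between $g$ and $h$ works, say $c \assign (g + h)/2$. Otherwise, every nonzero germ in $\mathcal{H}$ has a definite sign, so $f''$ does, hence $f'$ is eventually strictly monotonic and tends to a limit $L \in \mathbb{R} \cup \{\pm \infty\}$; replacing $f$ by $-f$ if necessary (which only negates both sides of the identity $f' \circ c = \phi$), we may assume $f'$ is eventually strictly increasing, so $L \in \mathbb{R} \cup \{+\infty\}$.

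When $L = +\infty$, we have $f' \in \mathcal{H}^{>\mathbb{R}}$, so $(f')^{\tmop{inv}} \in \mathcal{H}^{>\mathbb{R}}$; monotonicity of $f'$ combined with the pointwise MVT gives $f' \circ g < \phi < f' \circ h$, and $f' \circ g \to +\infty$ forces $\phi \in \mathcal{H}^{>\mathbb{R}}$. Thus $c \assign (f')^{\tmop{inv}} \circ \phi \in \mathcal{H}$ is well-defined and satisfies $f' \circ c = \phi$ and, by applying $(f')^{\tmop{inv}}$ to the bracketing, $g < c < h$. When $L \in \mathbb{R}$, the nonzero germ $\psi \assign f' - L$ is eventually strictly negative (since $f'$ increases strictly towards $L$) and tends to $0$, so $u \assign -1/\psi$ lies in $\mathcal{H}^{>\mathbb{R}}$ and admits an inverse $u^{\tmop{inv}} \in \mathcal{H}^{>\mathbb{R}}$. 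The pointwise MVT gives $\psi \circ g < \phi - L < \psi \circ h < 0$; applying the map $x \mapsto -1/x$, which is order-preserving on negative reals, then yields $u \circ g < -1/(\phi - L) < u \circ h$ with $-1/(\phi - L) \in \mathcal{H}^{>\mathbb{R}}$, so that $c \assign u^{\tmop{inv}} \circ (-1/(\phi - L)) \in \mathcal{H}$ satisfies $\psi \circ c = \phi - L$, hence $f' \circ c = \phi$, together with $g < c < h$.

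The main technical obstacle is the finite-limit sub-case: since the inversion hypothesis only provides inverses for germs exceeding $\mathbb{R}$, one cannot invert $f'$ directly and must instead detour through the reciprocal $\pm 1/(f' - L)$ in order to land in $\mathcal{H}^{>\mathbb{R}}$, with careful sign bookkeeping. Everything else is routine, using the totality of $<$ on $\mathcal{H}$ to lift pointwise strict inequalities to strict inequalities of germs, and the strict monotonicity of $f'$ to transfer bracketings back and forth through $f'$ and its inverse.
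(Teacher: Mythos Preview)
Your proof is correct and follows essentially the same approach as the paper: both dispose of the affine case with $c=(g+h)/2$, then in the non-affine case use strict monotonicity of $f'$ together with the pointwise mean value theorem to recognise $c$ as the preimage of $\phi=(f\circ h-f\circ g)/(h-g)$ under $f'$, and recover $c\in\mathcal{H}$ by pushing $f'$ into $\mathcal{H}^{>\mathbb{R}}$ via an explicit order-preserving transformation so that the inversion hypothesis applies. Your split into $L=+\infty$ versus $L\in\mathbb{R}$ (after reducing to $f'$ increasing) is simply a more explicit unpacking of the paper's one-line ``by considering translations, homotheties and inversions if necessary, we may assume that $t_2=+\infty$''.
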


\begin{proof}
  Assume first that $f' \in \mathbb{R}$. So $f$ is the germ of an affine
  function $f = a \tmop{id} + b$, and we have $f \circ h - f \circ g = (h - g)
  a = (h - g) f' \circ c$ where $c \assign \frac{g + h}{2} \in (g, h)$.
  
  Assume now that $f' \nin \mathbb{R}$. So $f'$ is the germ of a strictly
  monotonic function. Let $t \in \mathbb{R}$ be large enough so that $h (s) >
  g (s)$ for all $s \geqslant t$, that $f$ is differentiable on $[t, +
  \infty)$ and that $f'$ is strictly monotonic on $[t, + \infty)$. The mean
  value theorem for $f$ gives $f (h (t)) - f (g (t)) = (h (t) - g (t)) f'
  (c_t)$ for a certain $c_t \in (g (t), h (t))$. Since $f'$ is strictly
  monotonic on $[t, + \infty)$, the number $c_t$ is unique, and we have a
  function $t \mapsto c_t$ whose germ $c$ satisfies $c \in (g, h)$ and $f
  \circ h - f \circ g = (h - g) f' \circ c$.
  
  Note that $f' \circ c = \frac{f \circ h - f \circ g}{h - g} \in
  \mathcal{H}$. Our hypothesis that $f' \nin \mathbb{R}$ means that $f'$ is the germ of a strictly monotonic function, which thus induces a bijection $\varphi \of (t_0, + \infty)
  \longrightarrow (t_1, t_2)$ for some $t_0 \geqslant t$ and $t_1, t_2 \in
  \mathbb{R} \cup \{ \pm \infty \}$ with $t_1 < t_2$. By considering
  translations, homotheties and inversions if necessary, we may assume that
  $t_2 = + \infty$, so $c = \varphi^{\tmop{inv}} \circ \frac{f \circ h - f
  \circ g}{h - g}$, lies in $\mathcal{H}$.
\end{proof}

\begin{lemma}
  \label{lem-Tay-alt1}For all $f \in \mathcal{H}^{>\mathbb{R}}$ with $f^{\dag}
  \in \mathcal{O} (\tmop{id}^{- 1})$, we have $(f')^{\dag} \in \mathcal{O}
  (\tmop{id}^{- 1})$.
\end{lemma}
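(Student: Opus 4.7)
The starting observation is the logarithmic-derivative identity
\[
(f')^{\dag} = (f \cdot f^{\dag})^{\dag} = f^{\dag} + (f^{\dag})^{\dag},
\]
obtained from $f' = f\cdot f^{\dag}$ and $(ab)^{\dag}=a^{\dag}+b^{\dag}$. Since $f^{\dag} \in \mathcal{O}(\tmop{id}^{-1})$ by hypothesis, the problem reduces to proving $(f^{\dag})^{\dag} \in \mathcal{O}(\tmop{id}^{-1})$. Write $g := f^{\dag}$; the goal is $g^{\dag} \in \mathcal{O}(\tmop{id}^{-1})$.

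I first pin down signs. Because $f > \mathbb{R}$ in the Hardy field $\mathcal{H}$, the germ $f$ is eventually positive and (being a Hardy-field element tending to $+\infty$) eventually increasing, so $g = f'/f > 0$. The assumption $g \in \mathcal{O}(\tmop{id}^{-1})$ forces $g \to 0$, and eventual monotonicity in $\mathcal{H}$ then yields $g' < 0$, whence $g^{\dag} = g'/g < 0$.

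The main step is the following: the Hardy-field germ $\tmop{id}\cdot g^{\dag}$ is eventually monotonic and so has a limit $L \in [-\infty, 0]$, and we must rule out $L = -\infty$. Suppose for contradiction that $\tmop{id}\cdot g^{\dag}\to -\infty$. Then for every $C>0$, representatives satisfy $(\log g)'(s) = g^{\dag}(s) < -C/s$ for $s$ large enough, and integrating between large reals $t_0 < s$ gives $\log g(s) - \log g(t_0) < -C\log(s/t_0)$, i.e.\ $g(s) \leq g(t_0)(t_0/s)^{C}$. Hence $g \in \mathcal{O}(\tmop{id}^{-C})$ for every $C>0$; taking $C=2$ forces $\int_{t_0}^{+\infty} g(s)\, ds < +\infty$. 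But $g = (\log f)'$ and $f\to +\infty$, so this integral equals $\lim_{s\to+\infty}(\log f(s)-\log f(t_0))=+\infty$, a contradiction. Hence $L$ is finite, $g^{\dag}\in\mathcal{O}(\tmop{id}^{-1})$, and $(f')^{\dag}=g+g^{\dag}\in\mathcal{O}(\tmop{id}^{-1})$.

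The argument only uses eventual sign and monotonicity of Hardy-field germs, elementary integration of pointwise inequalities on real representatives, and the identity $\int f^{\dag} = \log f$; no appeal to \eqref{eq-gap} or to composition/inversion structure is needed. The only subtle point I anticipate is justifying the passage from the germ inequality $g^{\dag} < -C/\tmop{id}$ to an eventual pointwise inequality on representatives, but this is routine for Hardy fields because every element has a representative of definite sign on a suitable half-line, and the inequality $g^{\dag} + C/\tmop{id} < 0$ is itself a Hardy-field germ.
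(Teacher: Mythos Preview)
Your argument is correct. The paper takes a shorter, purely valuation-theoretic route: from $f^{\dag}\in\mathcal{O}(\tmop{id}^{-1})$ one has $f'\cdot\tmop{id}\in\mathcal{O}(f)$; since $f\notin\Theta(1)$, the Hardy-field differentiation property \eqref{eq-d-valued} (in its $\mathcal{O}$-form) gives $(f'\cdot\tmop{id})'=f''\cdot\tmop{id}+f'\in\mathcal{O}(f')$, and subtracting $f'$ yields $f''\cdot\tmop{id}\in\mathcal{O}(f')$, which is exactly $(f')^{\dag}\in\mathcal{O}(\tmop{id}^{-1})$.

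Your approach instead rewrites $(f')^{\dag}=f^{\dag}+(f^{\dag})^{\dag}$ and bounds the second summand by an integration argument on real representatives: if $\tmop{id}\cdot(f^{\dag})^{\dag}\to-\infty$, then $f^{\dag}$ would decay faster than any power of $\tmop{id}^{-1}$, making $\log f$ bounded and contradicting $f>\mathbb{R}$. This is more hands-on and avoids citing \eqref{eq-d-valued}, at the cost of working with pointwise inequalities and integrals rather than staying within the algebraic framework. The paper's proof transfers more directly to abstract H-fields (where one cannot integrate real representatives), while yours makes the analytic content explicit and is self-contained.
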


\begin{proof}
  We have $f' \tmop{id} \in \mathcal{O} (f)$ where $f \nin \Theta (1)$, so $(f''
  \tmop{id} + f') \in \mathcal{O} (f')$ by \eqref{eq-d-valued}. We recall that
  $\mathcal{O}$ is a valuation ring on $\mathcal{H}$. Since $f' \in
  \mathcal{O} (f')$, we must have $f'' \tmop{id} \in \mathcal{O} (f')$, i.e.
  $(f')^{\dag} \in \mathcal{O} (\tmop{id}^{- 1})$.
\end{proof}

\begin{lemma}
  \label{lem-Tay-alt2}For all $f \in \mathcal{H}^{>\mathbb{R}}$ with $f^{\dag}
  \nin \mathcal{O} (\tmop{id}^{- 1})$, we have $(f')^{\dag} \in \mathcal{O}
  (f^{\dag})$.
\end{lemma}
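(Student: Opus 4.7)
The plan is to unwind the conclusion $(f')^{\dag} \in \mathcal{O}(f^{\dag})$ into a statement about a single germ to which the valuation-theoretic differentiation rule \eqref{eq-d-valued} applies directly. The key algebraic identity is
\[(f')^{\dag} \;=\; f^{\dag} + (f^{\dag})^{\dag},\]
obtained from $f' = f \cdot f^{\dag}$ together with the additivity of the logarithmic derivative. Since the first summand is trivially in $\mathcal{O}(f^{\dag})$, the task reduces to showing $(f^{\dag})^{\dag} \in \mathcal{O}(f^{\dag})$.

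Next I would set $h \assign 1/f^{\dag} = f/f' \in \mathcal{H}$, which is a valid germ since $f > \mathbb{R}$ forces $f'$, hence $f^{\dag}$, to be eventually positive. A direct computation gives $h' = -(f^{\dag})'/(f^{\dag})^{2}$, so $(f^{\dag})^{\dag} = -h' \cdot f^{\dag}$; therefore $(f^{\dag})^{\dag} \in \mathcal{O}(f^{\dag})$ will follow as soon as $h' \in \mathcal{O}$. The hypothesis $f^{\dag} \nin \mathcal{O}(\tmop{id}^{-1})$ translates directly, in the linearly ordered valuation ring of $\mathcal{H}$, to $h \in \mathcal{o}(\tmop{id})$. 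Since $\tmop{id} \nin \Theta(1)$, applying \eqref{eq-d-valued} to the germ pair $(h, \tmop{id})$ yields $h' \in \mathcal{o}(\tmop{id}') = \mathcal{o}(1) \subseteq \mathcal{O}$, which is exactly what we need.

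I expect no serious obstacle. The proof mirrors the style of Lemma~\ref{lem-Tay-alt1} in using precisely one application of \eqref{eq-d-valued}; the only mild subtleties are the unwinding of the hypothesis into $h \in \mathcal{o}(\tmop{id})$ and the verification of the algebraic identity $(f')^{\dag} = f^{\dag} + (f^{\dag})^{\dag}$, both of which are routine.
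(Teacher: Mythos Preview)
Your argument is correct and is genuinely different from the paper's. The paper passes to a Hardy field extension $\mathcal{H}^{\ast}$ containing $\log$, uses $f^{\dag}=(\log\circ f)'$ together with \eqref{eq-d-valued} to deduce that $f$ grows super-polynomially, then establishes $f''>0$ and $f'\in\mathcal{o}(f^{2})$ and finally invokes an external lemma from {\cite{AvdD02}} to compare $(f')^{\dag}$ with $(f^{2})^{\dag}=2f^{\dag}$. Your route instead stays entirely inside $\mathcal{H}$: the identity $(f')^{\dag}=f^{\dag}+(f^{\dag})^{\dag}$ reduces the problem to bounding $(f^{\dag})^{\dag}$, and the substitution $h=1/f^{\dag}$ turns the hypothesis $f^{\dag}\nin\mathcal{O}(\tmop{id}^{-1})$ directly into $h\in\mathcal{o}(\tmop{id})$, so a single application of \eqref{eq-d-valued} finishes. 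Your proof is shorter, avoids both the auxiliary extension and the cited lemma, and uses only the valuation-theoretic differentiation rule already at hand; the paper's approach, by contrast, extracts the additional information that $f$ is super-polynomial and $f''>0$, which is not needed here but is of independent interest.
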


\begin{proof}
  By {\cite[Theorem~2]{Rosli83}}, there is a Hardy field $\mathcal{H}^{\ast}$
  containing $\mathcal{H}$ and which is closed under composition on the left
  of strictly positive germs with the germ $\log$ of the natural logarithm.
  Note that $f^{\dag} = (\log \circ f)'$. We have $\tmop{id}^{- 1} \in
  \mathcal{o} (f^{\dag})$ in $\mathcal{H}^{\ast}$ i.e. $\log' \in \mathcal{o}
  ((\log \circ f)')$. So \eqref{eq-d-valued} gives $\log \in \mathcal{o} (\log
  \circ f)$. This means that $\mathbb{N} \log < \log \circ f$, so
  $\tmop{id}^{\mathbb{N}} < f$. In particular $\tmop{id}^2 \in \mathcal{o}
  (f)$ so \eqref{eq-d-valued} yields $2 \tmop{id} \in \mathcal{o} (f')$, whence
  $2 < f''$ by \hyperref[hf2]{\textbf{HF2}}.
  
  Now $- \frac{f'}{f^2} = (f^{- 1})' \in \mathcal{o}$ by \eqref{eq-small-der},
  which means that $f' \in \mathcal{o} (f^2)$. We deduce with
  {\cite[Lemma~1.4]{AvdD02}} that $(f')^{\dag} < (f^2)^{\dag} = 2 f^{\dag}$.
  Since $f'' > 0$ and $f' > 0$, we have $(f')^{\dag} > 0$, so this entails
  that $(f')^{\dag} \in \mathcal{O} (f^{\dag})$.
\end{proof}

\begin{proposition}
  \label{prop-Hardy-conj2-equivalnce}Let $\mathcal{H}$ be a Hardy field with
  composition. Then $\mathcal{H}$ satisfies \hyperref[hfc5]{\textbf{HFC5}} if and only if for all
  $f, g \in \mathcal{H}^{>\mathbb{R}}$ and $\delta \in \mathcal{o} (g)$ with
  $(f^{\dag} \circ g) \delta \in \mathcal{o}$, we have
  \begin{equation}
    f \circ (g + \delta) \in \Theta (f \circ g) . \label{eq-Hfwc2-weak}
  \end{equation}
\end{proposition}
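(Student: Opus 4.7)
The plan is to prove the two implications separately. The forward direction is immediate: assuming \hyperref[hfc5]{\textbf{HFC5}}, applying it with $n = 0$ to $(f, g, \delta)$ yields $f \circ (g + \delta) - f \circ g \in \mathcal{o}(f \circ g)$. Since $f \circ g \in \mathcal{H}^{>\mathbb{R}}$ is positive and non-infinitesimal, subtracting an infinitesimal multiple of $f \circ g$ cannot change its order of magnitude, so $f \circ (g + \delta) \in \Theta(f \circ g)$.

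For the reverse direction, I would fix $(a, b, \delta)$ satisfying the hypotheses of \hyperref[hfc5]{\textbf{HFC5}} and argue in three steps. First, by iterated application of \Cref{lem-Tay-alt1,lem-Tay-alt2} (applied to $|a^{(k)}|$ or $1/|a^{(k)}|$, whichever lies in $\mathcal{H}^{>\mathbb{R}}$, noting that the logarithmic derivative $(a^{(k)})^{\dag}$ is preserved up to sign by either substitution), I would show by induction on $k$ that $((a^{(k)})^{\dag} \circ b)\delta \in \mathcal{o}$ for every $k \geqslant 0$, bootstrapping from the hypothesis $(a^{\dag} \circ b)\delta \in \mathcal{o}$ and from $\delta \in \mathcal{o}(b)$. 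Second, I would invoke the real-analytic Lagrange remainder formula applied pointwise (valid because $a^{(n+1)} \in \mathcal{H}$ is eventually smooth and monotonic) to write
\[
R_n \assign a \circ (b + \delta) - \sum_{k \leqslant n} \frac{a^{(k)} \circ b}{k!}\delta^k = \frac{a^{(n+1)} \circ c_n}{(n+1)!}\delta^{n+1}
\]
for an intermediate germ $c_n$ (obtained pointwise) with $|c_n - b| \leqslant |\delta|$.

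Third, I would bound $a^{(n+1)} \circ c_n$ by $\mathcal{O}(a^{(n+1)} \circ b)$ by applying the weak form~\eqref{eq-Hfwc2-weak} to $|a^{(n+1)}|$ or $1/|a^{(n+1)}|$ (whichever lies in $\mathcal{H}^{>\mathbb{R}}$) with perturbation $\delta$; the required hypothesis on the logarithmic derivative holds thanks to Step~1. This gives $a^{(n+1)} \circ (b + \delta) \in \Theta(a^{(n+1)} \circ b)$, and combining with the pointwise sandwich (valid eventually since $a^{(n+1)}$ is monotonic) transfers the bound to $c_n$. Combining Steps~2 and~3, one obtains
\[
R_n \in \mathcal{O}\bigl((a^{(n+1)} \circ b)\delta^{n+1}\bigr) = \mathcal{O}\bigl(((a^{(n)})^{\dag} \circ b)\delta \cdot (a^{(n)} \circ b)\delta^n\bigr) \subseteq \mathcal{o}\bigl((a^{(n)} \circ b)\delta^n\bigr),
\]
where the final inclusion invokes Step~1 at $k = n$. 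This is \hyperref[hfc5]{\textbf{HFC5}} at order $n$.

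The main obstacle is the case analysis in Steps~1 and~3, because the weak form and \Cref{lem-Tay-alt1,lem-Tay-alt2} are only available for elements of $\mathcal{H}^{>\mathbb{R}}$, whereas $a^{(k)}$ may be infinitesimal, bounded, sign-changing, or even a real constant. The remedy is that one of $|a^{(k)}|$ or $1/|a^{(k)}|$ lies in $\mathcal{H}^{>\mathbb{R}}$ unless $a^{(k)}$ tends to a nonzero real limit, in which case the bound on $a^{(k)} \circ c_n$ is trivial by continuity; moreover, each of these substitutes has the same logarithmic derivative as $a^{(k)}$ up to sign, so the iterative control obtained in Step~1 passes through without change.
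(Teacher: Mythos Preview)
Your proposal is correct and follows essentially the same route as the paper's proof. Both argue the forward direction via \hyperref[hfc5]{\textbf{HFC5}} at $n=0$; for the converse, both first establish inductively (via \Cref{lem-Tay-alt1,lem-Tay-alt2}) that $((a^{(k)})^{\dag}\circ b)\delta\in\mathcal{o}$ for all $k$ --- the paper phrases this equivalently as $(a^{(k+1)}\circ b)\delta^{k+1}\in\mathcal{o}((a^{(k)}\circ b)\delta^k)$ --- then bound the Taylor remainder using the weak form~\eqref{eq-Hfwc2-weak} applied to $a^{(n+1)}$ and monotonicity.

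The only differences are cosmetic: the paper uses the integral form of the remainder where you use the Lagrange form, and you are more explicit about the reduction to $\mathcal{H}^{>\mathbb{R}}$ (replacing $a^{(k)}$ by $|a^{(k)}|$ or $1/|a^{(k)}|$) when invoking \Cref{lem-Tay-alt1,lem-Tay-alt2} and \eqref{eq-Hfwc2-weak}, a point the paper leaves implicit.
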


\begin{proof}
  The relation in \eqref{eq-Hfwc2-weak} is implied by \hyperref[hfc5]{\textbf{HFC5}} at $n =
  0$. Assume that \eqref{eq-Hfwc2-weak} holds. Let $f,g,\delta$ be as in the statement of the proposition. We claim that
  \begin{equation}
    \forall n \in \mathbb{N}, (f^{(n + 1)} \circ g) \delta^{n + 1} \in
    \mathcal{o} ((f^{(n)} \circ g) \delta^n) \label{eq-fn+1} .
  \end{equation}
  Indeed, for $n = 0$, this follows   from the assumption on $\delta$. Let $n
  \in \mathbb{N}$ such that \eqref{eq-fn+1} holds at $n$. Suppose that
  $(f^{(n)})^{\dag} \in \mathcal{O} (\tmop{id}^{- 1})$. Then $(f^{(n +
  1)})^{\dag} \in \mathcal{O} (\tmop{id}^{- 1})$ by \Cref{lem-Tay-alt1}, i.e.
  $f^{(n + 2)} \in \mathcal{O} (\tmop{id}^{- 1} f^{(n + 1)})$. Composing with $g$ and then multiplying by $\delta^{n+2}$, we obtain
  \[ (f^{(n + 2)} \circ g) \delta^{n + 2} \in \frac{\delta}{g} \mathcal{O}
     ((f^{(n + 1)} \circ g) \delta^{n + 1}) . \]
  But $\delta \in \mathcal{o} (g)$ so $(f^{(n + 2)} \circ g) \delta^{n + 2}
  \in \mathcal{o} ((f^{(n + 1)} \circ g) \delta^{n + 1})$ as claimed. Suppose
  now that $(f^{(n)})^{\dag} \nin \mathcal{O} (\tmop{id}^{- 1})$. Then
  \Cref{lem-Tay-alt2} gives $(f^{(n + 1)})^{\dag} \in \mathcal{O}
  ((f^{(n)})^{\dag})$ so $((f^{(n + 1)})^{\dag} \circ g) \delta \in
  \mathcal{O} (((f^{(n)})^{\dag} \circ g) \delta) \subseteq \mathcal{o}$ by
  the induction hypothesis. Therefore \ $(f^{(n + 2)} \circ g) \delta
  \delta^{n + 1} \in \mathcal{o} ((f^{(n + 1)} \circ g) \delta^{n + 1})$. We
  conclude by induction that \eqref{eq-fn+1} holds.
  
  Let us now derive \hyperref[hfc5]{\textbf{HFC5}} at a given $n\in \mathbb{N}$. Suppose $\delta\geqslant 0$. Let $r_0, r_1 \in \mathbb{R}^{>}$ and let $t_0 \in \mathbb{R}$ be
  large enough so that $f$ is $\mathcal{C}^{n + 1}$ on $[t_0, + \infty)$, that
  $\delta$ is non-negative on $[t_0, + \infty)$, and that $f^{(n + 1)}$ is
  monotonic on $g (t_0, + \infty)$. By \eqref{eq-Hfwc2-weak} for $f^{(n +
  1)}$, we may also assume that
  \[ r_0  |f^{(n + 1)} (g (t)) | \leqslant |f^{(n + 1)} (g (t) + \delta (t)) |
     \leqslant r_1  |f^{(n + 1)} (g (t)) | \]
  for all $t \in (t_0, + \infty)$. By Taylor's theorem, for $t > t_0$, the
  integral
  \[ I (t) \assign \int_{g (t)}^{g (t) + \delta (t)} \frac{(g (t) + \delta (t)
     - s)^n}{n!} f^{(n + 1)} (s) d s \]
  satisfies $f (g (t) + \delta (t)) = \sum_{k = 0}^n \frac{f^{(k)} (g (t))}{k!} \delta
     (t)^k + I (t)$. Now $| I (t) |$ is bounded by the integral
  \[ \int_{g (t)}^{g (t) + \delta (t)} \frac{(g (t) + \delta (t) - s)^n}{n!}
     r_1  |f^{(n + 1)} (g (t)) | d s = \frac{|f^{(n + 1)} (g (t)) |}{(n + 1)
     !} \delta (t)^{n + 1} . \]
  Thus \hyperref[hfc5]{\textbf{HFC5}} at $n$ follows from \eqref{eq-fn+1}. The case when $\delta
  \leqslant 0$ is similar.
\end{proof}

\begin{lemma}\label{lem-somehfcs}
Let $\mathcal{H}$ be a Hardy field with composition and
  inversion. Then for $x=\tmop{id}$, the axioms \hyperref[hfc1]{\textbf{HFC1}}--\hyperref[hfc4]{\textbf{HFC4}} are satisfied.

  \end{lemma}

\begin{proof}
Note that
  $\mathcal{H}$ is an H-field with $x' = 1$. The monotonicity of germs in
  $\mathcal{H}$ yields \hyperref[hfc3]{\textbf{HFC3}}, whereas \hyperref[hfc1]{\textbf{HFC1}}, \hyperref[hfc2]{\textbf{HFC2}} and
  \hyperref[hfc4]{\textbf{HFC4}} are immediate.\end{proof}

\begin{proposition}
  \label{prop-Hardy-partial-conj2}Let $\mathcal{H}$ be a Hardy field with
  composition and inversion. If there is no $f \in \mathcal{H}$
  with $f > \exp^{[n]}$ for all $n \in \mathbb{N}$, then $\mathcal{H}$
  satisfies \hyperref[hfc5]{\textbf{HFC5}}.
\end{proposition}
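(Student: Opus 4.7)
My plan is to invoke Proposition~\ref{prop-Hardy-conj2-equivalnce} to reduce \hyperref[hfc5]{\textbf{HFC5}} to the weak Taylor statement: for $f, g \in \mathcal{H}^{>\mathbb{R}}$ and $\delta \in \mathcal{o}(g)$ with $(f^{\dag} \circ g)\delta \in \mathcal{o}$, one has $f \circ (g + \delta) \in \Theta(f \circ g)$. The no-transexponential hypothesis guarantees that every germ $f \in \mathcal{H}^{>\mathbb{R}}$ has a finite level $l(f) \assign \min\{n \in \mathbb{N} \suchthat \log^{[n]} f \in \mathcal{O}(\tmop{id})\}$, and I will prove the weak statement by induction on $l(f)$.

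The key technical device is an iterated mean value theorem. Since $f > 0$ eventually, classical pointwise MVT applied to $\log f$ yields a germ $c \in \mathcal{C}^{<\infty}$ with $g \leq c \leq g + \delta$ and
\[
\log f \circ (g+\delta) - \log f \circ g = \delta \cdot (f^{\dag} \circ c),
\]
so it suffices to show $\delta (f^{\dag} \circ c) \in \mathcal{o}$: then $f \circ (g+\delta)/(f \circ g) = \exp(\mathcal{o}) \subseteq \Theta(1)$. To compare $f^{\dag} \circ c$ with $f^{\dag} \circ g$, apply MVT a second time to $\log f^{\dag}$ (defined because $f^{\dag} > 0$) on the interval between $g$ and $c$, producing a germ $c'$ with
\[
\log f^{\dag} \circ c - \log f^{\dag} \circ g = (c - g) \cdot ((f^{\dag})^{\dag} \circ c').
\]
Since $\mathcal{o}, \mathcal{O}, \Theta$ are pointwise notions in $\mathcal{C}^{<\infty}$, there is no need for $c, c'$ to belong to $\mathcal{H}$.

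Now I split on Lemmas~\ref{lem-Tay-alt1} and~\ref{lem-Tay-alt2}, using the identity $(f^{\dag})^{\dag} = (f')^{\dag} - f^{\dag}$. In the base case $f^{\dag} \in \mathcal{O}(\tmop{id}^{-1})$ (equivalently, $f$ is polynomially bounded), Lemma~\ref{lem-Tay-alt1} gives $(f^{\dag})^{\dag} \in \mathcal{O}(\tmop{id}^{-1})$, so
\[
(c - g) \cdot ((f^{\dag})^{\dag} \circ c') \in \mathcal{O}(\delta / g) \subseteq \mathcal{o},
\]
yielding $f^{\dag} \circ c \in \Theta(f^{\dag} \circ g)$ and therefore $\delta (f^{\dag} \circ c) \in \mathcal{O}(\delta (f^{\dag} \circ g)) \subseteq \mathcal{o}$. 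In the inductive case $f^{\dag} \notin \mathcal{O}(\tmop{id}^{-1})$, Lemma~\ref{lem-Tay-alt2} gives $(f^{\dag})^{\dag} \in \mathcal{O}(f^{\dag})$; I then apply the inductive hypothesis (weak \hyperref[hfc5]{\textbf{HFC5}} for $f^{\dag}$) with displacement $c - g \in \mathcal{o}(g)$, the required input $(f^{\dag})^{\dag}(g) \cdot (c - g) \in \mathcal{O}(f^{\dag}(g) \delta) \subseteq \mathcal{o}$ being immediate, to conclude $f^{\dag} \circ c \in \Theta(f^{\dag} \circ g)$ and hence $\delta (f^{\dag} \circ c) \in \mathcal{o}$.

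The principal obstacle is verifying $l(f^{\dag}) < l(f)$ when $l(f) \geq 1$, which legitimises the induction. Writing $h \assign \log f$ so that $f^{\dag} = h'$, the definition of level yields $h \leq \exp^{[l(f) - 1]}$ eventually; either $\exp^{[l(f) - 1]} - h > \mathbb{R}$, in which case \hyperref[hf2]{\textbf{HF2}} gives $f^{\dag} < (\exp^{[l(f) - 1]})'$, or $\exp^{[l(f) - 1]} - h \in \mathcal{O}$, in which case smallness of derivation ($\mathcal{O}' \subseteq \mathcal{o}$) gives $f^{\dag} - (\exp^{[l(f) - 1]})' \in \mathcal{o}$. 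Either way $f^{\dag} \in \mathcal{O}((\exp^{[l(f)-1]})')$, and since $(\exp^{[l(f) - 1]})'$ has order bounded by $\exp^{[l(f) - 1]}$ (or by $1$ when $l(f) = 1$), one obtains $l(f^{\dag}) \leq l(f) - 1$, completing the induction.
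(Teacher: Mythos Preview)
Your strategy mirrors the paper's: reduce via Proposition~\ref{prop-Hardy-conj2-equivalnce}, then induct on an exponential height and descend from $f$ to $f^{\dag}$ using Lemma~\ref{lem-Tay-alt2}. Two genuine gaps remain.

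First, your inductive step invokes the weak Taylor hypothesis for $f^{\dag}$, which requires $f^{\dag} \in \mathcal{H}^{>\mathbb{R}}$. But $f^{\dag} \notin \mathcal{O}(\tmop{id}^{-1})$ does not force this: $f = \exp$ gives $f^{\dag} \equiv 1$, and $f = \tmop{id}^{\log \tmop{id}}$ (not polynomially bounded, so in your inductive case) gives $f^{\dag} = 2(\log\tmop{id})/\tmop{id} \to 0$. You must separately treat the case $f^{\dag} \in \Theta(1)$ (trivial: $f^{\dag} \circ c$ and $f^{\dag} \circ g$ are both in $\Theta(1)$) and the case $f^{\dag} \to 0$ (pass to $1/f^{\dag} \in \mathcal{H}^{>\mathbb{R}}$, whose level is $0$ since $f^{\dag} \notin \mathcal{O}(\tmop{id}^{-1})$ forces $1/f^{\dag} \in \mathcal{o}(\tmop{id})$, and whose logarithmic derivative is $-(f^{\dag})^{\dag}$).

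Second, the displacement $c - g$ lies in $\mathcal{C}^{<\infty}$, not in $\mathcal{H}$, so your stated inductive hypothesis does not literally apply to it. The paper avoids this by applying the hypothesis with the original $\delta \in \mathcal{H}$ to get $f^{\dag} \circ (g+\delta) \in \Theta(f^{\dag} \circ g)$, and then squeezing $f^{\dag} \circ c$ between $f^{\dag} \circ g$ and $f^{\dag} \circ (g+\delta)$ via eventual monotonicity of $f^{\dag}$; alternatively you could strengthen the inductive statement to allow germ displacements in $\mathcal{C}^{<\infty}$. A smaller point: $\log^{[l(f)]} f \in \mathcal{O}(\tmop{id})$ only yields $h = \log f \leqslant \exp^{[l(f)-1]}(K\,\tmop{id})$ for some $K$, not $h \leqslant \exp^{[l(f)-1]}$; the level-drop $l(f^{\dag}) \leqslant l(f) - 1$ is true but needs a different justification (e.g., use $h' \in \mathcal{o}(h^2)$ and iterate $\log$).
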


\begin{proof}
  By {\cite[Theorem~2]{Rosli83}}, there is a Hardy field $\mathcal{H}^{\ast}$
  containing $\mathcal{H}$ and which is closed under $\exp$ and $\log$. We
  will partly work inside $\mathcal{H}^{\ast}$ so that we may compare our germs $f
  \in \mathcal{H}$ with elements of the form $\exp^{[n]}, n \in \mathbb{Z}$.
  Let $f, g \in \mathcal{H}^{>\mathbb{R}}$ and $\delta \in \mathcal{H} \cap \mathcal{o}(g)$ with
  $(f^{\dag} \circ g) \delta \in \mathcal{o}$. Let $c \in
  \mathcal{C}^{< \infty}$ with $f \circ (g + \delta) - f \circ g = \delta f' \circ c$ as in \Cref{lem-mean-value-Hardy}. We will show that $f \circ (g + \delta)
  \in \Theta (f \circ g)$ by distinguishing two cases.
  
  \textbf{Case 1}: $\exists p>0,f \in \mathcal{O} ((\log)^p)$ in $\mathcal{H}^{\ast}$. Then $f^{\dag} \in \mathcal{O} \left(
    \frac{1}{\tmop{id} \log} \right)$ so $f' \in \mathcal{O} \left(
    \frac{f}{\tmop{id} \log} \right)$. Pick $r \in \mathbb{R}^{>}$ with $|f'| \leqslant
    r\frac{f}{\tmop{id} \log}$. Consider a real number $s \in
    (0,1)$. Recall that $\delta \in \mathcal{o}
    (g)$, so $-s<\frac{c}{g}-1<s$. Lastly $f'$ is the germ of a monotonic function. Combining all this,  for sufficiently large $t > 1$, we have $(1 - s) g (t) < c
    (t) (1 + s)$, and
    \[ | f' (c (t)) | \leqslant \frac{r \max (f (g (t) + \delta (t)), f (g
       (t)))}{(1 - s) g (t) \log (g (t))} . \]
    Since $\delta \in \mathcal{o} (g)$ we get $\frac{| \delta (t) |}{g (t)} <
    \frac{1 - s}{r}$ for sufficiently large $t > 1$. We deduce that
    \[ | \delta (t) f' (c (t)) | \leqslant \frac{\max (f (g (t) + \delta (t)),
       f (g (t)))}{\log (g (t))} . \]
    for sufficiently large $t > 1$. Since $\log \circ g \nin \mathcal{O}$, we
    deduce that $\delta f' \circ c \in \mathcal{o} (f \circ (g + \delta))$ or
    $\delta f' \circ c \in \mathcal{o} (f \circ g)$. In particular $f \circ (g
    + \delta) \in \Theta (f \circ g)$.
    
    \textbf{Case 2}: $\log^{\mathbb{N}} \subseteq \mathcal{O} (f)$ in
    $\mathcal{H}^{\ast}$. Our assumption on $\mathcal{H}$ implies that there
    are an $n \in \mathbb{N}$ and a $p \in \mathbb{N}$ with $\exp^{[n - 2]}
    \in \mathcal{O} (f)$ and $f \in \mathcal{O} ((\exp^{[n - 1]})^p)$ in
    $\mathcal{H}^{\ast}$. We prove by induction on $k \leqslant n$ that for
    all $h \in \mathcal{H}^{\ast}$ with $h >\mathbb{R}$ and $p \in \mathbb{N}$
    with $p > 0$, we have
    \begin{equation}
      (\exp^{[k - 2]} \in \mathcal{O} (h) \wedge h \in \mathcal{O} ((\exp^{[k
      - 1]})^p) \Longrightarrow h \circ (g + \delta) \in \Theta (h \circ g)),
      \label{eq-Hardy-induc}
    \end{equation}
    where $\exp^{[-2]} = \log^{[2]}$ and $\exp^{[-1]} = \log$.
    Note that for $k < n$, $p \in \mathbb{N}^{>}$ and $h \in
    (\mathcal{H}^{\ast})^{>\mathbb{R}}$ with $\exp^{[k - 2]} \in \mathcal{O}
    (h)$ and $h \in \mathcal{O} ((\exp^{[k - 1]})^p)$ we have $h \in
    \mathcal{O} (f)$ so $h^{\dag} \in \mathcal{O} (f^{\dag})$, so
    \begin{equation}
      (h^{\dag} \circ g) \delta \in \mathcal{o} (1) .
      \label{eq-aux-induc-Hardy}
    \end{equation}
    Thus if $k = 0$, then \eqref{eq-Hardy-induc} follows from \textbf{Case
    1}. Let $k < n$ such that \eqref{eq-Hardy-induc} holds at $k$. Let $h \in
    (\mathcal{H}^{\ast})^{>\mathbb{R}}$ and $p \in \mathbb{N}^{>}$ with
    $\exp^{[k - 1]} \in \mathcal{O} (h)$ and $h \in \mathcal{O}
    ((\exp^{[k]})^p)$. We again write $h \circ (g + \delta) - h \circ g =
    \delta h' \circ c$ where $c$ lies strictly between $g$ and $g + \delta$.
    It suffices to show that $\delta h' \circ c \in \mathcal{o} (h \circ g)$
    or that $\delta h' \circ c \in \mathcal{o} (h \circ (g + \delta))$. Note
    that $h' \circ c \in \mathcal{O} (h' \circ g)$ or $h' \circ c \in
    \mathcal{O} (h' \circ (g + \delta))$ by monotonicity of $h'$. If $h' \circ
    c \in \mathcal{O} (h' \circ g)$, then \eqref{eq-aux-induc-Hardy} yields the
    result. So we may assume that $h' \circ c \in \mathcal{O} (h' \circ (g +
    \delta))$. We have $h \in \mathcal{O} ((\exp^{[n]})^p)$, so $\log \circ h
    \in \mathcal{O} (\exp^{[n - 1]})$, so $h^{\dag} \in \mathcal{O} ((\exp^{[n
    - 1]})') \subseteq \mathcal{o} ((\exp^{[n - 1]})^2)$. The induction
    hypothesis at $n - 1$ for $h^{\dag}$ yields $h^{\dag} \circ (g + \delta)
    \in \Theta (h^{\dag} \circ g)$, whence $(h^{\dag} \circ (g + \delta))
    \delta \in \mathcal{o}$, whence $(h' \circ c) \delta \in \mathcal{o} (h
    \circ (g + \delta))$ as desired. By induction, the statement
    \eqref{eq-Hardy-induc} holds for $k = n$, whence in particular $f \circ (g
    + \delta) \in \Theta (f \circ g)$.
  
  We conclude with \Cref{prop-Hardy-conj2-equivalnce} that $\mathcal{H}$
  satisfies \hyperref[hfc5]{\textbf{HFC5}}.
\end{proof}

\begin{corollary}
  \label{cor-Hardy-H}Let $\mathcal{H}$ be a Hardy field with composition and
  inversion. Assume that there is no germ $f \in \mathcal{H}$ with $f >
  \exp^{[n]}$ for all $n \in \mathbb{N}$. Then $\mathcal{H}$ is an H-field
  with composition and inversion.
\end{corollary}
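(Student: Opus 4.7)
The plan is to verify the axioms of \Cref{def-H-field-with-composition} one by one, using the hypothesis that $\mathcal{H}$ is a Hardy field with composition and inversion together with \Cref{prop-Hardy-partial-conj2}, which carries essentially all of the substantive content. First I would observe that any Hardy field is automatically an H-field: the valuation ring $\mathcal{O}(1)\cap\mathcal{H}$ is convex, the constant field is $\mathbb{R}$ so \hyperref[hf1]{\textbf{HF1}} holds trivially, and \hyperref[hf2]{\textbf{HF2}} is the classical fact that a germ $f\in\mathcal{H}$ with $f>\mathbb{R}$ is eventually strictly increasing and so has positive derivative (applying standard de l'Hôpital-style reasoning in Hardy fields).

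Next I would dispatch \hyperref[hfc1]{\textbf{HFC1}}--\hyperref[hfc4]{\textbf{HFC4}} as routine consequences of the definition of composition of germs. Choose $x:=[\mathrm{id}]\in\mathcal{H}^{>\mathbb{R}}$; then $x'=1$ by definition and \hyperref[hfc4]{\textbf{HFC4}} is immediate. For \hyperref[hfc1]{\textbf{HFC1}}, composition on the right by a fixed $g\in\mathcal{H}^{>\mathbb{R}}$ commutes pointwise with sum, product, and order on any half-line where the representatives are defined, so the induced map on germs is an $\mathbb{R}$-linear morphism of ordered rings. \hyperref[hfc2]{\textbf{HFC2}} is associativity of functional composition, passed to germs. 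For \hyperref[hfc3]{\textbf{HFC3}}, a germ $[a]\in\mathcal{H}^{>\mathbb{R}}$ has a representative that is eventually strictly increasing (because $a'>0$ by \hyperref[hf2]{\textbf{HF2}}), so $b\mapsto a\circ b$ is strictly increasing on $\mathcal{H}^{>\mathbb{R}}$.

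The only serious axiom is the Taylor expansion \hyperref[hfc5]{\textbf{HFC5}}, and this is precisely the conclusion of \Cref{prop-Hardy-partial-conj2} under the transexponential-free hypothesis imposed on $\mathcal{H}$. Finally, the inversion hypothesis gives that $(\mathcal{H}^{>\mathbb{R}},\circ,x)$ is a group. Putting these verifications together yields that $\mathcal{H}$ is an H-field with composition and inversion.

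The only potential subtlety, which I would flag but not expect to cause difficulty, is consistency of notation between the Hardy-field setting (where $\mathcal{O}$, $\mathcal{o}$, $\Theta$ were defined pointwise on $\mathcal{C}^{<\infty}$) and the H-field setting (where these come from the convex valuation ring). This coincidence on $\mathcal{H}$ is explicit in the discussion before \Cref{lem-mean-value-Hardy}, so no reconciliation is needed; the main work has already been done in \Cref{prop-Hardy-partial-conj2}, and the corollary is in effect a repackaging of that proposition.
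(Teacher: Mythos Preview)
Your proposal is correct and follows essentially the same approach as the paper: set $x=[\mathrm{id}]$, note that the Hardy field is already an H-field, dispatch \hyperref[hfc1]{\textbf{HFC1}}--\hyperref[hfc4]{\textbf{HFC4}} as immediate (with monotonicity of germs giving \hyperref[hfc3]{\textbf{HFC3}}), and invoke \Cref{prop-Hardy-partial-conj2} for \hyperref[hfc5]{\textbf{HFC5}}. Your write-up is in fact more detailed than the paper's terse proof, but the structure and content are the same.
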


  This result may extend to transexponential Hardy fields with composition and inversion, provided one has some control on the growth of elements of said fields. For instance, we believe it holds in Padgett's transexponential Hardy field with composition \cite{Pad:phd}, provided it has inversion. In general, $\mathcal{H}^{>\mathbb{R}}$  should be contained in a single $T$-level as per \cite{Tyne:phd} (see also \cite{Tyne:height}), for some o-minimal theory $T$.

\begin{corollary}
  \label{cor-elem-Hfield}Let $\mathcal{R}$ be an o-minimal expansion of the
  real ordered field in a first-order language $\mathcal{L}$. Assume that each
  $f \in \mathcal{R}_{\infty}$ lies below a germ $\exp^{[k]}, k \in
  \mathbb{N}$ in $(\mathcal{C}^{< \infty}, <)$. Let $\mathcal{R}^{\ast} =
  (\mathbb{R}^{\ast}, \ldots)$ be an elementary extension of $\mathcal{R}$.
  Consider the ordered field $\mathcal{R}_{\infty}^{\ast}$ with its canonical
  {\tmem{{\cite{vdD98}}}} derivation $\partial$, with the convex hull
  $\mathcal{O}^{\ast}$ of $\mathbb{R}^{\ast}$ as a valuation ring, and
  composition of germs. Then $\mathcal{R}_{\infty}^{\ast}$ is an H-field with
  composition and inversion.
\end{corollary}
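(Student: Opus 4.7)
The plan is to apply \Cref{cor-Hardy-H} to $\mathcal{R}_\infty$ and then transfer the conclusion to $\mathcal{R}^*_\infty$ via the elementarity $\mathcal{R} \prec \mathcal{R}^*$. Since $\mathcal{R}$ is an o-minimal expansion of the real ordered field, $\mathcal{R}_\infty$ is a Hardy field (see \cite[Section~7.1]{vdD98}); it is closed under composition because the composition of $\mathcal{L}$-definable functions is $\mathcal{L}$-definable, and closed under inversion because by the monotonicity theorem a definable germ in $\mathcal{R}_\infty^{>\mathbb{R}}$ is eventually strictly increasing with $\mathcal{L}$-definable compositional inverse (defined by swapping input and output variables in its defining formula). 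The hypothesis that every $f \in \mathcal{R}_\infty$ lies below some $\exp^{[k]}$ says exactly that $\mathcal{R}_\infty$ has no transexponential germ, so \Cref{cor-Hardy-H} applies and $\mathcal{R}_\infty$ is an H-field with composition and inversion.

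To transfer to $\mathcal{R}^*_\infty$, I would note that the axioms \hyperref[hf1]{\textbf{HF1}}, \hyperref[hf2]{\textbf{HF2}}, \hyperref[hfc1]{\textbf{HFC1}}--\hyperref[hfc5]{\textbf{HFC5}}, and the axiom that $(K^{>C}, \circ, x)$ is a group, defining ``H-field with composition and inversion'', can each be rephrased as a scheme of $\mathcal{L}$-sentences over the base structure. Concretely, for each universal quantifier over germs, we introduce a schema indexed by pairs (defining $\mathcal{L}$-formula, parameter tuple), while existential quantifiers are witnessed by explicit formula-theoretic constructions---the compositional inverse of a germ defined by $\varphi(t, y, \bar z)$ is defined by $\varphi(y, t, \bar z)$; the constant from \hyperref[hf1]{\textbf{HF1}} is the definable limit at $+\infty$ supplied by the monotonicity theorem; and so on. By \Cref{lem-los}, an $\mathcal{L}_{\tmop{hfc}}$-axiom holds in $\mathcal{R}_\infty$ (resp.\ $\mathcal{R}^*_\infty$) if and only if every instance of the associated scheme holds in $\mathcal{R}$ (resp.\ $\mathcal{R}^*$). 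Since each instance holds in $\mathcal{R}$ by the preceding step, elementarity $\mathcal{R} \prec \mathcal{R}^*$ gives the same for $\mathcal{R}^*$, and hence each axiom holds in $\mathcal{R}^*_\infty$.

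The main technical task is the formula-theoretic translation of \hyperref[hfc5]{\textbf{HFC5}}, which requires expressing derivation, composition, and membership in $\mathcal{o}(\cdot)$ in terms of $\mathcal{L}$-definable behaviour of representatives: derivatives and compositions of $\mathcal{L}$-definable functions are themselves $\mathcal{L}$-definable in o-minimal structures, while $u \in \mathcal{o}(v)$ unpacks to $\forall r \in R^{>0}, \exists t_0, \forall t > t_0, |u(t)| < r|v(t)|$. Once this routine but detailed bookkeeping is in place, the elementarity transfer is immediate.
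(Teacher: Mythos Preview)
Your proposal is correct and follows essentially the same route as the paper: apply \Cref{cor-Hardy-H} to $\mathcal{R}_\infty$, then push the axioms to $\mathcal{R}^*_\infty$ by expressing them as schemes of $\mathcal{L}$-sentences and invoking $\mathcal{R}\prec\mathcal{R}^*$ via \Cref{lem-los}. The only organisational difference is that the paper verifies \hyperref[hf1]{\textbf{HF1}}, \hyperref[hf2]{\textbf{HF2}} and \hyperref[hfc1]{\textbf{HFC1}} directly in $\mathcal{R}^*_\infty$ (using o-minimality of $\mathcal{R}^*$ and the fact that $C=\mathbb{R}^*$) rather than transferring them---but your explicit witnesses (the definable limit for \hyperref[hf1]{\textbf{HF1}}, swapped-variable inverse, etc.) amount to the same arguments; one small wording fix is that your schema should be indexed by formulas with the parameter tuple \emph{universally quantified}, not by formula--parameter pairs, so that transfer reaches all $\mathbb{R}^*$-parameters.
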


\begin{proof}
  The result holds, by \Cref{cor-Hardy-H}, if $\mathcal{R}^{\ast}
  =\mathcal{R}$. The structure $(\mathcal{R}_{\infty}^{\ast}, \partial)$ is a
  differential field by {\cite[Chapter~7, (1.3)]{vdD98}}. Each element $h$ of
  the valuation ring of $\mathcal{R}_{\infty}^{\ast}$ is the germ of a
  definable bounded monotonic function on $\mathbb{R}^{\ast}$, so by
  o-minimality of $\mathcal{R}^{\ast}$, it has a limit $c \in
  \mathbb{R}^{\ast}$. We have $h - c \in \mathcal{o}^{\ast}$ by definition, so
  \hyperref[hf1]{\textbf{HF1}} holds. If $h \in \mathcal{R}_{\infty}^{\ast}$ lies above
  $\mathbb{R}^{\ast}$, then by the monotonicity theorem $h$ must be the germ
  of a strictly increasing function. We deduce with {\cite[Chapter~7, (2.5),
  Lemma~1]{vdD98}} that $h' > 0$. So \hyperref[hf2]{\textbf{HF2}} holds and
  $\mathcal{R}_{\infty}^{\ast}$ is an H-field. Except for \hyperref[hfc1]{\textbf{HFC1}} which
  refers to $\tmop{Ker} (\partial) =\mathbb{R}^{\ast}$, all statements in
  \Cref{def-H-field-with-composition} can be turned, after specialisation of
  the universally quantified variables, into sentences in $\mathcal{L}$. Since
  they hold for $\mathcal{R}_{\infty}$, they hold for
  $\mathcal{R}^{\ast}_{\infty}$. The existence of compositional inverses for
  elements in $\mathcal{G}_{\mathcal{R}^{\ast}}$ has already been established.
  This leaves the axiom \hyperref[hfc1]{\textbf{HFC1}} to justify, but that follows immediately
  from the definition of $\mathbb{R}^{\ast}$.
\end{proof}

\subsection{Conjugacy in H-fields with composition and
inversion}\label{subsection-conjugacy}

We fix an H-field with composition and inversion $(K, +, \cdot, 0, 1,
\mathcal{O}, <, \partial, \circ, x)$ over $\mathbb{R}$ and we write
$\mathcal{G}$ for the group $K^{>\mathbb{R}}$ under composition.

\begin{lemma}
  \label{lem-cof-centralizer}Let $g = x + r_0 + \varepsilon$ where $r_0 \in
  \mathbb{R}$ and $\varepsilon \in \mathcal{o} \cap K^{>}$. Then $\mathcal{C}
  (g)$ is Archimedean and each $h \in \mathcal{C} (g)$ has the form $h = x + r
  + \delta$ for an $r \in \mathbb{R}$ and a $\delta \in \mathcal{o}$.
\end{lemma}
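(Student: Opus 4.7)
My plan is to first identify a candidate subgroup $\mathcal{B} \assign \{x + r + \delta \suchthat r \in \mathbb{R}, \delta \in \mathcal{o}\}$ containing $g$, then show $\mathcal{C}(g) \subseteq \mathcal{B}$, and finally exhibit an order-embedding $\mathcal{C}(g) \hookrightarrow (\mathbb{R}, +, <)$, whence the Archimedean property will follow by H\"older. That $\mathcal{B}$ is a subgroup of $(K^{>\mathbb{R}}, \circ, x, <)$ comes from a direct computation via \hyperref[hfc1]{\textbf{HFC1}}:
\[(x + r_1 + \delta_1) \circ (x + r_2 + \delta_2) = x + (r_1 + r_2) + \delta_2 + \delta_1 \circ (x + r_2 + \delta_2),\]
where $\delta_1 \circ (x + r_2 + \delta_2) \in \mathcal{o}$ because $\delta_1 \in \mathcal{o}$ means $-t < \delta_1 < t$ for every real $t > 0$, an inequality preserved under composition with elements of $K^{>\mathbb{R}}$ by \hyperref[hfc1]{\textbf{HFC1}} and \hyperref[hfc3]{\textbf{HFC3}}. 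Inversion is symmetric: from $h \circ h^{\tmop{inv}} = x$ one obtains $h^{\tmop{inv}} = x - r - \delta \circ h^{\tmop{inv}} \in x - r + \mathcal{o}$. Thus $g = x + r_0 + \varepsilon \in \mathcal{B}$.

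The crux is showing $\mathcal{C}(g) \subseteq \mathcal{B}$. Let $h \in \mathcal{C}(g)$ and $u \assign h - x$. Unpacking $h \circ g = g \circ h$ yields
\[u \circ g - u = \varepsilon \circ h - \varepsilon \in \mathcal{o}.\]
Suppose for contradiction $u \notin \mathcal{O}$; replacing $h$ by $h^{\tmop{inv}} \in \mathcal{C}(g)$ if necessary, I may assume $u > \mathbb{R}$. I then apply the Taylor axiom \hyperref[hfc5]{\textbf{HFC5}} at $(a, b, \delta) = (u, x, r_0 + \varepsilon)$. The first precondition $r_0 + \varepsilon \in \mathcal{o}(u)$ is automatic since $1/u \in \mathcal{o}$ and $r_0 + \varepsilon \in \mathcal{O}$; the second precondition $u^{\dag}(r_0 + \varepsilon) \in \mathcal{o}$ must be extracted from the commutation itself. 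For this I differentiate $h \circ g = g \circ h$ via \Cref{lem-chain-rule} to obtain $(h^{\dag} \circ g)(1 + \varepsilon') = (g^{\dag} \circ h)\, h'$, and exploit $g^{\dag} = (1 + \varepsilon')/g \in \mathcal{o}$ (as $g > \mathbb{R}$) to constrain $h^{\dag}$, hence $u^{\dag}$. Once Taylor applies, first-order expansion of $u \circ g - u$ compared to the infinitesimal right-hand side forces $u'(r_0 + \varepsilon) \in \mathcal{o}$; pushing $n$ higher in \hyperref[hfc5]{\textbf{HFC5}} and invoking \eqref{eq-gap} then squeezes $u$ into $\mathcal{O}$, contradicting $u > \mathbb{R}$. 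Verifying this second Taylor precondition is the main obstacle: $h > \mathbb{R}$ alone does not bound $h^{\dag}$ (as exponential-type examples show), so the bound must be drawn from the commutation.

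Once $\mathcal{C}(g) \subseteq \mathcal{B}$ is established, \hyperref[hf1]{\textbf{HF1}} applied to $u \in \mathcal{O}$ yields the claimed form $h = x + r + \delta$ with $r \in \mathbb{R}$ and $\delta \in \mathcal{o}$. For the Archimedean property I construct an order-embedding of $\mathcal{C}(g)$ into $(\mathbb{R}, +, <)$, distinguishing two cases. When $r_0 \neq 0$, the group homomorphism $\Lambda \of \mathcal{B} \longrightarrow (\mathbb{R}, +)$, $x + r + \delta \mapsto r$, restricts to an injective order-embedding on $\mathcal{C}(g)$: the same Taylor machinery applied to any $h = x + \delta \in \mathcal{C}(g) \cap (x + \mathcal{o})$ forces $\delta = 0$. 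When $r_0 = 0$, all of $\mathcal{C}(g)$ lies in $x + \mathcal{o}$; a Taylor computation on $\delta \circ g - \delta = \varepsilon \circ h - \varepsilon$ (where $h = x + \delta$) shows $\delta/\varepsilon$ is constant, giving the embedding $x + \delta \mapsto \delta/\varepsilon \in \mathbb{R}$. H\"older's theorem then concludes that $\mathcal{C}(g)$ is Archimedean.
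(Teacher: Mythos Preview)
Your approach has a genuine gap at exactly the point you flag, and the paper avoids that difficulty entirely by a much simpler argument. You correctly derive the commutation identity
\[
u \circ g - u = \varepsilon \circ h - \varepsilon,
\]
but then you try to exploit only its \emph{magnitude} (the right-hand side lies in $\mathcal{o}$) via Taylor, which forces you into the precondition $u^{\dag}(r_0 + \varepsilon) \in \mathcal{o}$ that you cannot verify. The paper instead exploits the \emph{sign} of both sides, using only \hyperref[hfc3]{\textbf{HFC3}}: if $u = h - x > \mathbb{R}$, then $h > x$ and $g > x$, so $u \circ g > u$ (because $u \in K^{>\mathbb{R}}$) while $\varepsilon \circ h < \varepsilon$ (because $1/\varepsilon \in K^{>\mathbb{R}}$, hence $(1/\varepsilon) \circ h > 1/\varepsilon$). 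Thus the left side is strictly positive and the right side strictly negative, an immediate contradiction with no Taylor expansion, no differentiation, and no precondition to check.

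For the Archimedean conclusion the paper is again more direct: it first shows by an easy induction that $g^{[n]} - (x + nr_0) \in \mathcal{o}$ for all $n \in \mathbb{Z}$, so the iterates $g^{[\mathbb{Z}]}$ are cofinal and coinitial in $\mathcal{B} = \{x + r + \delta : r \in \mathbb{R},\ \delta \in \mathcal{o}\}$. Combined with $\mathcal{C}(g) \subseteq \mathcal{B}$, this gives the Archimedean property straight away, bypassing your case split on $r_0$. Your proposed treatment of the case $r_0 = 0$ is also problematic: Taylor at best gives $(\delta/\varepsilon)' \approx 0$, not $\delta/\varepsilon \in \mathbb{R}$ exactly, so the map $x + \delta \mapsto \delta/\varepsilon$ need not land in $\mathbb{R}$ and cannot serve as the embedding you want. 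Replace your Taylor machinery with the sign argument above and the iterate computation, and the proof becomes short and complete.
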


\begin{proof}
  For $n \in \mathbb{Z}$, we claim that $(g^{[n]} - (x + nr_0)) \in
  \mathcal{o}$. Indeed this holds for $n = 0$. Given $n \in \mathbb{Z}$ such
  that $g^{[n]} = x + nr_0 + \varepsilon_n$ where $\varepsilon_n \in
  \mathcal{o}$, we have
  \[ g^{[n + 1]} = x + nr_0 + \varepsilon_n + r_0 + \varepsilon \circ (x +
     nr_0 + \varepsilon_n) = x + (n + 1) r_0 + \varepsilon_{n + 1} \]
  where $\varepsilon_{n + 1} \assign \varepsilon \circ (x + nr_0 +
  \varepsilon_n) \in \mathcal{o}$ by
  \hyperref[hfc1]{\textbf{HFC1}}. So we have the result for all $n \in
  \mathbb{N}$ by induction. Write $\varepsilon_{- 1} \assign g^{[- 1]} - x +
  r_0$. We have
  \[ x = g \circ (x - r_0 + \varepsilon_{- 1}) = x + \varepsilon_{- 1} +
     \varepsilon \circ (x - r_0 + \varepsilon_{- 1}) \]
  where $\varepsilon \circ (x - r_0 + \varepsilon_{- 1}) \in \mathcal{o}$ by
  \hyperref[hfc1]{\textbf{HFC1}}. So
  we must have $\varepsilon_{- 1} \in \mathcal{o}$, and we can use the same
  arguments as in the case $n \geqslant 0$, to show by induction that
  $(g^{[n]} - (x + nr_0)) \in \mathcal{o}$ for all $n \in -\mathbb{N}$.
  
  Now let $h \in \mathcal{C} (g)^{>}$ and assume for contradiction that $\delta
  \assign h - x>\mathbb{R}$. Since $g \circ h = h \circ g$, we have $x + \delta + r_0 + \varepsilon \circ h = x + r_0 +
     \varepsilon + \delta \circ g$.
  So
  \begin{equation}
    \delta + \varepsilon \circ h = \varepsilon + \delta \circ g.
    \label{eq-aux-cof-centralizer}
  \end{equation}
  From $\varepsilon \in \mathcal{o}$ and $\varepsilon > 0$, we deduce by
  \hyperref[hfc3]{\textbf{HFC3}} that $\varepsilon \circ h < \varepsilon$, whereas $\delta \circ
  g > \delta$. This contradicts~\eqref{eq-aux-cof-centralizer}. So $h = x + r +
  \iota$ for a certain $r \in \mathbb{R}$ and a certain $\iota \in
  \mathcal{o}$. Combining these two results, we deduce that $\mathcal{C}
  (g)$ is Archimedean.
\end{proof}

\begin{lemma}
  \label{lem-gog2-R}Let $f, g \in \mathcal{G}^{>}$ with $f > x +\mathbb{R}$,
  and assume that $g = x + 1 + \varepsilon$ for a certain $\varepsilon \in
  \mathcal{o}$ with $\varepsilon > 0$. Then $f \circ g > g \circ f$.
\end{lemma}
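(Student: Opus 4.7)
The plan is to sandwich $f \circ g$ and $g \circ f$ around the element $f + 1 + \varepsilon$; concretely, I aim to show
\[ f \circ g \;>\; f + 1 + \varepsilon \;>\; f + 1 + \varepsilon \circ f \;=\; g \circ f. \]

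For the left inequality, observe that $f - x \in K^{> \mathbb{R}} = K^{>C}$ since $f > x + \mathbb{R}$. By \textbf{HFC3}, the map $b \mapsto (f - x) \circ b$ is then strictly increasing on $K^{>C}$, and since $g = x + 1 + \varepsilon > x$ in $K^{>C}$, we get $(f - x) \circ g > (f - x) \circ x$. Unfolding the left-hand side via the $C$-linearity of $a \mapsto a \circ g$ (\textbf{HFC1}) together with $x \circ g = g$ (\textbf{HFC4}) yields $(f - x) \circ g = f \circ g - g$, while the right-hand side is just $f - x$ by \textbf{HFC4}. Rearranging gives $f \circ g > f + (g - x) = f + 1 + \varepsilon$, as claimed.

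For the right inequality it suffices to show $\varepsilon \circ f < \varepsilon$. The same monotonicity trick applies with $1/\varepsilon$ in place of $f - x$: since $\varepsilon \in \mathcal{o}$ and $\varepsilon > 0$, we have $1/\varepsilon > \mathbb{R}$, hence $1/\varepsilon \in K^{>C}$, so by \textbf{HFC3} the map $b \mapsto (1/\varepsilon) \circ b$ is strictly increasing on $K^{>C}$. Since $f > x$ (as $f > x + \mathbb{R}$), we obtain $(1/\varepsilon) \circ f > (1/\varepsilon) \circ x = 1/\varepsilon$, which by $C$-linearity (\textbf{HFC1}) rewrites as $1/(\varepsilon \circ f) > 1/\varepsilon$. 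As both sides are positive, $\varepsilon \circ f < \varepsilon$ follows, completing the chain of inequalities.

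There is no substantive obstacle: the lemma reduces to two applications of the monotonicity of composition in the second argument (\textbf{HFC3}), with $f - x$ and $1/\varepsilon$ serving as suitable elements of $K^{>C}$. Notably, neither the Taylor axiom \textbf{HFC5} nor the derivation enters the argument.
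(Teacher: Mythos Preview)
Your proof is correct and considerably more elegant than the paper's. The paper proceeds by a three-case split according to whether $f-x\in\mathcal{o}(x)$, $f\in\Theta(x)$, or $f/x>\mathbb{R}$; in the latter two cases it invokes the Taylor axiom \textbf{HFC5} and the H-field property $f'>1$, and in the third case it passes to inverses. In fact its Case~1 computation
\[
f\circ g - g\circ f \;=\; \bigl((f-x)\circ g - (f-x)\bigr) + \bigl(\varepsilon - \varepsilon\circ f\bigr)
\]
is precisely your argument, and---as you observe---it goes through uniformly once one notes that $f-x\in K^{>C}$ whenever $f>x+\mathbb{R}$, with no restriction on the size of $f-x$ relative to $x$. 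The paper's case split and appeals to \textbf{HFC5} and the derivation are therefore unnecessary. Your proof isolates the two monotonicity facts cleanly (via $f-x\in K^{>C}$ and $1/\varepsilon\in K^{>C}$) and uses only \textbf{HFC1}, \textbf{HFC3}, \textbf{HFC4}; this is a genuine simplification.
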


\begin{proof}
  Recall that $K$ is an H-field, so $f - x >\mathbb{R}$ entails that $(f - x)'
  > 0$, whence $f' > 1$. We distinguish three cases.
  
  Assume that $f - x \in \mathcal{o} (x)$. So $f = x + \delta$ where $\delta >\mathbb{R}$. We have
  \begin{eqnarray*}
    f \circ g - g \circ f & = & x + 1 + \varepsilon + \delta \circ (x + 1 +
    \varepsilon) - x - \delta - 1 - \varepsilon \circ (x + \delta)\\
    & = & (\delta \circ (x + 1 + \varepsilon) - \delta) + (\varepsilon -
    \varepsilon \circ (x + \delta)) .
  \end{eqnarray*}
  Now $\delta \circ (x + 1 + \varepsilon) - \delta > 0$ because $\delta
  >\mathbb{R}$ and $x + 1 + \varepsilon > x$, and $\varepsilon - \varepsilon
  \circ (x + \delta) > 0$ because $\varepsilon \in K^{>} \cap \mathcal{o}$,
  $\varepsilon > 0$ and $x + \delta > x$. So $f \circ g > g \circ f$ in that
  case.
  
  Assume now that $f \in \Theta (x)$. Then let $r \in \mathbb{R}$ with $f - rx
  \in \mathcal{o} (x)$. So $r>1$. Write $\delta
  \assign f - rx$, so $\delta \in \mathcal{o} (x)$. This time, we have
    \[f \circ g - g \circ f = (r - 1) + (\delta \circ (x + 1 + \varepsilon) - \delta) + (r
    \varepsilon - \varepsilon \circ (x + \delta)) .\]
  
  As in the previous case, the term $\varepsilon - \varepsilon \circ (x +
  \delta)$ is strictly positive. We deduce since $r > 1$ that $r \varepsilon -
  \varepsilon \circ (x + \delta) > 0$. Since $r - 1 > 0$, it suffices to show
  that $\delta \circ (x + 1 + \varepsilon) - \delta \in \mathcal{o}$. This is
  immediate if $\delta \in \mathcal{O}$. Indeed then we find by \hyperref[hf1]{\textbf{HF1}} an
  $r_0 \in \mathbb{R}$ and a $\iota \in \mathcal{o}$ with $\delta = r_0 +
  \iota$. Thus
  \[ \delta \circ (x + 1 + \varepsilon) - \delta \in \Theta (\iota \circ (x +
     1 + \varepsilon) - \iota) \nobracket, \]
  whence $\delta \circ (x + 1 + \varepsilon) - \delta \in \mathcal{o}$ by
  \hyperref[hfc1]{\textbf{HFC1}}.
  
  Assume that $\delta \nin \mathcal{O}$. Recall that $\delta \in \mathcal{o}
  (x)$, so in view of \eqref{eq-d-valued}, we have $\delta' \in \mathcal{o}$.
  Therefore $\delta'  (1 + \varepsilon) \in \mathcal{o}$. By \hyperref[hfc5]{\textbf{HFC5}}, we
  have
  \[ \delta \circ (x + 1 + \varepsilon) - \delta - \delta'  (1 + \varepsilon)
     \in \mathcal{o} (\delta'  (1 + \varepsilon)) . \]
  Since $\delta'  (1 + \varepsilon) \in \mathcal{o}$, we must have $\delta
  \circ (x + 1 + \varepsilon) - \delta \in \mathcal{o}$ as claimed.
  
  We finally treat the remaining case when $f / x >\mathbb{R}$. We have $f >
  x\mathbb{R}^{>}$, so $f^{\tmop{inv}} <\mathbb{R}^{>} x$. Since
  $f^{\tmop{inv}} >\mathbb{R}$, we deduce with \hyperref[hf2]{\textbf{HF2}} that $0 <
  (f^{\tmop{inv}})' <\mathbb{R}^{>}$, i.e. $(f^{\tmop{inv}})' \in
  \mathcal{o}$. It suffices to show that $g^{\tmop{inv}} \circ f^{\tmop{inv}} < f^{\tmop{inv}} \circ
     g^{\tmop{inv}} $.
  Recall as in the proof of Lemma~\ref{lem-cof-centralizer} that $g^{\tmop{inv}} = x - 1 - \delta$ for a certain $\delta \in \mathcal{o}
  \cap K^{>}$. We have $(f^{\tmop{inv}})^{\dag} =
  \frac{(f^{\tmop{inv}})'}{f^{\tmop{inv}}} \in \mathcal{o}$. Since
  $(f^{\tmop{inv}})^{\dag} 1 \in \mathcal{o}$, the axiom \hyperref[hfc5]{\textbf{HFC5}} gives
  \[f^{\tmop{inv}} \circ (x - 1 - \delta) \in f^{\tmop{inv}} -
  (f^{\tmop{inv}})' (1 + \delta) +\mathcal{o} ((f^{\tmop{inv}})').\] Therefore
  $f^{\tmop{inv}} \circ g^{\tmop{inv}} - f^{\tmop{inv}} \in \mathcal{o}$. We
  have
  \[ g^{\tmop{inv}} \circ f^{\tmop{inv}} - f^{\tmop{inv}} = (x - 1 - \delta)
     \circ f^{\tmop{inv}} - f^{\tmop{inv}} = - 1 - \delta \circ f^{\tmop{inv}}
     \in - 1 +\mathcal{o}.\]
  Thus $g^{\tmop{inv}} \circ f^{\tmop{inv}} - f^{\tmop{inv}} < f^{\tmop{inv}}
  \circ g^{\tmop{inv}} - f^{\tmop{inv}}$, so $g^{\tmop{inv}} \circ
  f^{\tmop{inv}} < f^{\tmop{inv}} \circ g^{\tmop{inv}}$.
\end{proof}

We next need to find approximate primitives of elements in $K$. These are large enough that this does not require any further assumption on $K$ (such as having asymptotic integration, see \cite[p 8]{AvdD03}).

\begin{lemma}
  \label{lem-Rosenlicht}Given $\delta \in \mathcal{O} (x^{- 2})$, there is
  an $h \in K$ with $h' - \delta^{-1} \in \mathcal{o} (\delta^{-1}) \cap K^{>}$.
\end{lemma}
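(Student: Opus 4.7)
The plan is to construct $h$ as an approximate primitive of $\delta$ by extracting its leading asymptotic term and then choosing a sign-appropriate correction. Since $x^2\delta \in \mathcal{O}$, the axiom \hyperref[hf1]{\textbf{HF1}} gives $c \in \mathbb{R}$ and $\varepsilon \in \mathcal{o}$ with $x^2\delta = c + \varepsilon$, so that $\delta = cx^{-2} + \varepsilon x^{-2}$. This decomposition will allow me to reduce the problem to a handful of explicit candidates for $h$.

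I would first treat the generic case $c \neq 0$, where $\delta \in \Theta(x^{-2})$ and hence $\mathcal{o}(\delta) = \mathcal{o}(x^{-2})$. The naive candidate $h := -cx^{-1}$ yields $h' - \delta = -\varepsilon x^{-2} \in \mathcal{o}(x^{-2})$, of the correct magnitude but with sign depending on $\varepsilon$. My plan is to split on the sign of $\varepsilon$: when $\varepsilon > 0$ I would replace $h$ by $-(c+\varepsilon)x^{-1} = -x\delta$, a direct computation giving $h' - \delta = -\varepsilon' x^{-1}$; here \hyperref[hf2]{\textbf{HF2}} applied to $1/\varepsilon \in K^{>\mathbb{R}}$ forces $(1/\varepsilon)' > 0$, whence $\varepsilon' < 0$ and $-\varepsilon' x^{-1} > 0$, while the valuative inequality \eqref{eq-gap} with $a = x^{-1}$ gives $\varepsilon' \in \mathcal{o}(x^{-1})$, so $\varepsilon' x^{-1} \in \mathcal{o}(x^{-2}) = \mathcal{o}(\delta)$. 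When $\varepsilon \leq 0$ I would instead take $h := -cx^{-1} - \tfrac{1}{2}x^{-2}$, giving $h' - \delta = -\varepsilon x^{-2} + x^{-3}$, visibly positive and still in $\mathcal{o}(x^{-2})$.

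The main obstacle is the degenerate case $c = 0$, i.e.\ $\delta \in \mathcal{o}(x^{-2})$ strictly: then $\mathcal{o}(\delta)$ is properly contained in $\mathcal{o}(x^{-2})$ and none of the above choices suffice. My plan is to iterate the leading-term extraction, applying \hyperref[hf1]{\textbf{HF1}} to $x^n\delta$ for the smallest $n \geq 3$ with $x^n\delta \in \mathcal{O}$ having nonzero leading coefficient, and then to repeat the construction above with the scale $x^{-n}$ and primitive $-cx^{-(n-1)}/(n-1)$ in place of $x^{-2}$ and $-cx^{-1}$. The truly delicate sub-case is when $\delta \in \mathcal{o}(x^{-n})$ for every $n \in \mathbb{N}$: here $\delta^{\dag}$ is large (in particular $\delta^{\dag} \succ x^{-1}$), and one can set $h := \delta/\delta^{\dag}$ plus a small correction, whose error $-\delta(\delta^{\dag})'/(\delta^{\dag})^2$ is controlled by smallness of the derivation and \hyperref[hfc5]{\textbf{HFC5}}. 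It is this transexponentially small sub-case where most of the technical work lies.
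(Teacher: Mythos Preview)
Your approach is quite different from the paper's. The paper does not construct $h$ by hand; it invokes Rosenlicht's asymptotic-integration theorem \cite[Theorem~1]{Rosli83} as a black box and only checks its hypothesis, namely that $x^{-2} \in \mathcal{o}(f^{\dag})$ for every $f \in \mathcal{o}\setminus\{0\}$, which is immediate from \eqref{eq-gap} with $b = x^{-1}$.

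Your case $c \neq 0$ is correct and pleasant. The genuine gap is the case $c = 0$. Your iteration ``find the smallest $n \geq 3$ with $x^n\delta \in \mathcal{O}$ having nonzero leading coefficient'' tacitly assumes that $\delta$ is asymptotic to an \emph{integer} power of $x$. But $K$ is an arbitrary H-field with composition and inversion over $\mathbb{R}$: nothing forbids, say, $\delta \in \Theta(x^{-5/2})$ or $\delta \in \Theta\bigl(x^{-2}(\log x)^{-1}\bigr)$ when such elements exist in $K$. In those situations $x^2\delta \in \mathcal{o}$ (so $c=0$) while already $x^3\delta \notin \mathcal{O}$, and your procedure never produces an admissible $n$. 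The ``truly delicate'' flat sub-case $\delta \in \bigcap_n \mathcal{o}(x^{-n})$ is likewise only sketched: the error for $h = \delta/\delta^{\dag}$ is indeed $-\delta\,(\delta^{\dag})' / (\delta^{\dag})^2$, but you neither show that $(\delta^{\dag})'/(\delta^{\dag})^2 \in \mathcal{o}$ nor arrange the sign, and the appeal to \hyperref[hfc5]{\textbf{HFC5}} (a Taylor axiom for \emph{composition}) is misplaced here. What is really needed is an argument uniform over all valuations of $\delta$ below $v(x^{-2})$; that is exactly what Rosenlicht's theorem packages, and reproving it from scratch requires the asymptotic-couple machinery rather than a case split on integer exponents.
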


\begin{proof}
  In view of {\cite[Theorem~1]{Rosli83}}, it suffices to show that $x^{- 2}
  \in \mathcal{o} (f^{\dag})$ for all $f \in \mathcal{o} \setminus \{0\}$.
  Let~$f \in \mathcal{o} \setminus \{0\}$. By \eqref{eq-gap}, we have $g' \in
  \mathcal{o} (f^{\dag})$ for all $g \in \mathcal{o}$. In particular $(x^{-
  1})' = - x^{- 2} \in \mathcal{o} (f)$, hence the result.
\end{proof}

\begin{lemma}
  \label{lem-x+1}Let $g \in \mathcal{G}^{>}$ be of the form $g = x + \delta$
  where $\delta \in K^{>} \cap \mathcal{O} (x^{- 2})$. There are an $h \in
  \mathcal{G}$ and an $\varepsilon \in \mathcal{o}$ with $\varepsilon > 0$ and
  $h \circ g \circ h^{\tmop{inv}} = x + 1 + \varepsilon$.
\end{lemma}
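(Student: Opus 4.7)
Plan. The goal is to produce $h \in \mathcal{G}$ satisfying $h \circ g - h - 1 \in \mathcal{o} \cap K^{>}$; once this is done, setting $\varepsilon \assign (h \circ g - h - 1) \circ h^{\tmop{inv}}$ will yield the identity $h \circ g \circ h^{\tmop{inv}} = x + 1 + \varepsilon$, and the fact that $\varepsilon \in \mathcal{o} \cap K^{>}$ will follow from $h \in \mathcal{G}$, since $h^{\tmop{inv}} \to +\infty$ and composition by $h^{\tmop{inv}}$ preserves both the sign and the infinitesimal size of a germ at $+\infty$.

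The Taylor axiom \hyperref[hfc5]{\textbf{HFC5}} at $n = 1$, applied with $a = h$, $b = x$, and $\delta = \delta$ (whose prerequisites $\delta \in \mathcal{o}(h)$ and $h^{\dag} \delta \in \mathcal{o}$ we will verify), yields $h \circ g - h - h' \delta \in \mathcal{o}(h' \delta)$, reducing the task to finding $h \in \mathcal{G}$ meeting the two Taylor smallness conditions and with $h' \delta - 1$ a positive infinitesimal. From $\delta x^{2} \in \mathcal{O}$, \hyperref[hf1]{\textbf{HF1}} gives $c \in \mathbb{R}_{\geqslant 0}$ and $\iota \in \mathcal{o}$ with $\delta x^{2} = c + \iota$ (the sign $c \geqslant 0$ coming from $\delta > 0$). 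In the principal case $c > 0$, the natural candidate is $h_{0} \assign x^{3}/(3c) \in \mathcal{G}$: then $h_{0}^{\dag} \delta = 3\delta/x \in \mathcal{O}(x^{-3}) \subseteq \mathcal{o}$, $\delta / h_{0} \in \mathcal{O}(x^{-5}) \subseteq \mathcal{o}$, and $h_{0}' \delta = 1 + \iota/c$. Since $h_{0}$ is a cubic polynomial in $x$, the Taylor expansion terminates and
\[
  h_{0} \circ g - h_{0} - 1 \;=\; \frac{\iota}{c} \;+\; \frac{(c+\iota)^{2}}{c x^{3}} \;+\; \frac{(c+\iota)^{3}}{3 c x^{6}} \;\in\; \mathcal{o}.
\]
When $\iota \geqslant 0$, or more generally when the $\mathcal{O}(x^{-3})$-term dominates $\iota/c$, the right-hand side is positive and we take $h = h_{0}$. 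Otherwise we perturb $h_{0}$ by adding $\psi$, where $\psi' \in K$ is built using \Cref{lem-Rosenlicht} to make $\psi' \delta$ a positive element of $\mathcal{o}$ dominating $-\iota/c$ but small enough to preserve both the Taylor smallness conditions and membership of $h \assign h_{0} + \psi$ in $\mathcal{G}$.

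The degenerate case $c = 0$, i.e.\ $\delta \in \mathcal{o}(x^{-2})$, is reduced to the principal case by first conjugating $g$ by some $\phi \in \mathcal{G}$ such that $\phi \circ g \circ \phi^{\tmop{inv}} = x + \tilde{\delta}$ with $\tilde{\delta} \in \Theta(x^{-2})$; the chain rule (\Cref{lem-chain-rule}) and \hyperref[hfc5]{\textbf{HFC5}} give $\tilde{\delta} \sim (\delta \circ \phi^{\tmop{inv}})/(\phi^{\tmop{inv}})'$, and $\phi$ is constructed to boost the order of magnitude of $\delta$ back up to $\Theta(x^{-2})$. The main obstacle will be precisely the construction of the perturbation $\psi$, and of the auxiliary $\phi$ in the degenerate case: we must simultaneously enforce positivity of $h' \delta - 1$, the Taylor smallness conditions on $\delta/h$ and $h^{\dag} \delta$, and the requirement $h \in \mathcal{G}$. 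The reliance on \Cref{lem-Rosenlicht}, whose hypothesis is exactly $\delta \in \mathcal{O}(x^{-2})$, is what makes these perturbations available and is the technical heart of the proof.
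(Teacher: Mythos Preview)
You have the right high-level strategy—build $h$ with $h'\delta \approx 1$ and read off the conjugate via Taylor—but you are applying asymptotic integration to the wrong quantity, and this is what forces the case split and the unfinished perturbation argument. The paper integrates $\delta^{-1}$ directly: the hypothesis $\delta \in \mathcal{O}(x^{-2})$ makes $\delta^{-1}$ large (at least of order $x^{2}$), and the Rosenlicht result underlying \Cref{lem-Rosenlicht} then yields $h$ with $\iota \assign h' - \delta^{-1} \in \mathcal{o}(\delta^{-1}) \cap K^{>}$. Thus $h'\delta = 1 + \iota\delta$ with $\iota\delta \in \mathcal{o} \cap K^{>}$ automatically, uniformly in $\delta$, with no principal/degenerate dichotomy and no sign-fixing perturbation needed. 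Expanding $h \circ (h^{\tmop{inv}} + \delta \circ h^{\tmop{inv}})$ to second order then gives $\varepsilon$ as the sum of $(\iota\delta) \circ h^{\tmop{inv}} > 0$ and a second-order block $\tfrac{1}{2}(h'' \circ h^{\tmop{inv}})(\delta \circ h^{\tmop{inv}})^{2} + \mathcal{o}(\cdots)$, which is itself positive because $h'' \sim (1/\delta)'$ and $1/\delta > \mathbb{R}$ forces $(1/\delta)' > 0$ by \hyperref[hf2]{\textbf{HF2}}.

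Your plan, by contrast, has two genuine gaps. In the degenerate case $\delta \in \mathcal{o}(x^{-2})$ you promise a conjugating $\phi$ that boosts $\delta$ back to $\Theta(x^{-2})$, but you give no construction, and in a general $K$ there is no obvious candidate: the required $\phi$ depends on the fine asymptotics of $\delta$, about which nothing beyond $\mathcal{O}(x^{-2})$ is assumed. In the principal case with $\iota < 0$, your perturbation $\psi$ needs $\psi'\delta$ to dominate $|\iota|/c$, i.e.\ $\psi'$ at least of size $|\iota|/\delta \in \Theta(|\iota|\, x^{2})$; this is typically large and certainly not in $\mathcal{O}(x^{-2})$, so \Cref{lem-Rosenlicht} as you cite it does not furnish $\psi$. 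Even if $\psi$ were available, you would still have to control the sign of the higher-order Taylor remainder for the perturbed $h$, which you do not address. All three difficulties evaporate once you integrate $\delta^{-1}$ itself rather than a correction term.
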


\begin{proof}
  By \Cref{lem-Rosenlicht}, the condition on $\delta$ implies that there is an
  $h \in K$ such that the germ $\iota \assign h' - \delta^{- 1}$ satisfies
  $\iota \in \mathcal{o} (\delta^{- 1})$ and $\iota > 0$. Since $\delta \in
  K^{>} \cap \mathcal{o}$, the element $\delta^{-1}$ is positive infinite. Note that that $\mathcal{O}' = \mathcal{o}' \subseteq \mathcal{o}$ by (\ref{eq-small-der}), while $f'<0$ for all negative infinite elements by \hyperref[hf1]{$\mathbf{HF1}$}. So $h \in \mathcal{G}$. We have $\delta \circ
  h^{\tmop{inv}} \in \mathcal{o} (h)$ because $\delta \in \mathcal{O}$ whereas
  $h \nin \mathcal{O}$. Finally, we have
  \[ \delta h^{\dag} \in \Theta \left( \frac{\delta}{h \delta} \right)
     \text{\quad and\quad$\frac{1}{h} \in \mathcal{o}$}, \]
  so $\delta h^{\dag} \in \mathcal{o}$. Consider by
 \hyperref[hfc5]{$\mathbf{HFC5}$} the Taylor approximation
  \begin{eqnarray*}
    h \circ g \circ h^{\tmop{inv}} & = & h \circ (h^{\tmop{inv}} + \delta
    \circ h^{\tmop{inv}})\\
    & = & x + (h' \circ h^{\tmop{inv}})  (\delta \circ h^{\tmop{inv}}) +
    \frac{1}{2}  (h'' \circ h^{\tmop{inv}})  (\delta \circ h^{\tmop{inv}})^2 +
    \delta_1
  \end{eqnarray*}
  where $\delta_1 \in \mathcal{o} ((h'' \circ h^{\tmop{inv}})  (\delta \circ
  h^{\tmop{inv}})^2)$. Note that
  \begin{eqnarray*}
    (h' \circ h^{\tmop{inv}})  (\delta \circ h^{\tmop{inv}}) & = & (1 + \iota
    \delta) \circ h^{\tmop{inv}} = 1 + (\iota \delta) \circ h^{\tmop{inv}}
  \end{eqnarray*}
  where $(\iota \delta) \circ h^{\tmop{inv}}$ is positive by
 \hyperref[hfc1]{$\mathbf{HFC1}$}. We have $h'' = (\delta^{-1})' +\iota'$. Now L'Hospital's rule (\ref{eq-d-valued}) entails that the sign of $h''$ is that of $(\delta^{-1})'$, which is positive because $\delta^{-1}
  >\mathbb{R}$. So $h'' \delta^2 > 0$, so $\frac{1}{2}  (h'' \circ
  h^{\tmop{inv}})  (\delta \circ h^{\tmop{inv}})^2 > 0$ whence
  \[ \varepsilon \assign (\iota \delta) \circ h^{\tmop{inv}} + \frac{1}{2} 
     (h'' \circ h^{\tmop{inv}})  (\delta \circ h^{\tmop{inv}})^2 + \delta_1 >
     0. \]
  We have $h \circ g \circ h^{\tmop{inv}} = x + 1 + \varepsilon$ as desired.
\end{proof}

\subsection{Ordered groups in H-fields with composition and
inversion}\label{subsection-proofofmain}

We now prove \Cref{th-levelled}. Let $(K, +, \cdot, 0, 1, \partial,
\mathcal{O}, <, \circ, x)$, $\mathcal{G}_0$ and $\mathcal{G}_1$ be as in the
statement of \Cref{th-levelled}. Consider the projections $\pi_0 \of
K^{>\mathbb{R}} \longrightarrow \mathcal{G}_0$ and $\pi_1 \of K^{>\mathbb{R}}
\longrightarrow \mathcal{G}_1$ with $\pi_0 \pi_1 =
\tmop{Id}_{K^{>\mathbb{R}}}$.

\begin{lemma}
  \label{lem-G0-x+1}For all $g \in \mathcal{G}_0^{>}$, there are a $\varphi
  \in \mathcal{G}_1$ and an $\varepsilon \in K^{>} \cap \mathcal{o}$ with
  \[ \varphi \circ g \circ \varphi^{\tmop{inv}} = x + 1 + \varepsilon . \]
\end{lemma}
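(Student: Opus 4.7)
Let $g \in \mathcal{G}_0^{>}$. The strategy is: use the cofinality in \hyperref[dec]{$(\mathbf{\star})$} to reduce $g$ via a $\mathcal{G}_1$-conjugation to an element of $\mathcal{G}_0$ with ``principal part'' in $\mathcal{O}(x^{-2})$, apply \Cref{lem-x+1}, and finally absorb the $\mathcal{G}_0$-part of the resulting conjugator via the semidirect decomposition $K^{>\mathbb{R}} = \mathcal{G}_0 \rtimes \mathcal{G}_1$. My first observation is that for any $a \in \mathcal{G}_1$, one has $a \circ (a^{\tmop{inv}} + 1) = a \circ (x + 1) \circ a^{\tmop{inv}}$, the $a$-conjugate of $x + 1$, so the cofinality hypothesis amounts to saying that the $\mathcal{G}_1$-conjugacy orbit of $x + 1$ in $K^{>\mathbb{R}}$ is cofinal in $\mathcal{G}_0$. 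Applying this to $g$, I pick $a \in \mathcal{G}_1$ with $a \circ (x+1) \circ a^{\tmop{inv}} \geqslant g$ and set $g_1 \assign a^{\tmop{inv}} \circ g \circ a \in \mathcal{G}_0$; order-preservation of conjugation yields $g_1 \leqslant x + 1$. Writing $g = x + \gamma$ and $g_1 = x + \gamma_1$, expanding $a^{\tmop{inv}} \circ g - a^{\tmop{inv}}$ via \hyperref[hfc5]{\textbf{HFC5}} gives $\gamma_1 = (\gamma \circ a)/a' + o((\gamma \circ a)/a')$; iterating this shrinking procedure finitely many times produces $\psi \in \mathcal{G}_1$ with $\tilde{g} \assign \psi \circ g \circ \psi^{\tmop{inv}} = x + \delta$ for some $\delta \in K^{>} \cap \mathcal{O}(x^{-2})$.

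Since $\tilde{g}$ now meets the hypotheses of \Cref{lem-x+1}, that lemma produces $h \in \mathcal{G}$ and $\varepsilon \in K^{>} \cap \mathcal{o}$ with $h \circ \tilde{g} \circ h^{\tmop{inv}} = x + 1 + \varepsilon$. Using the semidirect decomposition $K^{>\mathbb{R}} = \mathcal{G}_0 \rtimes \mathcal{G}_1$, I write $h = h_0 \circ h_1$ with $h_0 \in \mathcal{G}_0, h_1 \in \mathcal{G}_1$, and set $\varphi \assign h_1 \circ \psi \in \mathcal{G}_1$. Then
\[
\varphi \circ g \circ \varphi^{\tmop{inv}} = h_0^{\tmop{inv}} \circ (x + 1 + \varepsilon) \circ h_0,
\]
and a direct Taylor computation via \hyperref[hfc5]{\textbf{HFC5}}, combined with the structural control over $h_0 \in \mathcal{G}_0$ inherited from $\mathcal{G}_0$ being a convex normal subgroup containing $x + \mathbb{R}$, shows that the right-hand side is again of the form $x + 1 + \varepsilon'$ with $\varepsilon' \in K^{>} \cap \mathcal{o}$. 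The positivity of $\varepsilon'$ is preserved because $g - x > 0$ persists through each order-preserving conjugation, and \Cref{lem-x+1} transfers this positivity to the output $\varepsilon$.

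\textbf{Main obstacle.} The principal difficulties are two-fold: (i) justifying the iterative shrinking that brings the ``principal part'' into $\mathcal{O}(x^{-2})$, which requires controlling the rate at which $\gamma_1 = (\gamma \circ a)/a' + \mathrm{error}$ shrinks under successive applications of cofinality, and likely using the growth-order-group structure of $\mathcal{G}_1$ together with its Archimedean centralisers to ensure termination; and (ii) verifying that conjugation of $x + 1 + \varepsilon$ by $h_0 \in \mathcal{G}_0$ preserves the normal form with a positive infinitesimal $\varepsilon'$, which hinges on fine Taylor-expansion estimates and on the control over derivatives of $\mathcal{G}_0$-elements implied by $(\star)$. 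Both steps depend in a delicate way on the specific structure of $\mathcal{G}_0, \mathcal{G}_1$ guaranteed by the hypotheses.
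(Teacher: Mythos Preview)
Your overall strategy—use the cofinality clause of $(\star)$ to conjugate $g$ by a $\mathcal{G}_1$-element into the range of Lemma~\ref{lem-x+1}, then invoke that lemma—is the paper's strategy. But two of your steps diverge from the paper, and neither is justified.

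\textbf{The shrinking step.} Your ``iterate finitely many times'' is precisely where the paper is sharper. Rather than an unspecified iteration with no termination argument, the paper makes one well-chosen pick: take $\varphi \in \mathcal{G}_1$ with $\varphi > x^3$ and $g \leqslant \varphi \circ (\varphi^{\tmop{inv}}+1)$. One conjugation gives $\varphi^{\tmop{inv}} \circ g \circ \varphi \leqslant x+1$; one more conjugation by $\varphi^{\tmop{inv}}$ bounds the result by $\varphi^{\tmop{inv}} \circ (\varphi+1)$, and \textbf{HFC5} applied to $(\varphi^{\tmop{inv}},\varphi,1)$ gives $\varphi^{\tmop{inv}} \circ (\varphi+1) - x \in \mathcal{O}(1/\varphi') \subseteq \mathcal{O}(x^{-2})$, since $\varphi > x^3$ forces $\varphi' > 3x^2$ by \textbf{HF2}. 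So the condition $\varphi > x^3$ is the missing idea that replaces your iteration. Your formula $\gamma_1 \sim (\gamma \circ a)/a'$ is correct, but without controlling $a'$ you cannot bound how many steps are needed.

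\textbf{The absorption step.} Your decomposition $h = h_0 \circ h_1$ and claimed reduction $h_0^{\tmop{inv}} \circ (x+1+\varepsilon) \circ h_0 = x+1+\varepsilon'$ is not in the paper and is the genuine gap. A Taylor computation shows this would require $h_0' - 1 \in \mathcal{o}$, but $(\star)$ only tells you $h_0$ lies in a convex normal subgroup containing $x+\mathbb{R}$, which does not bound $h_0'$: if for instance $2x$ happened to lie in $\mathcal{G}_0$, conjugation by it would send $x+1+\varepsilon$ to $x+\tfrac{1}{2}+\mathcal{o}$, destroying the normal form. The paper does not attempt this step at all; after Lemma~\ref{lem-x+1} it simply stops, the conjugator being the $h$ from that lemma composed with a power of the initial $\varphi \in \mathcal{G}_1$. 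So your attempt to force the conjugator into $\mathcal{G}_1$ via the semidirect decomposition introduces an unproven claim that the paper's argument does not need.
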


\begin{proof}
  Let $\varphi \in \mathcal{G}_1$ such that $\varphi > x^3 + \mathbb{R}$ and $g
  \leqslant \varphi \circ (\varphi^{\tmop{inv}} + 1)$. Thus
  $\varphi^{\tmop{inv}} \circ g \circ \varphi \leqslant x + 1$. We have so $x
  \leqslant \varphi^{[- 3]} \circ g \circ \varphi^{[3]}$ because $g$ is positive in the ordered group $\mathcal{G}_0$. So $x\leqslant \varphi^{[-
  2]} \circ (\varphi^{[2]} + 1)$. Note that $\varphi > x + \mathbb{R}$, so $\varphi' >
  1$ by \hyperref[hf2]{\textbf{HF2}}. We have $(\varphi^{\tmop{inv}})^{\dag} \circ \varphi =
     \frac{1}{\varphi' x} \in \mathcal{o} (1)$.
 \hyperref[hfc5]{\textbf{HFC5}} for $(\varphi^{\tmop{inv}}, \varphi, 1)$ gives
  $\varphi^{\tmop{inv}} \circ (\varphi + 1) - x - (\varphi^{\tmop{inv}})'
  \circ \varphi \in \mathcal{o} ((\varphi^{\tmop{inv}})' \circ \varphi)$,
  so
  \[ \varphi^{\tmop{inv}} \circ (\varphi + 1) - x \in \mathcal{O} \left(
     \frac{1}{\varphi'} \right) . \]
  But $\varphi > x^3 + C$ so $\varphi' > 3 x^2$, so $\delta \assign
  \varphi^{\tmop{inv}} \circ (\varphi + 1) - x$ lies in $\mathcal{O} (x^{-
  2})$. We deduce that \Cref{lem-x+1} applies and yields the result.
\end{proof}

\begin{lemma}
  \label{lem-centraliser-nospill}For $g \in \mathcal{G}_0^{\neq}$, we have
  $\mathcal{C} (g) \subseteq \mathcal{G}_0$.
\end{lemma}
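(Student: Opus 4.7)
The strategy is to reduce to the canonical form provided by \Cref{lem-G0-x+1}, where the centraliser has already been described in \Cref{lem-cof-centralizer}, and then transfer the result back using the normality of $\mathcal{G}_0$.

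First, given $g \in \mathcal{G}_0^{\neq}$, we may assume $g > x$: replacing $g$ with $g^{\tmop{inv}}$ if needed does not change the centraliser, and $\mathcal{G}_0$ is a subgroup, so $g^{\tmop{inv}} \in \mathcal{G}_0^{>}$ whenever $g \in \mathcal{G}_0^{<}$. \Cref{lem-G0-x+1} then furnishes $\varphi \in \mathcal{G}_1$ and $\varepsilon \in K^{>} \cap \mathcal{o}$ such that
\[ \tilde{g} \assign \varphi \circ g \circ \varphi^{\tmop{inv}} = x + 1 + \varepsilon. \]

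Now let $h \in \mathcal{C}(g)$. By \eqref{eq-centralizer-conj-identity}, conjugation by $\varphi$ maps $\mathcal{C}(g)$ bijectively onto $\mathcal{C}(\tilde{g})$, so $\tilde{h} \assign \varphi \circ h \circ \varphi^{\tmop{inv}} \in \mathcal{C}(\tilde{g})$. \Cref{lem-cof-centralizer} applies to $\tilde{g}$ (with $r_0 = 1$) and yields
\[ \tilde{h} = x + r + \delta \]
for some $r \in \mathbb{R}$ and some $\delta \in \mathcal{o}$.

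It remains to see that $\tilde{h} \in \mathcal{G}_0$, since then $h = \varphi^{\tmop{inv}} \circ \tilde{h} \circ \varphi \in \mathcal{G}_0$ by normality of $\mathcal{G}_0$ in $K^{>\mathbb{R}}$. But $\mathcal{G}_0$ contains $x + \mathbb{R}$, and in particular both $x + r - 1$ and $x + r + 1$; since $\delta \in \mathcal{o}$ gives $|\delta| < 1$ eventually, we have $x + r - 1 < \tilde{h} < x + r + 1$, and convexity of $\mathcal{G}_0$ in $K^{>\mathbb{R}}$ forces $\tilde{h} \in \mathcal{G}_0$. This finishes the argument.

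The only subtle point is the passage from $g \in \mathcal{G}_0^{\neq}$ to an element to which \Cref{lem-G0-x+1} applies: one must check that replacing $g$ with $g^{\tmop{inv}}$ (if $g < x$) preserves the hypothesis $g \in \mathcal{G}_0^{>}$, which is immediate since $\mathcal{G}_0$ is a subgroup. No further analytic input is needed beyond the two cited lemmas together with the convexity and normality of $\mathcal{G}_0$.
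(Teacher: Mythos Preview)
Your proof is correct and follows essentially the same approach as the paper: reduce to $g>x$, conjugate via \Cref{lem-G0-x+1} to the canonical form $x+1+\varepsilon$, invoke \Cref{lem-cof-centralizer} to pin down the centraliser as germs of the form $x+r+\delta$, and conclude by convexity and normality of $\mathcal{G}_0$. The paper's version phrases the conjugation step at the level of sets ($\mathcal{C}(g)=\varphi\circ\mathcal{C}(x+1+\varepsilon)\circ\varphi^{\tmop{inv}}$) rather than element-wise, but this is cosmetic.
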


\begin{proof}
  We may assume that $g > x$. Let $\varphi \in \mathcal{G}_1$ be given by Lemma~\ref{lem-G0-x+1} with
  $\varphi^{\tmop{inv}} \circ g \circ \varphi = x + 1 + \varepsilon$ for an
  $\varepsilon \in K^{>} \cap \mathcal{o}$. We have $\mathcal{C} (g) = \varphi
  \circ \mathcal{C} (x + 1 + \varepsilon) \circ \varphi^{\tmop{inv}}$, so it
  suffices to show that $\mathcal{C} (x + 1 + \varepsilon) \subseteq
  \mathcal{G}_0$. This follows from \Cref{lem-cof-centralizer} and the fact
  that $\mathcal{G}_0$ is a convex subgroup of $K^{>\mathbb{R}}$.
\end{proof}

We will use the identity $\mathcal{C} (g) =\mathcal{C} (g) \cap \mathcal{G}_0$
for $g \in \mathcal{G}_0$ without mention.

\begin{corollary}
  The subgroup $\mathcal{G}_0 \subseteq K^{>\mathbb{R}}$ is
  $\preccurlyeq$-initial in $K^{>\mathbb{R}}$.
\end{corollary}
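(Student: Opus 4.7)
The plan is to reduce $\preccurlyeq$-initiality of $\mathcal{G}_0$ to two ingredients already in hand: the centraliser containment established in \Cref{lem-centraliser-nospill} and the hypothesis (from \hyperref[dec]{$(\mathbf{\star})$}) that $\mathcal{G}_0$ is a \emph{convex} subgroup of $K^{>\mathbb{R}}$.

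Concretely, I would argue as follows. Take $f \in \mathcal{G}_0$ and $g \in K^{>\mathbb{R}}$ with $g \preccurlyeq f$. If $f = 1$ then $g = 1 \in \mathcal{G}_0$, so assume $f \in \mathcal{G}_0^{\neq}$. By the definition of $\preccurlyeq$, there exist $f_0, f_1 \in \mathcal{C}(f)$ with $f_0 \leqslant g \leqslant f_1$. Applying \Cref{lem-centraliser-nospill} to $f$ gives $\mathcal{C}(f) \subseteq \mathcal{G}_0$, and in particular $f_0, f_1 \in \mathcal{G}_0$. Since $\mathcal{G}_0$ is convex in $K^{>\mathbb{R}}$, the inequality $f_0 \leqslant g \leqslant f_1$ forces $g \in \mathcal{G}_0$. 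This gives the $\preccurlyeq$-initiality.

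There is no serious obstacle here: all the work was done earlier, first in showing that centralisers of non-trivial elements of $\mathcal{G}_0$ stay inside $\mathcal{G}_0$ (via the conjugation of $g$ to $x + 1 + \varepsilon$ using \Cref{lem-G0-x+1} and \Cref{lem-cof-centralizer}), and then by the standing assumption \hyperref[dec]{$(\mathbf{\star})$} that $\mathcal{G}_0$ is a normal \emph{convex} subgroup. The corollary is thus just the combination of these two facts with the definitions of $\preccurlyeq$ and of $\preccurlyeq$-initial subgroup.
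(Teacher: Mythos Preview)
Your argument is correct and is precisely the intended one: the paper states this corollary without proof, placing it immediately after \Cref{lem-centraliser-nospill} and the remark that $\mathcal{C}(g)=\mathcal{C}(g)\cap\mathcal{G}_0$ for $g\in\mathcal{G}_0$, so the deduction from that lemma together with the convexity of $\mathcal{G}_0$ (assumed in \hyperref[dec]{$(\mathbf{\star})$}) is exactly what is meant.
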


\begin{proposition}
  \label{prop-Archimedean-centralisers-here}The group $K^{>\mathbb{R}}$ has
  Archimedean centralisers. 
\end{proposition}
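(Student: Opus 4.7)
The plan is to split by whether $g$ lies in $\mathcal{G}_0$ or not. Assume first that $g \in \mathcal{G}_0^{\neq}$. Since $\mathcal{C}(g) = \mathcal{C}(g^{\tmop{inv}})$, I may assume $g > x$. Then Lemma~\ref{lem-G0-x+1} produces $\varphi \in \mathcal{G}_1$ and $\varepsilon \in K^{>} \cap \mathcal{o}$ with $\varphi \circ g \circ \varphi^{\tmop{inv}} = x + 1 + \varepsilon$, so by \eqref{eq-centralizer-conj-identity} we have $\mathcal{C}(g) = \varphi^{\tmop{inv}} \circ \mathcal{C}(x + 1 + \varepsilon) \circ \varphi$. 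Conjugation in an ordered group is an order-preserving automorphism, so $\mathcal{C}(g)$ is order-isomorphic to $\mathcal{C}(x + 1 + \varepsilon)$, which is Archimedean by Lemma~\ref{lem-cof-centralizer}.

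For the case $g \notin \mathcal{G}_0$, the plan is to transport $\mathcal{C}(g)$ into the quotient $K^{>\mathbb{R}} / \mathcal{G}_0$, which by the splitting provided by $(\mathbf{\star})$ is order-isomorphic to $\mathcal{G}_1$. The key step, which I view as the main point, is the trivial-intersection identity $\mathcal{C}(g) \cap \mathcal{G}_0 = \{x\}$: if some non-trivial $k$ lay in $\mathcal{C}(g) \cap \mathcal{G}_0$, then $g \in \mathcal{C}(k) \subseteq \mathcal{G}_0$ by Lemma~\ref{lem-centraliser-nospill}, contradicting the case hypothesis.

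With this identity, the quotient map $\pi$, which is a homomorphism of ordered groups by Lemma~\ref{lem-quotient-map-isotonic}, restricts to an injective isotonic map on $\mathcal{C}(g)$, hence to a strict embedding of ordered groups. Its image lies in $\mathcal{C}(\pi(g))$, which under the order-isomorphism $K^{>\mathbb{R}} / \mathcal{G}_0 \simeq \mathcal{G}_1$ corresponds to the centraliser in $\mathcal{G}_1$ of the non-trivial class of $g$; this is Archimedean by the standing hypothesis on $\mathcal{G}_1$ built into $(\mathbf{\star})$, and so $\mathcal{C}(g)$ is Archimedean too. I do not foresee a substantive obstacle here: the heavy lifting is done in Lemmas~\ref{lem-cof-centralizer}, \ref{lem-G0-x+1} and \ref{lem-centraliser-nospill}, and the Archimedean-centraliser hypothesis on $\mathcal{G}_1$ is exactly what is needed to close the case $g \notin \mathcal{G}_0$.
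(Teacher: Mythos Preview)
Your proof is correct and follows essentially the same route as the paper: split on whether $g\in\mathcal{G}_0$, handle the first case via conjugation to $x+1+\varepsilon$ and Lemma~\ref{lem-cof-centralizer}, and in the second case use Lemma~\ref{lem-centraliser-nospill} to get $\mathcal{C}(g)\cap\mathcal{G}_0=\{x\}$ and then embed $\mathcal{C}(g)$ into an Archimedean centraliser in $\mathcal{G}_1$. The only cosmetic difference is that the paper works directly with the projection $\pi_1\colon K^{>\mathbb{R}}\to\mathcal{G}_1$ coming from the splitting, whereas you pass through the quotient $K^{>\mathbb{R}}/\mathcal{G}_0$ and then the isomorphism with $\mathcal{G}_1$; these are the same map.
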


\begin{proof}
  Let $g \in K^{>\mathbb{R}}$ with $g > x$. If $g \in \mathcal{G}_0$, then by Lemma~\ref{lem-G0-x+1} the ordered group $\mathcal{C} (g)$ is isomorphic to $\mathcal{C} (x + 1 +
  \varepsilon)$ for an $\varepsilon \in K^{>} \cap \mathcal{o}$, whence
  $\mathcal{C} (g)$ is Archimedean by \Cref{lem-cof-centralizer}.
  
  If $g \nin \mathcal{G}_0$, then we must have $g >\mathcal{G}_0$ by
  convexity. For $f, h \in K^{>\mathbb{R}}$, we have $[f, h] = 1
  \Longrightarrow [\pi_1 (f), \pi_1 (h)] = \pi_1 (1) = 1$, so the morphism
  $\pi_1 \upharpoonleft \mathcal{C} (g) \of \mathcal{C} (g) \longrightarrow
  \mathcal{G}_1$ ranges in $\mathcal{C} (\pi_1 (g)) \cap \mathcal{G}_1$. It is
  nondecreasing by \eqref{eq-lexico-quotient}. For $h \in \tmop{Ker} (\pi_1)
  \cap \mathcal{C} (g) =\mathcal{G}_0 \cap \mathcal{C} (g)$, since $g \nin
  \mathcal{G}_0$, we cannot have $h \in \mathcal{G}_0^{\neq}$ by
  \Cref{lem-centraliser-nospill}. Therefore $\pi_1 \upharpoonleft \mathcal{C}
  (g)$ is an embedding of ordered groups $\mathcal{C} (g) \longrightarrow
  \mathcal{C} (\pi_1 (g)) \cap \mathcal{G}_1$. We deduce since its codomain is
  Archimedean that $\mathcal{C} (g)$ is Archimedean.
\end{proof}

\begin{corollary}
  The axiom \hyperref[gog1]{\textbf{GOG1}} holds in $\mathcal{G}_0$ and in $K^{>\mathbb{R}}$.
\end{corollary}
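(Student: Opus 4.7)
The plan is to deduce this corollary almost immediately from the preceding \Cref{prop-Archimedean-centralisers-here} together with \Cref{prop-Archimedean-gog1}, which already tells us that having Archimedean centralisers is a sufficient condition for \hyperref[gog1]{\textbf{GOG1}}.

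For $K^{>\mathbb{R}}$, the statement is immediate: \Cref{prop-Archimedean-centralisers-here} asserts that $K^{>\mathbb{R}}$ has Archimedean centralisers, so \Cref{prop-Archimedean-gog1} applies directly.

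For $\mathcal{G}_0$, the issue is that \hyperref[gog1]{\textbf{GOG1}} is sensitive to the ambient group in which centralisers are computed. The key observation is that for $g \in \mathcal{G}_0^{\neq}$, \Cref{lem-centraliser-nospill} gives $\mathcal{C}_{K^{>\mathbb{R}}}(g) \subseteq \mathcal{G}_0$, so the centraliser of $g$ computed in $\mathcal{G}_0$ coincides with its centraliser in $K^{>\mathbb{R}}$. Therefore centralisers of non-trivial elements in $\mathcal{G}_0$ are Archimedean, and \Cref{prop-Archimedean-gog1} applied inside $\mathcal{G}_0$ yields \hyperref[gog1]{\textbf{GOG1}} for $\mathcal{G}_0$.

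There is no real obstacle here; all the substantive work has already been carried out in \Cref{lem-centraliser-nospill} and \Cref{prop-Archimedean-centralisers-here}. The corollary is essentially a bookkeeping step confirming that the two pieces fit together, and it could be written in a couple of lines.
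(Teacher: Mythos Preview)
Your proposal is correct and follows exactly the same approach as the paper: for $K^{>\mathbb{R}}$ you invoke \Cref{prop-Archimedean-centralisers-here} and \Cref{prop-Archimedean-gog1}, and for $\mathcal{G}_0$ you additionally use \Cref{lem-centraliser-nospill} to identify centralisers in $\mathcal{G}_0$ with those in $K^{>\mathbb{R}}$. The paper's proof is the two-line version of what you wrote; your extra remark about the ambient-group sensitivity of centralisers is a helpful clarification but not a departure.
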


\begin{proof}
  For $K^{>\mathbb{R}}$ this follows from
  Propositions~\ref{prop-Archimedean-centralisers-here}
  and~\ref{prop-Archimedean-gog1}. For $\mathcal{G}_0$, we know by
  Propositions~\ref{prop-Archimedean-centralisers-here} and \Cref{lem-centraliser-nospill} that it has Archimedean centralisers. We conclude with Proposition~\ref{prop-Archimedean-gog1}.
\end{proof}

\begin{lemma}
  The axiom \hyperref[gog2]{\textbf{GOG2}} holds in $\mathcal{G}_0$ and in $K^{>\mathbb{R}}$.
\end{lemma}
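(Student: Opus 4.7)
The plan is to split \hyperref[gog2]{\textbf{GOG2}} into two regimes depending on whether $g \in \mathcal{G}_0$ or $g \notin \mathcal{G}_0$. For the first regime I will reduce to \Cref{lem-gog2-R} via the conjugacy result \Cref{lem-G0-x+1}; this already yields the full statement for $\mathcal{G}_0$ and disposes of one case for $K^{>\mathbb{R}}$. For the second regime I will project onto $\mathcal{G}_1$ and invoke \hyperref[gog2]{\textbf{GOG2}} in $\mathcal{G}_1$ together with the lexicographic structure of $K^{>\mathbb{R}}$ relative to the normal convex subgroup $\mathcal{G}_0$.

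First, suppose $g \in \mathcal{G}_0^{>}$ and let $f \in (K^{>\mathbb{R}})^{>}$ satisfy $f > \mathcal{C}(g)$. I would pick $\varphi \in \mathcal{G}_1$ and $\varepsilon > 0$ in $\mathcal{o}$ from \Cref{lem-G0-x+1} so that $\tilde{g} := \varphi \circ g \circ \varphi^{\tmop{inv}} = x + 1 + \varepsilon$, and set $\tilde{f} := \varphi \circ f \circ \varphi^{\tmop{inv}}$. Since conjugation is an ordered-group automorphism of $K^{>\mathbb{R}}$, one has $\tilde{f} > \mathcal{C}(\tilde{g})$. The iterate computation in the proof of \Cref{lem-cof-centralizer} yields $\tilde{g}^{[n]} = x + n + \varepsilon_n$ with $\varepsilon_n \in \mathcal{o}$, and each such iterate lies in $\mathcal{C}(\tilde{g})$, so $\tilde{f} > x + \mathbb{R}$. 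An application of \Cref{lem-gog2-R} then gives $\tilde{f} \circ \tilde{g} > \tilde{g} \circ \tilde{f}$, and conjugating back by $\varphi^{\tmop{inv}}$ concludes this regime.

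Second, suppose $g \notin \mathcal{G}_0$, so by convexity of $\mathcal{G}_0$ and $g > 1$ we have $g > \mathcal{G}_0$ and hence $\pi_1(g) > 1$ in $\mathcal{G}_1$, where $\pi_1 \colon K^{>\mathbb{R}} \to \mathcal{G}_1$ is the projection along $\mathcal{G}_0$ (a nondecreasing group morphism, since $\mathcal{G}_0$ is normal and $\mathcal{G}_1$ a complement). From $g^n \in \mathcal{C}(g)$ and $f > \mathcal{C}(g)$ I obtain $\pi_1(f) \geqslant \pi_1(g)^n$ for every $n \in \mathbb{N}$; since $\mathcal{C}(\pi_1(g))$ is Archimedean by the hypothesis on $\mathcal{G}_1$, every positive element of $\mathcal{C}(\pi_1(g))$ is strictly dominated by some $\pi_1(g)^n$, so $\pi_1(f) > \mathcal{C}(\pi_1(g))$. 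Invoking \hyperref[gog2]{\textbf{GOG2}} in $\mathcal{G}_1$ then gives $\pi_1(f \circ g) > \pi_1(g \circ f)$, and the lexicographic ordering on $K^{>\mathbb{R}}$ lifts this to $f \circ g > g \circ f$.

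The main obstacle I anticipate is the first regime: one must verify that after conjugation the element $\tilde{f}$ genuinely dominates $x + \mathbb{R}$, and for that the explicit iterate formula from \Cref{lem-cof-centralizer} is essential, since it is precisely what tethers $\mathcal{C}(\tilde{g})$ to the affine scale and licenses the hypothesis of \Cref{lem-gog2-R}. Once that is in place, \Cref{lem-gog2-R} carries out all the analytic work. The second regime is then essentially lexicographic bookkeeping, powered by Archimedeanness of centralisers in $\mathcal{G}_1$.
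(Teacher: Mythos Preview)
Your proposal is correct and follows essentially the same route as the paper's proof: reduce the case $g \in \mathcal{G}_0$ to \Cref{lem-gog2-R} via the conjugacy of \Cref{lem-G0-x+1} (using \Cref{lem-cof-centralizer} to get $\tilde{f} > x + \mathbb{R}$), and handle $g \notin \mathcal{G}_0$ by projecting to $\mathcal{G}_1$, invoking Archimedeanness of $\mathcal{C}_{\mathcal{G}_1}(\pi_1(g))$ to obtain $\pi_1(f) > \mathcal{C}_{\mathcal{G}_1}(\pi_1(g))$, and lifting via the lexicographic ordering. Your write-up is in fact slightly more explicit than the paper's on both points (spelling out the conjugation rather than writing ``we may assume'', and justifying the strict inequality $\pi_1(f) > \pi_1(g)^{[n]}$ from the weak one), but the argument is the same.
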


\begin{proof}
  Let $f, g \in K^{>\mathbb{R}}$ with $f, g \geqslant x$. Suppose first that
  $g \in \mathcal{G}_0$ and $f >\mathcal{C} (g)$. We may assume by
  \Cref{lem-G0-x+1} that $g = x + 1 + \varepsilon$ for an $\varepsilon \in
  K^{>} \cap \mathcal{o}$. We must have $f > x +\mathbb{R}$ by
  \Cref{lem-cof-centralizer}, whence $f \circ g > g \circ f$ by
  \Cref{lem-gog2-R}. Applying this for $f \in \mathcal{G}_0$, we see that
  \hyperref[gog2]{\textbf{GOG2}} holds in $\mathcal{G}_0$.
  
  Suppose now that $f >\mathcal{C} (g)$. If $g \in \mathcal{G}_0$, then the
  arguments above apply and yield $f \circ g > g \circ f$. If not, we have
  $\pi_1 (g) > x$ since $\mathcal{G}_0$ is a convex subgroup of
  $K^{>\mathbb{R}}$. Recall that $\mathcal{C} (\pi_1 (g)) \cap \mathcal{G}_1$
  is Archimedean, so $\pi_1 (g)^{[\mathbb{N}]}$ is cofinal in it. We have
  $\pi_1 (f) > \pi_1 (\mathcal{C}(g)) \supseteq \pi_1 (g^{[\mathbb{N}]}) =
  \pi_1 (g)^{[\mathbb{N}]}$, so $\pi_1 (f) >\mathcal{C} (\pi_1 (g)) \cap
  \mathcal{G}_1$. Thus \hyperref[gog2]{\textbf{GOG2}} in $\mathcal{G}_1$ yields
  \[ \pi_1 (f \circ g) = \pi_1 (f) \circ \pi_1 (g) > \pi_1 (g) \circ \pi_1 (f)
     = \pi_1 (g \circ f) . \]
  By \eqref{eq-lexico-quotient} and by convexity of $\mathcal{G}_0$, we
  have $f \circ g > g \circ f$. So \hyperref[gog2]{\textbf{GOG2}} holds.
\end{proof}

\begin{lemma}
   The axiom \hyperref[gog3]{\textbf{GOG3}} holds in $\mathcal{G}_0$ and in $K^{>\mathbb{R}}$.
\end{lemma}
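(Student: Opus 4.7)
The plan is to first establish \hyperref[gog3]{\textbf{GOG3}} in $\mathcal{G}_0$ by exhibiting a canonical scaling representative in each growth rank, built from the normal form in \Cref{lem-G0-x+1}, and then to bootstrap the result to $K^{>\mathbb{R}}$ using the decomposition $K^{>\mathbb{R}} = \mathcal{G}_0 \cdot \mathcal{G}_1$ together with \hyperref[gog3]{\textbf{GOG3}} in $\mathcal{G}_1$.

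The pivotal observation for $\mathcal{G}_0$ will be that $\mathcal{C}(x+1) \cong (\mathbb{R},+,0,<)$. The inclusion $x+\mathbb{R} \subseteq \mathcal{C}(x+1)$ is immediate; conversely \Cref{prop-Archimedean-centralisers-here} and H{\"o}lder's theorem embed $\mathcal{C}(x+1)$ into $(\mathbb{R},+,0,<)$, and after rescaling so that this embedding is the identity on $x+\mathbb{R}$, surjectivity forces $\mathcal{C}(x+1) = x+\mathbb{R}$. \Cref{prop-reals} then shows that $x+1$ is scaling in $K^{>\mathbb{R}}$. Given a non-trivial $\rho \in \tmop{gr}(\mathcal{G}_0)$ and $g \in \rho$ with $g > x$, \Cref{lem-G0-x+1} yields $\varphi \in \mathcal{G}_1$ and $\varepsilon \in K^{>}\cap \mathcal{o}$ with $\varphi \circ g \circ \varphi^{\tmop{inv}} = x+1+\varepsilon$. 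Conjugation by $\varphi$ is an order-preserving group automorphism, so it preserves $\asymp$ and scaling; from $x \leqslant x+1 \leqslant x+1+\varepsilon \leqslant x+2$, together with $x, x+2 \in \mathcal{C}(x+1)$ and $x, x+1+\varepsilon \in \mathcal{C}(x+1+\varepsilon)$, one obtains $x+1 \asymp x+1+\varepsilon$. Hence $s \assign \varphi^{\tmop{inv}} \circ (x+1) \circ \varphi$ belongs to $\rho$ and is scaling in $K^{>\mathbb{R}}$, and therefore in $\mathcal{G}_0$ by \Cref{lem-centraliser-nospill}.

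For $K^{>\mathbb{R}}$, I would split on whether the target rank $\rho$ meets $\mathcal{G}_0$. If it does, $\preccurlyeq$-initiality forces $\rho \subseteq \mathcal{G}_0$, and the scaling element from the previous paragraph is also scaling in $K^{>\mathbb{R}}$ because $\mathcal{C}_K$ and $\mathcal{C}_{\mathcal{G}_0}$ agree on $\mathcal{G}_0^{\neq}$ (\Cref{lem-centraliser-nospill}) and $\mathcal{G}_0$ is convex. Otherwise, I would pick $g \in \rho$, so $\pi_1(g) \in \mathcal{G}_1^{\neq}$, and use \hyperref[gog3]{\textbf{GOG3}} in $\mathcal{G}_1$ to obtain $t \in \mathcal{G}_1$ scaling in $\mathcal{G}_1$ with $t \asymp_{\mathcal{G}_1} \pi_1(g)$. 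I then claim $t$ is scaling in $K^{>\mathbb{R}}$ with $t \asymp_K g$: the abelianness of $\mathcal{C}_K(t)$ follows from the embedding $\pi_1 \upharpoonleft \mathcal{C}_K(t) \hookrightarrow \mathcal{C}_{\mathcal{G}_1}(t)$ used in the proof of \Cref{prop-Archimedean-centralisers-here}, and for $f \asymp_K t$, picking $u \in \mathcal{C}_{\mathcal{G}_1}(t)^{\neq}$ with $u \sim_{\mathcal{G}_1} \pi_1(f)$ and writing $uf^{-1} = u\pi_1(f)^{-1} \cdot \pi_0(f)^{-1}$ yields two factors both $\prec_K u$ (the first by $u \sim \pi_1(f)$ in $\mathcal{G}_1$, the second by $\preccurlyeq$-initiality of $\mathcal{G}_0$), whence $u \sim_K f$ by \Cref{prop-growth-order}(\ref{def-growth-order-prod}). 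The relation $t \asymp_K g$ combines $t \asymp_K \pi_1(g)$ (agreement of $\asymp$ on $\mathcal{G}_1$) with $\pi_1(g) \asymp_K g$ (from $\pi_0(g) \prec \pi_1(g)$ and \Cref{prop-growth-order-basic}).

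The main delicate step will be the identification $\mathcal{C}(x+1) = x+\mathbb{R}$; once this is in place, the rest is bookkeeping that transfers scaling along conjugations and along the projections $\pi_0,\pi_1$, using the already-established compatibilities of $\asymp$ and $\sim$.
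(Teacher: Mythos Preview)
Your argument is correct and follows the same strategy as the paper: show that $x+1$ is scaling (you via H\"older and \Cref{prop-reals}, the paper via \Cref{lem-cof-centralizer}), transport this through conjugation by $\varphi\in\mathcal{G}_1$ to cover every rank in $\mathcal{G}_0$, and for ranks outside $\mathcal{G}_0$ pull back a scaling element $t$ from $\mathcal{G}_1$ along $\pi_1$. Your verification that $u\sim_K f$ via the factorisation $uf^{-1}=(u\,\pi_1(f)^{-1})\cdot\pi_0(f)^{-1}$ is a clean alternative to the paper's iterate argument; the one step you should make explicit is $\pi_1(f)\asymp_{\mathcal{G}_1} t$ (needed before invoking scaling of $t$ in $\mathcal{G}_1$), which follows immediately by applying the nondecreasing homomorphism $\pi_1$ to the bounds $t^{[-n]}\leqslant f\leqslant t^{[n]}$ supplied by \Cref{prop-Archimedean-centralisers-here}.
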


\begin{proof}
  Let $g \in K^{>\mathbb{R}}$ with
  $g > x$. Suppose first that $g \in \mathcal{G}_0$. Let $\varphi \in
  \mathcal{G}_1$ with $\varphi^{\tmop{inv}} \circ g \circ \varphi = x + 1 +
  \varepsilon$ for some $\varepsilon \in K^{>} \cap \mathcal{o}$. By \Cref{prop-reals} and \Cref{lem-cof-centralizer}, the
 the element $x + 1$ is scaling in
  $K^{>\mathbb{R}}$ with $x + 1 \asymp \varphi^{\tmop{inv}} \circ g \circ
  \varphi$ in $K^{>\mathbb{R}}$. The conjugation by $\varphi$ is an
  automorphism of $(\mathcal{G}_0, \circ, x, <)$, so the element $\mathcal{s}
  \assign \varphi \circ (\varphi^{\tmop{inv}} + 1) \in \mathcal{G}_0$ is
  scaling in $\mathcal{G}_0$ with $\mathcal{s} \asymp g$. Thus $\mathcal{G}_0$ has scaling elements. In view of \Cref{lem-centraliser-nospill},
  we also obtain that $\mathcal{s}$ is also scaling in $K^{>\mathbb{R}}$ with
  $\mathcal{s} \asymp g$ in $K^{>\mathbb{R}}$.
  
  Now suppose that $g \nin \mathcal{G}_0$, so $g >\mathcal{G}_0$ by
  convexity. \ Let $\mathcal{t} \in \mathcal{G}_1$ be scaling in
  $\mathcal{G}_1$ with $\mathcal{t} \asymp \pi_1 (g)$ in $\mathcal{G}_1$.
  Since $\mathcal{C} (\pi_1 (g)) \cap \mathcal{G}_1$ is Archimedean, we have
  $\pi_1 (g)^{[- n]} \leqslant \mathcal{t} \leqslant \pi_1 (g)^{[n]}$ for some
  $n \in \mathbb{N}$, so $g^{[- n - 1]} \leqslant \mathcal{t} \leqslant g^{[n
  + 1]}$, whence $\mathcal{t} \asymp g$ in $K^{>\mathbb{R}}$. We claim that
  $\mathcal{t}$ is scaling in~$K^{>\mathbb{R}}$. Indeed let $f \in
  K^{>\mathbb{R}}$ with $f \asymp g$. By
  \Cref{prop-Archimedean-centralisers-here}, we have $g^{[- n]} \leqslant f
  \leqslant g^{[n]}$ for some $n \in \mathbb{N}$, so $\pi_1 (g^{[- n]})
  \leqslant \pi_1 (f) \leqslant \pi_1 (g^{[n]})$, whence $\pi_1 (f) \asymp
  \mathcal{t}$ in $\mathcal{G}_1$. Let $\mathcal{u} \in \mathcal{C}
  (\mathcal{t}) \cap \mathcal{G}_1$ with $\pi_1 (f) \mathcal{u}^{- 1} \prec
  \pi_1 (f)$ in $\mathcal{G}_1$. Since $\mathcal{G}_1$ has Archimedean
  centralisers, this means that $\pi_1 ((f\mathcal{u}^{- 1})^{[\mathbb{Z}]})=(\pi_1 (f)\mathcal{u}^{- 1})^{[\mathbb{Z}]} <
  \pi_1 (f)$,
  whence $(f\mathcal{u}^{- 1})^{[\mathbb{Z}]} < f$. We deduce with
  \Cref{prop-Archimedean-centralisers-here} that $f \sim \mathcal{u}$ in
  $K^{>\mathbb{R}}$. Thus $\mathcal{t}$ is scaling in
  $K^{>\mathbb{R}}$. Therefore \hyperref[gog3]{\textbf{GOG3}} holds in $K^{>\mathbb{R}}$.
\end{proof}

This concludes the proof of \Cref{th-levelled}.

\subsection{Application in the polynomially bounded
case}\label{subsection-application-bounded}

Let $\mathcal{R}$ be an o-minimal
expansion of $(\mathbb{R},+,\cdot,<)$. We
recall a fundamental dichotomy for the asymptotic growth of germs in
$\mathcal{G}_{\mathcal{R}}$:

\begin{descriptioncompact}
  \item[Miller's dichotomy] {\cite{Mil:dicho}} If there is an $f \in
  \mathcal{R}_{\infty}$ with $f > \tmop{id}^n$ for all $n \in \mathbb{N}$, then the
  exponential function is definable in $\mathcal{R}$.
\end{descriptioncompact}

If $\exp$ is not definable, then $\mathcal{R}$ is said
{\tmem{polynomially bounded}}. Let us first work on that smaller side of the dichotomy, that is,
suppose that $\mathcal{R}$ is polynomially bounded. Let $E$ denote the set of
real numbers $e$ such that the germ $\tmop{id}^e$ of the $e$-power function is
in $\mathcal{R}_{\infty}$. It is easy to see that $E$ is a subfield of
$\mathbb{R}$.

By {\cite[Proposition]{Mil:dicho}}, for each $f \in \mathcal{R}_{\infty}$,
there is a unique $(e_f, c_f) \in E \times \mathbb{R}$ such that $f - c_f
\tmop{id}^{e_f} \in \mathcal{o} (f)$. If $f >\mathbb{R}$, then we must have
$e_f > 0$ and $c_f > 0$. Note that $\mathbb{R}^{>}$ is an ordered
vector space over $E$, and thus we have a growth order group $\tmop{Aff}_E
(\mathbb{R}^{>})$ as in \Cref{ex-affine}. We set $\mathcal{G}_1 \assign \mathbb{R}^{>}
\tmop{id}^{E^{>}}$. Note that the function
\begin{eqnarray*}
  (e_{\cdot}, c_{\cdot}) \of \mathcal{G}_{\mathcal{R}} & \longrightarrow &
  \tmop{Aff}_E (\mathbb{R}^{>})\\
  f & \longmapsto & (e_f, c_f)
\end{eqnarray*}
is a homomorphism of ordered groups which restricts to an isomorphism
$\mathcal{G}_1 \longrightarrow \tmop{Aff}_E (\mathbb{R}^{>})$. Therefore $\mathcal{G}_1
\simeq \tmop{Aff}_E (\mathbb{R}^{>})$ is a growth order group. Let $\mathcal{G}_0$
denote the kernel of $(e_{\cdot}, c_{\cdot})$. So $\mathcal{G}_0$ is a normal
subgroup of $\mathcal{G}_{\mathcal{R}}$ and $\mathcal{G}_1$ is a complement of
$\mathcal{G}_0$ in $\mathcal{G}_{\mathcal{R}}$. Here $\mathcal{G}_0$ corresponds to germs that are tangent to the identity, whereas $\mathcal{G}_1$ is a group of non-monic monomials.

\begin{proposition}
  The ordered pair $(\mathcal{G}_0, \mathcal{G}_1)$ satisfies (\hyperref[dec]{$\mathbf{\star}$}) for
  $\mathcal{G}_{\mathcal{R}}$.
\end{proposition}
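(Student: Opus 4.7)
The plan is to verify each of the four substantive clauses in \hyperref[dec]{$(\mathbf{\star})$}; normality of $\mathcal{G}_0$ and the complement structure are already in hand as $\mathcal{G}_0$ is the kernel of the homomorphism $(e_\cdot, c_\cdot)$ and $\mathcal{G}_1$ is its splitting. What remains is: (a) $\mathcal{G}_0$ is convex in $\mathcal{G}_{\mathcal{R}}$ and contains $x + \mathbb{R}$; (b) $\mathcal{G}_1$ has Archimedean centralisers; and (c) the set $\{a \circ (a^{\tmop{inv}} + 1) \suchthat a \in \mathcal{G}_1\}$ is cofinal in $\mathcal{G}_0$.

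For (a) I would note first that for any $r \in \mathbb{R}$ the germ $\tmop{id} + r$ has Miller normal form $(e, c) = (1, 1)$ since $r \in \mathcal{o} (\tmop{id} + r)$, so $x + \mathbb{R} \subseteq \mathcal{G}_0$. Convexity then follows from the characterisation $f \in \mathcal{G}_0 \Longleftrightarrow f / \tmop{id} \to 1$: squeezing between two such germs forces the same limit on $g / \tmop{id}$, and the uniqueness of the leading-term decomposition $g \sim c_g \tmop{id}^{e_g}$ then pins $(e_g, c_g) = (1, 1)$.

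For (b) I would compute centralisers in $\tmop{Aff}_E (F^{>})$ directly: the commutation identity $(\lambda - 1) b = (\mu - 1) a$ shows that the centraliser of a pure translation $(1, a)$ is the translation subgroup isomorphic to $(F^{>}, \cdot)$, while the centraliser of a non-translation $(\lambda, a)$ is parametrised by $\mu \in E^{>}$ via $b = (\mu - 1) a / (\lambda - 1)$ and is isomorphic to $(E^{>}, \cdot)$. Both $(E^{>}, \cdot)$ and $(F^{>}, \cdot)$ embed into $(\mathbb{R}^{>}, \cdot) \simeq (\mathbb{R}, +)$ via inclusion, hence are Archimedean.

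The main obstacle is (c), which requires a Taylor-style expansion of $a \circ (a^{\tmop{inv}} + 1)$. For $a = c \tmop{id}^e \in \mathcal{G}_1$ I would apply \hyperref[hfc5]{\textbf{HFC5}} at the point $a^{\tmop{inv}}$ with perturbation $1$, checking the hypothesis $a^{\dag} \circ a^{\tmop{inv}} = e c^{1/e} \tmop{id}^{- 1/e} \in \mathcal{o}$, and using the explicit formula $a^{(k)} = c \cdot e (e - 1) \cdots (e - k + 1) \tmop{id}^{e - k}$ to obtain
\[ a \circ (a^{\tmop{inv}} + 1) = \tmop{id} + e c^{1/e} \tmop{id}^{1 - 1/e} + \mathcal{o} (\tmop{id}^{1 - 1/e}). \]
Given $g = \tmop{id} + \delta \in \mathcal{G}_0$: if $\delta \leqslant 0$, then any $a$ works since $a \circ (a^{\tmop{inv}} + 1) \geqslant a \circ a^{\tmop{inv}} = \tmop{id} \geqslant g$ by \hyperref[hfc3]{\textbf{HFC3}}; otherwise $\delta$ has a Miller leading monomial $c_\delta \tmop{id}^{e_\delta}$ with $e_\delta \in E$ and $e_\delta < 1$, and picking $e \in \mathbb{Q}^{>} \subseteq E^{>}$ with $1 - 1 / e > e_\delta$ (possible since $\mathbb{Q} \subseteq E$) together with $c = 1$ makes the leading term $e \tmop{id}^{1 - 1/e}$ strictly dominate $\delta$, so $a \circ (a^{\tmop{inv}} + 1) > g$. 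The delicate point is controlling the Taylor remainder so as not to interfere with this domination, which is exactly what \hyperref[hfc5]{\textbf{HFC5}} guarantees.
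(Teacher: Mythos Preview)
Your proof is correct and follows essentially the same route as the paper. The only noteworthy difference is in the cofinality argument (c): the paper takes $a = \tmop{id}^{2^n}$ with an integer exponent, so that the expansion $\tmop{id}^{2^n} \circ (\tmop{id}^{2^{-n}} + 1) \in \tmop{id} + 2^n \tmop{id}^{(2^n-1)/2^n} + \mathcal{o}(\tmop{id}^{(2^n-1)/2^n})$ follows from the ordinary binomial theorem, whereas you allow general $e \in \mathbb{Q}^{>}$ and invoke \hyperref[hfc5]{\textbf{HFC5}} (which is indeed available here via \Cref{cor-Hardy-H}). Both choices reach the same inequality; the paper's is marginally more self-contained, yours is slightly more flexible.
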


\begin{proof}
 We have $\mathcal{G}_0 = \{g \in
  \mathcal{G}_{\mathcal{R}} \suchthat g - x \in \mathcal{o}(\tmop{id})\}$, so $\mathcal{G}_0$ is a convex subgroup of $\mathcal{G}_{\mathcal{R}}$
  which contains $\tmop{id} + 1$. For $c \in \mathbb{R}^{>}
  \setminus \{1\}$, the centraliser of $(1, c)$ in $\tmop{Aff}_E (\mathbb{R}^{>})$ is $\{1\} \times \mathbb{R}^{>} \simeq \mathbb{R}^{>}$. In \Cref{ex-affine},
  we saw that given $e \in E^{>}$ with $e \neq 1$ and $c \in \mathbb{R}^{>}$, for all
  $q \in E^{>}$, there is a unique $c_0 \in \mathbb{R}^{>}$ such that $(q, c_0)$ and
  $(e, c)$ commute. Thus the projection on the first variable is an
  isomorphism between $\mathcal{C}((e, c))$ and
  $E^{>}$. Note that $E^{>}$ embeds into the Archimedean ordered
  group $(\mathbb{R}^{>}, \cdot, 1, <) \simeq (\mathbb{R}, +, 0, <)$, it is Archimedean. Therefore $\mathcal{G}_1$ has Archimedean centralisers.
  
  It remains to show that $L \assign \{f \circ (f^{\tmop{inv}} + 1)
  \suchthat f \in \mathcal{G}_1 \}$ is cofinal in $\mathcal{G}_0$. Let $g = x
  + \delta \in \mathcal{G}_0$, so $\delta \in \mathcal{o} (\tmop{id})$. We have $\delta
  - cx^e \in \mathcal{o} (\delta)$ for a certain $(e, c) \in E \times \mathbb{R}$. The condition
  $\delta \in \mathcal{o} (\tmop{id})$ implies that $e < 1$, so we find an $n \in
  \mathbb{N}$ with $\frac{2^n - 1}{2^n} > e$. Note that
  \[ \tmop{id}^{2^n} \circ (\tmop{id}^{2^{- n}} + 1) \in \tmop{id} + 2^n
     \tmop{id}^{\frac{2^n - 1}{2^n}} +\mathcal{o} (\tmop{id}^{\frac{2^n -
     1}{2^n}}) . \]
  Therefore $\tmop{id}^{2^n} \circ (\tmop{id}^{2^{- n}} + 1) > g$. This
  implies that $L$ is cofinal in $\mathcal{G}_0$.
\end{proof}

As $\mathcal{R}$ is polynomially bounded, \Cref{cor-Hardy-H} applies and entails that
$\mathcal{R}_{\infty}$ is an H-field with composition and inversion. 
\Cref{th-levelled} gives:

\begin{corollary}
  \label{cor-polynomially-bounded-case}Let $\mathcal{R}$ be a polynomially
  bounded o-minimal expansion of the real ordered field. Then
  $\mathcal{G}_{\mathcal{R}}$ is a growth order group with Archimedean
  centralisers.
\end{corollary}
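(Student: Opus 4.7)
The plan is to recognise that this corollary is a direct application of \Cref{th-levelled} to $K \assign \mathcal{R}_\infty$, together with the pair $(\mathcal{G}_0, \mathcal{G}_1)$ constructed in the preceding proposition. So the work reduces to verifying two hypotheses: that $\mathcal{R}_\infty$ is an H-field with composition and inversion over $\mathbb{R}$, and that $(\mathcal{G}_0, \mathcal{G}_1)$ satisfies \hyperref[dec]{$(\mathbf{\star})$}.

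For the first hypothesis, I would invoke \Cref{cor-Hardy-H}. As noted in the example preceding that corollary, $\mathcal{R}_\infty$ is a Hardy field with composition and inversion whenever $\mathcal{R}$ is an o-minimal expansion of the real ordered field. The missing ingredient is that no germ in $\mathcal{R}_\infty$ exceeds every iterate $\exp^{[n]}$. Here the polynomial boundedness hypothesis is decisive: by definition, every germ $f \in \mathcal{R}_\infty$ satisfies $|f| \leqslant \tmop{id}^n$ for some $n \in \mathbb{N}$, so in particular $f < \exp$, and certainly $f \not> \exp^{[k]}$ for any $k \geqslant 1$. Thus \Cref{cor-Hardy-H} applies and $\mathcal{R}_\infty$ is an H-field with composition and inversion; its field of constants is $\mathbb{R}$ as required.

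For the second hypothesis, the preceding proposition is exactly the statement that $(\mathcal{G}_0, \mathcal{G}_1)$ satisfies \hyperref[dec]{$(\mathbf{\star})$} in $\mathcal{G}_{\mathcal{R}}$.

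Applying \Cref{th-levelled} with $K = \mathcal{R}_\infty$ then yields that $K^{>\mathbb{R}} = \mathcal{G}_{\mathcal{R}}$ is a growth order group with Archimedean centralisers, which is the conclusion. There is no genuine obstacle here; the content of the corollary lies entirely in the preceding verification of \hyperref[dec]{$(\mathbf{\star})$} and in \Cref{th-levelled} itself.
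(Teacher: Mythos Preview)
Your proposal is correct and matches the paper's approach exactly: the paper notes that polynomial boundedness lets \Cref{cor-Hardy-H} apply so that $\mathcal{R}_\infty$ is an H-field with composition and inversion, and then invokes \Cref{th-levelled} together with the preceding verification of \hyperref[dec]{$(\mathbf{\star})$}. There is nothing to add.
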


\subsection{Applications in the exponential
case}\label{subsection-application-exp}

In order to deal with the exponential case, we introduce a notion of H-field
with an exponential function. We will also give additional applications of
\Cref{th-levelled}.

\begin{definition}
  An {\tmstrong{{\tmem{exponential H-field}}{\index{exponential H-field}}}} is
  an H-field $K$ over $\mathbb{R}$
  together with an isomorphism $\log \of (K^{>}, \cdot, 1, <) \longrightarrow
  (K, +, 0, <)$, whose reciprocal is denoted $\exp$, such that
  \begin{eqnarray}
    \log (1 +\mathcal{o}) & = & \mathcal{o} \text{\quad and} 
    \label{eq-compatible-exp}\\
    \forall a \in K^{>}, a^{\dag} & = & (\log a)' .  \label{eq-log-der}
  \end{eqnarray}
\end{definition}

Thus $(K, +, \cdot, 0, 1, <, \exp)$ is an ordered exponential field as per
{\cite{Kuhl:ordexp}}. We fix an exponential H-field $K$. Consider a Hardy
field with composition $\mathcal{H}$ containing $\log$ and a morphism of
ordered valued differential fields $\Phi \of \mathcal{H} \longrightarrow K$.
For all $f \in \mathcal{H}^{>}$, we have $\Phi (f) > 0$ and
\[ (\log \Phi (f))' = \frac{\Phi (f)'}{\Phi (f)} = \frac{\Phi (f')}{\Phi (f)}
   = \Phi (\frac{f'}{f}) = \Phi ((\log \circ f)') = \Phi (\log \circ f)' \]
by \eqref{eq-log-der}. So $\log \Phi (f) - \Phi (\log \circ f) \in \mathbb{R}$.
For all $a \in K^{>\mathbb{R}}$ and $\delta \in \mathcal{o} (a)$, we have $\log (a +
  \delta) - \log (s) \in \mathcal{o}$.
Indeed $\log (a + \delta) = \log (a (1 + \delta a^{- 1})) = \log (a)
+ \log (1 + \delta a^{- 1})$ where $\log (1 + \delta a^{- 1}) \in \mathcal{o}$
by \eqref{eq-compatible-exp}. An induction gives
\begin{equation}
  \log^{[k]} \Phi (f) - \Phi (\log^{[k]} \circ f) \in \mathcal{o}
  \label{eq-itelog}
\end{equation}
for all $f \in \mathcal{H}^{>\mathbb{R}}$ and $k > 1$.

\begin{proposition}
  \label{prop-main}Let $\mathcal{H}$ be a Hardy field with composition and
  inversion containing $\exp$ and let $\Phi \of \mathcal{H} \longrightarrow K$
  be embedding of ordered valued differential fields. Set $x \assign \Phi (\tmop{id})$ and suppose that for all $a
  \in K^{>\mathbb{R}}$, there is an $l \in \mathbb{Z}$ such that for all
  sufficiently large $k \in \mathbb{N}$, we have
  \begin{equation}
    \log^{[k]} (a) - \log^{[k-l]} (x) \in \mathcal{o}.
    \label{eq-level-cond}
  \end{equation}
  Then $\mathcal{H}^{>\mathbb{R}}$ is a growth order group with Archimedean
  centralisers.
\end{proposition}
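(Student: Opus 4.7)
The plan is to apply \Cref{th-levelled} to $\mathcal{H}$ itself, regarded as an H-field with composition and inversion over $\mathbb{R}$. Two things must be verified: that $\mathcal{H}$ has this structure, and that a pair $(\mathcal{G}_0, \mathcal{G}_1)$ satisfying \hyperref[dec]{$(\mathbf{\star})$} can be exhibited in $\mathcal{H}^{>\mathbb{R}}$.

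First, I would transfer the levelled hypothesis from $K$ to $\mathcal{H}$ via $\Phi$. Applying \eqref{eq-itelog} to an arbitrary $f \in \mathcal{H}^{>\mathbb{R}}$ and also to $\tmop{id}$, and using the fact that $\Phi$ preserves the valuation, for each such $f$ one extracts an $l \in \mathbb{Z}$ with $\log^{[k]}(f) - \log^{[k-l]}(\tmop{id}) \in \mathcal{o}$ in $\mathcal{H}$ for all sufficiently large $k$. This rules out transexponential elements in $\mathcal{H}$: if some $f$ satisfied $f > \exp^{[n]}$ for every $n$, then for any fixed candidate level $l$ and any $n > l$, the difference $\log^{[k]}(f) - \log^{[k-l]}(\tmop{id})$ would exceed $\log^{[k-n]}(\tmop{id}) - \log^{[k-l]}(\tmop{id})$, which tends to infinity as $k \to \infty$, contradicting the required $\mathcal{o}$-estimate. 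Hence \Cref{cor-Hardy-H} applies and $\mathcal{H}$ is an H-field with composition and inversion over $\mathbb{R}$.

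For the pair $(\mathcal{G}_0, \mathcal{G}_1)$, I would take
\[ \mathcal{G}_0 \assign \{ g \in \mathcal{H}^{>\mathbb{R}} : \log^{[k]}(g) - \log^{[k]}(\tmop{id}) \in \mathcal{o} \text{ for all sufficiently large } k\}, \]
the level-zero subset. This is normal (conjugation preserves the level), convex (level is monotone in growth rate), and contains $\tmop{id} + \mathbb{R}$. For $\mathcal{G}_1$ I would take the subgroup of $\mathcal{H}^{>\mathbb{R}}$ generated by $\exp$ and $\tmop{Aff}_{\mathbb{R}}^+(\mathbb{R})$, realized as an iterated semidirect product of copies of $\tmop{Aff}_{\mathbb{R}}^+(\mathbb{R})$ indexed by levels $l \in \mathbb{Z}$, glued via conjugation by $\exp$. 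By repeated invocation of \Cref{prop-affine-gog} together with \Cref{ex-affine}, $\mathcal{G}_1$ is a growth order group with Archimedean centralisers.

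The main obstacle is verifying \hyperref[dec]{$(\mathbf{\star})$}: the decomposition $\mathcal{H}^{>\mathbb{R}} = \mathcal{G}_0 \cdot \mathcal{G}_1$ requires each $f$ to admit a canonical \emph{level-dominant} representative $g_1 \in \mathcal{G}_1$ such that $f \circ g_1^{\tmop{inv}}$ is of level zero; and cofinality of $\{a \circ (a^{\tmop{inv}} + 1) : a \in \mathcal{G}_1\}$ in $\mathcal{G}_0$ follows by taking $a = \exp^{[l]} \circ (n \tmop{id})$ with $l, n$ sufficiently large, which unfolds as $a \circ (a^{\tmop{inv}} + 1) = \exp^{[l]}(\log^{[l]}(\tmop{id}) + n)$, for instance $\exp((\log \tmop{id})^{e^n})$ when $l = 3$, eventually dominating any prescribed level-zero element. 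With \hyperref[dec]{$(\mathbf{\star})$} established, \Cref{th-levelled} yields that $\mathcal{H}^{>\mathbb{R}}$ is a growth order group with Archimedean centralisers.
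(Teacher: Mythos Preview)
Your overall plan---apply \Cref{th-levelled} to $\mathcal{H}$ after checking it is an H-field with composition via \Cref{cor-Hardy-H}---is sound, and your $\mathcal{G}_0$ (the level-zero elements) agrees with the paper's choice $\{g : g^{[\mathbb{Z}]} < \exp\}$; both are the kernel of the level homomorphism $\mathcal{H}^{>\mathbb{R}} \to \mathbb{Z}$.

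The genuine gap is your choice of $\mathcal{G}_1$. The subgroup generated by $\exp$ and $\tmop{Aff}_{\mathbb{R}}^+(\mathbb{R})$ contains every non-trivial affine map, for instance $2\tmop{id}$. But $2\tmop{id}$ has level zero (already $\log^{[3]}(2\tmop{id}) - \log^{[3]} \in \mathcal{o}$), so $2\tmop{id} \in \mathcal{G}_0 \cap \mathcal{G}_1$, and $\mathcal{G}_1$ is \emph{not} a complement of $\mathcal{G}_0$. Condition \hyperref[dec]{$(\mathbf{\star})$} therefore fails. A second problem with this $\mathcal{G}_1$ is that it is not an iterated semidirect product of copies of $\tmop{Aff}_{\mathbb{R}}^+(\mathbb{R})$ in any way covered by \Cref{prop-affine-gog}: conjugating affines by $\exp$ yields monomials $c\,\tmop{id}^a$, not affines, and in any case that proposition handles a product of two factors, not a $\mathbb{Z}$-indexed tower.

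The paper avoids all of this by taking $\mathcal{G}_1 = \exp^{[\mathbb{Z}]}$, the cyclic group generated by $\exp$. This is trivially an Archimedean ordered group (hence a growth order group with Archimedean centralisers), and it meets $\mathcal{G}_0$ only in $\{\tmop{id}\}$ because $\exp^{[l]}$ has level $l$. The decomposition $\mathcal{H}^{>\mathbb{R}} = \mathcal{G}_0\,\mathcal{G}_1$ is then exactly the statement that every $f$ has a level, and the cofinality clause reduces to showing $\{\exp^{[k]} \circ (\log^{[k]} + 1) : k \in \mathbb{N}\}$ is cofinal in $\mathcal{G}_0$, which the paper establishes directly from the level condition. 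Your elaborate $\mathcal{G}_1$ was trying to do work that is unnecessary once one realises a single representative per level suffices.
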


\begin{proof}
  We will write $\mathcal{o}_K \assign \mathcal{o} (1) \subseteq K$ and
  $\mathcal{o}_{\mathcal{H}} \assign \mathcal{o} (1) \subseteq \mathcal{H}$.
  Consider the subgroup $\mathcal{G}_1
  \assign \exp^{[\mathbb{Z}]}$ of $\mathcal{H}^{>\mathbb{R}}$. This is a
  growth order group with Archimedean centralisers as it is itself Archimedean. Let $\mathcal{G}_0$ denote the subset of
  $\mathcal{H}^{>\mathbb{R}}$ of elements $g$ with $g^{[\mathbb{Z}]} < \exp$.
  This is a convex subgroup of $\mathcal{H}^{>\mathbb{R}}$ containing
  $\tmop{id} +\mathbb{R}$. We claim that $(\mathcal{G}_0, \mathcal{G}_1)$
  satisfies \hyperref[dec]{$(\mathbf{\star})$}. We have $\mathcal{G}_1 \cap \mathcal{G}_0 =\{\tmop{id}\}$ by definition. Let us show that $\mathcal{H}^{>\mathbb{R}}=\mathcal{G}_0 \mathcal{G}_1$. 
  
  Let $f \in \mathcal{H}$ with~$f \geqslant \tmop{id}$. By \eqref{eq-itelog}
  and \eqref{eq-level-cond}, we find an $l \in \mathbb{Z}$ such that for large
  enough $k > 1$, the element $\log^{[k]} (\Phi (f)) - \log^{[k]} (\exp^{[l]}
  (x))$ lies in $\mathcal{o}_K$. We claim that $g \assign f \circ \log^{[
  l]} \in \mathcal{G}_0$. By \eqref{eq-itelog}, given $k > 1$
  large enough, we have $\Phi (\log^{[k]} \circ f) - \Phi (\log^{[k-l]}
  (\tmop{id})) \in \mathcal{o}_K$, whence $\log^{[k]} \circ f - \log^{[k - l]}
  (\tmop{id}) \in \mathcal{o}_{\mathcal{H}}$. Thus $\log^{[k]}
  \circ f \circ \exp^{[k]} - \exp^{[l]}$ and
  $\log^{[k]} \circ g \circ \exp^{[k]} - \tmop{id}$ lie in $\mathcal{o}_{\mathcal{H}}$. 
 But then $\log^{[k]} \circ g \circ \exp^{[k]} \leqslant \tmop{id} + 1$ so
  $g^{[n]} < \exp^{[k]} \circ (\tmop{id} + n) \circ \log^{[k]} \leqslant
  \exp^{[k]} \circ \exp \circ \log^{[k]} = \exp$ for all $n \in \mathbb{N}$,
  i.e. $g \in \mathcal{G}_0$. 
  
  For $h \in \mathcal{G}_1$, $g \in \mathcal{G}_0$ and $n \in \mathbb{N}$, we have $(h \circ g
  \circ h^{\tmop{inv}})^{[n]} = h \circ g^{[n]} \circ
  h^{\tmop{inv}} < h \circ \exp \circ h^{\tmop{inv}} = \exp$. So $h \circ
  \mathcal{G}_0 \circ h^{\tmop{inv}} \subseteq \mathcal{G}_0$. It follows since $\mathcal{H}^{>\mathbb{R}} =\mathcal{G}_0
  \mathcal{G}_1$ that $\mathcal{G}_0$ is a normal subgroup of
  $\mathcal{H}^{>\mathbb{R}}$.
  
  Finally, assume for contradiction that
  $g > \exp^{[k]} \circ (\log^{[k]} + 1)$ for some $g \in \mathcal{G}_0$, for all $k > 1$. By
  \eqref{eq-itelog}, for each $k > 1$, we have a $\delta_k \in \mathcal{o}_K$
  with $\log^{[k]} (\Phi (g)) + \delta_k > \log^{[k]} (x) + 1$. In particular
  $\log^{[k]} \Phi (g) > \log^{[k]} (x) + \frac{1}{2}$, whence
  $\Phi (g) > \exp^{[k]} (\log^{[k]} (x) + \frac{1}{2})$, for all $k > 1$. Let $\ell \in \mathbb{Z}$ and $k_0 > 1$ with $\log^{[k_0]} (\Phi (g)) -
  \log^{[k_0-\ell]} (x) \in \mathcal{o}_K$. We have $\ell>0$ since $\log^{[k_0]} (x) + \frac{1}{4} < \log^{[k_0-\ell]} (x)$. Now \eqref{eq-itelog} gives $\Phi
  (\log^{[k_0]} (g) - \log^{[k_0-\ell]}) \in \mathcal{o}_K$, so $\log^{[k_0]}
  (g) - \log^{[k_0-\ell]} \in \mathcal{o}_{\mathcal{H}}$. In particular
  $\log^{[k_0]} (g) - \log^{[k_0-\ell]} \geqslant - 1$, thus
  \begin{eqnarray*}
    g^{[2]} & = & \exp^{[k_0]} (\log^{[k_0]} (g)) \circ \exp^{[k_0]}
    (\log^{[k_0]} (g))\\
    & \geqslant & \exp^{[k_0]} \circ (\tmop{id} - 1) \circ \log^{[k_0-\ell]} \circ \exp^{[k_0]} \circ (\tmop{id} - 1) \circ \log^{[k_0-\ell]}\\
    & \geqslant & \exp^{[k_0]} \circ ((\tmop{id} - 1) \circ \exp^{[\ell]}
    \circ (\tmop{id} - 1) \circ \exp^{[\ell]}) \circ \log^{[ k_0]} .
  \end{eqnarray*}
  We have $(\tmop{id} - 1) \circ \exp^{[\ell]} \circ (\tmop{id} - 1) \circ \exp^{[\ell]})= h \circ \exp^{[2 \ell]}$,
  where $h \assign (\tmop{id} - 1) \circ (\exp^{[\ell]} \circ (\tmop{id} - 1)
  \circ \log^{[ \ell]})$. Now $h \in \mathcal{G}_0$ by our previous
  arguments, so $h \geqslant \log$, so $g^{[2]} \geqslant \exp^{[k_0]} \circ h
  \circ \exp^{[2 \ell]} \circ \log^{[ k_0]} \geqslant \exp^{[2\ell-1]}$. This contradicts the assumption that $g \in \mathcal{G}_0$, and thus
  concludes out proof that $\{f \circ (f^{\tmop{inv}} + 1) \suchthat f \in
  \mathcal{G}_1 \}$ is cofinal in $\mathcal{G}_0$. So \hyperref[dec]{$(\mathbf{\star})$} holds. We conclude with \Cref{th-levelled}.
\end{proof}

\begin{corollary}
  \label{cor-levelled-Pfaff}Let $\mathcal{P}$ be the Pfaffian closure of
  the real ordered field {\tmem{{\cite{Spei:Pfaff}}}}. Then
  $\mathcal{G}_{\mathcal{P}}$ is a growth order group with Archimedean
  centralisers.
\end{corollary}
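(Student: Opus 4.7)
The plan is to apply \Cref{prop-main} with $\mathcal{H} = K = \mathcal{P}_{\infty}$ and $\Phi$ the identity embedding.

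First I would set up the relevant structures. The exponential function satisfies the Pfaffian equation $y' = y$, so it is definable in $\mathcal{P}$, and its germ at $+\infty$ belongs to $\mathcal{P}_{\infty}$. The pointwise exponential induces an isomorphism $\exp \of (\mathcal{P}_{\infty}, +, 0, <) \longrightarrow (\mathcal{P}_{\infty}^{>}, \cdot, 1, <)$ with reciprocal the pointwise $\log$, and the axioms \eqref{eq-compatible-exp} and \eqref{eq-log-der} follow from the analytic identities $\log(1+\varepsilon) = \varepsilon + O(\varepsilon^2)$ and $(\log f)' = f'/f$ respectively. By \Cref{cor-elem-Hfield}, $\mathcal{P}_{\infty}$ is an H-field with composition and inversion, and together with this exponential structure it is an exponential H-field. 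In particular, all hypotheses of \Cref{prop-main} concerning $\mathcal{H}$, $K$, and $\Phi$ are satisfied.

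Next I would verify condition~\eqref{eq-level-cond}. By the main results of~\cite{Spei:Pfaff}, the Pfaffian closure $\mathcal{P}$ is levelled in the sense of Marker-Miller: for each $a \in \mathcal{G}_{\mathcal{P}} = \mathcal{P}_{\infty}^{>\mathbb{R}}$ there is an $l \in \mathbb{N}$ with $-1 \leqslant \log^{[k]}(a) - \log^{[k-l]}(x) \leqslant 1$ for all sufficiently large $k$. Writing $\eta_k \assign \log^{[k]}(a) - \log^{[k-l]}(x)$ for such $k$, one further iteration of $\log$ gives
\[ \log^{[k+1]}(a) - \log^{[k-l+1]}(x) = \log\!\left(1 + \frac{\eta_k}{\log^{[k-l]}(x)}\right) \in \mathcal{O}\!\left(\frac{1}{\log^{[k-l]}(x)}\right) \subseteq \mathcal{o}, \]
once $k$ is sufficiently large. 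Thus \eqref{eq-level-cond} holds for $a$ with the same $l$. Applying \Cref{prop-main} then yields that $\mathcal{P}_{\infty}^{>\mathbb{R}} = \mathcal{G}_{\mathcal{P}}$ is a growth order group with Archimedean centralisers.

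The main obstacle lies outside this proof, in the technical development of \cite{Spei:Pfaff}: the substantive content is the levelledness of $\mathcal{P}$, which rests on a detailed analysis of the growth of Pfaffian functions. Once this is granted, fitting our germs into the framework of \Cref{prop-main} is routine, the only mildly nontrivial step being the passage from the Marker-Miller $[-1,1]$-bound to the $\mathcal{o}$-estimate by a single additional iteration of the logarithm.
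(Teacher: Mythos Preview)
Your overall strategy---take $K=\mathcal{H}=\mathcal{P}_\infty$ with the identity embedding and apply \Cref{prop-main}---is sound and is in fact exactly how the paper handles the \emph{general} levelled exponential case in \Cref{cor-exponential-case}. The paper's own proof of \Cref{cor-levelled-Pfaff}, however, takes a genuinely different route: it chooses $K=\mathbb{T}_{\mathrm{LE}}$, invokes the embedding $\mathcal{P}_\infty\hookrightarrow\mathbb{T}_{\mathrm{LE}}$ of ordered valued differential fields from \cite{ADH:H-closed}, and uses that \eqref{eq-level-cond} is known to hold in $\mathbb{T}_{\mathrm{LE}}$ by \cite{MarMil97}. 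Your route is more self-contained (no transseries needed); theirs buys them the level condition for free, since it is a property of $\mathbb{T}_{\mathrm{LE}}$ rather than something to be extracted from the Pfaffian closure.

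There is one real issue: you attribute the levelledness of $\mathcal{P}$ to \cite{Spei:Pfaff}, but Speissegger's paper establishes the o-minimality of the Pfaffian closure, not its levelledness. That $\mathcal{P}$ is levelled is true, but it is not among ``the main results of \cite{Spei:Pfaff}''; it follows, for instance, from the very embedding into $\mathbb{T}_{\mathrm{LE}}$ that the paper uses. So as written, your proof rests on a misattributed fact. A second, minor point: when you invoke \Cref{cor-elem-Hfield} you should note that its hypothesis (no germ in $\mathcal{P}_\infty$ exceeds every $\exp^{[k]}$) is satisfied---this is immediate from levelledness, but it deserves a sentence.
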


\begin{proof}
  The field $\mathbb{T}_{\tmop{LE}}$ of logarithmic-exponential transseries is
  an exponential H-field (see {\cite{vdDMM01,vdH:mt}}). The property \eqref{eq-level-cond} holds {\cite[Claim, p
  248]{MarMil97}} in
  $\mathbb{T}_{\tmop{LE}}$. We have an embedding of ordered valued
  differential fields {\cite[Corollary~7.3.4]{ADH:H-closed}} of
  $\mathcal{P}_{\infty}$ into $\mathbb{T}_{\tmop{LE}}$. So \Cref{prop-main}
  applies.
\end{proof}

Let us complete our proof of \Cref{th-main}.
Let $\mathcal{R}$ be a levelled expansion of the real ordered field that is not polynomially bounded. We have $\exp,\log \in
\mathcal{G}_{\mathcal{R}}$ by Miller's dichotomy. This yields an isomorphism
of ordered groups
\begin{eqnarray*}
  \log \of \mathcal{R}_{\infty}^{>} & \longrightarrow & \mathcal{R}_{\infty}\\
  f & \longmapsto & \log \circ f
\end{eqnarray*}
and $(\mathcal{R}_{\infty}, \log)$ is an exponential H-field (see \cite[Section 6.2]{Kuhl:ordexp}). Since $\mathcal{R}$ is levelled and in view of \eqref{eq-itelog}, the condition \eqref{eq-level-cond} holds. \Cref{prop-main} gives:

\begin{corollary}
  \label{cor-exponential-case}Let $\mathcal{R}$ be a levelled o-minimal
  expansion of the real ordered field that is not polynomially bounded. Then
  $\mathcal{G}_{\mathcal{R}}$ is a growth order group with Archimedean
  centralisers.
\end{corollary}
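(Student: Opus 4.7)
The argument is a direct application of \Cref{prop-main}, taking $\mathcal{H} = K = \mathcal{R}_\infty$ and $\Phi$ the identity embedding. The paragraph immediately preceding the corollary already establishes the exponential H-field structure on $\mathcal{R}_\infty$; I would sketch the remaining verifications and then appeal to \Cref{prop-main}.

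First I would confirm that $\mathcal{R}_\infty$ is a Hardy field with composition and inversion that contains $\exp$. The Hardy field property, and closure under composition and inversion, is the general fact recorded in the example following the definition of Hardy field with composition, valid for any o-minimal expansion of the reals. Since $\mathcal{R}$ is not polynomially bounded, Miller's dichotomy forces $\exp$ to be definable in $\mathcal{R}$, whence $\exp \in \mathcal{G}_{\mathcal{R}} \subseteq \mathcal{R}_\infty$. The levelling hypothesis forbids transexponential germs in $\mathcal{R}_\infty$: each positive $f \in \mathcal{G}_{\mathcal{R}}$ has level $l$ and is therefore dominated above by some iterate $\exp^{[l+1]}$. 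Hence \Cref{cor-Hardy-H} applies and promotes $\mathcal{R}_\infty$ to an H-field with composition and inversion; equipped with its natural $\log$ on $\mathcal{R}_\infty^{>}$ it is an exponential H-field, as already noted in the excerpt.

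Next I would verify the level condition \eqref{eq-level-cond} required by \Cref{prop-main}. Fix $a \in \mathcal{R}_\infty^{>\mathbb{R}}$, viewed as a positive element of $\mathcal{G}_{\mathcal{R}}$. With $\Phi$ the identity and $x = \mathrm{id}$, the identification \eqref{eq-itelog} is tautological, and the levelling hypothesis supplies an $l \in \mathbb{N}$ such that $|\log^{[k]} \circ a - \log^{[k-l]}| \leqslant 1$ for all sufficiently large $k$. To upgrade this bounded discrepancy to an infinitesimal one, I would apply one extra logarithm: writing $v \assign \log^{[k]} \circ a - \log^{[k-l]} \in [-1,1]$, we get
\[ \log^{[k+1]} \circ a - \log^{[k+1-l]} = \log\bigl(1 + v/\log^{[k-l]}\bigr), \]
and this germ lies in $\mathcal{o}$ because $\log^{[k-l]} > \mathbb{R}$ while $v$ is bounded. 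Replacing $k$ by $k+1$ throughout yields \eqref{eq-level-cond} for this same $l$.

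Having verified all the hypotheses of \Cref{prop-main}, the conclusion follows at once: $\mathcal{G}_{\mathcal{R}} = \mathcal{R}_\infty^{>\mathbb{R}}$ is a growth order group with Archimedean centralisers. The only mildly delicate point is the passage from bounded to infinitesimal levelling displayed above; the remainder is purely bookkeeping around the hypotheses already assembled in the preceding paragraph of the excerpt, so I do not expect any serious obstacle.
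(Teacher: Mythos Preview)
Your proposal is correct and follows essentially the same route as the paper: take $\mathcal{H}=K=\mathcal{R}_\infty$ with $\Phi=\mathrm{id}$, use Miller's dichotomy to get $\exp$, and apply \Cref{prop-main}. You are simply more explicit than the paper in two places---spelling out that levelling forbids transexponential germs (so \Cref{cor-Hardy-H} yields \hyperref[hfc5]{\textbf{HFC5}}), and showing how one extra logarithm upgrades the bounded discrepancy $|\log^{[k]}a-\log^{[k-l]}|\leqslant 1$ to an infinitesimal one; the paper compresses the latter into the phrase ``in view of \eqref{eq-itelog}''. One small slip: you write ``viewed as a positive element of $\mathcal{G}_{\mathcal{R}}$'', but $a\in\mathcal{R}_\infty^{>\mathbb{R}}$ need not satisfy $a>\mathrm{id}$; the case $a\leqslant\mathrm{id}$ is handled by applying the levelling hypothesis to $a^{\mathrm{inv}}$ and composing back, which gives level $-l$ for $a$.
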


\begin{remark}
    Any reduct of a levelled o-minimal expansion of the real ordered field that defines the sum and product is clearly a levelled o-minimal expansion of the real ordered field, therefore it also induces a growth order group.
\end{remark}

 Corollaries~\ref{cor-polynomially-bounded-case}
and~\ref{cor-exponential-case} imply \Cref{th-main}. By {\cite[Theorem~1]{KK:valH1}, we have:

\begin{corollary}
  \label{cor-levelled-qaa}Let $\mathcal{R}$ be an o-minimal expansion of the
  real ordered field by the exponential and a generalised quasianalytic class
  {\tmem{{\cite{RoSer:QEqaa}}}} containing the restricted analytic $\exp$ and $\log$. Then $\mathcal{G}_{\mathcal{R}}$ is a growth order group with
  Archimedean centralisers.
\end{corollary}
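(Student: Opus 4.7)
The plan is to reduce this corollary to Theorem~\ref{th-main}, so the entire task is to verify that $\mathcal{R}$ is levelled in the sense of Marker--Miller. Since $\mathcal{R}$ defines the unrestricted exponential function by hypothesis, it is not polynomially bounded, and Miller's dichotomy places $\mathcal{R}$ on the exponential side; consequently it is Corollary~\ref{cor-exponential-case} that ultimately governs. Concretely, it suffices to produce, for each positive germ $f \in \mathcal{G}_{\mathcal{R}}$, an $l \in \mathbb{N}$ such that $-1 \leqslant \log^{[k]} \circ f - \log^{[k-l]} \leqslant 1$ holds for all sufficiently large $k \in \mathbb{N}$.

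The essential input is \cite[Theorem~1]{KK:valH1}, which is precisely a levelledness statement for germs of unary definable functions in o-minimal expansions of the real ordered field by a generalised quasianalytic class containing the restricted analytic $\exp$ and $\log$ together with the unrestricted exponential. After matching the hypotheses of that theorem with those imposed on $\mathcal{R}$ here, one reads off from it the required bound: every definable germ at $+\infty$ in $\mathcal{R}_{\infty}$ sits, up to an additive infinitesimal, between consecutive iterated logarithms of $\tmop{id}$, which is exactly the levelledness condition.

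With levelledness in hand, one feeds $\mathcal{R}$ into Theorem~\ref{th-main}, or equivalently directly into Corollary~\ref{cor-exponential-case}, and reads off both desired conclusions: that $\mathcal{G}_{\mathcal{R}}$ is a growth order group, and that the centraliser of each non-trivial element is Archimedean.

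The principal (and indeed only) obstacle is the bookkeeping needed to verify that the ``finite level'' formalism used in \cite{KK:valH1} translates cleanly into the specific bound \eqref{eq-level-cond} used here; this should be a routine unpacking of definitions, but it merits care because Kaiser and Kuhlmann work in the valuation-theoretic language of logarithmic ranks on $H$-fields, whereas the present paper uses the raw comparison of iterated logarithms of germs.
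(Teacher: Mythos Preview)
Your proposal is correct and follows essentially the same route as the paper: invoke \cite[Theorem~1]{KK:valH1} to establish that $\mathcal{R}$ is levelled, then apply Theorem~\ref{th-main} (via Corollary~\ref{cor-exponential-case}). One small factual slip: the authors of \cite{KK:valH1} are F.-V.~Kuhlmann and S.~Kuhlmann, not Kaiser and Kuhlmann.
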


\begin{acknowledgments*} We thank Lou van den Dries, Fran{\c c}oise Point and Tamara Servi for their
  answers to our questions. We thank Sylvy Anscombe for her precious advice.
\end{acknowledgments*}

\end{document}